\newtheorem{theorem}{Theorem}[section]
\newtheorem{lemma}[theorem]{Lemma}
\newtheorem{corollary}[theorem]{Corollary}
\newtheorem{proposition}[theorem]{Proposition}
\newtheorem{definition}[theorem]{Definition}
\newtheorem{example}[theorem]{Example}
\newtheorem{question}[theorem]{Question}
\newtheorem{remark}[theorem]{Remark}
\newtheorem*{thm1}{Theorem KPS1}
\newtheorem*{thm2}{Theorem KPS2}
\newtheorem*{que1}{Question KPS3}
\newtheorem*{que2}{Question KPS4}
\newtheorem*{thmu}{Theorem U}
\numberwithin{equation}{section}
\long\def\symbolfootnote[#1]#2{\begingroup%
\def\thefootnote{\fnsymbol{footnote}}\footnote[#1]{#2}\endgroup}
\begin{document}

\def\C{{\mathbb C}}
\def\N{{\mathbb N}}
\def\Z{{\mathbb Z}}
\def\R{{\mathbb R}}
\def\D{{\mathbb D}}
\def\T{{\mathbb T}}
\def\Q{{\mathbb Q}}
\def\U{{\mathbb U}}
\def\F{{\cal F}}
\def\M{{\cal M}}
\def\hh{{\cal H}}
\def\E{{\cal E}}
\def\rr{{\cal R}}
\def\pp{{\cal P}}
\def\epsilon{\varepsilon}
\def\kappa{\varkappa}
\def\phi{\varphi}
\def\leq{\leqslant}
\def\geq{\geqslant}
\def\re{\text{\tt Re}\,}
\def\slim{\mathop{\hbox{$\overline{\hbox{\rm lim}}$}}\limits}
\def\ilim{\mathop{\hbox{$\underline{\hbox{\rm lim}}$}}\limits}
\def\supp{\hbox{\tt supp}\,}
\def\dim{\hbox{\tt dim}\,}
\def\ker{\hbox{\tt ker}\,}
\def\Ker{\hbox{\tt ker}^\star\,}
\def\Int{\hbox{\tt int}\,}
\def\spann{\hbox{\tt span}\,}
\def\Re{\hbox{\tt Re}\,}
\def\dist{\hbox{\tt dist}\,}
\def\deg{\hbox{\tt deg}\,}
\def\ssub#1#2{#1_{{}_{{\scriptstyle #2}}}}
\def\bin#1#2{\left({{#1}\atop {#2}}\right)}
\def\NH{{\it N\!H}}
\def\WNH{{\it W\!N\!H}}
\def\SNH{{\it S\!N\!H}}
\def\NO{{\it N\!O}}
\def\divi{{\,|\,}} 

\title{On numerically hypercyclic operators}

\author{Stanislav Shkarin}

\date{}

\maketitle

\begin{abstract} According to Kim, Peris and Song, a continuous linear operator $T$ on a complex Banach space $X$ is called {\it numerically hypercyclic} if the numerical orbit $\{f(T^nx):n\in\N\}$
is dense in $\C$ for some $x\in X$ and $f\in X^*$ satisfying $\|x\|=\|f\|=f(x)=1$. They have characterized numerically hypercyclic weighted shifts and provided an example of a numerically hypercyclic operator on $\C^2$.

We answer two questions of Kim, Peris and Song. Namely, we construct a numerically hypercyclic operator, whose square is not numerically hypercyclic as well as an operator which is not numerically hypercyclic but has two numerical orbits whose union is dense in $\C$. We characterize numerically hypercyclic operators on $\C^2$ as well as the operators similar to a numerically hypercyclic one and those operators whose conjugacy class consists entirely of numerically hypercyclic operators. We describe in spectral terms the operator norm closure of the set of numerically hypercyclic operators on a reflexive Banach space. Finally, we provide criteria for numeric hypercyclicity and decide upon the numerical hypercyclicity of operators from various classes.
\end{abstract}

\small \noindent{\bf MSC:} \ \ 47A16, 37A25

\noindent{\bf Keywords:} \ \ Numerically hypercyclic operators, numerical orbit, numerical range
\normalsize

\section{Introduction \label{s1}}\rm

Throughout this article $X$ stands for a Banach space over the field $\C$ of complex numbers, while $S(X)=\{x\in X:\|x\|=1\}$ and $B(X)=\{x\in X:\|x\|\leq 1\}$.  As usual, $\R$ is the field of real numbers, $\R_+$ is the set of non-negative real numbers, $\D=\{z\in\C:|z|<1\}$, $\T=\{z\in\C:|z|=1\}$, $\Z_+$ is the set of non-negative integers and $\N$ is the set of positive integers. If $M$ is a finite or countable set, the symbol $\ell^1_+(M)$ stands for the set of sequences in $\ell^1(M)$ whose entries are all non-negative real numbers. We say that $z_1,\dots,z_k\in\T$ are {\it independent} if $z_1^{m_1}\dots z_k^{m_k}\neq 1$ for every non-zero vector $m\in\Z^k$. Equivalently, $z_j=e^{i\theta_j}$ with $\theta_j\in\R$ are independent if and only if $\pi,\theta_1,\dots,\theta_k$ are linearly independent over the field $\Q$ of rational numbers. It is well-known and easy to prove that $z_1,\dots,z_k\in\T$ are independent if and only if the set $\{(z_1^n,\dots,z_k^n):n\in\Z_+\}$ is dense in $\T^n$.
Recall that a sequence $\{e_n\}_{n\in\N}$ in $X$ is called a {\it Schauder basis} in $X$ if every $x\in X$ can be uniquely written as $x=\sum\limits_{n=1}^\infty c_nx_n$, where $c_n\in\C$ and the series is norm-convergent. The functionals $e_n^*:X\to\C$, $e_n^*(x)=c_n$ are automatically continuous and are called the {\it coordinate functionals} of the Schauder basis $\{e_n\}_{n\in\N}$. Obviously, $e_n^*(e_k)=\delta_{n,k}$ for every $n,k\in\N$. A sequence $\{e_n\}_{n\in\N}$ in $X$ is called a {\it Schauder basic sequence} if $\{e_n\}_{n\in\N}$ is a Schauder basis in the closed linear span of the set $\{e_n:n\in\N\}$. The symbol $L(X)$ stands for the space of bounded linear operators $T:X\to X$ and $X^*$ is the space of continuous linear functionals $f:X\to\C$. For $T\in L(X)$, the dual operator $T^*\in L(X^*)$ acts according to the formula  $T^*f(x)=f(Tx)$. For $T\in L(X)$, $\sigma(T)$ stands for the spectrum of $T$, while $\sigma_p(T)$ denotes the point spectrum of $T$. Recall that $T\in L(X)$ is called {\it semi-Fredholm} if $T(X)$ is closed in $X$ and either $X/T(X)$ or $\ker T$ (or both, in which case $T$ is called {\it Fredholm}) is finite dimensional. The number ${\bf i}(T)=\dim\,\ker T-\dim X/T(X)\in \Z\cup\{-\infty,+\infty\}$ is called the {\it index} of $T$. Note that the set of semi-Fredholm operators is norm-open in $L(X)$ and that the index function ${\bf i}$ is locally constant. Recall that $\lambda\in\C$ is called a {\it normal eigenvalue} of $T\in L(X)$ if $\lambda$ is an isolated point of $\sigma(T)$ and the spectral projection corresponding to the clopen subset $\{\lambda\}$ of $\sigma(T)$ has finite rank. This rank is called the {\it multiplicity} of the normal eigenvalue $\lambda$.

Recall that $x\in X$ is called a {\it hypercyclic vector} for $T\in L(X)$ if the orbit
$$
O(T,x)=\{T^nx:n\in\Z_+\}
$$
is dense in $X$. Similarly, $x\in X$ is called a {\it weakly hypercyclic vector} for $T\in L(X)$ if $O(T,x)$ is dense in $X$ with respect to the weak topology. An operator $T$, which has a hypercyclic vector is called {\it hypercyclic}, while an operator $T$, which has a weakly hypercyclic vector, is called {\it weakly hypercyclic}. For more information on hypercyclicity see the books \cite{bama,gp-book} and references therein. Recall also that $T\in L(X)$ is called {\it power bounded} if $\sup\{\|T^n\|:n\in\N\}<\infty$. Unless stated otherwise, we assume that $\C^n$ carries the standard Euclidean norm. We shall routinely identify an operator $T\in L(\C^n)$ with its matrix. For instance, a {\it diagonal operator} on $\C^n$ is an operator whose matrix is diagonal. The same agreement stands for operators on classical sequence spaces.

\subsection{The main concept}

For $T\in L(X)$ and $(x,f)\in X\times X^*$, the {\it numerical orbit of $(x,f)$} is the set
$$
O(T,x,f)=f(O(T,x))=\{f(T^nx):n\in\Z_+\}\subset \C.
$$
Note that numerical orbits are also known as weak orbits, see \cite{wo} and references therein. The following definition, motivated by the concept of the numerical range of an operator, was introduced by Kim, Peris and Song \cite{nhy}, see also \cite{nhy1} for the generalizations to polynomials on Banach spaces.

\begin{definition}\label{nuhy1}\rm
Let
$$
\Pi(X)=\{(x,f)\in X\times X^*:\|x\|=\|f\|=f(x)=1\}.
$$
We say that $(x,f)\in \Pi(X)$ is a {\it numerically hypercyclic
vector} for $T\in L(X)$ if the numerical orbit $O(T,x,f)$ is dense in
$\C$. An operator $T$ is called {\it numerically hypercyclic} if it
has a numerically hypercyclic vector. We use the symbol $\NH(X)$ to denote the set of all
numerically hypercyclic operators $T\in L(X)$.
\end{definition}

The following proposition collects some elementary observations made in \cite{nhy}.

\begin{proposition}\label{ele} Let $T\in L(X)$.
\begin{itemize}\itemsep=-2pt
\item
If $T$ is power bounded, then it is not numerically hypercyclic.
\item
If $T$ is weakly hypercyclic, then it is numerically hypercyclic.
\item
If the restriction of $T$ to a closed invariant subspace is numerically hypercyclic, then so is $T$.
\item
If $T$ is numerically hypercyclic, then $T^*$ is numerically
hypercyclic. In particular, if $X$ is reflexive, then $T$ is
numerically hypercyclic if and only if $T^*$ is.
\item
If $X$ is a Hilbert space, then $T$ is numerically
hypercyclic if and only if there is $x\in S(X)$ such that $\{\langle T^nx,x\rangle:n\in\Z_+\}$
is dense in $\C$, where $\langle\cdot,\cdot\rangle$ is the inner product of $X$.
\end{itemize}
\end{proposition}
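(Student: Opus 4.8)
The plan is to verify the five items one at a time; each of them follows quickly from the definition of $\Pi(X)$ together with the Hahn--Banach theorem, so I would not expect any of them to require genuine work.

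For the first item I would use only the trivial norm estimate: if $M=\sup_n\|T^n\|<\infty$ then $|f(T^nx)|\le\|f\|\,\|T^n\|\,\|x\|\le M$ for every $(x,f)\in\Pi(X)$ and every $n$, so each numerical orbit lies in a bounded disc and cannot be dense in $\C$. For the third item I would take a closed $T$-invariant subspace $Y$ on which $S=T|_Y$ is numerically hypercyclic, witnessed by $(y,g)\in\Pi(Y)$, extend $g$ by Hahn--Banach to $f\in X^*$ with $\|f\|=\|g\|=1$, and note that $(y,f)\in\Pi(X)$ (the norm on $Y$ is the one inherited from $X$) while $f(T^ny)=f(S^ny)=g(S^ny)$, so $(y,f)$ is a numerically hypercyclic vector for $T$.

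For the fourth item I would send a numerically hypercyclic vector $(x,f)\in\Pi(X)$ for $T$ to the pair $(f,\widehat x)\in\Pi(X^*)$, where $\widehat x\in X^{**}$ is the canonical image of $x$; since $\widehat x((T^*)^nf)=((T^*)^nf)(x)=f(T^nx)$, this pair is numerically hypercyclic for $T^*$, and when $X$ is reflexive, applying this implication twice and using $X^{**}=X$, $T^{**}=T$ yields the stated equivalence. For the fifth item I would invoke the Riesz representation theorem: $x\mapsto\langle\cdot,x\rangle$ is a conjugate-linear isometry of $X$ onto $X^*$, and the equality case of the Cauchy--Schwarz inequality shows that $\|x\|=\|z\|=1$ together with $\langle x,z\rangle=1$ forces $z=x$; hence $(x,f)\in\Pi(X)$ exactly when $f=\langle\cdot,x\rangle$ for some $x\in S(X)$, and then $f(T^nx)=\langle T^nx,x\rangle$, which gives the equivalence.

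The only item calling for a small idea is the second. Here I would first record the general fact that a nonzero $f\in X^*$ maps every weakly dense subset $A$ of $X$ onto a dense subset of $\C$: the functional $f$ is continuous from the weak topology of $X$ to $\C$ and is onto, so $\C=f(X)=f\bigl(\overline A^{\,w}\bigr)\subseteq\overline{f(A)}$. Given a weakly hypercyclic vector $x$ for $T$, I would replace it by $x/\|x\|$ --- which is legitimate since multiplication by a nonzero scalar is a weak homeomorphism, hence preserves weak density of the orbit --- to arrange $\|x\|=1$, and then use Hahn--Banach to pick $f\in S(X^*)$ with $f(x)=1$, so that $(x,f)\in\Pi(X)$; applying the general fact to $A=O(T,x)$ shows $O(T,x,f)=f(O(T,x))$ is dense in $\C$. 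I expect this to be the only real (if minor) obstacle, since it is the one place where one must manufacture a normalized pair rather than merely transport an existing one.
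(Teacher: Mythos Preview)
Your proof is correct in all five parts. Note that the paper does not actually prove this proposition: it attributes the result to Kim, Peris and Song \cite{nhy} and only adds a one-line remark on the Hilbert space item (exactly your Riesz/Cauchy--Schwarz argument), so there is no ``paper's own proof'' to compare against beyond that. Your write-up is the natural elementary verification and matches the spirit of the paper's remark.
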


The last observation is due to the fact that if $\hh$ is a Hilbert space and $x\in S(\hh)$, then the only functional $f\in S(\hh^*)$ satisfying $f(x)=1$ is given by $f(y)=\langle y,x\rangle$. Thus in this case the numerical orbits featuring in the definition of numeric hypercyclicity are naturally labelled by the elements of $S(\hh)$. If $\hh$ is a Hilbert space and $T\in L(\hh)$, we use the following notation:
$$
\NO(T,x)=\{\langle T^nx,x\rangle:n\in\Z_+\}\ \ \text{for $x\in S(\hh)$.}
$$

Recall that forward $F_w$ and backward $B_w$ weighted shifts on $\ell^p(\Z_+)$ act according to the rule $(x_0,x_1,x_2,\dots)\mathop{\mapsto}\limits^{F_w}(0,w_1x_0,w_2x_1,\dots)$ and
$(x_0,x_1,x_2,\dots)\mathop{\mapsto}\limits^{B_w}(w_1x_1,w_2x_2,\dots)$, where $w=\{w_n\}_{n\in\N}$ is a bounded sequence of non-zero scalars. A bilateral weighted shift $T_w$ on $\ell^p(\Z)$ is defined by $(T_wx)_n=w_{n+1}x_{n+1}$, where again $w=\{w_n\}_{n\in\Z}$ is a bounded sequence of non-zero scalars. The main results of \cite{nhy} can be summarized as follows.

\begin{thm1} There is a diagonal numerically hypercyclic operator on $\C^2$. Consequently, there is a numerically hypercyclic operator on $\C^n$ for every $n\geq2$.
\end{thm1}

\begin{thm2} Let $1<p<\infty$ and $T$ be a backward or forward weighted shift on $\ell^p(\N)$ or a bilateral weighted shift on $\ell^p(\Z)$. Then $T$ is numerically hypercyclic if and only if $T$ is not power bounded.
\end{thm2}

\begin{remark}\label{kpsrem} \rm Theorem~KPS2 is not exactly what is stated in \cite{nhy}. Namely, the corresponding results in \cite{nhy} provide a condition on the weight sequence equivalent to the numeric hypercyclicity. For bilateral weighted shifts the authors notice that this condition is equivalent to the operator being not power bounded, while saying nothing for unilateral shifts. However upon reflection, one can readily see that this condition is again equivalent to the operator being not power bounded. Furthermore in the case of unilateral backward shifts the same condition is equivalent to hypercyclicity. One has just co compare it to the characterization of Salas \cite{sal} of hypercyclic backward shifts. On the other hand, there are no hypercyclic forward shifts and there are non-hypercyclic bilateral shifts, which are not power bounded.
\end{remark}

Recall that $T\in L(X)$ and $S\in L(Y)$ are called {\it similar} if there is an isomorphism (in the category of topological vector spaces) $J:X\to Y$ such that $J^{-1}SJ=T$. Note that hypercyclicity and weak hypercyclicity are similarity invariants, while the numeric hypercyclicity is not (below we provide plenty of examples substantiating the last claim). This prompts us to consider the following concepts.

\begin{definition}\label{nuhy2}\rm We say that $T\in L(X)$ is {\it weakly numerically hypercyclic} if $T$ is similar to a numerically hypercyclic operator. We say that $T\in L(X)$ is {\it strongly numerically hypercyclic} if every operator similar to $T$ is numerically hypercyclic. We denote the sets of weakly numerically hypercyclic operators and of strongly numerically hypercyclic operators on $X$ by the symbols $\WNH(X)$ and $\SNH(X)$ respectively.
\end{definition}

The following elementary proposition characterizes weak numeric hypercyclicity.

\begin{proposition}\label{ele1} Let $T\in L(X)$. Then $T$ is weakly numerically hypercyclic if and only if there exist $x\in X$ and $f\in X^*$ such that $O(T,x,f)$ is dense in $\C$.
\end{proposition}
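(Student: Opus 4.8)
The plan is to prove both implications directly; the substantive part is a renorming argument for the ``if'' direction. For the ``only if'' direction, suppose $T\in L(X)$ is similar to a numerically hypercyclic operator $S\in L(Y)$, say $J:X\to Y$ is a linear homeomorphism with $J^{-1}SJ=T$, and let $(y,g)\in\Pi(Y)$ be a numerically hypercyclic vector for $S$. I would set $x=J^{-1}y\in X$ and $f=J^*g\in X^*$, that is, $f(u)=g(Ju)$. Since $T^n=J^{-1}S^nJ$ for every $n\in\Z_+$, one gets $f(T^nx)=g(JT^nJ^{-1}y)=g(S^ny)$, so $O(T,x,f)=O(S,y,g)$, which is dense in $\C$. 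Note that no normalization of $(x,f)$ is needed: the conclusion of the proposition only asks for existence of such a pair, not for membership in $\Pi(X)$.

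For the ``if'' direction, let $x\in X$ and $f\in X^*$ be such that $O(T,x,f)$ is dense in $\C$; in particular $f\neq0$. First I would reduce to the case $f(x)=1$. Since $O(T,x,f)$ is dense, there is $k\in\Z_+$ with $f(T^kx)\neq0$; replacing $x$ by $T^kx$ only deletes the finitely many points $f(T^0x),\dots,f(T^{k-1}x)$ from the numerical orbit and hence keeps it dense, and then rescaling $x$ by the nonzero scalar $1/f(T^kx)$ multiplies the whole orbit by a nonzero constant, preserving density. So we may assume $f(x)=1$, whence also $x\neq0$.

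Next I would renorm $X$. Fix $\epsilon\in(0,1/\|x\|]$ and put $\|u\|'=\max\{|f(u)|,\epsilon\|u\|\}$ for $u\in X$. Then $\epsilon\|u\|\leq\|u\|'\leq\max\{\|f\|,\epsilon\}\|u\|$, so $\|\cdot\|'$ is a norm equivalent to the original one; denote by $Z$ the Banach space $X$ equipped with $\|\cdot\|'$. By construction $|f(u)|\leq\|u\|'$ for all $u$, so the dual norm of $f$ on $Z$ is at most $1$; moreover $\|x\|'=\max\{1,\epsilon\|x\|\}=1$ by the choice of $\epsilon$, and since $f(x)=1$ this forces the dual norm of $f$ on $Z$ to equal $1$. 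Thus $(x,f)\in\Pi(Z)$. The identity map $J:X\to Z$ is a linear homeomorphism and $S:=JTJ^{-1}$ is simply $T$ regarded as an operator on $Z$, bounded because the two norms are equivalent; since $O(S,x,f)=O(T,x,f)$ is dense in $\C$ with $(x,f)\in\Pi(Z)$, the operator $S$ is numerically hypercyclic and $T=J^{-1}SJ$ is similar to it, so $T$ is weakly numerically hypercyclic. The only step that takes any thought is exhibiting an equivalent norm that simultaneously achieves $\|x\|'=\|f\|'=f(x)=1$, and the max-norm above does this cleanly; everything else is routine.
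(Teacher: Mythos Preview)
Your proof is correct and follows essentially the same route as the paper: reduce to $f(x)=1$ and then renorm by a max-type norm so that $(x,f)\in\Pi$ in the new space. The paper normalizes one step further to $\|x\|=1$ and uses the Minkowski functional of $\{u:\|u\|\leq1,\ |f(u)|\leq1\}$, which is exactly $\max\{\|u\|,|f(u)|\}$---your norm with $\epsilon=1$---so the difference is purely cosmetic.
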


\begin{proof} The 'only if' part is obvious. Let $x\in X$ and $f\in X^*$ be such that $O(T,x,f)$ is dense in $\C$. It remains to show that $T\in \WNH(X)$. Replacing $x$ by $T^nx$ for an appropriate $n\in\N$ and normalizing, we can without loss of generality assume that $\|x\|=f(x)=1$. Let $D=\{x\in X:\|x\|\leq 1,\ |f(x)|\leq 1\}$ and $\|\cdot\|_1$ be the Minkowski functional of $D$. Since $D$ is bounded and contains a neighborhood of $0$ in $X$, $\|\cdot\|_1$ is a norm on $X$ equivalent to the original norm $\|\cdot\|$. It is easy to see that $\|x\|_1=\|f\|_1=f(x)=1$, where $\|f\|_1$ is the norm of $f$ as a functional on the normed space $X_1=(X,\|\cdot\|_1)$. Thus $(x,f)\in \Pi(X_1)$ and therefore $(x,f)$ is a numerically hypercyclic vector for $T$ acting on $X_1$. Since $\|\cdot\|$ and $\|\cdot\|_1$ are equivalent, $T$ acting on $X$ is similar to $T$ acting on $X_1$. Thus $T$ is weakly numerically hypercyclic.
\end{proof}

According to Feldman \cite{fe}, $T\in L(X)$ is called $1$-{\it weakly hypercyclic} if there is $x\in X$ such that $f(O(T,x))$ is dense in $\C$ for each non-zero $f\in X^*$. Of course, every weakly hypercyclic operator is $1$-weakly hypercyclic. We collect a number of straightforward consequences of Definition~\ref{nuhy2}
in the following proposition. The proof is elementary and is left for the reader as an exercise.

\begin{proposition}\label{ele00} Let $T\in L(X)$.
\begin{itemize}\itemsep=-2pt
\item
If $T$ is power bounded, then $T\notin\WNH(X)$. 
If $T$ is $1$-weakly hypercyclic, then $T\in \SNH(X)$.
\item
If a restriction of $T$ to an invariant closed linear subspace is weakly numerically hypercyclic $($res\-pec\-ti\-vely, strongly numerically hypercyclic$)$, then $T$ is weakly numerically hypercyclic $($respectively, strongly numerically hypercyclic$)$.
\item
If $T\in \WNH(X)$, then $T^*\in \WNH(X^*)$. If additionally $X$ is reflexive, then $T\in \WNH(X)\iff T^*\in \WNH(X^*)$ and $T\in \SNH(X)\iff T^*\in \SNH(X^*)$.
\end{itemize}
\end{proposition}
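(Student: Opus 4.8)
The plan is to handle the three items separately, reducing each $\WNH$ assertion to Proposition~\ref{ele1} and each $\SNH$ assertion to Proposition~\ref{ele}. For the first item: if $T$ is power bounded then $|f(T^nx)|\le\|f\|\,\|x\|\,\sup_k\|T^k\|<\infty$ for every $(x,f)\in X\times X^*$, so no numerical orbit $O(T,x,f)$ can be dense in $\C$, and Proposition~\ref{ele1} gives $T\notin\WNH(X)$. For the $1$-weakly hypercyclic claim I would first note that $1$-weak hypercyclicity is a similarity invariant: if $S=JTJ^{-1}$ with $J$ an isomorphism and $x$ is a $1$-weakly hypercyclic vector for $T$, then $O(S,Jx)=J(O(T,x))$, so for each non-zero $g$ in the dual of the space on which $S$ acts one has $g(O(S,Jx))=(J^*g)(O(T,x))$, which is dense because $J^*g\neq0$; hence $Jx$ works for $S$. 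It then suffices to show a $1$-weakly hypercyclic operator is numerically hypercyclic: take a $1$-weakly hypercyclic vector $x$, which is necessarily non-zero, rescale so that $\|x\|=1$, choose $f\in X^*$ with $\|f\|=f(x)=1$ by Hahn--Banach, and observe that $(x,f)\in\Pi(X)$ while $O(T,x,f)=f(O(T,x))$ is dense. Applying this to every operator similar to $T$ yields $T\in\SNH(X)$.

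For the second item, let $Y$ be a closed $T$-invariant subspace and put $T_0=T|_Y$. If $T_0\in\WNH(Y)$, Proposition~\ref{ele1} gives $y\in Y$ and $g\in Y^*$ with $g(O(T_0,y))$ dense; since $Y$ is invariant, $O(T_0,y)=O(T,y)$, and extending $g$ to $f\in X^*$ via Hahn--Banach we get $O(T,y,f)=g(O(T,y))$ dense, so $T\in\WNH(X)$ by Proposition~\ref{ele1}. If $T_0\in\SNH(Y)$ and $S=JTJ^{-1}$ is any operator similar to $T$, with $J\colon X\to Z$ an isomorphism, then $J(Y)$ is a closed $S$-invariant subspace and $S|_{J(Y)}=(J|_Y)\,T_0\,(J|_Y)^{-1}$ is similar to $T_0$, hence numerically hypercyclic; the third item of Proposition~\ref{ele} then gives that $S$ is numerically hypercyclic, so $T\in\SNH(X)$.

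For the third item, if $T\in\WNH(X)$ I would take $x,f$ with $O(T,x,f)$ dense and let $\Psi\in X^{**}$ be the canonical image of $x$; since $\Psi((T^*)^nf)=((T^n)^*f)(x)=f(T^nx)$, one has $O(T^*,f,\Psi)=O(T,x,f)$, so $T^*\in\WNH(X^*)$ by Proposition~\ref{ele1}. When $X$ is reflexive, identifying $X^{**}$ with $X$ and $T^{**}$ with $T$ and iterating yields $T\in\WNH(X)\iff T^*\in\WNH(X^*)$. For the $\SNH$ equivalence in the reflexive case, given $S=JT^*J^{-1}$ on some $Z$ with $J\colon X^*\to Z$ an isomorphism, I would compute $S^*=(J^*)^{-1}T^{**}J^*$ and use $T^{**}=T$ to see that $S^*$ is similar to $T$, hence numerically hypercyclic; since $Z$ is isomorphic to the reflexive space $X^*$ it is reflexive, so the fourth item of Proposition~\ref{ele} shows that $S$ itself is numerically hypercyclic. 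Thus $T\in\SNH(X)\Rightarrow T^*\in\SNH(X^*)$, and applying this with $X^*$ (also reflexive) in place of $X$ gives the converse.

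The main obstacle, modest as it is, lies in the $\SNH$ statements: there one must quantify over \emph{all} operators similar to $T$ (or to $T^*$), so after transporting the situation through the similarity isomorphism $J$ one argues on the ambient space $Z$ rather than on $X$. The point that makes this work is that $Z$, being isomorphic to $X$ (respectively $X^*$), inherits reflexivity, so the duality clause of Proposition~\ref{ele} is available on $Z$; everything else is routine manipulation with Hahn--Banach extensions and adjoints.
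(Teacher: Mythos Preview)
Your argument is correct. The paper itself does not give a proof of this proposition, explicitly writing ``The proof is elementary and is left for the reader as an exercise.'' Your treatment handles all three items carefully and in the intended spirit: Proposition~\ref{ele1} to characterize $\WNH$, Proposition~\ref{ele} (restriction and duality clauses) for $\NH$, and transport along similarities for $\SNH$. In particular, your observation that for the $\SNH$ duality one must check reflexivity of the ambient space $Z$ (so that Proposition~\ref{ele} applies there) is exactly the small point that needs attention.
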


\subsection{Finite dimensional results}

The following two questions are raised in \cite{nhy}.

\begin{que1} Let $T\in\NH(X)$. Is $T^n$ numerically hypercyclic for every $n\in\N${\rm ?}
\end{que1}

\begin{que2} Let $T\in L(X)$ and there exist $(x_j,f_j)\in\Pi(X)$ for $1\leq j\leq n$ for which $\smash{\bigcup\limits_{j=1}^n O(T,x_j,f_j)}$ is dense in $\C$. Is $T$ numerically hypercyclic{\rm ?}
\end{que2}

We answer both questions negatively.

\begin{theorem}\label{aq1} There exists a diagonal $T\in \NH(\C^2)$ such that $T^2\notin \NH(\C^2)$.
\end{theorem}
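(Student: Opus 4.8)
The approach is to reduce the statement to a question about one real number and then build that number via its continued fraction. Since $\C^2$ is a Hilbert space, Proposition~\ref{ele} identifies the numerical orbits of a diagonal operator $T=\mathrm{diag}(a,b)$ with the sets $\{p\,a^n+q\,b^n:n\geq0\}$, where $x=(x_1,x_2)\in S(\C^2)$, $p=|x_1|^2$, $q=|x_2|^2$, $p+q=1$. A short case analysis (using the reverse triangle inequality when $p\neq q$ or $|a|\neq|b|$) shows such a set can be dense in $\C$ only if $p=q=\tfrac12$ and $|a|=|b|=r>1$; writing then $a=re^{i(\psi-\pi\lambda)}$ and $b=re^{i(\psi+\pi\lambda)}$ it equals $O:=\{u_n:n\geq0\}$ with $u_n=r^n\cos(\pi n\lambda)\,e^{in\psi}$. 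Applying this to $T$ and to $T^2=\mathrm{diag}(a^2,b^2)$ gives: $T\in\NH(\C^2)$ if and only if $O$ is dense in $\C$, and $T^2\in\NH(\C^2)$ if and only if the even‑index subsequence $O_{\mathrm{ev}}=\{u_{2n}:n\geq0\}$ is dense. So it suffices to choose $r>1$, a number $\psi$ with $\psi/\pi$ irrational, and a number $\lambda\in(0,1)$ so that $O$ is dense while $O_{\mathrm{ev}}$ is not.

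Non‑density of $O_{\mathrm{ev}}$ will come from a rigid Diophantine obstruction: I plan to take $\lambda$ whose continued‑fraction convergents $p_j/q_j$ all have $q_j\not\equiv0\pmod4$. Then for every $n\geq1$ and every integer $m$, the fraction $\tfrac{2m+1}{4n}$ in lowest terms has denominator divisible by $4$, hence is not a convergent of $\lambda$, so Legendre's theorem yields $\bigl|\lambda-\tfrac{2m+1}{4n}\bigr|\geq\tfrac1{2(4n)^2}$. Since $\cos(2\pi n\lambda)$ vanishes exactly on $\tfrac1{2n}(\Z+\tfrac12)$, it follows that $|\cos(2\pi n\lambda)|\geq c/n$ for an absolute constant $c>0$, whence $|u_{2n}|=r^{2n}|\cos(2\pi n\lambda)|\geq c\,r^{2n}/n\to\infty$. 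Thus $O_{\mathrm{ev}}$ meets every disc in a finite set and is not dense, so $T^2\notin\NH(\C^2)$. The parity is essential here: if we asked the same thing of $n$ in place of $2n$, we would instead need $\lambda$ to stay away from $\tfrac{2m+1}{2n}$, whose reduced denominator is only forced to be $\equiv2\pmod4$ — and this is exactly the loophole exploited next.

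Density of $O$ will be obtained by building $\lambda=[0;a_1,a_2,\dots]$ stage by stage, enumerating a countable dense subset of $\C$ with each point listed infinitely often and with tolerances tending to $0$. When handling a target $z_0$ with tolerance $\epsilon$: since the direction of $u_n$ is $\pm e^{in\psi}$, I first want some $n$ with $n\psi$ close to $\arg z_0$ modulo $\pi$, so I choose the next partial quotient $a_j$ so that $q_j\equiv2\pmod4$ — this only fixes $a_j$ modulo $4$, leaving an arithmetic progression of admissible values of $n:=q_j/2$ along which $n\psi\bmod\pi$ is dense, and I pick $n$ both large and with $n\psi\bmod\pi$ within $\epsilon/(10|z_0|)$ of $\arg z_0$. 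Then $n\lambda\approx\tfrac12\pmod1$, so $|u_n|=r^n|\sin(\pi n(\lambda-p_j/q_j))|$; as $n$ is large the required value $|z_0|/r^n$ is tiny, and taking the next partial quotient $a_{j+1}$ large while leaving the tail free tunes $|\lambda-p_j/q_j|$ so that $|u_n|$ lands within $\epsilon/2$ of $|z_0|$. Each step only constrains one $a_j$ modulo $4$ and can always be continued with every future $q_j$ kept out of $4\Z$, so the obstruction of the previous paragraph survives; and the points $u_n$ chosen at the successive stages form a dense subset of $\C$, so $O$ is dense and $T\in\NH(\C^2)$.

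I expect the genuine technical work to be the joint bookkeeping just sketched: verifying that at every stage one can simultaneously maintain the parity pattern of the $q_j$ (no convergent denominator in $4\Z$), make $n\psi\bmod\pi$ land near $\arg z_0$ (which forces $n$, hence $r^n$, to be large, so that the modulus is tunable at all), and hit the prescribed modulus through the choice of $a_{j+1}$ — and checking that all the ``$n$ large enough'' demands are compatible (they are, since $n=q_j/2$ may always be taken larger) and that $\psi$ may be fixed at the outset. The remaining ingredients — the reduction, the Legendre estimate, and the final density/non‑density verification — are routine.
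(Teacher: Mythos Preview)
Your approach is sound and genuinely different from the paper's. Both share the same skeleton: write the relevant orbit as $\{r^n c_n e^{i\theta_n}\}$ with $c_n$ real (for you $c_n=\cos(\pi n\lambda)$, $\theta_n=n\psi$; in the paper $c_n=\tfrac12|1+z^n|$, $\theta_n=\arg(u^n(1+z^n))$), arrange that $|c_n|\geq c/n$ for all even $n$ (so $|u_{2n}|\to\infty$ and $T^2\notin\NH(\C^2)$), and along a sparse sequence of odd $n$'s tune $r^nc_n$ and $\theta_n$ to hit any prescribed target. The paper realizes this by an explicit construction: $z=e^{i\pi\tau}$ with $\tau=\sum m_k/\nu_k$ where the $\nu_k$ are rapidly growing powers of an odd prime $p$ (hence all odd), together with a companion number $u$ for the argument, the required estimates being proved directly in Lemmas~\ref{esti1} and~\ref{esti2}. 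You instead encode the even-index obstruction via Legendre's theorem (no convergent denominator of $\lambda$ lies in $4\Z$) and obtain density from the standard freedom in the tail of a continued fraction combined with equidistribution of $n\psi$ along arithmetic progressions. Your route is more conceptual and leans on off-the-shelf Diophantine tools; the paper's is self-contained and delivers sharper control of $|1+z^n|^{1/n}$ away from the special indices, which it recycles in the proof of Theorem~\ref{aq2}. The bookkeeping you anticipate is indeed benign: once $q_j\equiv2\pmod4$ the next denominator $q_{j+1}=a_{j+1}q_j+q_{j-1}$ is automatically odd (since $q_{j-1}$ is coprime to $q_j$, hence odd), so the modulus-tuning choice of $a_{j+1}$ carries no mod-$4$ constraint; and since the sign of $\cos(\pi n\lambda)$ is determined by the parity of $j$ and by $p_j\bmod4$ (both fixed before $a_{j+1}$ is chosen), you can match it by aiming $n\psi\bmod{2\pi}$ at $\arg z_0$ or at $\arg z_0+\pi$, each of which is dense along the admissible arithmetic progression of $n$'s.
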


\begin{theorem}\label{aq2} There exist a diagonal $T\in L(\C^4)$ and $x_1,x_2\in S(\C^4)$ such that $T\notin \NH(\C^4)$, while $\NO(T,x_1)\cup \NO(T,x_2)$  is dense in $\C$.
\end{theorem}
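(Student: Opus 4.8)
Since $\C^4$ with the Euclidean norm is a Hilbert space, the last item of Proposition~\ref{ele} lets me work with the sets $\NO(T,x)$, $x\in S(\C^4)$. For diagonal $T=\mathrm{diag}(\lambda_1,\dots,\lambda_4)$ and $x\in S(\C^4)$ with $t_j=|x_j|^2$ one has $\NO(T,x)=\{\sum_{j=1}^4 t_j\lambda_j^n:n\in\Z_+\}$, where $(t_1,\dots,t_4)$ ranges over the standard simplex. I would put all four points on a circle of radius $R>1$, $\lambda_j=Re^{i\alpha_j}$, and take $x_1=\tfrac1{\sqrt3}(e_1+e_2+e_3)$, $x_2=\tfrac1{\sqrt3}(e_2+e_3+e_4)$, so that $\NO(T,x_k)=\{\tfrac13R^nP_n^{(k)}:n\in\Z_+\}$ with $P_n^{(1)}=e^{in\alpha_1}+e^{in\alpha_2}+e^{in\alpha_3}$ and $P_n^{(2)}=e^{in\alpha_2}+e^{in\alpha_3}+e^{in\alpha_4}$. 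Everything then reduces to choosing $\alpha_1,\dots,\alpha_4$ (and $R$).

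For statement (ii) I would run a Diophantine construction modelled on the proof of Theorem~KPS1. One arranges a sparse sequence along which the triple $n\alpha_1,n\alpha_2,n\alpha_3\pmod{2\pi}$ is exponentially close to an equilateral configuration, so that $P_n^{(1)}$ is exponentially small there; fine-tuning the offsets and the overall phase $n\alpha_1$ along that sequence, one makes the corresponding orbit points $\tfrac13R^nP_n^{(1)}$ dense in $\C$ with a prescribed small disc $D_1$ near the origin omitted, while off that sequence $|P_n^{(1)}|\ge R^{-n/2}$, so those orbit points escape to $\infty$; the same device also makes $\NO(T,x_1)$ dense in $\{|w|\ge1\}$ in every direction, so that altogether $\overline{\NO(T,x_1)}=\C\setminus D_1$. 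Carrying out the analogous construction for $\alpha_2,\alpha_3,\alpha_4$ along a disjoint sparse sequence gives $\overline{\NO(T,x_2)}=\C\setminus D_2$; choosing the centres of $D_1$ and $D_2$ distinct makes the two discs disjoint, whence $\NO(T,x_1)\cup\NO(T,x_2)$ is dense in $\C$. In particular $x_1$ and $x_2$ are \emph{not} numerically hypercyclic for $T$.

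For statement (i) one must show $\{\sum_j t_j\lambda_j^n\}$ is not dense in $\C$ for every probability vector $t$. Write the time-$n$ point as $R^nP_n$ with $P_n=\sum_j t_je^{in\alpha_j}$ (sum over the support of $t$); since $R>1$, density in $\C$ forces $|P_n|\lesssim R^{-n}$ along a subsequence. If the support has at most two points the orbit lies on a logarithmic spiral or in a sparse union of circular annuli (the $t_i\ne t_j$ subcase escapes because $|P_n|=|t_ie^{in\alpha_i}+t_je^{in\alpha_j}|\ge|t_i-t_j|$, so $R^nP_n$ has modulus $\ge R^n|t_i-t_j|\to\infty$), hence is not dense. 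If the support has three or four points, then at a resonant time $n$ of the triple $\{1,2,3\}$ the partial sum over $\{1,2,3\}$ equals $e^{in\alpha_1}(t_1+t_2\omega+t_3\omega^2)+O(R^{-n})$ with $\omega=e^{2\pi i/3}$, of modulus bounded below unless $t_1=t_2=t_3$; requiring in addition that the remaining contributions be exponentially small forces $t_4=0$, so only $x=x_1$ survives, and symmetrically only $x=x_2$ survives at resonant times of $\{2,3,4\}$; at every other large $n$ all of $n\alpha_1,\dots,n\alpha_4$ are ``generic'' and $|P_n|$ stays bounded below (here one uses that, by construction, all the off‑resonance frequency data is badly approximable, so no unplanned cancellation is exponentially precise), whence the orbit escapes to $\infty$. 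Thus $x_1$ and $x_2$ are, apart from the degenerate small‑support orbits, the only vectors with non‑escaping orbit, and neither orbit is dense in $\C$; hence $T\notin\NH(\C^4)$.

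The real difficulty is the single Diophantine construction underlying both parts: one must build one quadruple $(\alpha_1,\alpha_2,\alpha_3,\alpha_4)$, by successive approximation, so that simultaneously (a) $P_n^{(1)}$ and $P_n^{(2)}$ become exponentially small along two disjoint sparse sequences, with the associated orbit points dense exactly in $\C\setminus D_1$ and $\C\setminus D_2$, and (b) every other frequency difference $\alpha_i-\alpha_j$ and every other relevant linear combination of the $\alpha_j$ is badly approximable, so that no other subset‑and‑weight choice yields an exponentially small partial sum. Interleaving this family of Liouville‑type requirements with this family of badly‑approximable requirements is where the work lies; granting the construction, the escape‑to‑infinity estimates of the previous paragraph and the density statement in (ii) are routine.
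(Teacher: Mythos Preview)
Your proposal is a strategy sketch, not a proof. You explicitly concede that ``the real difficulty is the single Diophantine construction underlying both parts'' and then do not carry it out; you ask the reader to grant the construction. That is precisely the content of the theorem. The paper's proof, by contrast, actually performs this construction (Lemmas~\ref{esti1} and~\ref{esti2}), and does so by making a structural choice that you have avoided: it places the four eigenvalues on \emph{two} circles of radii $p>q^{3/2}>q>1$, and takes $x_1,x_2$ with \emph{disjoint} supports. This decouples the two density constructions completely --- each is a $2$-dimensional ``funny numbers'' construction on its own circle --- and the hierarchy $p>q^{3/2}$ makes the case analysis for $T\notin\NH(\C^4)$ straightforward: whichever pair of coordinates has unequal weights, or whichever circle dominates, the orbit escapes; the only non-escaping orbits are (scalar multiples of) $\NO(T,x_1)$ and $\NO(T,x_2)$, each of which misses a half-plane by design.

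Your single-circle setup with overlapping supports $\{1,2,3\}$ and $\{2,3,4\}$ forces you to build one quadruple $(\alpha_1,\alpha_2,\alpha_3,\alpha_4)$ that is simultaneously Liouville for two different triples and badly approximable for every other relevant linear combination. Even if this can be done, you have not done it, and several steps of your non-hypercyclicity argument rest on it without justification. In particular: (i) your treatment of $3$- and $4$-point supports asserts that off the two planned resonance sequences ``no unplanned cancellation is exponentially precise'', which is exactly the badly-approximable hypothesis you have not established; (ii) for the full support $t=(\tfrac14,\tfrac14,\tfrac14,\tfrac14)$ (and more generally any $4$-point $t$) you give no argument at all beyond a hand-wave --- at your triple-resonance times the $4$-point sum is $\approx e^{in\alpha_4}$ (or $e^{in\alpha_1}$), so the orbit escapes there, but you still need a lower bound at \emph{all} other $n$, for \emph{every} $t$; (iii) your claim that $\overline{\NO(T,x_1)}=\C\setminus D_1$ for a \emph{disc} $D_1$ is stronger than what the construction you sketch would naturally give (compare Proposition~\ref{pno}, where the density/avoidance pattern is prescribed in advance but the construction is explicit). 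In short, the paper's two-radius, disjoint-support design is not an incidental choice: it is what makes the construction explicit and the case analysis finite.
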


The following two results are motivated by Theorem~KPS1. They provide sufficient conditions of weak/strong numeric hypercyclicity in terms of the restriction to a finite dimensional invariant subspace.

\begin{theorem}\label{suffwnh} Each of the following conditions is sufficient for the weak numeric hypercyclicity of $T\in L(X)$.
\begin{itemize}\itemsep=-2pt
\item[{\rm (\ref{suffwnh}.1)}]
There exist $\lambda_1,\lambda_2\in\sigma_p(T)$ such that $|\lambda_1|=|\lambda_2|>1$ and $\frac{\lambda_1}{|\lambda_1|}$, $\frac{\lambda_2}{|\lambda_2|}$ are independent in $\T$.
\item[{\rm (\ref{suffwnh}.2)}]
There are independent $\lambda_1,\lambda_2\in\T$ such that $\ker(T-\lambda_jI)^2\neq \ker(T-\lambda_jI)$ for $j\in\{1,2\}$.
\item[{\rm (\ref{suffwnh}.3)}]
There exist $\lambda_1,\lambda_2,\lambda_3\in\sigma_p(T)$ such that $|\lambda_1|=|\lambda_2|>|\lambda_3|>1$, $\frac{\lambda_1}{\lambda_2}$ has infinite order in the group $\T$ and $\frac{\lambda_1}{|\lambda_1|}$, $\frac{\lambda_3}{|\lambda_3|}$ are independent.
\item[{\rm (\ref{suffwnh}.4)}]
There exist $\lambda_1,\lambda_2\in\sigma_p(T)$  and $\lambda_3\in\T$ such that $\ker(T-\lambda_3I)^2\neq \ker(T-\lambda_3I)$, $|\lambda_1|=|\lambda_2|>1$, $\frac{\lambda_1}{\lambda_2}$ has infinite order in $\T$ and $\frac{\lambda_1}{|\lambda_1|}$, $\lambda_3$ are independent.
\end{itemize}
\end{theorem}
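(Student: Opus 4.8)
The plan is, in each of the four cases, to exhibit a single pair $(x,f)\in X\times X^*$ for which $O(T,x,f)$ is dense in $\C$; by Proposition~\ref{ele1} this already yields $T\in\WNH(X)$, and by Proposition~\ref{ele00} it suffices to do this after restricting $T$ to a suitable finite-dimensional invariant subspace $Y$. In cases (\ref{suffwnh}.1) and (\ref{suffwnh}.3) take $Y$ to be the span of eigenvectors $u_1,u_2$ (respectively $u_1,u_2,u_3$) attached to the $\lambda_j$; in cases (\ref{suffwnh}.2) and (\ref{suffwnh}.4), for each $\lambda_j$ with $\ker(T-\lambda_jI)^2\neq\ker(T-\lambda_jI)$ also adjoin a vector $v_j$ with $u_j:=(T-\lambda_jI)v_j\neq0$ and $(T-\lambda_jI)u_j=0$. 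Then $T|_Y$ is a direct sum of scalar blocks $(\lambda_j)$ and $2\times2$ Jordan blocks, and for $x\in Y$ and an arbitrary $f$ (chosen on $Y$ and extended to $X$ by Hahn--Banach) the orbit is an exponential polynomial
\[
f(T^nx)=\sum_j(\alpha_j+\beta_j n)\,\lambda_j^{\,n-1},\qquad \beta_j=0\ \text{on the scalar blocks},
\]
where the finitely many coefficients $\alpha_j$ (and $\beta_j$, on the Jordan blocks) can be prescribed arbitrarily in $\C$.

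It then remains to select these coefficients and to prove density of the resulting sequence in $\C$. The \emph{arguments} of the summands are governed by Kronecker's (equivalently Weyl's) theorem: writing $z_j=\lambda_j/|\lambda_j|$, the independence hypotheses say precisely that the pair $(z_1^n,z_2^n)$ in (\ref{suffwnh}.2), and $(z_1^n,z_3^n)$ in (\ref{suffwnh}.3)--(\ref{suffwnh}.4), equidistribute in $\T^2$, while ``$\lambda_1/\lambda_2$ has infinite order'' makes $(\lambda_1/\lambda_2)^n$ dense in $\T$; the \emph{moduli} are carried by the exponential factors $|\lambda_j|^n$ and, where a Jordan block is present, by the polynomial factor $n$. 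Concretely, for (\ref{suffwnh}.2) take $x=v_1+v_2$ and $f$ with $f(u_j)=1$, $f(v_j)=d_j$, so that $O=\bigl\{\lambda_1^{\,n-1}(d_1\lambda_1+n)+\lambda_2^{\,n-1}(d_2\lambda_2+n):n\in\Z_+\bigr\}$ --- a polynomially growing ``double spiral'' with two independent unimodular frequencies; for (\ref{suffwnh}.3) take $x=u_1+u_2+u_3$ and $f(u_j)=c_j$ with $|c_1|=|c_2|$, so that $c_1\lambda_1^n+c_2\lambda_2^n$ is a leading spiral of modulus $\asymp|\lambda_1|^n\,\bigl|1+(c_2/c_1)(\lambda_2/\lambda_1)^n\bigr|$ while $c_3\lambda_3^n$, of strictly smaller but still exponentially growing modulus, is a corrective term; (\ref{suffwnh}.4) superimposes the Jordan term $n\lambda_3^{\,n-1}$ (modulus $\asymp n$) on the two genuinely exponential terms $\lambda_1^n,\lambda_2^n$; and (\ref{suffwnh}.1) is the minimal two-term instance.

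The crux, and the main obstacle, is that the modulus and the argument of the $n$-th orbit point are both functions of the single index $n$ and so cannot be prescribed independently: given a target $w=\rho e^{i\psi}$ with $\rho>0$ and a tolerance $\varepsilon>0$, one must find a \emph{single} $n$ whose orbit point has modulus within $\sim\varepsilon$ of $\rho$ \emph{and}, simultaneously, argument within $\sim\varepsilon/\rho$ of $\psi$. Each hypothesis is tailored to leave enough radial room for this while the equidistribution steers the argument. In (\ref{suffwnh}.2) and (\ref{suffwnh}.4) the linear factor $n$ acts as a slowly varying scale running through all large real values, and --- decisively --- the set of admissible indices $n$, those at which the two dominant summands nearly cancel so that the orbit point drops back to a modulus of the required size, is \emph{infinite} because a series $\sum_n1/n$ diverges; in (\ref{suffwnh}.3) the intermediate rate $|\lambda_3|^n$ (with $1<|\lambda_3|<|\lambda_1|$) contributes a term sweeping out a disk whose radius grows with $n$, to be added to the leading spiral once $|c_1|=|c_2|$ is arranged. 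One must then verify that along the resulting (possibly very sparse) subsequence of admissible $n$ the argument still approaches $\psi$ --- this is where the full strength of ``independent''/``infinite order'' is used, through discrepancy estimates for the relevant torus sequences --- and tune the free coefficients so that no annular gap of the leading spiral is left uncovered. Case (\ref{suffwnh}.1), in which the two eigenvalues alone must play the roles of leading spiral and perturbation, is the most delicate. These quantitative steps make up the bulk of the proof; the reductions of the first paragraph and the bare equidistribution inputs are routine.
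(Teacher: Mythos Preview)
Your proposal is an outline, not a proof: you correctly reduce to a finite-dimensional invariant subspace, correctly write the numerical orbit as an exponential polynomial, and correctly identify that equidistribution of the $z_j^n$ is the engine --- but then you explicitly defer the entire density argument (``these quantitative steps make up the bulk of the proof'') without carrying any of it out. The gap is real, not cosmetic. Discrepancy estimates for torus sequences control how often $(z_1^n,\dots)$ visits a prescribed box, but they do \emph{not} by themselves show that along a sparse subsequence where the leading term nearly vanishes the argument can be steered to any prescribed value; your remark that admissible $n$ are infinite ``because $\sum 1/n$ diverges'' is not an argument and does not connect to the conclusion. In case~(\ref{suffwnh}.2) you moreover fix $f(u_j)=1$, leaving only the lower-order $d_j$ free; but Proposition~\ref{znwn} shows that for Lebesgue-almost all independent pairs $\lambda_1,\lambda_2\in\T$ the leading sequence $\{n(\lambda_1^{n-1}+\lambda_2^{n-1})\}$ is \emph{not} dense, so without freedom in the leading coefficients your ansatz can fail.

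The paper closes exactly this gap by a Baire category argument rather than quantitative equidistribution. Lemmas~\ref{wnh21} and~\ref{wnh31} show, via Theorem~U, that for independent $z,w\in\T$ and $R_n\to\infty$ the set of $(a,b)\in\T^2$ for which $\{R_n(az^n+bw^n)\}$ (respectively the three-term variant) is dense in $\C$ is a dense $G_\delta$. One then \emph{chooses} $a,b\in\T$ from this set and sets $f(e_1)=a$, $f(e_2)=b$ (extended by Hahn--Banach). This is why the freedom in the leading coefficients is essential, and why case~(\ref{suffwnh}.1) is in fact the \emph{simplest}, not the most delicate: it is a direct corollary of Lemma~\ref{wnh21}. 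Cases~(\ref{suffwnh}.3)--(\ref{suffwnh}.4) require the extra reduction that when $\lambda_1/|\lambda_1|$ and $\lambda_2/|\lambda_2|$ are dependent one rewrites both as powers of a common $z\in\T$ (times roots of unity absorbed by passing to $T^d$), after which Lemma~\ref{wnh31} applies. Your sketch does not contain this reduction either.
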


\begin{theorem}\label{suffsnh} Each of the following conditions is sufficient for the strong numeric hypercyclicity of $T\in L(X)$.
\begin{itemize}\itemsep=-2pt
\item[{\rm (\ref{suffsnh}.1)}]
There exist $\lambda_1,\dots,\lambda_n\in\sigma_p(T)$ and $c_1,\dots,c_n\in\R_+$ such that
$\Bigl\{\sum\limits_{j=1}^n c_j\lambda_j^k:k\in\Z_+\Bigr\}$ is dense in $\C$.
\item[{\rm (\ref{suffsnh}.2)}]
There exist $\lambda_1,\lambda_2,\lambda_3\in\sigma_p(T)$ such that $|\lambda_1|=|\lambda_2|=|\lambda_3|>1$ and $\frac{\lambda_1}{|\lambda_1|}$, $\frac{\lambda_2}{|\lambda_2|}$, $\frac{\lambda_3}{|\lambda_3|}$ are independent.
\end{itemize}
\end{theorem}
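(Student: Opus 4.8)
The plan is to settle both conditions by reducing to a diagonal operator on a finite-dimensional space and then, for each norm on that space, exhibiting a pair in $\Pi$ with dense numerical orbit. Strong numeric hypercyclicity is a similarity invariant by definition, both hypotheses involve only the point spectrum, and by Proposition~\ref{ele00} strong numeric hypercyclicity is inherited from a restriction to an invariant subspace; so one may first pass to $W=\spann\{v_1,\dots,v_n\}$, where $v_j$ is an eigenvector for $\lambda_j$. Discarding the indices with $c_j=0$ in (\ref{suffsnh}.1) and merging equal eigenvalues (whose coefficients add), we may assume the $\lambda_j$ distinct, the $v_j$ linearly independent, and $T|_W$ equal, in the basis $\{v_j\}$, to $D=\mathrm{diag}(\lambda_1,\dots,\lambda_n)$ on $\C^n$. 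It then remains to prove: for every norm $p$ on $\C^n$, with dual norm $p^*$, the operator $D$ has a vector $(x,f)$ with $p(x)=p^*(f)=f(x)=1$ whose numerical orbit $\{f(D^kx):k\in\Z_+\}$ is dense in $\C$.

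For (\ref{suffsnh}.1), set $d_j=c_j/\sum_i c_i$, so that $d_j>0$ and $\sum_j d_j=1$; it suffices to find $(x,f)$ as above with $x_jf_j=d_j$ for every $j$, for then $\{f(D^kx):k\}=\{\sum_j d_j\lambda_j^k:k\}$ is a nonzero scalar multiple of $\{\sum_j c_j\lambda_j^k:k\}$ and hence dense by hypothesis. To produce such a pair I would maximise $\Phi(x)=\sum_j d_j\log|x_j|$ over the ball $\{p\leq1\}$. Since $\Phi(tx)=\log t+\Phi(x)$ the maximum lies on the sphere; since $\Phi$ is bounded above (all norms on $\C^n$ are equivalent) while $\Phi(x)\to-\infty$ as $x$ approaches a coordinate hyperplane, it is attained at some $x^*$ with every coordinate nonzero. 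Let $g$ be the functional with $g_j=d_j/x^*_j$, so $\Phi(x^*+\varepsilon)=\Phi(x^*)+\re g(\varepsilon)+o(\varepsilon)$. Maximality over the convex body $\{p\leq1\}$ forces $\re g(y-x^*)\leq0$ for all $y$ with $p(y)\leq1$, i.e. $g$ lies in the normal cone of the ball at $x^*$, which for the unit ball of a norm is $\{0\}\cup\R_+\cdot\{h:p^*(h)=h(x^*)=1\}$; thus $g=\mu h$ with $\mu\geq0$ and $h$ norming at $x^*$, and pairing with $x^*$ gives $\mu=\re g(x^*)=\sum_j d_j=1$, so $g$ itself is norming. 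Then $p(x^*)=p^*(g)=g(x^*)=1$ and $x^*_jg_j=d_j$, as wanted. The only delicate point is this normal-cone/optimality step for a possibly non-smooth norm, which is routine convex analysis but must be spelled out.

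For (\ref{suffsnh}.2), write $\lambda_j=\rho\mu_j$ with $\rho=|\lambda_1|>1$; the $\mu_j$ being independent are in particular distinct, so $n=3$, and $\{\sum_j c_j\lambda_j^k:k\}=\{\rho^k\sum_j c_j\mu_j^k:k\}$. By (\ref{suffsnh}.1) it is enough to find a probability vector $d=(d_1,d_2,d_3)$ with all $d_j>0$ such that $\{\rho^k(d_1\mu_1^k+d_2\mu_2^k+d_3\mu_3^k):k\}$ is dense in $\C$; that the coefficient vector must be a genuine (real) probability vector is unavoidable, since for the Euclidean norm, by the last part of Proposition~\ref{ele}, numeric hypercyclicity means $\NO(D,x)=\{\sum_j|x_j|^2\lambda_j^k:k\}$ is dense for some unit $x$. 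I would construct $d$ by a nested-ball argument: since independence makes $(\mu_1^k,\mu_2^k,\mu_3^k)$ dense in $\T^3$, for every ball $B$ in a suitable open set of probability vectors there are arbitrarily large $k$ for which $\mu_1^k,\mu_2^k,\mu_3^k$ form a non-degenerate triangle in $\C$ whose barycentric coordinates of $0$ lie in $B$; near the corresponding point $d^{(k)}$ (for which $\sum_j d^{(k)}_j\mu_j^k=0$) the affine map $d\mapsto\sum_j d_j\mu_j^k$ is a linear isomorphism with controlled inverse. Fixing a sequence $(\zeta_\ell)$ dense in $\C$, one picks nested balls $B_1\supseteq B_2\supseteq\cdots$ and indices $k_\ell\to\infty$ so that the image of $B_\ell$ under $d\mapsto\sum_j d_j\mu_j^{k_\ell}$ contains a disc of radius $r_\ell$ about $\zeta_\ell\rho^{-k_\ell}$ with $\rho^{k_\ell}r_\ell\to0$, and lets $B_{\ell+1}$ be the preimage of that disc in $B_\ell$, of diameter tending to $0$. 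The common point $d=\bigcap_\ell B_\ell$ then has $\rho^{k_\ell}\sum_j d_j\mu_j^{k_\ell}$ within $\rho^{k_\ell}r_\ell\to0$ of $\zeta_\ell$, so its orbit is dense, and this $d$ is fed back into (\ref{suffsnh}.1).

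I expect the real work to be entirely in (\ref{suffsnh}.2): one has to extract from independence the precise density statement about the configurations $(\mu_1^k,\mu_2^k,\mu_3^k)$ and the barycentric points $d^{(k)}$, and keep uniform control on the inverses of the maps $d\mapsto\sum_j d_j\mu_j^k$ along the chosen subsequence so that the radii $r_\ell$ can really be taken with $\rho^{k_\ell}r_\ell\to0$. The use of three eigenvalues is essential here: with only two equal-modulus independent eigenvalues the admissible real coefficients keep $|d_1\mu_1^k+d_2\mu_2^k|$ at distance $\gg\rho^{-k}$ from $0$ whenever the phase of $\mu_1/\mu_2$ is badly approximable, so the orbit escapes to infinity and the argument breaks down.
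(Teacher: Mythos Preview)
Your proposal is correct and matches the paper's proof essentially step for step. Your log-maximization of $\sum_j d_j\log|x_j|$ with the normal-cone optimality argument is exactly the paper's Lemma~\ref{cnvefu} (phrased there via a holomorphic differentiation instead of convex analysis), and your nested-ball construction for (\ref{suffsnh}.2) is precisely what the paper obtains by invoking Theorem~U together with Lemma~\ref{3point}, whose density proof uses the same implicit-function/barycentric idea you describe; the only cosmetic difference is that the paper observes directly that condition (\ref{suffsnh}.1) is similarity-invariant and applies Hahn--Banach on the ambient space, whereas you pass to the invariant subspace and vary the norm there.
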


In the Hilbert space situation we can refine the first two parts of Theorem~\ref{suffwnh}.

\begin{proposition}\label{suffnh} Let $\hh$ be a Hilbert space and $T\in L(\hh)$. Assume that there exist $\lambda_1,\lambda_2\in\sigma_p(T)$ such that $|\lambda_1|=|\lambda_2|>1$,
the eigenspaces $\ker(T-\lambda_1 I)$ and $\ker(T-\lambda_2I)$ are non-orthogonal and
$\frac{\lambda_1}{|\lambda_1|}$, $\frac{\lambda_2}{|\lambda_2|}$ are independent in $\T$. Then $T\in\NH(\hh)$.
\end{proposition}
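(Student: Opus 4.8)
The plan is to use the non-orthogonality of the eigenspaces to make a whole arc of ``phases'' available in the numerical orbit, and then to show that a residual set of phases already forces density in $\C$; so $T$ is numerically hypercyclic even though (as the paper stresses) a single orbit need not be dense for every phase.

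First I would fix unit vectors $v_1\in\ker(T-\lambda_1I)$ and $v_2\in\ker(T-\lambda_2I)$ with $c:=\langle v_1,v_2\rangle\neq0$, which exist by non-orthogonality; since independence forces $\tfrac{\lambda_1}{|\lambda_1|}\neq\tfrac{\lambda_2}{|\lambda_2|}$ and hence $\lambda_1\neq\lambda_2$, the vectors $v_1,v_2$ are linearly independent, so $|c|\in(0,1)$. Write $\rho=|\lambda_1|=|\lambda_2|>1$ and $z_j=\lambda_j/\rho\in\T$, and for $\gamma\in\R$ put $x_\gamma=v_1+e^{i\gamma}v_2$ (a non-zero vector) and $A_\gamma=1+ce^{-i\gamma}\neq0$. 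A direct computation gives
$$
\langle T^nx_\gamma,x_\gamma\rangle=A_\gamma\lambda_1^n+\overline{A_\gamma}\,\lambda_2^n=A_\gamma\,\rho^n\bigl(z_1^n+\zeta_\gamma z_2^n\bigr),\qquad \zeta_\gamma:=\overline{A_\gamma}/A_\gamma\in\T .
$$
As $\gamma$ runs through $[0,2\pi)$ the point $1+ce^{-i\gamma}$ runs around the circle of radius $|c|$ about $1$, so $\arg A_\gamma$ sweeps out $[-\arcsin|c|,\arcsin|c|]$ and hence $\zeta_\gamma=e^{-2i\arg A_\gamma}$ sweeps out the arc $\Gamma=\{e^{i\phi}:|\phi|\le 2\arcsin|c|\}$, which has positive length precisely because $|c|>0$. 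Since $A_\gamma/\|x_\gamma\|^2\neq0$ and a dense subset of $\C$ stays dense after multiplication by a non-zero scalar, it is enough to find $\zeta\in\Gamma$ for which $\{\rho^n(z_1^n+\zeta z_2^n):n\in\Z_+\}$ is dense in $\C$: normalising $x_\gamma$ and invoking the Hilbert-space criterion of Proposition~\ref{ele} then yields $T\in\NH(\hh)$.

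The heart of the matter is the claim that $G:=\{\zeta\in\T:\{\rho^n(z_1^n+\zeta z_2^n):n\in\Z_+\}\ \text{is dense in}\ \C\}$ is a dense $G_\delta$ in $\T$, so that $G\cap\Gamma\neq\varnothing$. I would prove this by Baire category: fixing a countable dense $D\subset\C\setminus\{0\}$ one has $G=\bigcap_{w\in D,\,m,N\in\N}U_{w,m,N}$, where $U_{w,m,N}=\{\zeta\in\T:\exists\,n\ge N,\ |\rho^n(z_1^n+\zeta z_2^n)-w|<1/m\}$ is manifestly open, so everything reduces to the density of each $U_{w,m,N}$. This density statement is the main obstacle, and it is here that the arithmetic of $z_1,z_2$ enters. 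For a fixed $n$ the set $\{\rho^n(z_1^n+\zeta z_2^n):\zeta\in\T\}$ is the circle of radius $\rho^n$ about $\rho^nz_1^n$, so its distance to $w$ equals $\bigl|\,|w-\rho^nz_1^n|-\rho^n\,\bigr|\le |w|^2/\rho^n+2|\Re(w\overline{z_1^n})|$, which falls below $1/m$ as soon as $\rho^n>2m|w|^2$ and $z_1^n$ lies within $\tfrac1{4m|w|}$ of $-iw/|w|$; moreover the closest point of that circle to $w$ is attained at $\zeta_n:=z_2^{-n}(w-\rho^nz_1^n)/|w-\rho^nz_1^n|$, and one checks $|\zeta_n+(z_1/z_2)^n|\le 2|w|/(\rho^n-|w|)\to0$. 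Given an arbitrary arc $J\subset\T$, I would then use the independence of $z_1,z_2$ --- equivalently, the density of $\{(z_1^n,(z_1/z_2)^n):n\in\Z_+\}$ in $\T^2$, since $z_1$ and $z_1/z_2$ are independent exactly when $z_1,z_2$ are --- to produce arbitrarily large $n\ge N$ with $z_1^n$ this close to $-iw/|w|$ while $-(z_1/z_2)^n$ sits well inside $J$; for such $n$ one gets $\zeta_n\in U_{w,m,N}\cap J$, so $U_{w,m,N}$ is dense. Baire's theorem then gives $\zeta^*\in G\cap\Gamma$, and choosing $\gamma^*$ with $\zeta_{\gamma^*}=\zeta^*$ makes $x=x_{\gamma^*}/\|x_{\gamma^*}\|\in S(\hh)$ a numerically hypercyclic vector for $T$, completing the argument.

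Finally I would note that non-orthogonality is used essentially: if $\langle v_1,v_2\rangle=0$ then the numerical orbit of $av_1+bv_2$ is just $|a|^2\lambda_1^n+|b|^2\lambda_2^n$, density forces $|a|=|b|$, and the only phase so obtained is $\zeta=1$, which need not lie in $G$ --- for instance it does not when $\arg(\lambda_1/\lambda_2)$ is badly approximable --- so no numerical orbit is dense; the arc $\Gamma$ collapsing to a single point is exactly the obstruction that the non-orthogonality hypothesis removes.
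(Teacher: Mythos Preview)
Your proof is correct and follows essentially the same route as the paper: both pick unit eigenvectors with non-zero inner product, compute that the resulting numerical orbits have the form (non-zero constant)${}\cdot\rho^n(z_1^n+\zeta z_2^n)$ with the phase $\zeta$ ranging over a non-trivial arc of $\T$, and then use a Baire argument to locate a $\zeta$ in that arc for which the orbit is dense. The only packaging difference is that the paper quotes its Lemma~\ref{wnh21} (the two-parameter dense $G_\delta$ statement, proved via Theorem~U) for step three, whereas you reprove the equivalent one-parameter version inline with a direct geometric estimate on the circles $\{\rho^n z_1^n+\rho^n\zeta z_2^n:\zeta\in\T\}$; the underlying density mechanism---using the independence of $z_1$ and $z_1/z_2$ to simultaneously control $z_1^n$ and the optimal phase---is the same.
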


\begin{proposition}\label{suffnh1} Let $\hh$ be a Hilbert space and $T\in L(\hh)$. Assume that there exist $\lambda_1,\lambda_2\in\C$ such that $\ker(T-\lambda_jI)^2\neq \ker(T-\lambda_jI)$ for $j\in\{1,2\}$, $|\lambda_1|=|\lambda_2|\geq1$ and $\frac{\lambda_1}{|\lambda_1|}$, $\frac{\lambda_2}{|\lambda_2|}$ are independent in $\T$. Then $T\in\NH(\hh)$.
\end{proposition}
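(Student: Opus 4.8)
The plan is to produce one unit vector living on a four-dimensional $T$-invariant subspace. \emph{Reduction to finite dimensions.} For $j\in\{1,2\}$ I fix $v_j\in\ker(T-\lambda_jI)^2\setminus\ker(T-\lambda_jI)$ and set $u_j=(T-\lambda_jI)v_j\neq0$, so that $Tu_j=\lambda_ju_j$ and $Tv_j=\lambda_jv_j+u_j$. Independence of $\frac{\lambda_1}{|\lambda_1|},\frac{\lambda_2}{|\lambda_2|}$ forces $\lambda_1\neq\lambda_2$, so the vectors $u_1,v_1,u_2,v_2$, lying in the generalized eigenspaces of distinct eigenvalues, are linearly independent, and $M=\spann\{u_1,v_1,u_2,v_2\}$ is a four-dimensional $T$-invariant subspace. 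By the relevant part of Proposition~\ref{ele} it is enough to show that $T|_M$, which is again an operator on a Hilbert space, is numerically hypercyclic; so from now on I assume $\hh=M$.

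\emph{Normalization and the shape of $\NO(T,x)$.} Replacing $v_j$ by $v_j+\mu_ju_j$ for suitable $\mu_j\in\C$ I arrange $v_j\perp u_j$, and rescaling $(u_j,v_j)$ I arrange $\|u_j\|=1$. For $x=au_1+bv_1+cu_2+dv_2$, using $T^nu_j=\lambda_j^nu_j$ and $T^nv_j=\lambda_j^nv_j+n\lambda_j^{n-1}u_j$, one obtains
$$
\langle T^nx,x\rangle=\lambda_1^{n-1}P_1(n)+\lambda_2^{n-1}P_2(n),\qquad P_j(n)=nB_j+\lambda_jA_j,
$$
where $B_1=b\langle u_1,x\rangle$, $B_2=d\langle u_2,x\rangle$ and $A_j$ is an affine function of $a,b,c,d$; thus each $P_j$ is affine in $n$ with leading coefficient $B_j$. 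I then pick the four parameters so that $B_2=-B_1\neq0$ --- a direct computation shows this is possible, after a harmless perturbation of the chosen $u_j,v_j$ to exclude degenerate Gram matrices --- and finally normalize $\|x\|=1$.

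\emph{Density.} Writing $r=|\lambda_1|=|\lambda_2|$ and $\zeta_j=\lambda_j/r$,
$$
\langle T^nx,x\rangle=r^{n-1}\zeta_1^{n-1}\Bigl[P_1(n)+(\zeta_2/\zeta_1)^{n-1}P_2(n)\Bigr].
$$
Since $\zeta_1,\zeta_2$ are independent, so are $\zeta_1$ and $\zeta_2/\zeta_1$; hence $\{(\zeta_1^n,(\zeta_2/\zeta_1)^n):n\in\Z_+\}$ is dense in $\T^2$ and returns to every open set infinitely often. As $B_2=-B_1$ we have $P_1(n)/P_2(n)\to-1$, so the bracket is small exactly along those $n$ for which $(\zeta_2/\zeta_1)^{n-1}$ is close to $1$, a value approached at rate $O(1/n)$ along the continued-fraction denominators of $\arg(\zeta_2/\zeta_1)/2\pi$. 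Along such a subsequence $\langle T^nx,x\rangle$ stays bounded, and the independent free rotation $\zeta_1^{n-1}$ together with the sub-leading terms $\lambda_jA_j$ of $P_1,P_2$ provide enough additional freedom to steer the orbit into any prescribed $\epsilon$-ball, in particular into a ball about $0$; when instead $(\zeta_2/\zeta_1)^{n-1}$ stays away from $1$ the orbit has large modulus and essentially arbitrary argument, so points far from $0$ are reached as well. Hence $\NO(T,x)$ is dense in $\C$, i.e.\ $T\in\NH(\hh)$.

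The main obstacle is this last step: upgrading the qualitative equidistribution on $\T^2$ to the quantitative claim that the numerical orbit actually meets every disk. The delicate point is to control the residue surviving the near-cancellation of the two Jordan ``beats'' precisely enough that it fills out a full two-dimensional neighbourhood of $0$ (and of every other target) rather than some thin lower-dimensional set.
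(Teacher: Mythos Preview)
Your reduction to a four-dimensional $T$-invariant subspace is fine and matches the paper. The gap is exactly where you say it is: the density step is not proved, and the sketch you give does not close it.

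Two concrete issues. First, the structural condition you impose, $B_2=-B_1$, is not the right one. With $|\lambda_1|=|\lambda_2|=r$ the leading term becomes $nB_1(\lambda_1^{n-1}-\lambda_2^{n-1})$; even along the continued-fraction subsequence where $|(\zeta_2/\zeta_1)^{n-1}-1|\le C/n$, this term has size of order $r^{n-1}$, which blows up when $r>1$. So the heuristic ``the orbit stays bounded along that subsequence'' already fails unless additional cancellation with the lower-order term is arranged, and you give no mechanism for that.

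Second, and more fundamentally, the paper does \emph{not} fix a single vector and argue directly. It first uses an Implicit Function Theorem step to produce a one-complex-parameter family $x(z)$, $z\in\epsilon\D$, for which the numerical orbit has the very specific conjugate form
\[
\langle T^nx(z),x(z)\rangle=\lambda_1^n\bigl(nz+\phi(z)\bigr)+\lambda_2^n\bigl(n\overline{z}+\overline{\phi(z)}\bigr),
\]
with both the leading and the constant coefficients complex conjugates of each other. This symmetry is essential: writing $\lambda_1=rus$, $\lambda_2=rus^{-1}$ with $u,s\in\T$ independent, the bracket becomes $2r^nu^n\,\mathrm{Re}\bigl[(nz+\phi(z))s^n\bigr]$, a \emph{real} quantity times a free rotation. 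The paper then invokes a separate lemma (Lemma~\ref{fnh1}), proved via the Universality Theorem and an intermediate-value argument in the parameter $z$, showing that for a dense $G_\delta$ set of $z$ this orbit is dense in $\C$. Your setup lacks both ingredients: the conjugate structure (your $B_2=-B_1$ gives no such reality), and the Baire-category argument over a family of vectors that replaces the quantitative Diophantine control you are trying to improvise.
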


We characterize $\NH(\C^2)$, $\SNH(\C^2)$, $\WNH(\C^2)$ and $\WNH(\C^3)$.

\begin{theorem}\label{2dim} Let $T\in L(\C^2)$. Then $T\in\WNH(\C^2)$ if and only if $\sigma(T)=\{\lambda_1,\lambda_2\}$, where $|\lambda_1|=|\lambda_2|>1$ and $\frac{\lambda_1}{|\lambda_1|}$, $\frac{\lambda_2}{|\lambda_2|}$ are independent in $\T$. Furthermore, $T\in \SNH(\C^2)$ if and only if $\sigma(T)=\{\lambda_1,\lambda_2\}$ with $\{\lambda_1^k+\lambda_2^k:k\in\Z_+\}$ being dense in $\C$. Finally, $T\in \NH(\C^2)$ if and only if either $T\in\SNH(\C^2)$ or $T\in \WNH(\C^2)$ and $T$ is not unitarily equivalent to a diagonal operator.
\end{theorem}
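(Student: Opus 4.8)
The plan is to reduce the whole statement to the question of when a sequence $\{\alpha\lambda_1^n+\beta\lambda_2^n:n\in\Z_+\}$ (with $\alpha,\beta\in\C$) is dense in $\C$, and then to read off the three characterizations by combining that analysis with the sufficient conditions already available. The "if" parts are quick: if $\{\lambda_1^k+\lambda_2^k\}$ is dense in $\C$ then necessarily $\lambda_1\neq\lambda_2$, so both lie in $\sigma_p(T)$, and Theorem~\ref{suffsnh}~(\ref{suffsnh}.1) with $c_1=c_2=1$ gives $T\in\SNH(\C^2)$; if $\sigma(T)=\{\lambda_1,\lambda_2\}$ with $|\lambda_1|=|\lambda_2|>1$ and $\frac{\lambda_1}{|\lambda_1|},\frac{\lambda_2}{|\lambda_2|}$ independent, then again $\lambda_1\neq\lambda_2$, both are eigenvalues, and Theorem~\ref{suffwnh}~(\ref{suffwnh}.1) gives $T\in\WNH(\C^2)$; the remaining "if" part for $\NH(\C^2)$ is handled at the end.

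For the converses, first note that a $2\times2$ operator with one eigenvalue is excluded from all three classes: if $\sigma(T)=\{\lambda\}$ then $T=\lambda I+N$ with $N^2=0$, so $f(T^nx)=\lambda^{n-1}\bigl(\lambda f(x)+nf(Nx)\bigr)$ for $n\ge1$, which — according to whether $f(Nx)=0$ and to $|\lambda|$ — is a geometric spiral, a bounded (or finite) set, or a set whose moduli increase to $\infty$ essentially monotonically; in every case it is a closed discrete, hence non-dense, subset of $\C$, so $T\notin\WNH(\C^2)$ by Proposition~\ref{ele1}, and therefore $T\notin\NH(\C^2)$ and $T\notin\SNH(\C^2)$. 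Hence any $T$ in one of the three classes has distinct eigenvalues $\lambda_1\neq\lambda_2$, is similar to $D=\mathrm{diag}(\lambda_1,\lambda_2)$, and its numerical orbits are exactly the sequences $\{\alpha\lambda_1^n+\beta\lambda_2^n:n\in\Z_+\}$; by Proposition~\ref{ele1}, $T\in\WNH(\C^2)$ iff one of these is dense for some $\alpha,\beta\in\C$, whereas applying Proposition~\ref{ele} to $D$ in the standard orthonormal basis gives $\langle D^nx,x\rangle=|c_1|^2\lambda_1^n+|c_2|^2\lambda_2^n$, so $D\in\NH(\C^2)$ iff some $\{t\lambda_1^n+(1-t)\lambda_2^n\}$ with $t\in[0,1]$ is dense. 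Now the easy obstructions: the orbit is a geometric sequence (never dense) if $\alpha\beta=0$; if $|\lambda_1|\neq|\lambda_2|$ the dominant term forces either power-boundedness of $D$ (so $D\notin\WNH$ by Proposition~\ref{ele00}) or moduli escaping to $\infty$; likewise $D$ is power bounded when $|\lambda_1|=|\lambda_2|\le1$, and the orbit is a geometric spiral if $\frac{\lambda_1}{|\lambda_1|}=\frac{\lambda_2}{|\lambda_2|}$; finally, once $|\lambda_1|=|\lambda_2|=r>1$, if $|\alpha|\neq|\beta|$ then $|\alpha\lambda_1^n+\beta\lambda_2^n|\ge\bigl||\alpha|-|\beta|\bigr|\,r^n\to\infty$, again a closed discrete set. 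This already yields the $\SNH$-characterization: density of some $\{t\lambda_1^n+(1-t)\lambda_2^n\}$ forces $|\lambda_1|=|\lambda_2|>1$ and then $t=\tfrac12$, i.e.\ density of $\{\lambda_1^k+\lambda_2^k\}$, and $T\in\SNH(\C^2)$ means precisely that the conjugate $D$ is numerically hypercyclic.

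The one case not covered above — the technical heart of the proof — is to show that if $\lambda_j=r\mu_j$ with $r>1$, $\mu_j\in\T$, $\mu_1\neq\mu_2$, $|\alpha|=|\beta|$, but $\mu_1,\mu_2$ are \emph{dependent}, then $\{r^n(\alpha\mu_1^n+\beta\mu_2^n):n\in\Z_+\}$ is not dense; this is what will force $\mu_1,\mu_2$ to be independent in the $\WNH$-converse. Here I would argue as follows: the closure $H$ of the subgroup of $\T^2$ generated by $(\mu_1,\mu_2)$ is a \emph{proper} closed subgroup, hence of dimension $\le1$; since the forward orbit $\{(\mu_1^n,\mu_2^n):n\in\Z_+\}$ is dense in $H$, the set $\{\alpha\mu_1^n+\beta\mu_2^n:n\in\Z_+\}$ lies in the compact set $K=\{\alpha z_1+\beta z_2:(z_1,z_2)\in H\}$, which is either finite or a finite union of real-analytic arcs of the form $\{As^p+Bs^q:s\in\T\}$ (with $|A|=|B|$, so that such an arc meets $0$ at finitely many points and with nonvanishing derivative there). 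Thus the orbit lies in $\bigcup_{n\ge0}r^nK$, and the point is that $\overline{\bigcup_{n\ge0}r^nK}$ has planar Lebesgue measure zero: a point of this closure not already in some $r^nK$ is a limit $r^{n_k}\zeta_k\to w$ with $n_k\to\infty$ and $\zeta_k\in K$, which forces $\zeta_k\to0$ and places $w$ on the tangent cone of $K$ at the origin — a finite union of lines. Hence the orbit misses a nonempty open set and is not dense (when $H$ is finite, $\bigcup_n r^nK$ is merely a finite union of geometric sequences and the claim is immediate). This completes the $\WNH$-characterization.

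Finally, to assemble the $\NH$-characterization, note that $\NH$ is invariant under unitary equivalence (immediate from the Hilbert-space reformulation in Proposition~\ref{ele}, since $\langle US^nU^\ast x,x\rangle=\langle S^n(U^\ast x),U^\ast x\rangle$). If $T$ is unitarily equivalent to a diagonal operator, then $T\in\NH(\C^2)\iff D\in\NH(\C^2)\iff\{\lambda_1^k+\lambda_2^k\}$ is dense $\iff T\in\SNH(\C^2)$, by the computation above. If instead $T\in\WNH(\C^2)$ is not unitarily equivalent to a diagonal operator, then $\sigma(T)=\{\lambda_1,\lambda_2\}$ with $|\lambda_1|=|\lambda_2|>1$ and $\frac{\lambda_1}{|\lambda_1|},\frac{\lambda_2}{|\lambda_2|}$ independent, and the one-dimensional eigenspaces of $T$ are non-orthogonal in $\C^2$, so Proposition~\ref{suffnh} gives $T\in\NH(\C^2)$. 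Since $\NH(\C^2)\subseteq\WNH(\C^2)$ automatically, these two facts give exactly the stated description. I expect the measure-zero argument in the dependent case to be the only genuine difficulty; the rest is organised bookkeeping around the already-established sufficient conditions.
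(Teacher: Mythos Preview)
Your proof is correct and follows essentially the same route as the paper: the ``if'' directions via Theorems~\ref{suffwnh}, \ref{suffsnh} and Proposition~\ref{suffnh}, the single-eigenvalue and unequal-modulus obstructions (which the paper packages as Lemma~\ref{abcde}), the diagonal computation for $\SNH$ (the paper's Lemma~\ref{snsnsn0}), and the key dependent case handled by a tangent-cone/measure-zero argument that is precisely the $n=2$ content of Lemma~\ref{ddiag} together with Lemma~\ref{tech}. Your closed-subgroup phrasing of that last step is a clean repackaging of the paper's explicit decomposition $d_j=Ru_jw^{k_j}$ into a finite-order part and a one-parameter part; the limiting argument placing accumulation points on finitely many tangent lines is exactly what Lemma~\ref{tech} does.
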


\begin{theorem}\label{3dim} Let $T\in L(\C^3)$. Then $T$ is weakly numerically hypercyclic if and only if
either there are $\lambda_1,\lambda_2\in\sigma(T)$ such that $|\lambda_1|=|\lambda_2|>1$ and $\frac{\lambda_1}{|\lambda_1|}$, $\frac{\lambda_2}{|\lambda_2|}$ are independent in $\T$ or $\sigma(T)=\{\lambda_1,\lambda_2,\lambda_3\}$, where $|\lambda_1|=|\lambda_2|>|\lambda_3|>1$, $\frac{\lambda_1}{\lambda_2}$ has infinite order in the group $\T$ and $\frac{\lambda_1}{|\lambda_1|}$, $\frac{\lambda_3}{|\lambda_3|}$ are independent in $\T$.
\end{theorem}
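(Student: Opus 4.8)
The plan is to prove the two implications separately. Sufficiency will be immediate: since $\dim\C^3<\infty$ we have $\sigma(T)=\sigma_p(T)$, so if the first alternative holds the eigenvalues $\lambda_1\ne\lambda_2$ (they are distinct because two equal points of $\T$ are never independent) satisfy the hypotheses of $(\ref{suffwnh}.1)$, while if the second holds $\lambda_1,\lambda_2,\lambda_3$ satisfy those of $(\ref{suffwnh}.3)$; either way $T\in\WNH(\C^3)$. All the work is in the converse, which I would organise as two reductions followed by a case analysis.

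For necessity I would start from Proposition~\ref{ele1}: fix $x\in\C^3$, $f\in(\C^3)^*$ with $O(T,x,f)$ dense in $\C$ (so by Proposition~\ref{ele00} $T$ is not power bounded), decompose $\C^3$ into generalized eigenspaces, and write $z_n:=f(T^nx)=\sum_j\lambda_j^nq_j(n)$, the sum over the distinct eigenvalues with $q_j\not\equiv0$, where $\deg q_j+1$ is at most the dimension of the $\lambda_j$-generalized eigenspace and these dimensions sum to at most $3$. Reduction~1: if $|\lambda_j|\le1$ for all occurring $j$, a brief inspection of the Jordan types available in dimension $3$ shows $z_n$ is either bounded or of the form $\mu^n(an+b)+O(1)$ or $\mu^n(an^2+bn+c)$ with $|\mu|=1$, and in the unbounded cases the orbit clusters along a logarithmic spiral or finitely many rays whose angular (respectively radial) gaps diverge — never dense; hence $\rho:=\max_j|\lambda_j|>1$. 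Reduction~2: with $d$ the largest $\deg q_j$ among $j$ with $|\lambda_j|=\rho$ and $A=\{j:\ |\lambda_j|=\rho,\ \deg q_j=d\}$, write $\lambda_j=\rho e^{i\theta_j}$ and $c_j\ne0$ for the leading coefficient of $q_j$ ($j\in A$); then $z_n=\rho^nn^dw_n+o(\rho^nn^d)$ with $w_n=\sum_{j\in A}c_je^{in\theta_j}$. If $|A|=1$ then $|z_n|=|c|\rho^nn^d(1+o(1))$ and $|z_{n+1}|/|z_n|\to\rho>1$, so the orbit misses a sequence of ever-widening annuli going to infinity — impossible; so $|A|\ge2$, and since the generalized eigenspaces of the distinct $\lambda_j$, $j\in A$, are independent and of dimension $\ge d+1$ each, $|A|(d+1)\le3$, forcing $d=0$. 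Thus $z_n=\rho^nw_n+r_n$ with $|A|\in\{2,3\}$, $r_n=o(\rho^n)$, and $\sigma(T)$ consisting of the $|A|$ distinct $\lambda_j$ of modulus $\rho$ together with at most one further point of smaller modulus (necessarily semisimple) when $|A|=2$.

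Now the dichotomy. If some pair $j,k\in A$ has $\lambda_j/\rho,\lambda_k/\rho$ independent, then — being eigenvalues of equal modulus $\rho>1$ — they realise the first alternative, and we are done. Otherwise all pairs from $\{e^{i\theta_j}\}_{j\in A}$ (together with $\pi$) are $\Q$-dependent, which pins the closed subgroup $H\le\T^{|A|}$ generated by $(e^{i\theta_j})_{j\in A}$ to dimension at most $1$, so $w_n$ lives on a curve $\Gamma$ (a finite union of loops $t\mapsto\sum b_it^{a_i}$, $|t|=1$) or a finite set. If $|A|=3$ there is no lower-order term, $z_n=\rho^nw_n\in\bigcup_n\rho^n\Gamma$, and since a generic ray through $0$ meets each loop of $\Gamma$ finitely often, $\bigcup_n\rho^n\Gamma$ meets it in finitely many geometric sequences and is not dense — contradiction; so $|A|=2$, say $A=\{1,2\}$. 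If $\lambda_1/\lambda_2$ has finite order then $|w_n|$ is periodic, so $|z_n|$ avoids an unbounded sequence of intervals — contradiction; hence $\lambda_1/\lambda_2$ has infinite order. If, moreover, $\sigma(T)=\{\lambda_1,\lambda_2\}$, or $\sigma(T)=\{\lambda_1,\lambda_2,\lambda_3\}$ with $|\lambda_3|\le1$, then $r_n$ is bounded, $z_n$ stays in a bounded-width neighbourhood of $\bigcup_n\rho^n\Gamma$, and is still not dense — contradiction. The only surviving possibility is $\sigma(T)=\{\lambda_1,\lambda_2,\lambda_3\}$ with $|\lambda_1|=|\lambda_2|=\rho>|\lambda_3|>1$ and $\lambda_1/\lambda_2$ of infinite order, exactly the spectral shape of the second alternative; it remains only to prove that $\lambda_1/\rho,\lambda_3/\rho$ are independent.

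The main obstacle is precisely this last point: show that if $\lambda_1/\rho,\lambda_3/\rho$ are \emph{not} independent then $O(T,x,f)$ is not dense — delicate because $r_n=c_3\lambda_3^n$ now diverges and cannot be discarded. The idea: by the $\lambda_1\leftrightarrow\lambda_2$ symmetry, the failure of the second alternative at this spectral shape means neither $\lambda_1/\rho,\lambda_3/\rho$ nor $\lambda_2/\rho,\lambda_3/\rho$ is independent, so together with $\lambda_1/\rho,\lambda_2/\rho$ all three pairs from $\{e^{i\theta_1},e^{i\theta_2},e^{i\theta_3}\}$ (with $\pi$) are $\Q$-dependent; hence the closed subgroup of $\T^3$ they generate is at most $1$-dimensional, and — $\lambda_1/\lambda_2$ having infinite order — exactly $1$-dimensional. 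So all three phases $e^{in\theta_j}$ are functions of a single equidistributing angle $n\gamma$ with $\gamma\notin\pi\Q$, and $z_n$ is, on each residue class mod some $m$, of the form $\rho^nP(e^{in\gamma})+c_3\lambda_3^n$ with $P(w)=d_1w^{a_1}+d_2w^{a_2}$, $a_1\ne a_2$, a nonzero trigonometric polynomial. I would then show no such sequence is dense in $\C$: approaching a far target $Re^{i\Phi}$ forces $\arg P(e^{in\gamma})\approx\Phi$ (when $\rho^nP(e^{in\gamma})$ dominates) or $P(e^{in\gamma})$ close to one of its finitely many zeros on $\T$ (when the two terms are comparable), and then $|z_n|\approx R$ pins $n$ to a window of bounded length, after which no equidistributing parameter remains free to fix the argument — whereas independence of $\lambda_1/\rho,\lambda_3/\rho$ would make the generating subgroup $2$-dimensional and let $e^{in\theta_3}$ vary freely against $P(e^{in\gamma})$, which is exactly what powers $(\ref{suffwnh}.3)$. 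Carrying out this non-density estimate rigorously, via Weyl equidistribution on the relevant torus and the empty interior of $\{P(w):|w|=1\}$, is the one genuinely technical part.
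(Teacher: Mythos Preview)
Your sufficiency direction is exactly the paper's: invoke (\ref{suffwnh}.1) and (\ref{suffwnh}.3). For necessity you take a genuinely different route. The paper argues by a case split on the full spectrum $\sigma(T)$ and cites pre-built obstructions: Lemma~\ref{abcde} when the distinct eigenvalues have distinct moduli, and Lemmas~\ref{ddiag}, \ref{ddiagon} (which rest on the technical Lemmas~\ref{tech}, \ref{tech1}) when all phase pairs are dependent. You instead fix a single dense orbit $z_n=f(T^nx)$, extract its dominant term $\rho^nn^dw_n$, and peel off cases directly from the shape of $z_n$. Both organisations are valid; the paper's buys modularity (the hard analysis sits in Lemmas~\ref{tech} and~\ref{tech1} once and for all), yours buys directness but forces you to redo that analysis inline.

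You correctly locate the crux. The sentence ``$z_n$ is, on each residue class mod some $m$, of the form $\rho^nP(e^{in\gamma})+c_3\lambda_3^n$ with $P(w)=d_1w^{a_1}+d_2w^{a_2}$'' lands precisely on the set $\{R^n(az^k+bz^m)+cr^nz^j\}$ of Lemma~\ref{tech1}, and that lemma is exactly the ``one genuinely technical part'' you flag. The paper's proof of Lemma~\ref{tech1} is more delicate than your sketch suggests: the real issue is not equidistribution but controlling the accumulation set when $P$ has zeros on $\T$, and the argument there splits into whether $R=r^2$ (where a genuine cancellation can occur). Your heuristic ``no equidistributing parameter remains free to fix the argument'' is the right intuition, but does not by itself exclude density.

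A few smaller points to tighten. In Reduction~1 the unbounded cases simply have $|z_n|\to\infty$ (a polynomial times a modulus-one power dominates any bounded remainder); your spiral/ray description is not the actual mechanism. In the sub-case $\lambda_1/\lambda_2$ of finite order, $|w_n|$ periodic is not quite enough when some period value is zero --- on that arithmetic progression $z_n=r_n$, and you must also dispose of that tail (it is either bounded or a single geometric sequence, so still not dense). Most importantly, ``$r_n$ bounded implies $z_n$ lies in a bounded-width neighbourhood of $\bigcup_n\rho^n\Gamma$, hence not dense'' needs an argument: a bounded neighbourhood of a nowhere dense set can be dense. Here $\Gamma$ is a two-term Laurent image of $\T$, and when $|c_1|=|c_2|$ it passes through $0$, so infinitely many dilates $\rho^n\Gamma$ visit any fixed annulus; one must observe (as in the proof of Lemma~\ref{tech}) that those visits occur only along finitely many asymptotic directions determined by the zeros of $P$, so a generic ray still meets the neighbourhood in a non-dense set.
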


\begin{remark}\label{examples}\rm
The above results provide easy explicit examples of numerically hypercyclic operators on $\C^n$. For instance, applying Theorems~\ref{2dim}, Theorem~\ref{suffsnh} and Proposition~\ref{suffnh1} respectively, we have
\begin{align*}
&\left(\begin{array}{cc} 2e^{i\pi\log 2}&1\\ 0&2e^{i\pi\log 3} \end{array}\right)\in \NH(\C^2),\quad \left(\begin{array}{ccc} 2e^{i\pi\log 2}&0&0\\
0&2e^{i\pi\log 3}&0\\
0&0&2e^{i\pi\log 5}
\end{array}\right)\in \SNH(\C^3)\subset \NH(\C^3)
\\
&\text{and}\quad\left(\begin{array}{cccc}
e^{i\pi\log 2}&1&0&0\\
0&e^{i\pi\log 2}&0&0\\
0&0&e^{i\pi\log 3}&1\\
0&0&0&e^{i\pi\log 3}
\end{array}\right)\in \NH(\C^4).
\end{align*}
\end{remark}

In relation to Theorem~\ref{2dim} it makes sense to mention the following fact.

\begin{proposition}\label{znwn} For every $R>1$, 
$\bigl\{(z,w)\in\T^2:\{R^n(z^n+w^n):n\in\Z_+\}\ \text{is dense in $\C$}\bigr\}$
is a dense $G_\delta$-subset of $\T^2$ of Lebesgue measure $0$.
\end{proposition}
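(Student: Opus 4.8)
The plan is to split the claim into three separate assertions: the set $G_R=\bigl\{(z,w)\in\T^2:\{R^n(z^n+w^n):n\in\Z_+\}\text{ is dense in }\C\bigr\}$ is (a) a $G_\delta$ set, (b) dense, and (c) of Lebesgue measure zero. For (a), fix a countable base $\{V_m\}$ of nonempty open subsets of $\C$ (say open disks with rational centers and radii) and write $G_R=\bigcap_m\bigcup_n\{(z,w)\in\T^2:R^n(z^n+w^n)\in V_m\}$; each inner set is open in $\T^2$ since $(z,w)\mapsto R^n(z^n+w^n)$ is continuous, so $G_R$ is $G_\delta$.

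For density (b), I would run a Baire category argument on the compact metric space $\T^2$. By (a) it suffices to show that for each $m$ the open set $U_m=\bigcup_n\{(z,w):R^n(z^n+w^n)\in V_m\}$ is dense; then $G_R=\bigcap_m U_m$ is a dense $G_\delta$ by the Baire category theorem. To see $U_m$ is dense, fix any nonempty open $W\subseteq\T^2$; it contains a product of arcs, hence a pair $(z,w)=(e^{i\alpha},e^{i\beta})$ with $\alpha,\beta$ and $\pi$ linearly independent over $\Q$, i.e. $z,w$ independent in $\T$. For such a pair the sequence $\{(z^n,w^n):n\in\Z_+\}$ is dense in $\T^2$ by the characterization of independence recalled in the introduction. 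Since $R>1$, $R^n\to\infty$, and I claim $\{R^n(z^n+w^n):n\in\Z_+\}$ is dense in $\C$: indeed, given a target $\zeta\in\C$ and $\epsilon>0$, pick $n_0$ with $R^{n_0}>2|\zeta|/\epsilon$ and $R^{n_0}>R$, then choose $n\geq n_0$ with $(z^n,w^n)$ close to $(\zeta/R^{n_0},\overline{\zeta}/(R^{n_0}|\zeta|)\cdot|\zeta|)$—more carefully, one approximates the pair $(z^n,w^n)$ to a pair $(u,v)\in\T^2$ with $R^n(u+v)$ prescribed; since for fixed large $R^n$ the map $\T^2\to\C$, $(u,v)\mapsto R^n(u+v)$ has image the annulus $\{|\xi|\le 2R^n\}$ and is an open map onto its image away from $u+v=0$, density of $\{(z^n,w^n)\}$ transfers. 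Thus $(z,w)\in G_R\subseteq U_m$, and since $W$ was arbitrary, $U_m$ is dense.

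For measure zero (c), the key point is a second Borel--Cantelli / averaging estimate. I would show that for a.e.\ $(z,w)\in\T^2$ (with respect to normalized Haar measure $\mu$) the sequence $R^n(z^n+w^n)$ in fact escapes every bounded set, hence fails to be dense. Write $z=e^{2\pi is}$, $w=e^{2\pi it}$ and consider, for the annulus $A_\rho=\{\xi:|\xi|\le\rho\}$, the set $E_n(\rho)=\{(z,w):|z^n+w^n|\le\rho R^{-n}\}$. Since $|z^n+w^n|^2=2+2\cos(2\pi n(s-t))=4\cos^2(\pi n(s-t))$, the condition $|z^n+w^n|\le\delta$ forces $\cos^2(\pi n(s-t))\le\delta^2/4$, which for small $\delta$ confines $n(s-t)\bmod 1$ to a set of measure $O(\delta)$; hence $\mu(E_n(\rho))=O(\rho R^{-n})$. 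Therefore $\sum_n\mu(E_n(\rho))<\infty$, and by Borel--Cantelli, for a.e.\ $(z,w)$ only finitely many $n$ satisfy $|R^n(z^n+w^n)|\le\rho$. Taking a sequence $\rho_k\to\infty$ and intersecting the corresponding full-measure sets, we get that for a.e.\ $(z,w)$, $|R^n(z^n+w^n)|\to\infty$, so its orbit lies outside any fixed disk eventually and is certainly not dense. Hence $\mu(G_R)=0$, and since $G_R$ is $G_\delta$ and dense, the proof is complete.

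The main obstacle I anticipate is making the density transfer in step (b) fully rigorous: one must be slightly careful that $\{R^n(z^n+w^n)\}$ is genuinely dense in all of $\C$ and not merely unbounded, which requires exploiting that along a subsequence $n_j$ one can simultaneously control $(z^{n_j},w^{n_j})$ near a point where $u+v$ has a prescribed small but nonzero value and $R^{n_j}$ is correspondingly large — this is where one leans on density of $\{(z^n,w^n)\}$ in $\T^2$ together with a two-parameter pigeonhole (choosing $u+v$ of modulus $\approx|\zeta|R^{-n}$ with the right argument). Everything else — the $G_\delta$ description, the Baire argument, and the Borel--Cantelli computation with the explicit formula $|z^n+w^n|=2|\cos(\pi n(s-t))|$ — is routine.
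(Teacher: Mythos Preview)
Your parts (a) and (c) are essentially the paper's own arguments and are correct. The fatal gap is in (b).

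Your density argument fixes an independent pair $(z,w)$ in the open set $W$ and then asserts that $\{R^n(z^n+w^n):n\in\Z_+\}$ is dense in $\C$ for this fixed pair. That assertion is \emph{false} in general, and in fact it is contradicted by your own part~(c): the set of independent pairs has full Lebesgue measure in $\T^2$, yet you have just shown that for almost every $(z,w)$ one has $|R^n(z^n+w^n)|\to\infty$, so the orbit is nowhere dense. The reason your ``two-parameter pigeonhole'' cannot be made to work is quantitative: hitting a target $\zeta$ forces $|z^n+w^n|\approx|\zeta|R^{-n}$, an \emph{exponentially} small value, while for a generic independent pair the sequence $(z^n,w^n)$ equidistributes in $\T^2$ with at best a polynomial approximation rate, so you cannot choose $n$ to land $(z^n,w^n)$ in an exponentially shrinking target.

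The paper does \emph{not} try to find a single $(z,w)\in G_R\cap W$. Instead it applies Theorem~U by showing that
\[
\Lambda=\{(z,w,R^n(z^n+w^n)):(z,w)\in\T^2,\ n\in\Z_+\}
\]
is dense in $\T^2\times\C$, allowing $(z,w)$ to move with $n$. The key trick: given $(z_0,w_0,y)$, first choose $n$ large with $2R^n>|y|$, then (since $\T+\T\supset 2\overline{\D}$) pick $a_n,b_n\in\T$ with $R^n(a_n+b_n)=y$, and finally choose $n$th roots $z_n,w_n$ of $a_n,b_n$ with $|z_n-z_0|<\pi/n$, $|w_n-w_0|<\pi/n$. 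Then $(z_n,w_n,y)\in\Lambda$ and $(z_n,w_n)\to(z_0,w_0)$. This is precisely what fills the gap you identified as the ``main obstacle'': you must vary the base point, not just the exponent.
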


\begin{remark}\label{wnhs} \rm
Let $\{x,y\}$ be a fixed linear basis in $\C^2$ and $R>1$. For each $(z,w)\in \T^2$, consider $T=T_{z,w}\in L(\C^2)$ given by $Tx=Rzx$ and $Ty=Rwy$. By Theorem~\ref{2dim} and Proposition~\ref{znwn}, for almost all $(z,w)\in \T^2$ in the Lebesgue measure sense, $T_{z,w}\in \WNH(\C^2)\setminus \NH(\C^2)$ if $\langle x,y\rangle =0$ and $T_{z,w}\in \NH(\C^2)\setminus \SNH(\C^2)$ if $\langle x,y\rangle \neq  0$. In particular, there are numerically hypercyclic operators on $\C^2$ which are not strongly numerically hypercyclic and there are weakly numerically hypercyclic operators on $\C^2$ that are not numerically hypercyclic.
\end{remark}

Due to  Le\'on-Saavedra and M\"uller \cite{muller}, if $T\in L(X)$ is hypercyclic or weakly hypercyclic, then so is $zT$ for each $z\in\T$. Moreover, Theorem~KPS2 ensures that if a weighted shift $T$ is numerically hypercyclic, then so is $zT$ for each $z\in\C$ satisfying $|z|\geq1$. In general, the scalar multiples of numerically hypercyclic operators do not exhibit this nice behavior even in the friendly realm of diagonal operators on $\C^2$.

\begin{proposition}\label{scalar} For every $T\in L(\C^2)$, there is $w\in\T$ such that $wT\notin\WNH(\C^2)$.
\end{proposition}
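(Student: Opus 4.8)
\noindent\emph{Proof proposal.} The plan is to read off $\WNH(\C^2)$ from the spectral characterization in Theorem~\ref{2dim}: an operator $S\in L(\C^2)$ lies in $\WNH(\C^2)$ if and only if $\sigma(S)=\{\mu_1,\mu_2\}$ is a two-point set with $|\mu_1|=|\mu_2|>1$ and $\frac{\mu_1}{|\mu_1|},\frac{\mu_2}{|\mu_2|}$ independent in $\T$. Since $\sigma(wT)=w\,\sigma(T)$ for $w\in\T$, the whole statement reduces to a short case analysis according to the shape of $\sigma(T)$.

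First I would dispatch the degenerate cases. If $\sigma(T)$ consists of a single point (in particular if $T$ is a single $2\times2$ Jordan block), then $\sigma(wT)$ is a single point for every $w\in\T$; since for $z\in\T$ the pair $z,z$ is never independent (the nonzero exponent vector $(1,-1)$ gives $z^1z^{-1}=1$), Theorem~\ref{2dim} shows $wT\notin\WNH(\C^2)$ for every $w\in\T$, so any $w$ works. Otherwise write $\sigma(T)=\{\lambda_1,\lambda_2\}$ with $\lambda_1\neq\lambda_2$; then $\lambda_1,\lambda_2$ are not both zero, so whenever $|\lambda_1|=|\lambda_2|=r$ we have $r>0$. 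If $|\lambda_1|\neq|\lambda_2|$, or if $|\lambda_1|=|\lambda_2|=r\leq1$, then $|w\lambda_1|=|w\lambda_2|>1$ fails for every $w\in\T$, and again Theorem~\ref{2dim} gives $wT\notin\WNH(\C^2)$ for every $w\in\T$.

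This leaves the main case $\lambda_1\neq\lambda_2$, $|\lambda_1|=|\lambda_2|=r>1$. Here I would choose $w$ so that the two unimodular numbers $\frac{w\lambda_1}{|w\lambda_1|}=w\lambda_1/r$ and $\frac{w\lambda_2}{|w\lambda_2|}=w\lambda_2/r$ fail to be independent in the simplest possible way, namely so that their product equals $1$: since $\lambda_1\lambda_2/r^2\in\T$ admits a square root $\zeta\in\T$, setting $w=\zeta^{-1}\in\T$ yields $w^2\lambda_1\lambda_2/r^2=1$, i.e. $\frac{w\lambda_1}{|w\lambda_1|}\cdot\frac{w\lambda_2}{|w\lambda_2|}=1$. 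Thus the nonzero vector $(1,1)\in\Z^2$ witnesses that these two numbers are not independent in $\T$, so $wT\notin\WNH(\C^2)$ by Theorem~\ref{2dim}. No step here is a genuine obstacle; the only point requiring a little care is to confirm that Theorem~\ref{2dim} really forces $\sigma(T)$ to be a two-point set (so that the singleton case is covered), and that in the final case the modulus condition is not already violated — which is exactly why the subcase $r\leq1$ was peeled off beforehand.
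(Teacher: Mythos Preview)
Your argument is correct and follows the same strategy as the paper: reduce everything to the spectral characterization in Theorem~\ref{2dim} and then choose $w\in\T$ so as to destroy the independence condition. The paper's proof is a one-liner: pick $w$ so that one eigenvalue of $wT$ is real, whence its unimodular part is $\pm1$ and independence fails. Your case analysis is more explicit, and in the main case you instead pick $w$ so that the \emph{product} of the two unimodular parts equals $1$ (witnessed by the exponent vector $(1,1)$) rather than making one of them a root of unity (witnessed by $(2,0)$). Both choices work for the same reason and the proofs are essentially equivalent.
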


\begin{proposition}\label{scalar2} For each $z,w\in\T$, 
$\displaystyle M(z,w)=\biggl\{r>1:\left(\!\begin{array}{cc}\!\!rz\!\!&0\\ 0&\!\!rw\!\!\end{array}\!\right)\in\NH(\C^2)\biggr\}$ is a $G_\delta$-subset of $\R$, being either infinite or empty. There exist $w,z\in\T$ such that $M(z,w)$ is a dense $G_\delta$-subset of $(1,\infty)$ and therefore is uncountable. For every $w,z\in\T$, $M(z,w)$ has zero Lebesgue measure.
\end{proposition}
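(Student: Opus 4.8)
Write $D_r$ for the operator $\left(\begin{smallmatrix}rz&0\\ 0&rw\end{smallmatrix}\right)$ appearing in the definition of $M(z,w)$. Since $D_r$ is a diagonal operator it is unitarily equivalent to a diagonal one, so Theorem~\ref{2dim} gives $D_r\in\NH(\C^2)\iff D_r\in\SNH(\C^2)\iff\{(rz)^k+(rw)^k:k\in\Z_+\}$ is dense in $\C$. Writing $z=e^{i\phi}$, $w=e^{i\psi}$, $\mu=\tfrac12(\phi-\psi)$, $\nu=\tfrac12(\phi+\psi)$, the $k$-th point of this orbit is $P_k(r)=(rz)^k+(rw)^k=2r^k\cos(k\mu)\,e^{ik\nu}$, so $|P_k(r)|=2r^k|\cos k\mu|$ while $\arg P_k(r)$ is independent of $r$. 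Thus $M(z,w)=\{r>1:\{P_k(r):k\in\Z_+\}\ \text{is dense in }\C\}$. If $\tfrac\mu\pi\in\Q$ (equivalently $w/z$ is a root of unity) then $\cos(k\mu)$ is periodic, so $|P_k(r)|\in\{0\}\cup[2c_0r^k,2r^k]\subseteq\{0\}\cup[2c_0,\infty)$ for a constant $c_0>0$; the orbit moduli miss $(0,2c_0)$ and $M(z,w)=\emptyset$. If $\tfrac\nu\pi\in\Q$ (equivalently $wz$ is a root of unity) then $k\nu$ is periodic mod $2\pi$, so the arguments of the $P_k(r)$ take only finitely many values, the orbit lies on finitely many rays, and again $M(z,w)=\emptyset$. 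So henceforth assume $\tfrac\mu\pi,\tfrac\nu\pi\notin\Q$.

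\textbf{The $G_\delta$-property and measure zero.}
For a countable dense $Q\subseteq\C$ we have $M(z,w)=\bigcap_{q\in Q}\bigcap_{m\in\N}\bigcup_{k}\{r>1:|P_k(r)-q|<1/m\}$; each bracketed set is open because $r\mapsto P_k(r)$ is continuous, so $M(z,w)$ is a $G_\delta$. For measure zero put $I_k=\{r>1:|P_k(r)|\in[\tfrac12,1]\}$, which is an interval since $|P_k(\cdot)|$ is continuous and strictly increasing; a dense orbit meets the annulus $\{\tfrac12\le|\zeta|\le1\}$ for infinitely many $k$, so $M(z,w)\subseteq\limsup_k I_k$. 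Fix $1<R_1<R_2$. If $I_k\cap(R_1,R_2)\neq\emptyset$, pick $r$ in it; then $|P_k(R_1)|\le|P_k(r)|\le1$ forces $|\cos k\mu|\le\tfrac1{2R_1^{k}}$, i.e. $k$ lies in the ``resonant'' set $\mathcal K=\{k:\|k\tfrac\mu\pi-\tfrac12\|_{\Z}\le C R_1^{-k}\}$ ($\|\cdot\|_{\Z}$ being distance to the nearest integer). Two elements $k<k'$ of $\mathcal K$ give $\|(k'-k)\tfrac\mu\pi\|_{\Z}\le 2CR_1^{-k}$, and since for the irrational $\tfrac\mu\pi$ the quantities $\|n\tfrac\mu\pi\|_{\Z}$ decay to $0$ only along the (at least geometrically growing) best-approximation denominators, a short induction shows the elements of $\mathcal K$ grow super-geometrically; in particular $\sum_{k\in\mathcal K}1/k<\infty$ (and $\mathcal K$ may be finite, in which case $\limsup_kI_k$ misses $(R_1,R_2)$ outright). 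Moreover, for $k\in\mathcal K$ with $I_k\cap(R_1,R_2)\neq\emptyset$ the left endpoint of $I_k$ is $\le R_2$, whence $|I_k|\le R_2(2^{1/k}-1)\le C_{R_2}/k$. Therefore $\sum_{\{k:I_k\cap(R_1,R_2)\neq\emptyset\}}|I_k|\le C_{R_2}\sum_{k\in\mathcal K}1/k<\infty$, so by the Borel--Cantelli lemma $|\limsup_kI_k\cap(R_1,R_2)|=0$; letting $(R_1,R_2)$ run through $(1+\tfrac1n,n)$ gives $|M(z,w)|=0$.

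\textbf{Empty or infinite, and the construction.}
Since $M(z,w)$ is a $G_\delta$, it suffices to show it has no isolated point: a nonempty $G_\delta$ subset of $\R$ without isolated points is a perfect Polish space, hence uncountable and in particular infinite. So suppose $r_0\in M(z,w)$. From the density of $\{P_k(r_0)\}$ in $\C$ one extracts resonant indices $k_j\uparrow\infty$ such that, writing $|\cos k_j\mu|=r_0^{-k_j}e^{-d_j}$, the pairs $(d_j,\arg P_{k_j}(r_0))$ are dense in a set $[c,\infty)\times\T$ (the large-modulus values of the orbit are already supplied by the non-resonant indices). For $r=r_0e^{\eta}$ one has $|P_{k_j}(r)|=2e^{\eta k_j-d_j}$, so $r\in M(z,w)$ becomes the requirement that $(d_j-\eta k_j,\arg P_{k_j}(r_0))$ stay dense in $[c',\infty)\times\T$; I would prove that the set of small $\eta$ for which this holds is comeager near $0$ by a Baire-category argument exploiting the arithmetic of $\mu$ (the distribution of the best inhomogeneous approximations $\|k\tfrac\mu\pi-\tfrac12\|_{\Z}$), which produces points of $M(z,w)$ arbitrarily close to $r_0$. \emph{This propagation step is the main obstacle}: one must show density of the orbit at a single $r_0$ forces density at a rich set of nearby $r$, and that needs real control of where the resonances $k_j$ sit. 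Finally, to get $z,w$ with $M(z,w)$ a dense $G_\delta$ in $(1,\infty)$ (hence, by Baire, uncountable), use the $G_\delta$-description: it is enough that each $\bigcup_k\{r:|P_k(r)-q|<1/m\}$ be dense in $(1,\infty)$, which unwinds to: for every subinterval $(a,b)\subseteq(1,\infty)$ and target $q$ there is an index $k$ with $\arg(z^k+w^k)\approx\arg q$ and $2|\cos k\mu|\in(|q|b^{-k},|q|a^{-k})$. One meets all these countably many requirements by a recursive construction of $\tfrac\mu\pi$ as a suitable Liouville number together with $\tfrac\nu\pi$, taking the relevant indices $k$ to grow fast enough to leave room at each step.
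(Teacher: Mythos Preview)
Your $G_\delta$ reduction is correct and matches the paper. The remaining three parts, however, each have a genuine gap, and in two of them the paper's route is quite different from yours.

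\textbf{Infinite or empty.} You acknowledge that the propagation step (density at $r_0$ forces density at nearby $r$) is ``the main obstacle'' and you do not actually carry it out. The paper avoids this obstacle entirely with an algebraic trick: if $r\in M(z,w)$ and $k$ is odd, then $r^{1/k}\in M(z,w)$. The argument (Lemma~\ref{root}) is short: writing $y_n=r^n(z^n+w^n)$, one checks that along the indices where $|y_n|$ stays bounded away from $0$ and $\infty$ one has $r^n(z^{kn}+w^{kn})=\Phi(y_n)+o(1)$ with $\Phi(x)=x^k/|x|^{k-1}$, a self-homeomorphism of $\C\setminus\{0\}$. Hence $\{r^n(z^{kn}+w^{kn})\}$ is dense, and so is the larger set $\{r^{n/k}(z^n+w^n)\}$. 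This gives infinitely many points $r^{1/k}$ in $M(z,w)$ accumulating at $1$, with no need to analyse what happens for $r$ \emph{near} $r_0$.

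\textbf{Existence of $z,w$ with $M(z,w)$ dense.} Your recursive Liouville construction is only a sketch. The paper's argument is a one-line Baire category step: by Proposition~\ref{znwn}, for every fixed $r>1$ the set $\{(z,w)\in\T^2:r\in M(z,w)\}$ is a dense $G_\delta$ in $\T^2$. Intersecting over a countable dense set of $r$'s yields a dense $G_\delta$ set of $(z,w)$ for which $M(z,w)$ contains that dense set and is therefore dense in $(1,\infty)$.

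\textbf{Measure zero.} Here there is an actual error. You define $\mathcal K$ using only the upper bound $|\cos k\mu|\le \tfrac12 R_1^{-k}$ and claim that the elements of $\mathcal K$ grow super-geometrically because ``$\|n\tfrac{\mu}{\pi}\|_{\Z}$ decays to $0$ only along the best-approximation denominators''. That last statement is false: if $q$ is a good denominator then so are $2q,3q,\dots$ up to the next denominator, and for Liouville $\mu/\pi$ this can put many consecutive multiples of $q$ into $\mathcal K$. What saves the day is the \emph{lower} bound you did not use: $I_k\cap(R_1,R_2)\neq\varnothing$ also forces $|\cos k\mu|\ge \tfrac14 R_2^{-k}$, so the relevant set is $A=\{k:a^k\le\delta(k\theta)\le b^k\}$ with $0<a<b<1$ and $\delta$ the distance to the nearest odd integer. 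The paper then proves (Lemma~\ref{somth}, Corollary~\ref{poWW}) that $\sum_{k\in A}1/k<\infty$; this is not automatic and requires a careful analysis of the reduced fractions $p(q)/q$ with $\alpha(q)=|\theta-p(q)/q|\le b^q/q$, showing that those $q$ are sparse and that each $n\in A$ has the form $n=qr$ with $r$ confined to a window of multiplicatively bounded length. Your ``short induction'' does not do this work.
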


By Proposition~\ref{scalar2}, not a single diagonal $T\in L(\C^2)$ satisfies $rT\in \NH(\C^2)$ for each $r>1$.

\subsection{Infinite dimensional results}

The following two theorems provide further sufficient conditions for weak/strong numeric hypercyclicity.

\begin{theorem}\label{suffwnhid} Each of the following conditions is sufficient for the weak numeric hypercyclicity of $T\in L(X)$.
\begin{itemize}\itemsep=-2pt
\item[{\rm (\ref{suffwnhid}.1)}] There is a sequence $\{z_n\}_{n\in\N}$ in $\sigma_p(T)$ such that $1<|z_1|<|z_2|<{\dots}$
\item[{\rm (\ref{suffwnhid}.2)}] There is $r>1$ such that $\sigma_p(T)\cap r\T$ is infinite.
\item[{\rm (\ref{suffwnhid}.3)}] There is a cyclic vector $x$ for $T$ satisfying $\liminf\limits_{n\to\infty}\frac{1+\|T^nx\|}{\|T^n\|}=0$.
\item[{\rm (\ref{suffwnhid}.4)}] There are $x\in X$ and an infinite set $A\subseteq \N$ such that $\|T^nx\|\to\infty$ as $n\to\infty$, $n\in A$ and the sequence $\bigl\{\frac{T^nx}{\|T^nx\|}\bigr\}_{n\in A}$ is weakly convergent but is not norm convergent.
\end{itemize}
\end{theorem}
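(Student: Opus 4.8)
Throughout I would apply Proposition~\ref{ele1}: for each of the four conditions it is enough to exhibit $x\in X$ and $f\in X^*$ with $O(T,x,f)$ dense in $\C$ (the normalisation is then automatic). I would treat (\ref{suffwnhid}.1)--(\ref{suffwnhid}.2) as ``spectral'' and (\ref{suffwnhid}.3)--(\ref{suffwnhid}.4) as ``orbital'', and in fact derive (\ref{suffwnhid}.1) from (\ref{suffwnhid}.3).

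\emph{Condition (\ref{suffwnhid}.4).} Let $v$ be the weak limit of $\bigl(\tfrac{T^nx}{\|T^nx\|}\bigr)_{n\in A}$, so that $u_n:=\tfrac{T^nx}{\|T^nx\|}-v$ ($n\in A$) is weakly null; since the orbit directions do not converge in norm, $\|u_n\|\not\to0$, so on some infinite $A'\subseteq A$ the sequence $(u_n)$ is seminormalised, and by the Bessaga--Pe\l czy\'nski selection principle it has a subsequence $(u_n)_{n\in A''}$ that is a Schauder basic sequence. Its coordinate functionals are uniformly bounded and extend by Hahn--Banach to $g_n\in X^*$ with $g_n(u_m)=\delta_{n,m}$. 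Enumerate $A''=\{n_1<n_2<\cdots\}$, thinned so that $\|T^{n_k}x\|$ grows fast against a fixed sequence $(w_k)$ dense in $\C$, and set $f=f_0+\sum_k\tfrac{w_k}{\|T^{n_k}x\|}\,g_{n_k}$, where $f_0\in X^*$ is chosen to vanish on $\overline{\spann}\{u_n:n\in A''\}$ and to make $f(v)=0$ (in the degenerate case $v\in\overline{\spann}\{u_n\}$ one perturbs two of the coefficients slightly instead). Then $f(u_{n_k})=\tfrac{w_k}{\|T^{n_k}x\|}$, hence $f(T^{n_k}x)=\|T^{n_k}x\|\bigl(f(v)+f(u_{n_k})\bigr)=w_k$, and $O(T,x,f)\supseteq\{w_k\}$ is dense.

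\emph{Condition (\ref{suffwnhid}.1), via (\ref{suffwnhid}.3).} Eigenvalues satisfy $|z_n|\leq\|T\|$, so the increasing sequence $(|z_n|)$ tends to some $L\in(1,\|T\|]$. Fix unit eigenvectors $e_n$ and work on the invariant subspace $Y=\overline{\spann}\{e_n:n\in\N\}$; by Proposition~\ref{ele00} it suffices that $T|_Y\in\WNH(Y)$. Take $x=\sum_nc_ne_n$ with $c_n\neq0$ and $\sum_n|c_n|<\infty$. Because $|z_n|$ is strictly increasing to $L$, each $z_m$ is isolated from $\{z_n:n\neq m\}$, and this set together with its (modulus‑$L$) accumulation points is a countable compact set with connected complement, so Runge's theorem gives polynomials $q$ with $q(z_m)=1/c_m$ and $\sup_{n\neq m}|q(z_n)|$ as small as desired; then $\|q(T|_Y)x-e_m\|\leq\sum_{n\neq m}|c_n|\,|q(z_n)|$ is as small as desired, so each $e_m\in\overline{\spann}\{T^kx:k\geq0\}$ and $x$ is cyclic for $T|_Y$. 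Finally $\|T^k|_Y\|\geq|z_n|^k$ for every $n$, so $\|T^k|_Y\|\geq L^k\to\infty$, while $\|T^kx\|\leq\sum_n|c_n|\,|z_n|^k=L^k\sum_n|c_n|(|z_n|/L)^k=o(L^k)$ by dominated convergence; hence $\tfrac{1+\|T^kx\|}{\|T^k|_Y\|}\to0$, and (\ref{suffwnhid}.3) applies to $T|_Y$.

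\emph{Conditions (\ref{suffwnhid}.3) and (\ref{suffwnhid}.2): the core.} For (\ref{suffwnhid}.3) the hypothesis gives an infinite $A$ with $\|T^n\|\to\infty$ and $\|T^nx\|=o(\|T^n\|)$ along $A$. For $n\in A$ pick a near-maximal unit vector $v_n$ ($\|T^nv_n\|\geq\tfrac12\|T^n\|$) and, by cyclicity, a polynomial $q_n$ with $q_n(T)x$ extremely close to $v_n$; the source vectors $a_n=q_n(T)x$ then satisfy $\|T^na_n\|\gtrsim\|T^n\|$ but $\|T^ma_n\|\leq\|q_n(T)\|\,\|T^mx\|=o(\|T^m\|)$ once $m\gg n$. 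The plan is to take $y=\sum_j2^{-j}a_{n_j}$ and $f=\sum_j\beta_jb_j$ with $b_j$ peaking at $T^{n_j}a_{n_j}$ and $\beta_j$ tuned so that the diagonal contribution to $f(T^{n_l}y)$ equals a prescribed value $w_l$ from a dense sequence, choosing the increasing $(n_j)\subseteq A$ and the growth of $\|T^{n_j}\|$ (hence the decay of $\beta_j$) to make each off-diagonal term $2^{-j}\beta_i b_i(T^{n_l}a_{n_j})$ negligible: those with $j<l$ are killed by $\|T^{n_l}x\|=o(\|T^{n_l}\|)$ together with fast growth of $n_l$, and those with $j>l$ by the geometric decay of the $a$-coefficients. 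The genuinely delicate point --- and the main obstacle --- is the off-diagonal terms $\beta_ib_i(T^{n_l}a_{n_l})$ with $i<l$: to suppress these one must choose the functionals $b_i$, the times $n_l$, and the near-maximal directions in a mutually adapted way, so that $T^{n_l}a_{n_l}$ nearly annihilates $b_1,\dots,b_{l-1}$ while still having norm $\gtrsim\|T^{n_l}\|$, using that the hypotheses prevent $T^{n_l}$ from being too degenerate on finite-codimension subspaces. Condition (\ref{suffwnhid}.2) I would reduce, by the same passage to a basic subsequence of eigenvectors as in (\ref{suffwnhid}.1), to finding a summable $(a_n)$ with $\{r^k\sum_na_n\zeta_n^k:k\in\Z_+\}$ dense in $\C$, the $\zeta_n\in\T$ being infinitely many distinct points; if some two of them are independent this is already Theorem~\ref{suffwnh}(\ref{suffwnh}.1), and otherwise one may assume all $\zeta_n$ are roots of unity and produce the density by a Chinese-remainder-theorem construction in which $(a_n)$ is chosen so that the attainable partial sums $\sum_na_n\zeta_n^k$ densely fill a disc around the origin while $(a_n)$ still decays fast enough against the powers $r^k$ that arise --- this last balance being the point that requires care.
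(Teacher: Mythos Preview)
Your argument for (\ref{suffwnhid}.4) is essentially the paper's Lemma~\ref{weco} recast via Bessaga--Pe\l czy\'nski; this is fine, though your handling of the case $v\in\overline{\spann}\{u_n\}$ is a bit loose (the paper builds the functionals $f_n$ from the outset to annihilate the weak limit, then shows the resulting map $a\mapsto(g_a(x_{n_m}))$ on $\ell^1$ is a small perturbation of the identity, hence invertible).

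For (\ref{suffwnhid}.1) your reduction to (\ref{suffwnhid}.3) via Runge is correct and is a legitimate alternative route --- but it is heavier than needed and hostage to (\ref{suffwnhid}.3). The paper simply takes eigenvectors $x_k$, notes $\|T^nx_1\|=|z_1|^n\to\infty$ and $\|T^nx_{k+1}\|/\|T^nx_k\|=|z_{k+1}/z_k|^n\to\infty$, and applies Corollary~\ref{sufwe1}; no cyclicity or approximation theory required.

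The real gaps are in (\ref{suffwnhid}.3) and (\ref{suffwnhid}.2). For (\ref{suffwnhid}.3) you correctly locate the obstruction --- controlling $b_i(T^{n_l}a_{n_l})$ for $i<l$ --- but do not overcome it; the assertion that ``the hypotheses prevent $T^{n_l}$ from being too degenerate on finite-codimension subspaces'' is not justified, and there is no a priori reason $T^{n_l}$ should have large norm on $\ker b_1\cap\cdots\cap\ker b_{l-1}$. The paper sidesteps this with a Baire-category argument (Lemma~\ref{opera}): one shows that $\Lambda=\{(y,f,f(T^ny)):(y,f)\in U,\ n\in\Z_+\}$ is dense in $U\times\C$ for a suitable open $U\subseteq Y\times X^*$, and Theorem~U then produces $(y,f)$ with dense numerical orbit in one stroke. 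The density of $\Lambda$ is checked by a short case analysis comparing $h(T_ny_n)$, $h_n(T_nu)$ and $h_n(T_ny_n)$, which is precisely the missing control in your explicit construction.

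For (\ref{suffwnhid}.2) your dichotomy is false: pairwise dependence of the $\zeta_n=\lambda_n/r$ does \emph{not} force them to be roots of unity (take $\zeta_n=e^{ik_n\alpha}$ with $\alpha/\pi$ irrational and distinct $k_n$), so the Chinese-remainder sketch does not cover the general case. Nor is it clear that eigenvectors for infinitely many eigenvalues on $r\T$ admit a basic subsequence in an arbitrary Banach space. The paper's route is different: it splits on whether $\limsup_n\|T^n\|/r^n=\infty$. If so, every vector in the (dense) span of the eigenvectors satisfies $\|T^nx\|=O(r^n)=o(\|T^n\|)$ along a subsequence, and the Baire-category lemma above applies. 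If $\|T^n\|\le cr^n$, one proves directly that the coordinate functionals $e^*_z$ on the closed span satisfy $\|e^*_z\|\le c$; then Lemma~\ref{ddiaa} furnishes $a\in\ell^1_+(M)$ with $\sum_z\sqrt{a_z}<\infty$ and $\{r^k\sum_za_zz^k:k\in\N\}$ dense in $\C$, and one takes $x=\sum_z\sqrt{a_z}e_z$, $f=\sum_z\sqrt{a_z}e_z^*$. Both the bound on $\|e^*_z\|$ and the existence of such $a$ (Lemma~\ref{ddiaa} is itself a substantial construction built on Lemma~\ref{penta} and Corollary~\ref{sico}) are genuine ingredients absent from your sketch.
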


The following examples show how to apply Theorem~\ref{suffwnhid}.

\begin{example}\label{vol} $I+V\in\WNH(L^2[0,1])$, where $Vf(x)=\int_0^x f(t)\,dt$ is the classical Volterra operator. \end{example}

\begin{proof}
It is easy to verify (see, for instance, \cite{mbs}) that $\|T^n\|\to\infty$ and $\|T^ng\|=o(\|T^n\|)$, where $T=I+V$ and $g(x)=x$ for $x\in[0,1]$. Since $g$ is a cyclic vector for $T$, (\ref{suffwnhid}.3) is satisfied and $T\in \WNH(L^2[0,1])$ by Theorem~\ref{suffwnhid}. 
\end{proof}

\begin{example}\label{conti} Let $a\in C[0,1]$ be non-constant and $\|a\|>1$. Then $M\in \WNH(C[0,1])$, where $M$ is the multiplication by $a$ operator: $Mf=af$ for $f\in C[0,1]$.
\end{example}

\begin{proof} Let $K=\{t\in [0,1]:|a(t)|=\|a\|\}$. If $K\neq [0,1]$, we can pick $f\in C[0,1]$ such that $K=\{t\in[0,1]:f(t)=0\}$ and let $X$ be the closed linear span of $O(M,f)$. It is easy to see that $\|M^n\bigr|_X\|=\|a\|^n$ for every $n\in\Z_+$ and $\|T^nf\|=o(\|a\|^n)$ as $n\to\infty$. Thus (\ref{suffwnhid}.3) is satisfied for $M\bigr|_X$ and therefore $M\bigr|_X\in\WNH(X)$. By Proposition~\ref{ele}, $M\in\WNH(C[0,1])$. It remains to consider the case $K=[0,1]$. Let $R=\|a\|$. Since $a$ is non-constant and $|a|\equiv R$, Proposition~\ref{znwn} ensures that we can pick $w,z\in\T$ such that $Rz,Rw\in a([0,1])$ and $\{R^n(z^n+w^n):n\in\Z_+\}$ is dense in $\C$. Choose $s,t\in[0,1]$ such that $a(s)=Rz$ and $a(t)=Rw$ and let ${\bf 1}$ be the constant $1$ function on $[0,1]$. Set $\phi\in C[0,1]^*$ to be $\phi(f)=\frac12(f(s)+f(t))$. Clearly, $({\bf 1},\phi)\in \Pi(C[0,1])$ and $\phi(M^n{\bf 1})=\frac12R^n(z^n+w^n)$ for $n\in\Z_+$. Thus $O(M,{\bf 1},\phi)$ is dense in $\C$ and therefore $M\in \NH(C[0,1])\subset \WNH(C[0,1])$.
\end{proof}

\begin{theorem}\label{suffsnhid} Each of the following conditions is sufficient for the strong numeric hypercyclicity of $T\in L(X)$.
\begin{itemize}\itemsep=-2pt
\item[{\rm (\ref{suffsnhid}.1)}]
$X$ is reflexive and there are a Schauder basic sequence $\{e_n\}_{n\in\N}$ in $X$ and $c\in\ell^1_+(\N)$ such that $Te_n=\lambda_ne_n$ with $\lambda_n\in\C$ for each $n\in\N$ and 
$\Bigl\{\sum\limits_{j=1}^\infty c_j\lambda_j^k:k\in\Z_+\Bigr\}$ is dense in $\C$.
\item[{\rm (\ref{suffsnhid}.2)}] $X$ is reflexive and there is a Schauder basic sequence $\{e_n\}_{n\in\N}$ in $X$ such that $Te_n=\lambda_ne_n$ for each $n\in\N$, where $\lambda_n\in\C$ are such that $|\lambda_1|>1$, the sequence $\{|\lambda_n|\}_{n\in\N}$ is $($maybe non-strictly$)$ increasing and the numbers $\frac{\lambda_n}{|\lambda_n|}$ are pairwise distinct elements of $\T$.
\item[{\rm (\ref{suffsnhid}.3)}]
There is $\lambda\in\C$ such that $|\lambda|\geq 1$ and $T-\lambda I$ is semi-Fredholm of positive index.
\item[{\rm (\ref{suffsnhid}.4)}]
$X$ is reflexive and there is $\lambda\in\C$ such that $|\lambda|\geq 1$ and $T-\lambda I$ is semi-Fredholm of negative index.
\end{itemize}
\end{theorem}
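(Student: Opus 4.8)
\emph{Proof strategy.} In each case we have to show that every operator similar to $T$ is numerically hypercyclic. All four hypotheses are similarity invariants (reflexivity; being a Schauder basic sequence; the eigenvalue relations; semi-Fredholmness together with the value of the index; and the scalar conditions), so for parts~(\ref{suffsnhid}.1) and (\ref{suffsnhid}.2) it suffices to prove the formally weaker statement $T\in\NH(X)$: applying it to each operator similar to $T$ (which satisfies the same hypothesis) then yields $T\in\SNH(X)$. Parts~(\ref{suffsnhid}.3) and (\ref{suffsnhid}.4) are handled by reducing directly to the already established Theorem~\ref{suffsnh}.

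\emph{Parts~(\ref{suffsnhid}.3)--(\ref{suffsnhid}.4).} Let $|\lambda|\geq1$ and $T-\lambda I$ be semi-Fredholm of positive index. Since the semi-Fredholm operators form a norm-open set on which ${\bf i}$ is locally constant, $T-\mu I$ is semi-Fredholm of the same positive index for all $\mu$ in some disc $D$ around $\lambda$; positivity of the index forces $\dim\ker(T-\mu I)\geq1$, so $D\subseteq\sigma_p(T)$. Because $|\lambda|\geq1$, the open set $D\cap\{|\mu|>1\}$ is nonempty, hence contains an arc of a circle $|\mu|=r$ with $r>1$; on such an arc one can pick $\lambda_1,\lambda_2,\lambda_3$ with $\frac{\lambda_1}{|\lambda_1|},\frac{\lambda_2}{|\lambda_2|},\frac{\lambda_3}{|\lambda_3|}$ independent in $\T$ (the non-independent triples of arguments forming a meagre set), and (\ref{suffsnh}.2) yields $T\in\SNH(X)$. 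For (\ref{suffsnhid}.4): as $X$ is reflexive, $(T-\lambda I)^*=T^*-\lambda I$ is semi-Fredholm of index $-{\bf i}(T-\lambda I)>0$, so $T^*$ meets the hypothesis of (\ref{suffsnhid}.3) and $T^*\in\SNH(X^*)$; by Proposition~\ref{ele00}, $T\in\SNH(X)$.

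\emph{Part~(\ref{suffsnhid}.1).} By Proposition~\ref{ele} we may assume $X$ is the closed linear span of $\{e_n\}$, so $\{e_n\}$ is a Schauder basis of the reflexive space $X$; normalize $\|e_n\|=1$ and, rescaling $c$ (which only rescales the numerical orbit), assume $C:=\sum_jc_j=1$. If only finitely many $c_j$ are non-zero we are done by (\ref{suffsnh}.1). The finite-dimensional core is the following \emph{Claim:} if $W$ is a finite-dimensional normed space with basis $u_1,\dots,u_m$ and coordinate functionals $u_j^*$, and $d_1,\dots,d_m>0$ with $\sum_jd_j=1$, then there are $x\in W$ and $g\in W^*$ with $\|x\|=\|g\|=g(x)=1$ and $u_j^*(x)g(u_j)=d_j$ for every $j$. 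Indeed any such pair automatically satisfies $g(x)=\sum_ju_j^*(x)g(u_j)=\sum_jd_j=1$, and existence follows from a compactness/degree argument: the continuous map $(x,g)\mapsto(u_j^*(x)g(u_j))_{j=1}^m$ on the compact set $\{(x,g):\|x\|=\|g\|=g(x)=1\}$ takes values in $\{v:\sum_jv_j=1\}$, hits every vertex of the standard simplex $\Delta$ (take $x=u_{j_0}/\|u_{j_0}\|$ and $g$ any norming functional of $x$), hence by induction on faces all of $\partial\Delta$, hence all of $\Delta$ by a degree argument. Granting the Claim, for each $N$ put $C_N=\sum_{j\leq N}c_j$, apply the Claim in $W_N=\spann\{e_1,\dots,e_N\}$ to $(c_1/C_N,\dots,c_N/C_N)$, obtaining $(x_N,f_N)$ with $\|x_N\|=\|f_N\|_{W_N^*}=f_N(x_N)=1$ and $e_j^*(x_N)f_N(e_j)=c_j/C_N$ for $j\leq N$, and extend $f_N$ to $\hat f_N\in X^*$ with $\|\hat f_N\|=1$. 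Since $x_N\in W_N$ and $W_N$ is $T$-invariant, $\hat f_N(T^kx_N)=\frac1{C_N}\sum_{j\leq N}c_j\lambda_j^k$, independently of the extension. Using reflexivity of $X$ and of $X^*$, pass to a subnet along which $x_N\rightharpoonup x_*$ in $X$ and $\hat f_N\rightharpoonup f_*$ in $X^*$. For each $j$, $e_j^*(x_*)f_*(e_j)=\lim_N e_j^*(x_N)\hat f_N(e_j)=\lim_N c_j/C_N=c_j$, so $f_*(x_*)=\sum_je_j^*(x_*)f_*(e_j)=\sum_jc_j=1$; since $\|x_*\|\leq1$ and $\|f_*\|\leq1$ by weak lower semicontinuity of the norm, $1=|f_*(x_*)|\leq\|f_*\|\,\|x_*\|\leq1$ forces $\|x_*\|=\|f_*\|=1$, i.e.\ $(x_*,f_*)\in\Pi(X)$. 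Finally $f_*(T^kx_*)=\sum_je_j^*(x_*)\lambda_j^kf_*(e_j)=\sum_jc_j\lambda_j^k$, which is dense in $\C$ by hypothesis; hence $T\in\NH(X)$.

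\emph{Part~(\ref{suffsnhid}.2)} is reduced to (\ref{suffsnhid}.1): it suffices to exhibit $c\in\ell^1_+(\N)$ with $\{\sum_jc_j\lambda_j^k:k\in\Z_+\}$ dense in $\C$, a purely arithmetic statement about the bounded sequence $(\lambda_j)$ with $|\lambda_1|>1$, $(|\lambda_j|)$ non-decreasing and $(\lambda_j/|\lambda_j|)$ pairwise distinct. One builds $c$ recursively: to push some partial sum close to a prescribed target one picks a large exponent $k$ together with a block of not-yet-used indices $j$ on which the unit directions $(\lambda_j/|\lambda_j|)^k$ are spread around $\T$ — possible because from one index to the next the argument changes by $\approx k/j^2$, so for $k$ large and $j$ in a suitable range these directions fill $\T$ — and then chooses the weights on that block so as to steer the sum; the non-decreasing moduli keep the orbit unbounded, and the pairwise-distinctness of the directions is exactly what makes the spreading possible. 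The serious obstacle — and, I expect, the technical heart of the whole theorem — is to organize the recursion so that the weights added at one stage do not spoil the approximations arranged at the earlier ones. Once such $c$ has been produced, (\ref{suffsnhid}.1) gives $T\in\SNH(X)$.
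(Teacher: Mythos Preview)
Your treatment of (\ref{suffsnhid}.3)--(\ref{suffsnhid}.4) is fine and matches the paper's route (the paper uses two eigenvalues and Proposition~\ref{znwn} to land in (\ref{suffsnh}.1), you use three and land in (\ref{suffsnh}.2); either works). The overall architecture of (\ref{suffsnhid}.1) --- prove the finite-dimensional Claim, then take weak limits using reflexivity --- is exactly what the paper does (Lemma~\ref{cnvefu} and Lemma~\ref{shau}).

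There are two genuine gaps.

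\textbf{The Claim in (\ref{suffsnhid}.1).} Your degree argument is not a proof. The map $(x,g)\mapsto (u_j^*(x)g(u_j))_j$ takes values in the complex affine hyperplane $\{\sum v_j=1\}\subset\C^m$, not in $\R^m$; the simplex $\Delta$ is a real $(m{-}1)$-cell sitting inside a $(2m{-}2)$-real-dimensional target, so ``hitting $\partial\Delta$ $\Rightarrow$ hitting $\Delta$ by degree'' has no meaning here. Moreover the domain $\{(x,g):\|x\|=\|g\|=g(x)=1\}$ is not a manifold for a general norm, so you have no ball/sphere structure to run degree theory on. The paper's proof of this lemma is variational: maximize $F(w)=\sum_j d_j\log|u_j^*(w)|$ over $B(W)$, take a maximizer $x\in S(W)$, set $g=\sum_j d_j u_j^*/u_j^*(x)$, and show $\|g\|\leq 1$ by comparing the one-sided derivative of $t\mapsto F(x+t(y-x))$ at $0$ with ${\rm Re}\,g(y-x)$. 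You should either reproduce that argument or give a correct topological one.

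\textbf{Part (\ref{suffsnhid}.2).} This is not proved. Your heuristic that ``from one index to the next the argument changes by $\approx k/j^2$'' presupposes structure that is nowhere in the hypothesis: all you know is that the $\lambda_j/|\lambda_j|$ are pairwise distinct in $\T$, nothing about their spacing. You also explicitly leave open the central difficulty (new weights not destroying earlier approximations). In the paper this part rests on substantial preparatory lemmas: one passes to a subsequence on which $|\lambda_n|$ is either strictly increasing or constant, and then invokes Corollary~\ref{ddIaa} in the first case and Lemma~\ref{ddiaa} in the second. Lemma~\ref{ddiaa} in particular is where the real work is --- it uses a syndeticity argument (Lemma~\ref{grou}, Corollary~\ref{sico}) together with a five-point convex-geometry lemma (Lemma~\ref{penta}) to build $c\in\ell^1_+$ so that the new block simultaneously hits the current target \emph{and} vanishes at a later exponent, which is exactly what prevents later stages from spoiling earlier ones. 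Your sketch does not contain this idea.
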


The reflexivity condition in the above theorem can not be removed entirely as illustrated by the following examples.

\begin{example}\label{c0e1} Let $\{z_n\}_{n\in\N}$ be a sequence of finite order elements of $\T$, $r>1$ and $T\in L(c_0(\N))$ be the diagonal operator with the numbers $rz_n$ on the diagonal. Then $T\notin \NH(c_0(\N))$.
\end{example}

\begin{proof} Let $(x,f)\in \Pi(c_0(\N))$ and $A=\{n\in\N:|x_n|=1\}$. Clearly $A$ is non-empty and finite and there is $c\in\R_+^A$ such that $f$ acts according to the formula
$f(y)=\sum\limits_{n\in A} c_n\overline{x}_ny_n$ for each $y\in c_0(\N)$.
Hence $f(T^kx)=r^k\sum\limits_{n\in A} c_nz_n^k$ for each $k\in\Z_+$. Since $A$ is finite and each $z_n$ has finite order, $O(T,x,f)$ is contained in the union of finitely many lines in $\C$ through the origin. Hence $O(T,x,f)$ is nowhere dense in $\C$ and $T\notin\NH(c_0(\N))$.
\end{proof}

\begin{example}\label{c0e2} Let $r>1$ and $T\notin \NH(c_0(\N))$ be given by $(Tx)_1=0$ and $(Tx)_n=rx_{n-1}$ if $n>1$. Then $T-I$ is Fredholm of index $-1$ and $T\notin\NH(c_0(\N))$.
 \end{example}

\begin{proof} Let $(x,f)\in \Pi(c_0(\N))$ and the finite set $A\subset \N$ and $c\in\R_+^A$ be as in the proof of Example~\ref{c0e1}. It is easy to see that $f(T^kx)=0$ if $k\geq \max(A)$. Thus $O(T,x,f)$ is a finite set. Hence $T\notin\NH(c_0(\N))$. On the other hand, $T-I$ is injective and $(T-I)(c_0(\N))$ is exactly the kernel of $\phi\in c_0(\N)^*$ given by $\phi(x)=\sum\limits_{n=1}^\infty r^{-n}x_n$. Thus $T-I$ is Fredholm of index $-1$. 
\end{proof}

Obviously, a self-adjoint operator $T$ on a Hilbert space $\hh$ can not be numerically hypercyclic. Indeed, $\NO(T,x)\subset\R$ for each $x\in S(\hh)$. The following theorem nearly (but not quite) characterizes weakly numerically hypercyclic normal operators. It also allows a funny characterisation of  weakly numerically hypercyclic self-adjoint operators.

\begin{theorem}\label{normaLLL} Let $\hh$ be a Hilbert space, $T\in L(\hh)$ and $k\in\N$ be such that $T^k$ is normal. Then the following statements are true.
\begin{itemize}\itemsep=-2pt
\item[{\rm (\ref{normaLLL}.1)}]
$T\in\WNH(\hh)$ if there is a sequence $\{\lambda_n\}_{n\in\N}$ in $\sigma(T)$ such that $1<|\lambda_1|<|\lambda_2|<{\dots}$
\item[{\rm (\ref{normaLLL}.2)}]
$T\in\NH(\hh)$ if there is a sequence $\{\lambda_n\}_{n\in\N}$ in $\sigma(T)$ such that $1<|\lambda_1|<|\lambda_2|<{\dots}$ and the numbers $\frac{\lambda_j}{|\lambda_j|}$ are pairwise distinct.
\item[{\rm (\ref{normaLLL}.3)}]
If $T^k$ is self-adjoint, then $T\in\WNH(\hh)$ if and only if the set $\{-|z|:z\in\sigma(T),\ |z|>1\}$ is not well-ordered by the natural ordering of $\R$.
\item[{\rm (\ref{normaLLL}.4)}]
$T\in\NH(\hh)$ if there exist $r>1$ and a $T^k\!$-invariant subspace ${\cal K}$ such that $\frac1rT^k\bigr|_{\cal K}\in L({\cal K})$ is a unitary operator with infinite spectrum.
\end{itemize}
\end{theorem}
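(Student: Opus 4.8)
The plan is to handle the four parts in order, the recurring device being the spectral theorem for the normal operator $N:=T^k$ together with two trivial observations: for all $x,f$ one has $O(N,x,f)\subseteq O(T,x,f)$, and for $x\in S(\hh)$ one has $\{\langle N^mx,x\rangle:m\in\Z_+\}\subseteq\NO(T,x)$; so to get $T\in\WNH(\hh)$ (resp.\ $T\in\NH(\hh)$) it suffices to produce an appropriate $x$ (resp.\ $x\in S(\hh)$) for $N$. For (\ref{normaLLL}.1) set $\mu_n=\lambda_n^k\in\sigma(N)$, so $1<|\mu_1|<|\mu_2|<\cdots$ and, $\sigma(N)$ being bounded, $|\mu_n|\uparrow L<\infty$ with $|\mu_n|<L$ for every $n$. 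I would take pairwise disjoint closed balls $\bar B(\mu_n,\delta_n)$ with $\delta_n$ so small that $\delta_1<|\mu_1|-1$ and $|\mu_n|+\delta_n<L$, put $E_n=E_N(\bar B(\mu_n,\delta_n))\ne0$, pick a unit vector $u_n\in E_n\hh$, and set $x=\sum_n a_nu_n$ with $a_n>0$, $\sum a_n^2=1$. The $E_n\hh$ are mutually orthogonal $N$-reducing subspaces, $N^mu_n\in E_n\hh$, and $(|\mu_n|-\delta_n)^m\le\|N^mu_n\|\le(|\mu_n|+\delta_n)^m$. Hence $\|N^mx\|\ge a_1(|\mu_1|-\delta_1)^m\to\infty$, while the $E_n\hh$-component of $N^mx/\|N^mx\|$ has norm $a_n\|N^mu_n\|/\|N^mx\|\le a_n(|\mu_n|+\delta_n)^m/\bigl(a_{n'}(|\mu_{n'}|-\delta_{n'})^m\bigr)\to0$ as $m\to\infty$ once $n'$ is chosen with $|\mu_{n'}|-\delta_{n'}>|\mu_n|+\delta_n$ (possible since $|\mu_{n'}|\to L>|\mu_n|+\delta_n$). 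A norm-one vector of $\bigoplus_nE_n\hh$ all of whose block components tend to $0$ converges weakly to $0$, so $\{N^mx/\|N^mx\|\}_m$ is weakly, but not norm, convergent, and applying (\ref{suffwnhid}.4) to $T$, $x$ and $A=k\N$ gives $T\in\WNH(\hh)$.

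For (\ref{normaLLL}.2) I again pass to $N$ and the $\mu_n$; discarding at most $k-1$ indices on each fibre (at most $k$ of the pairwise distinct $\lambda_n/|\lambda_n|$ can share a common $k$-th power) I may assume the $\mu_n/|\mu_n|$ pairwise distinct as well. Restricting $N$ to the reducing subspace generated by $\xi=\sum_na_nu_n$ (notation as above) identifies it with multiplication by $z$ on $L^2(\rho)$, where $\rho=\sum_na_n^2\nu_n$ with $\nu_n$ a probability measure supported in $\bar B(\mu_n,\delta_n)$, so that $\langle N^m\xi,\xi\rangle=\sum_nc_n\int z^m\,d\nu_n$ with $c_n=a_n^2$. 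Thus everything reduces to choosing weights $c_n\ge0$, $\sum c_n=1$, so that $\bigl\{\sum_nc_n\mu_n^m:m\in\Z_+\bigr\}$ is dense in $\C$, and then shrinking the radii $\delta_n$ so that replacing each $\mu_n^m$ by $\int z^m\,d\nu_n$ (an error of size at most $(|\mu_n|+\delta_n)^m-|\mu_n|^m$) does not spoil density — this last step being a routine diagonalisation over a countable dense target set. The former is the analogue, with the spectrum in place of the point spectrum, of Theorem~\ref{suffsnhid}.2, and I would carry it out by a greedy construction exploiting that the $|\mu_n|$ are strictly increasing (so at each stage a fresh, much more slowly growing block of the series can be steered to approximate the next target while all earlier blocks are, for the relevant $m$, too small to matter) together with the distinctness of the arguments $\arg\mu_n$. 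This construction is the technical core of the proof.

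For (\ref{normaLLL}.3) write $R=\{|z|:z\in\sigma(T),\ |z|>1\}$; the assertion is that $T\in\WNH(\hh)$ iff $R$ contains a strictly increasing sequence. If it does, (\ref{normaLLL}.1) already yields $T\in\WNH(\hh)$. Conversely suppose $-R$ is well ordered, i.e.\ $R$ has no strictly increasing sequence; then $R$ is countable. Since $N=T^k$ is self-adjoint, $\sigma(N)\subseteq\R$, so $\sigma(N)\cap\{|t|>1\}\subseteq\{\pm r^k:r\in R\}$ is countable, whence $E_N(\sigma(N)\cap\{|t|>1\})=\sum_{t}E_N(\{t\})$ is purely atomic and $\hh=\bigl(\bigoplus_{t\in\sigma(N),\ |t|>1}\ker(N-tI)\bigr)\oplus E_N([-1,1])\hh$ is a $T$-reducing decomposition: on the second summand $T$ is power bounded, and on $\ker(N-tI)$ one has $T^k=tI$ with $t$ real, $|t|>1$. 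Hence for any $x,f$ and $n=kq+s$ ($0\le s<k$) the numerical orbit value equals $\sum_tc_{t,s}t^q+\beta_{q,s}$ with $(c_{t,s})_t\in\ell^1$ and $\sup_q|\beta_{q,s}|<\infty$; grouping the (at most two) summands of each modulus and using that $R$ has a largest element, dominated convergence shows that for each $s$ and each parity of $q$ the partial sum $\sum_tc_{t,s}t^q$ either vanishes identically or has modulus tending to $\infty$ with argument converging. Therefore $O(T,x,f)$ lies in the union of a bounded set and finitely many sequences tending to $\infty$, hence in the union of a bounded set and a nowhere dense set, so it is never dense in $\C$; by Proposition~\ref{ele1} this means $T\notin\WNH(\hh)$. (In particular a self-adjoint $T$ is weakly numerically hypercyclic iff $\sigma(T)\cap(1,\infty)$ contains a strictly increasing sequence.)

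For (\ref{normaLLL}.4) it suffices to obtain $N|_{\mathcal K}\in\NH(\mathcal K)$, where $N|_{\mathcal K}=rU$ with $U$ unitary and $\sigma(U)$ infinite. I would pick distinct $\zeta_1,\zeta_2,\dots\in\sigma(U)$, disjoint arcs $I_n\ni\zeta_n$, and realise $\tau=\sum_nc_n\,(\rho|_{I_n}/\rho(I_n))$ (with $\rho$ a scalar spectral measure of $U$, so $\rho(I_n)>0$) as the spectral measure of a unit vector $x\in\mathcal K$; then $\langle(rU)^mx,x\rangle=r^m\sum_nc_n(\zeta_n^m+\eta_n(m))$ with $|\eta_n(m)|\le m\,|I_n|^m$. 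The remaining task is to choose the $\zeta_n$, the $c_n$ and the arc-lengths so that $\bigl\{r^m\sum_nc_n\zeta_n^m:m\in\Z_+\bigr\}$ is dense in $\C$; here a finite set of points need not suffice (if $\sigma(U)$ consists of roots of unity the sums are periodic and the orbit lies in finitely many rays), so one genuinely uses infinitely many $\zeta_n$, arranged so that $\sum_{n\le N}c_n\zeta_n^m$ is constant on a residue class of $m$ and that constant can be pushed close to $0$, the tail then carrying the approximation while the factor $r^m$ is absorbed because arbitrarily small values of $\sum_nc_n\zeta_n^m$ occur at arbitrarily large $m$. I expect this construction on $\T$ — and its sibling on $\bigcup_n\bar B(\mu_n,\delta_n)$ from part~(\ref{normaLLL}.2) — to be the only genuinely delicate point; everything else is bookkeeping with the spectral theorem and the reductions above.
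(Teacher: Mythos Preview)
Your reductions are sound and parts (\ref{normaLLL}.1) and (\ref{normaLLL}.3) are complete. For (\ref{normaLLL}.1) you take a slightly different route from the paper: the paper picks a \emph{sequence} of vectors $x_n$ with $\|T^m x_{n+1}\|/\|T^m x_n\|\to\infty$ and invokes Corollary~\ref{sufwe1}, whereas you build a \emph{single} vector $x=\sum a_nu_n$ and show that $\{N^mx/\|N^mx\|\}$ is weakly null but not norm null, then apply (\ref{suffwnhid}.4). Both work. For (\ref{normaLLL}.3) your parity split is equivalent to the paper's passage to $T^{2k}$ (Lemma~\ref{posa}): the paper squares $N$ to force $\sigma(N^2)\subset[0,\infty)$ so that each modulus occurs at most once, while you keep $N$ and separate $t^q$ according to $\mathrm{sgn}(t)$ and the parity of $q$, which is the same bookkeeping.

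The genuine gap is in (\ref{normaLLL}.2) and (\ref{normaLLL}.4), and you correctly identify it yourself: the ``greedy construction'' producing weights $c_n\ge0$ with $\bigl\{\sum_nc_n\mu_n^m\bigr\}_m$ (resp.\ $\bigl\{r^m\sum_nc_n\zeta_n^m\bigr\}_m$) dense in $\C$ is not carried out, and this is the entire content. In the paper these are Lemmas~\ref{ddIaa00} and~\ref{ddiaa}/\ref{proba}, each proved via Theorem~U together with a fairly delicate approximation argument (Corollary~\ref{sico}, Lemma~\ref{penta}); none of them is routine. Moreover, your two-step plan for (\ref{normaLLL}.2) --- first find the $c_n$ for the exact points $\mu_n$, then ``shrink $\delta_n$'' to absorb the error $\int z^m\,d\nu_n-\mu_n^m$ --- is unlikely to work as stated: the error per term is of order $m\delta_n|\mu_n|^{m-1}$ and carries no cancellation, while the density relies on delicate cancellation in $\sum_nc_n\mu_n^m$ at \emph{arbitrarily large} $m$, so there is no single choice of $(\delta_n)$ that controls the error uniformly over the $m$'s you need. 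The paper avoids this entirely by working in the $L^2(\mu)$ model of the normal operator and constructing directly a nonnegative $a\in L^1(\mu)$ with $\bigl\{\int g^ma\,d\mu\bigr\}_m$ dense (Lemma~\ref{ddIaa00}); for (\ref{normaLLL}.4) it splits into the case $\sigma_p(U)$ infinite (then an orthonormal eigenbasis puts you in the setting of (\ref{suffsnhid}.2), i.e.\ Lemma~\ref{ddiaa}) and the purely non-atomic case (Lemma~\ref{proba}).
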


\begin{corollary}\label{sanh} A self-adjoint operator $T$ is weakly numerically hypercyclic if and only if the set\break $\{-|z|:z\in\sigma(T),\ |z|>1\}$ is not well-ordered by the natural ordering of $\R$.
\end{corollary}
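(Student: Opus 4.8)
The plan is to read the corollary off as the case $k=1$ of Theorem~\ref{normaLLL}(\ref{normaLLL}.3). By definition a self-adjoint operator $T$ is a bounded operator on a Hilbert space $\hh$ with $T=T^*$; in particular $T^1=T$ is self-adjoint, so Theorem~\ref{normaLLL} applies with $k=1$ and the hypothesis ``$T^k$ is self-adjoint'' of part (\ref{normaLLL}.3) holds. That part then asserts precisely that $T\in\WNH(\hh)$ if and only if $\{-|z|:z\in\sigma(T),\ |z|>1\}$ is not well-ordered by the natural ordering of $\R$, which is exactly the statement of the corollary. So there is essentially nothing new to prove here, and no obstacle beyond what is already absorbed into Theorem~\ref{normaLLL}: the corollary is a verbatim specialization, isolated because of its appealing form.

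For completeness I indicate how the two implications of this specialization run, the self-adjoint case being especially transparent. If the set $\{-|z|:z\in\sigma(T),\ |z|>1\}$ is not well-ordered, it carries an infinite strictly decreasing sequence $-|z_1|>-|z_2|>\cdots$ with $z_n\in\sigma(T)$, $|z_n|>1$; reading it backwards gives $1<|z_1|<|z_2|<\cdots$, whence $T\in\WNH(\hh)$ by Theorem~\ref{normaLLL}(\ref{normaLLL}.1). Conversely, assume that set is well-ordered. If it is empty, then $\sigma(T)\subseteq[-1,1]$, so $\|T\|$, which for the self-adjoint $T$ equals the spectral radius, is at most $1$; thus $T$ is power bounded and $T\notin\WNH(\hh)$ by Proposition~\ref{ele00}. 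If it is nonempty, well-orderedness rules out any strictly increasing sequence in $\{|z|:z\in\sigma(T),\ |z|>1\}$, and the remaining task — the genuine content of the ``only if'' half of Theorem~\ref{normaLLL}(\ref{normaLLL}.3), and the place where the real work lies — is to convert this spectral scarcity into the conclusion that no numerical orbit $O(T,x,f)$ is dense in $\C$; this is carried out inside Theorem~\ref{normaLLL} via the spectral theorem, decomposing a candidate vector along the spectral measure of $T$ and controlling the resulting orbits, and nothing beyond that is needed to close the corollary.
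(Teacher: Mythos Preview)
Your proposal is correct and matches the paper's approach: the corollary is stated immediately after Theorem~\ref{normaLLL} without proof, precisely because it is the specialization $k=1$ of part~(\ref{normaLLL}.3). Your additional unpacking of the two implications is accurate and consistent with how Theorem~\ref{normaLLL} is proved in Section~\ref{s10}.
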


\begin{corollary}\label{uni} Let $r>1$ and $T$ be a unitary operator with infinite spectrum. Then $rT$ is numerically hypercyclic.
\end{corollary}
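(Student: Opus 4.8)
The plan is to obtain the corollary as the special case $k=1$, ${\cal K}=\hh$ of part~(\ref{normaLLL}.4) of Theorem~\ref{normaLLL}, applied not to $T$ itself but to the operator $S:=rT$. First I would confirm that Theorem~\ref{normaLLL} is applicable to $S$: since $T$ is unitary it is in particular normal, and a scalar multiple of a normal operator on a Hilbert space is again normal, so $S=S^1$ is normal; thus $k=1$ is a legitimate choice of the exponent required in the hypothesis of that theorem.

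Next I would verify, for $S$ and this value of $k$, the hypothesis of~(\ref{normaLLL}.4). Taking the (trivially $S$-invariant) subspace ${\cal K}=\hh$ and the number $r>1$ from the statement, one has $\frac1r S^k\big|_{\cal K}=\frac1r(rT)=T$, and by assumption $T$ is a unitary operator with infinite spectrum. Hence~(\ref{normaLLL}.4) applies and yields $S=rT\in\NH(\hh)$, which is exactly the assertion. I do not expect any real obstacle here, since the whole argument is just unwinding the definitions around Theorem~\ref{normaLLL}; the genuine content lies entirely in that theorem. It is also worth noting that none of~(\ref{normaLLL}.1)--(\ref{normaLLL}.3) could serve instead, because $\sigma(rT)\subseteq r\T$ forces every spectral point to have the same modulus $r$ (and prevents any power of $rT$ from being self-adjoint once $\sigma(T)$ is infinite), so~(\ref{normaLLL}.4) is precisely the part of the theorem tailored to this situation.
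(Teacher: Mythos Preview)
Your proposal is correct and matches the paper's intended argument: Corollary~\ref{uni} is stated immediately after Theorem~\ref{normaLLL} without proof precisely because it is the direct application of part~(\ref{normaLLL}.4) with $k=1$ and ${\cal K}=\hh$ to the operator $S=rT$, exactly as you describe.
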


Herrero \cite{her,herw} described the norm closure of the set of hypercyclic operators on $\ell^2(\N)$ in terms of the spectrum. Pr\v ajitur\v a \cite{pra} demonstrated that the set of weakly hypercyclic operators on $\ell^2(\N)$ has the same closure. We provide a spectral description of the closures of $\SNH(X)$, $\WNH(X)$ and $\NH(X)$ for an arbitrary reflexive Banach space $X$. 

\begin{theorem}\label{clos} Let $X$ be reflexive and $T\in L(X)$. Then the following statements are equivalent$:$
\begin{itemize}\itemsep=-2pt
\item[{\rm (\ref{clos}.1)}]
$T$ does not belong to the operator norm closure of $\SNH(X);$
\item[{\rm (\ref{clos}.2)}]
$T$ does not belong to the operator norm closure of $\NH(X);$
\item[{\rm (\ref{clos}.3)}]
$T$ does not belong to the operator norm closure of $\WNH(X);$
\item[{\rm (\ref{clos}.4)}]
the set $\{\lambda\in\sigma(T):|\lambda|\geq 1\}$ consists of finitely many normal eigenvalues of multiplicity $1$ with pairwise distinct absolute values.
\end{itemize}
\end{theorem}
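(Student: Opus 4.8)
The plan is to establish the cycle $(\ref{clos}.4)\Rightarrow(\ref{clos}.3)\Rightarrow(\ref{clos}.2)\Rightarrow(\ref{clos}.1)\Rightarrow(\ref{clos}.4)$. Since $\SNH(X)\subseteq\NH(X)\subseteq\WNH(X)$, the operator norm closures are nested, so the two middle implications are immediate (failing to lie in the largest closure is the strongest of the three non-membership statements). Only the two outer implications carry content, and I would prove $(\ref{clos}.1)\Rightarrow(\ref{clos}.4)$ in contrapositive form: if $(\ref{clos}.4)$ fails, then $T$ belongs to the operator norm closure of $\SNH(X)$.

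For $(\ref{clos}.4)\Rightarrow(\ref{clos}.3)$ the key point is that the spectral picture in $(\ref{clos}.4)$ is stable under small norm perturbations. Write $\sigma(T)\cap\{|\lambda|\geq1\}=\{\lambda_1,\dots,\lambda_m\}$, each a normal eigenvalue of multiplicity $1$ with the $|\lambda_j|$ pairwise distinct, and pick disjoint small circles $\Gamma_j$ around $\lambda_j$ together with a circle $\Gamma_0=\{|z|=\rho\}$, $\rho\in(0,1)$, enclosing $\sigma(T)\cap\D$ but none of the $\lambda_j$; all these contours avoid $\sigma(T)$. By continuity of the resolvent on the contours and upper semicontinuity of the spectrum, there is a norm-ball $B$ about $T$ such that for each $S\in B$ the associated Riesz projections still give an $S$-invariant decomposition $X=X_0\oplus X_1\oplus\dots\oplus X_m$ with $\dim X_j=1$ for $j\geq1$, with $S|_{X_0}$ of spectral radius $<1$, and with the eigenvalues $\mu_j$ of $S|_{X_j}$ still of pairwise distinct moduli. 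For any $x\in X$, $\phi\in X^*$, writing $x=x_0+\sum_j x_j$ one has $\phi(S^nx)=\sum_{j=1}^m\phi(x_j)\mu_j^n+o(1)$, since $\|S^nx_0\|\to0$. If some $j$ has $|\mu_j|>1$ and $\phi(x_j)\neq0$, the unique such $j$ of maximal modulus dominates and $|\phi(S^nx)|\to\infty$; otherwise $\phi(S^nx)$ stays bounded. In either case $O(S,x,\phi)$ is not dense, so $S\notin\WNH(X)$ by Proposition~\ref{ele1}. Thus $B\cap\WNH(X)=\emptyset$ and $(\ref{clos}.3)$ holds.

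Now assume $(\ref{clos}.4)$ fails, and split into two cases. Case~1: there is $\lambda_0$ with $|\lambda_0|\geq1$ such that $T-\lambda_0 I$ is not semi-Fredholm; in particular it is not upper semi-Fredholm, i.e. not bounded below on any finite-codimensional closed subspace. Then $X$ is infinite-dimensional, and a standard greedy construction yields, for each $\epsilon>0$, unit vectors $x_1,x_2$ forming a basic sequence with norm-controlled biorthogonal functionals $\psi_1,\psi_2$ (extended to $X$ by Hahn--Banach) and with $\|(T-\lambda_0)x_i\|$ as small as we wish. Then $K$, defined by $Ky=-\sum_i\psi_i(y)(T-\lambda_0)x_i$, has $\|K\|<\epsilon$ and $S:=T+K$ satisfies $\ker(S-\lambda_0 I)\supseteq\mathrm{span}\{x_1,x_2\}$. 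Case~2: $T-\lambda I$ is semi-Fredholm for every $|\lambda|\geq1$; by connectedness of $\{|\lambda|\geq1\}$ and invertibility of $T-\lambda I$ for large $|\lambda|$, the index is identically $0$ on $\{|\lambda|\geq1\}$, so $T-\lambda I$ is Fredholm of index $0$ throughout some open neighborhood $U$ of $\{|\lambda|\geq1\}$. Analytic Fredholm theory makes $\sigma(T)\cap U$ discrete, whence $\sigma(T)\cap\{|\lambda|\geq1\}$ is finite, each of its points an isolated point of $\sigma(T)$ with finite-rank Riesz projection, i.e. a normal eigenvalue of finite multiplicity. As $(\ref{clos}.4)$ fails, either one of these eigenvalues has multiplicity $\geq2$ — giving a $2$-dimensional $T$-invariant subspace $F$ with $T|_F$ equal to $\mathrm{diag}(\lambda,\lambda)$ or to $\left(\begin{smallmatrix}\lambda&1\\0&\lambda\end{smallmatrix}\right)$ with $|\lambda|\geq1$ — or two of them have equal modulus, giving a $2$-dimensional $T$-invariant $F$ with $T|_F=\mathrm{diag}(\lambda,\lambda')$, $|\lambda|=|\lambda'|\geq1$.

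In all these situations we have, within $\epsilon$ of $T$, an operator with a $2$-dimensional invariant subspace $F$ on which it is $\mathrm{diag}(\mu_1,\mu_2)$ or $\left(\begin{smallmatrix}\mu_1&1\\0&\mu_2\end{smallmatrix}\right)$, with eigenvalues $\mu_1,\mu_2$ that an additional $\epsilon$-small perturbation supported on $F$ (which is finite-dimensional, hence complemented) may move freely. Choosing $R>1$ (the original common modulus if it exceeds $1$, otherwise just above $1$) and, by Proposition~\ref{znwn}, a pair $(z,w)$ with $z\neq w$ in the dense $G_\delta$-subset of $\T^2$ for which $\{R^k(z^k+w^k):k\in\Z_+\}$ is dense in $\C$, and setting $\mu_1=Rz$, $\mu_2=Rw$ close to the original eigenvalues, we obtain an operator whose restriction to $F$ has spectrum $\{\mu_1,\mu_2\}$ with $\{\mu_1^k+\mu_2^k\}$ dense in $\C$; by Theorem~\ref{2dim} this restriction lies in $\SNH(\C^2)$, so the whole operator lies in $\SNH(X)$ by Proposition~\ref{ele00}. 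Letting $\epsilon\to0$ yields $T\in\overline{\SNH(X)}$, closing the cycle. The step I expect to be the main obstacle is the perturbation lemma of Case~1 — producing eigenvalues of prescribed multiplicity at a non-semi-Fredholm point by an arbitrarily small finite-rank perturbation, with the eigenvectors independent in a norm-controlled way — together with the Fredholm bookkeeping of Case~2 pinning down $\sigma(T)$ outside the unit disc; granting these, the asymptotic argument for $(\ref{clos}.4)\Rightarrow(\ref{clos}.3)$ and the trivial implications present no difficulty.
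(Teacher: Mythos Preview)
Your proof is correct and the overall architecture---the easy chain $(\ref{clos}.3)\Rightarrow(\ref{clos}.2)\Rightarrow(\ref{clos}.1)$, the Riesz-projection stability argument for $(\ref{clos}.4)\Rightarrow(\ref{clos}.3)$, and the contrapositive $\neg(\ref{clos}.4)\Rightarrow T\in\overline{\SNH(X)}$---matches the paper's. The endgame is also the same: once you have a $2$-dimensional invariant subspace with both eigenvalues on a common circle of radius $\geq1$, perturb the restriction using Proposition~\ref{znwn} and invoke Theorem~\ref{2dim} (this is the paper's Lemmas~\ref{fdapp} and~\ref{appRO}).

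Where you genuinely diverge is in the case split for $\neg(\ref{clos}.4)$. The paper fixes a single $\lambda\in\sigma(T)$ with $|\lambda|\geq1$ that is not a normal eigenvalue and $\dim\ker(T-\lambda I)\leq1$, and then separates into: (i) range of $T-\lambda I$ not closed, (ii) Fredholm of index $+1$, (iii) semi-Fredholm of negative index. Cases (ii) and (iii) are dispatched by Theorem~\ref{suffsnhid} (nonzero semi-Fredholm index forces $T\in\SNH(X)$, using reflexivity for the negative-index case). Your dichotomy is global: either $T-\lambda_0 I$ fails to be semi-Fredholm for \emph{some} $|\lambda_0|\geq1$ (your Case~1), or $T-\lambda I$ is semi-Fredholm for \emph{all} $|\lambda|\geq1$, in which case local constancy of the index on the connected set $\{|\lambda|\geq1\}$ together with invertibility for large $|\lambda|$ forces index $0$ throughout, and analytic Fredholm theory reduces you to finitely many normal eigenvalues (your Case~2). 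This connectedness observation eliminates the nonzero-index cases altogether, so you never need Theorem~\ref{suffsnhid}. Your Case~1 perturbation (producing a $2$-dimensional eigenspace at a non-upper-semi-Fredholm point) is essentially the content of Lemmas~\ref{noncl}, \ref{triv} and~\ref{appRO} rolled together, and your sketch is accurate.

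What each route buys: the paper's argument exercises Theorem~\ref{suffsnhid}, which is of independent interest and gives $T\in\SNH(X)$ outright (no approximation needed) in the nonzero-index situation. Your argument is more self-contained for the purposes of Theorem~\ref{clos}---it needs only Proposition~\ref{znwn}, Theorem~\ref{2dim}, and standard Fredholm/Riesz machinery---and is arguably cleaner, at the cost of not highlighting that semi-Fredholm points of nonzero index already lie in $\SNH(X)$ rather than merely its closure.
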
 

\subsection{The structure} 

In Section~\ref{s2} we prove Theorems~\ref{aq1} and~\ref{aq2} by means of explicit examples. In the same section we show that the existence of a numerical orbit $\NO(T,x)$ dense in a non-empty open subset of $\C$ does not guarantee the numeric hypercyclicity of $T\in L(\C^2)$. In Section~\ref{s3} we prove Proposition~\ref{znwn} and build up a toolbox used later on in order to prove the main results. Section~\ref{s4} is devoted to the proof of Theorems~\ref{suffwnh} and~\ref{suffsnh}, which provide sufficient conditions of weak and of strong numeric hypercyclicity in terms of a restriction to a finite dimensional invariant subspace. In Section~\ref{s5}, we prove Propositions~\ref{suffnh} and~\ref{suffnh1}, which deal with the similar issues in the Hilbert space setting.
Theorem~\ref{suffwnhid}, providing genuinely infinite dimensional sufficient conditions of weak numeric hypercyclicity, is proved in Section~\ref{s6}. We prove Theorems~\ref{2dim}, describing the numerically hypercyclic operators on $\C^2$, and Theorem~\ref{3dim}, describing the weakly numerically hypercyclic operators on $\C^3$, in Section~\ref{s7}. Theorem~\ref{suffsnhid}, which provides infinite dimensional sufficient conditions for the strong numeric hypercyclicity, is proved in Section~\ref{s8}. In Section~\ref{s9}, we prove Theorem~\ref{clos}, which describes the operator norm closure of the set of numerically hypercyclic operators on a reflexive Banach space. Theorem~\ref{normaLLL}, dealing with numeric hypercyclicity of normal operators, is proved in Section\ref{s10}. In Section~\ref{s11}, we prove Propositions~\ref{scalar} and~\ref{scalar2} on scalar multiples of numerically hypercyclic operators on $\C^2$. In Section~\ref{s12}, we make some further remarks, construct few examples and raise several questions on the numeric hypercyclicity.

\section{Proof of Theorems~\ref{aq1} and~\ref{aq2}\label{s2}}

Our construction is similar to the proof of Theorem~KPS1 in \cite{nhy}. The main difference is that we need and achieve better control of the numbers in the numerical orbit. 
Note also that there are natural generalizations and some estimates below are far from optimal.

\subsection{Funny numbers}

As usual, for $m,n\in \N$, we write $m\divi n$ if $m$ is a divisor of $n$.
Let $p$ be an odd prime number and $\nu_k=\nu_k(p)$ for $k\in\N$ be powers of $p$ defined recurrently by the formula
\begin{equation}\label{nuk}
\text{$\nu_1=p$ and $\nu_{k+1}=\nu_k p^{k+\nu_k}$ for $k\in\N$.} 
\end{equation}
Let also ${\bf w}=\{w_k\}_{k\in\N}$ be a fixed sequence of complex numbers satisfying
\begin{equation}\label{wk}
\textstyle \text{$\frac1k\leq |w_k|\leq k$ \ for each $k\in\N$}.
\end{equation}
We define a sequence $\{m_k=m_k(p,{\bf w})\}_{k\in\N}$ of positive integers according to the following rule:
\begin{equation}\label{mk}\textstyle
\text{$m_1=1$ and $m_k=\min\{m\in\N:m>\frac{|w_{k-1}|p^{k-1}}{\pi},\ 2\divi m,\ p\ndivides m\}$ for $k\geq 2$}.
\end{equation}
Since the gaps between consecutive members of $\{m\in\N:2|m,\ p\ndivides m\}$ do not exceed $4$, we have
\begin{equation}\label{mk1}\textstyle
\text{$0\leq m_k-\frac{|w_{k-1}|p^{k-1}}{\pi}<4$ for every $k\geq 2$}.
\end{equation}
According to (\ref{wk}) and (\ref{mk1}),
\begin{equation}\label{mk2}
m_k=O(kp^k)\ \ \text{as $k\to\infty$}.
\end{equation}
By (\ref{nuk}) and (\ref{mk2}), the series $\sum \frac{m_k}{\nu_k}$ converges, which allows us to define
\begin{equation}\label{tauz}\textstyle
\text{$\tau=\tau(p,{\bf w})=\sum\limits_{k=1}^\infty \frac{m_k}{\nu_k}\in\R$\ \ and\ \
$z=z(p,{\bf w})=e^{\pi \tau i}\in \T$}.
\end{equation}

\begin{lemma}\label{esti1} In the above notation the following asymptotic relations hold$:$
\begin{align}\label{linuk}
&\lim_{k\to\infty}\bigl(p^{\nu_k}|1+z^{\nu_k}|-|w_k|\bigr)=0;
\\
&\liminf_{n\to\infty\atop n\notin\{\nu_k:k\in\N\}}|1+z^n|^{1/n}\geq p^{-1/3}. \label{linnuk}
\end{align}
\end{lemma}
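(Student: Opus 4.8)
The plan is to treat the two displays separately. For (\ref{linuk}) I would first show $z^{\nu_j}=-e^{i\theta_j}$, where $\theta_j:=\pi\nu_j\sum_{k>j}m_k/\nu_k>0$: indeed $\tau\nu_j=N_j+\theta_j/\pi$ with $N_j=\sum_{k\le j}m_k\nu_j/\nu_k$, which is an integer since $\nu_k\divi\nu_j$ for $k\le j$, and is odd since modulo $2$ only the term $k=1$, namely $\nu_j/p$, survives (the rest carry the even factor $m_k$, $k\ge2$, by (\ref{mk})); hence $z^{\nu_j}=e^{i\pi N_j}e^{i\theta_j}=-e^{i\theta_j}$. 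From $\nu_{k+1}=\nu_kp^{k+\nu_k}$ one gets $\theta_j=\pi m_{j+1}p^{-(j+\nu_j)}+(\text{a term smaller by the factor }p^{-(j+1+\nu_{j+1})})$, so $\theta_j=O(j\,p^{-\nu_j})\to0$ by (\ref{mk2}), whence $|1+z^{\nu_j}|=|1-e^{i\theta_j}|=2\sin(\theta_j/2)=\theta_j+O(\theta_j^{3})$. Multiplying through by $p^{\nu_j}$ turns the main term $p^{\nu_j}\theta_j$ into $\pi m_{j+1}p^{-j}$ and sends every error to $0$, so $p^{\nu_j}|1+z^{\nu_j}|=\pi m_{j+1}p^{-j}+o(1)$; finally (\ref{mk1}) gives $\pi m_{j+1}p^{-j}=|w_j|+O(p^{-j})$, and since $|w_j|=O(j)$ by (\ref{wk}) all errors stay $o(1)$, so $p^{\nu_j}|1+z^{\nu_j}|-|w_j|\to0$.

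For (\ref{linnuk}) it suffices to prove that for each $\epsilon>0$ one has $|1+z^n|\ge p^{-(1/3+\epsilon)n}$ for all sufficiently large $n\notin\{\nu_k\}$, since then $\liminf|1+z^n|^{1/n}\ge p^{-1/3-\epsilon}$ for every $\epsilon$. Because $|1+z^n|=2|\cos(\pi\tau n/2)|\ge2\,\mathrm{dist}(\tau n,2\Z+1)$, I must bound $\mathrm{dist}(\tau n,2\Z+1)$ below. Writing $\nu_j=p^{L_j}$ and $\tau_j=\sum_{k\le j}m_k/\nu_k=A_j/\nu_j$, the integer $A_j$ is odd and prime to $p$ (modulo $2$ only the $k=1$ summand survives, modulo $p$ only the $k=j$ one, by (\ref{mk})); hence when $v_p(n)<L_j$ the rational $A_jn/\nu_j$ has reduced denominator $p^{L_j-v_p(n)}$, so $\mathrm{dist}(A_jn/\nu_j,2\Z+1)\ge\mathrm{dist}(A_jn/\nu_j,\Z)\ge p^{v_p(n)-L_j}$, while the error $\tau n-\tau_jn=n\sum_{k>j}m_k/\nu_k$ is negligible next to $p^{-L_j}$ as soon as $n\le\nu_j^{2}$ — this is exactly the purpose of the doubly exponential gaps $\nu_{j+1}/\nu_j=p^{j+\nu_j}$. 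Now fix $n\notin\{\nu_k\}$, let $j$ be maximal with $\nu_j\le n$, and split into three cases. (i) If $n\le\nu_j^{2}$ and $\nu_j\ndivides n$, then $v_p(n)<L_j$ gives $|1+z^n|\ge p^{-L_j}$, hence $|1+z^n|^{1/n}\ge p^{-L_j/\nu_j}\to1$. (ii) If $n\le\nu_j^{2}$ and $\nu_j\divi n$, write $n=a\nu_j$ with $a\ge2$ (the value $a=1$ being excluded); then $\tau n=A_ja+\delta$ with $A_ja$ an integer of the parity of $a$ and $\delta:=a\sum_{k>j}m_k\nu_j/\nu_k>0$ which is $<1/2$ once $j$ is large, so for $a$ even $\mathrm{dist}(\tau n,2\Z+1)=1-\delta\ge1/2$, whereas for $a$ odd (so $a\ge3$) $\mathrm{dist}(\tau n,2\Z+1)=\delta\asymp a\,m_{j+1}p^{-(j+\nu_j)}\asymp a\,|w_j|p^{-\nu_j}$ by (\ref{mk1}), giving $|1+z^n|^{1/n}\to p^{-1/a}\ge p^{-1/3}$, with equality in the limit precisely along $n=3\nu_j$. (iii) If $n>\nu_j^{2}$, run the same argument with $\tau_{j+1}$ in place of $\tau_j$ (legitimate since $v_p(n)<L_{j+1}$, because $n<\nu_{j+1}$): this yields $|1+z^n|\ge p^{-L_{j+1}}$, and $L_{j+1}\le2\nu_j$ for large $j$ gives $|1+z^n|^{1/n}\ge p^{-2/\nu_j}\to1$. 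Combining the three cases gives $|1+z^n|\ge p^{-(1/3+\epsilon)n}$ eventually.

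The delicate point is not any single estimate but the design that makes them fit together. One has to recognise that among all $n\notin\{\nu_k\}$ the $\liminf$ of $|1+z^n|^{1/n}$ is pinned at $p^{-1/3}$ by the single family $n=3\nu_k$, which is what dictates splitting on both the size of $n$ and whether $\nu_j\divi n$, and using the approximant $\tau_j$ or $\tau_{j+1}$ accordingly; and one must keep the approximation error $n\sum_{k>j}m_k/\nu_k$ strictly below $p^{v_p(n)-L_j}$ throughout, which is exactly what forces the doubly exponential growth of the $\nu_k$ together with the evenness, the prime-to-$p$ property, and the calibration $m_k=|w_{k-1}|p^{k-1}/\pi+O(1)$ of the $m_k$. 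Once the construction is set up, each case is a short computation; getting the construction to serve all three cases at once (in particular, verifying that no configuration other than $n=3\nu_k$ drives the $\liminf$ below $p^{-1/3}$) is the real content.
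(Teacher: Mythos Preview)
Your argument is correct and follows essentially the same route as the paper: the treatment of (\ref{linuk}) is identical, and for (\ref{linnuk}) both proofs isolate the odd multiples $n=a\nu_j$ with $a\ge3$ as the family that pins the $\liminf$ at $p^{-1/3}$, controlling all other $n$ via the $p$-adic distance bound $\mathrm{dist}(A_jn/\nu_j,\Z)\ge p^{v_p(n)-L_j}$ and the smallness of the tail $n\sum_{k>j}m_k/\nu_k$. The only difference is organizational: the paper partitions $n$ by $\sqrt{\nu_{k-1}\nu_k}<n\le\sqrt{\nu_k\nu_{k+1}}$ and singles out the set $\Lambda=\{\nu_k m:m\in2\N+1,\ m<\sqrt{\nu_{k+1}/\nu_k}\}$, whereas you partition by $\nu_j\le n<\nu_{j+1}$ and then by the threshold $n\le\nu_j^2$ and the divisibility $\nu_j\divi n$---different cutoffs, same content.
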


\begin{proof} Note that for each $k\in\N$,
\begin{equation}\label{NK}\textstyle
\text{$\nu_k\tau =N_k+\sum\limits_{s=k+1}^\infty \frac{m_s\nu_k}{\nu_s}$,\ \ where $N_k=\sum\limits_{s=1}^k \frac{m_s\nu_k}{\nu_s}$ is an odd integer.}
\end{equation}
Since $e^{\pi N_ki}=-1$, we have
$z^{\nu_k}=-\exp\Bigl(\pi i\sum\limits_{s=k+1}^\infty \frac{m_s\nu_k}{\nu_s}\Bigr)$.
Hence 
$$\textstyle
1+z^{\nu_k}=-\pi i\frac{m_{k+1}\nu_k}{\nu_{k+1}}+O\biggl(\Bigl(\frac{m_{k+1}\nu_k}{\nu_{k+1}}\Bigr)^2+
\sum\limits_{s=k+2}^\infty \frac{m_{s}\nu_k}{\nu_{s}}\biggr)\ \ \text{as $k\to\infty$}.
$$
Multiplying by $p^{\nu_k}$ and using (\ref{nuk}), we get
$$\textstyle
p^{\nu_k}|1+z^{\nu_k}|=\frac{\pi m_{k+1}}{p^k}+O\biggl(\frac{m^2_{k+1}\nu_k}{\nu_{k+1}p^k}+
\sum\limits_{s=k+2}^\infty \frac{m_{s}\nu_kp^{\nu_k}}{\nu_{s}}\biggr)\ \ \text{as $k\to\infty$}.
$$
By (\ref{nuk}) and (\ref{mk2}), the sequence in the $O$ sign converges to 0 as $k\to\infty$. Hence
$p^{\nu_k}|1+z^{\nu_k}|-\frac{\pi m_{k+1}}{p^k}\to 0$ as $k\to\infty$. The inequality (\ref{mk1}) yields
$\frac{\pi m_{k+1}}{p^k}-|w_k|\to 0$ as $k\to\infty$ and (\ref{linuk}) follows.

We prove (\ref{linnuk}) in two steps. Denote $\Lambda=
\Bigl\{\nu_km:k\in\N,\ m\in2\N+1,\ m<\sqrt{\frac{\nu_{k+1}}{\nu_k}}\Bigr\}$. First, we shall verify that
\begin{equation}\label{linnuk1}
\lim_{n\to\infty\atop n\notin\Lambda}|1+z^n|^{1/n}=1.
\end{equation}
If $n\in\N$ is large enough, there is a unique $k=k(n)\geq 2$ such that $\sqrt{\nu_{k-1}\nu_k}<n\leq \sqrt{\nu_{k}\nu_{k+1}}$.
By (\ref{NK}),
$$\textstyle
n\tau=\frac{nN_k}{\nu_k}+\sum\limits_{s=k+1}^\infty \frac{m_sn}{\nu_s}.
$$
Using the estimate $n\leq \sqrt{\nu_{k}\nu_{k+1}}$, (\ref{nuk}) and (\ref{mk2}), we obtain
$$\textstyle
\sum\limits_{s=k+1}^\infty \frac{m_sn}{\nu_s}=O\Bigl(\frac1{\nu_k^2}\Bigr).
$$
If $n\notin\Lambda$, $\frac{n}{\nu_k}$ is not an odd integer.
Since $p\ndivides N_k$, $\frac{nN_k}{\nu_k}$ can not be an odd integer. Hence the distance from $\frac{nN_k}{\nu_k}$ to the nearest odd integer is at least $\frac1{\nu_k}$.
Then by the last two displays,
$$\textstyle
|1+z^n|=|1+e^{\pi n\tau i}|\geq \frac1{2\nu_k}\ \ \text{if $n\notin\Lambda$ is sufficiently large.}
$$
Since $\sqrt{\nu_k}<n$, $2\geq |1+z^n|\geq \frac1{2n^2}$
for all sufficiently large $n\notin\Lambda$, which immediately implies (\ref{linnuk1}).

In order to complete the proof of (\ref{linnuk}) and that of the lemma, it now suffices to show that
\begin{equation}\label{linnuk2}
\liminf_{n\to\infty\atop n\in\Lambda}|1+z^{n}|^{1/n}\geq p^{-1/3}.
\end{equation}

By the already verified equality (\ref{linuk}), $\lim\limits_{k\to\infty}\bigl(p^{\nu_k}|1+z^{\nu_k}|-|w_k|\bigr)=0$. Combining it with (\ref{wk}), we have
$\frac{p^{-\nu_k}}{k}=O(|1+z^{\nu_k}|)$ and $|1+z^{\nu_k}|=O(kp^{-\nu_k})$ as $k\to\infty$.
Hence, we can find a sequence $r_k$ of real numbers such that
$$\textstyle
z^{\nu_k}=-e^{ir_k}\ \ \text{for every $k\in\N$},\ \  \frac{p^{-\nu_k}}{k}=O(|r_k|)\ \ \text{and}\ \ r_k=O(kp^{-\nu_k})\ \ \text{as $k\to\infty$}.
$$
The last relation yields $r_k\sqrt{\nu_{k+1}\nu_k}\to 0$ as $k\to\infty$. Thus $|nr_k|<\pi$ for $n=\nu_km\in\Lambda$ for all sufficiently large $k$. Using the inequality $|e^{it}-1|\geq \frac2\pi |t|$ if $|t|\leq \pi$, we see that if $|nr_k|<\pi$ and $n=\nu_km\in\Lambda$ is sufficiently large, then
$$\textstyle
|1+z^n|=|1-e^{ir_km}|\geq\frac2\pi |r_k|m\geq c \frac{mp^{-\nu_k}}{k}\geq \frac{cp^{-\nu_k}}{n}
$$
for some constant $c>0$ independent on $n$ ($c$ does exist because $\frac{p^{-\nu_k}}{k}=O(|r_k|)$).
Hence
$$
|1+z^n|^{1/n}\geq  (c/n)^{1/n} \bigl(p^{-\nu_k}\bigr)^{1/\nu_km}= (c/n)^{1/n} p^{-1/m}\geq
(c/n)^{1/n} p^{-1/3}\to p^{-1/3}
$$
since $m\geq 3$. Thus the lower limit of $|1+z^n|^{1/n}$ is at least $p^{-1/3}$ as $n\to\infty$, $n\in\Lambda$. This completes the proof of (\ref{linnuk}) and that of the lemma.
\end{proof}

Keeping the above notation, for each $k\in\N$, we choose $q_k\in\{0,1,\dots,p^k-1\}$ such that
\begin{equation}\label{qk} \textstyle
\Bigl|\frac{w_k|1+z^{\nu_k}|}{|w_k|(1+z^{\nu_k})}-\exp\Bigl(\frac{2\pi q_ki}{p^k}\Bigr)\Bigr|=
\min\biggl\{\Bigl|\frac{w_k|1+z^{\nu_k}|}{|w_k|(1+z^{\nu_k})}-\exp\Bigl(\frac{2\pi qi}{p^k}\Bigr)\Bigr|:
q\in\{0,1,\dots,p^k-1\}\biggr\}.
\end{equation}
The inequality $0\leq q_k<p^k$ and (\ref{nuk}) ensure that the series $\sum \frac{q_k}{p^k\nu_k}$ converges. Thus we can define
\begin{equation}\label{theu}\textstyle
\theta=\theta(p,{\bf w})=\sum\limits_{k=1}^\infty \frac{q_k}{p^k\nu_k}\in\R,\ \ \text{and}\ \
u=u(p,{\bf w})=e^{2\pi \theta i}\in \T.
\end{equation}

\begin{lemma}\label{esti2} In the above notation,
\begin{equation}\label{linuk11}
\lim_{k\to\infty}\bigl(u^{\nu_k}p^{\nu_k}(1+z^{\nu_k})-w_k\bigr)=0.
\end{equation}
\end{lemma}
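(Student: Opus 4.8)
The plan is to pin down $u^{\nu_k}$ up to a negligible multiplicative error and then feed in the asymptotics of $p^{\nu_k}|1+z^{\nu_k}|$ from Lemma~\ref{esti1} together with the near-optimality of the choice of $q_k$ in (\ref{qk}).

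First I would evaluate $u^{\nu_k}=\exp\bigl(2\pi i\,\nu_k\sum_{s=1}^\infty\frac{q_s}{p^s\nu_s}\bigr)$ by splitting the series at the index $s=k$. For $s<k$ the recursion (\ref{nuk}) gives $\nu_k/\nu_s=p^{\,\sum_{j=s}^{k-1}(j+\nu_j)}$, whose exponent is at least $s$, so $p^s$ divides $\nu_k/\nu_s$ and each term $\frac{q_s\nu_k}{p^s\nu_s}$ with $s<k$ is an integer. The single term $s=k$ contributes exactly $\frac{q_k}{p^k}$. For $s>k$, using $0\le q_s<p^s$ and the super-exponential growth $\nu_{k+1}/\nu_k=p^{k+\nu_k}$ from (\ref{nuk}), the tail $\sum_{s=k+1}^\infty\frac{q_s\nu_k}{p^s\nu_s}$ is $O(p^{-\nu_k})$. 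Hence $u^{\nu_k}=\exp\!\bigl(\tfrac{2\pi iq_k}{p^k}\bigr)\bigl(1+O(p^{-\nu_k})\bigr)$.

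Next I would write $p^{\nu_k}(1+z^{\nu_k})=p^{\nu_k}|1+z^{\nu_k}|\cdot\frac{1+z^{\nu_k}}{|1+z^{\nu_k}|}$; by (\ref{linuk}) the first factor equals $|w_k|+o(1)$. For the phase, the $p^k$-th roots of unity are spaced $2\pi/p^k$ apart in argument on $\T$, so (\ref{qk}) forces $\exp(\tfrac{2\pi iq_k}{p^k})$ to lie at distance $\le\pi/p^k$ from the unimodular number $\frac{w_k|1+z^{\nu_k}|}{|w_k|(1+z^{\nu_k})}=\frac{w_k}{|w_k|}\cdot\frac{|1+z^{\nu_k}|}{1+z^{\nu_k}}$; multiplying by $\frac{1+z^{\nu_k}}{|1+z^{\nu_k}|}$ gives $\exp(\tfrac{2\pi iq_k}{p^k})\cdot\frac{1+z^{\nu_k}}{|1+z^{\nu_k}|}=\frac{w_k}{|w_k|}+O(p^{-k})$. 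Assembling the three factors, $u^{\nu_k}p^{\nu_k}(1+z^{\nu_k})=\bigl(|w_k|+o(1)\bigr)\bigl(1+O(p^{-\nu_k})\bigr)\bigl(\tfrac{w_k}{|w_k|}+O(p^{-k})\bigr)$, and since $|w_k|=O(k)$ by (\ref{wk}) while $kp^{-k}\to0$ and $kp^{-\nu_k}\to0$, every product of $|w_k|$ with an error term tends to $0$; thus $u^{\nu_k}p^{\nu_k}(1+z^{\nu_k})-w_k\to0$, which is (\ref{linuk11}).

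The one point that needs care is the interplay between the leading coefficient $|w_k|$ — controlled only by the coarse bound $|w_k|\le k$ from (\ref{wk}) — and the error terms $O(p^{-\nu_k})$ and $O(p^{-k})$: the argument goes through precisely because the growth hard-wired into (\ref{nuk}) makes $\nu_k$ (and even $k$ relative to $p^k$) grow so fast that $kp^{-k}$ and $kp^{-\nu_k}$ still vanish. Once the divisibility bookkeeping in the first step is done correctly, the remainder is routine estimation.
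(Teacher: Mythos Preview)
Your proof is correct and follows essentially the same route as the paper's: you isolate $u^{\nu_k}=e^{2\pi i q_k/p^k}(1+O(p^{-\nu_k}))$ via the divisibility of $\nu_k/\nu_s$ by $p^s$ for $s<k$, combine this with the modulus estimate (\ref{linuk}) and the phase approximation built into (\ref{qk}), and then check that the error terms beat the $|w_k|\le k$ growth. The paper does the same, just in a slightly different order (it first proves $e^{2\pi i q_k/p^k}p^{\nu_k}(1+z^{\nu_k})-w_k\to 0$ and then replaces $e^{2\pi i q_k/p^k}$ by $u^{\nu_k}$), and leaves the divisibility step implicit where you spelled it out.
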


\begin{proof} By (\ref{qk}), $\Bigl|\frac{w_k|1+z^{\nu_k}|}{|w_k|(1+z^{\nu_k})}-\exp\Bigl(\frac{2\pi q_ki}{p^k}\Bigr)\Bigr|\leq\frac{\pi}{p^k}$.
By Lemma~\ref{esti1}, $\bigl(p^{\nu_k}|1+z^{\nu_k}|-|w_k|\bigr)\to 0$. These two relations and the estimate $|w_k|\leq k$ imply that
\begin{equation}\textstyle\label{e11}
\exp\Bigl(\frac{2\pi q_ki}{p^k}\Bigr)p^{\nu_k}(1+z^{\nu_k})-w_k\to 0\ \ \text{as $k\to\infty$}.
\end{equation}

By (\ref{theu}),
$$\textstyle
u^{\nu_k}\exp\Bigl(-\frac{2\pi q_ki}{p^k}\Bigr)=\exp\Bigl(\sum\limits_{s=k+1}^\infty \frac{2\pi\nu_kq_k i}{\nu_sp^s}\Bigr)=
1+O\Bigl(\sum\limits_{s=k+1}^\infty \frac{\nu_kq_k}{\nu_sp^s}\Bigr).
$$
Since $\sum\limits_{s=k+1}^\infty \frac{\nu_kq_k}{\nu_sp^s}=O(\nu_k/\nu_{k+1})=O(p^{-k-\nu_k})$ and 
$p^{\nu_k}|1+z^{\nu_k}|=O(|w_k|)=O(k)$, we have\break $(u^{\nu_k}-\exp(2\pi q_ki/p^k))p^{\nu_k}(1+z)^{\nu_k}\to 0$, which together with (\ref{e11}) implies (\ref{linuk11}).
\end{proof}

\subsection{$T\in L(\C^2)$ with a prescribed numerical orbit}

The following result shows that the somewhere dense orbit theorem fails miserably for numerical orbits.

\begin{proposition}\label{pno} Let a non-empty open subset $U$ of $\C$ be the interior of a closed set. Then there exist a diagonal $T\in L(\C^2)$ and $x_0\in S(\C^2)$ such that $U$ is the interior of $\overline{\NO(T,x_0)}$ and for every $x\in S(\C^2)$ either $\NO(T,x)=\NO(T,x_0)$ or $\NO(T,x)$ is nowhere dense in $\C$.
\end{proposition}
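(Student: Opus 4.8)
The plan is to build a diagonal operator $T$ on $\C^2$ with eigenvalues $re^{i\alpha}$ and $re^{i\beta}$ for suitable $r>1$ and $\alpha,\beta\in\R$, acting on the standard basis, and to take $x_0=\frac1{\sqrt2}(e_1+e_2)$, so that the functional $f$ pairing with $x_0$ is $f(y)=\frac1{\sqrt2}(\overline{x_{0,1}}y_1+\overline{x_{0,2}}y_2)=\frac12(y_1+y_2)$, giving $\langle T^nx_0,x_0\rangle=\frac12(r^ne^{in\alpha}+r^ne^{in\beta})=\frac{r^n}2(e^{in\alpha}+e^{in\beta})$. Thus $\NO(T,x_0)=\{\frac{r^n}2(z^n+w^n):n\in\Z_+\}$ where $z=e^{i\alpha}$, $w=e^{i\beta}$. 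The first task is to choose $r,z,w$ so that the closure of this set has interior exactly the prescribed $U$. Since $U$ is the interior of a closed set, $U=\Int\overline U$, so it suffices to arrange $\overline{\NO(T,x_0)}=\overline U$.

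For that I would proceed in two moves. First, by Proposition~\ref{znwn} (or rather a variant of its proof) choose $z,w\in\T$ with $z/w$ of infinite order and the joint orbit $\{(z^n,w^n):n\in\Z_+\}$ equidistributed on a translate/subset of $\T^2$ that is adapted to the geometry of $U$; second, exploit the radial factor $r^n$: for fixed $(z,w)$ the set $\{\frac{r^n}2(z^n+w^n)\}$ is a union over $n$ of points whose moduli grow geometrically, so by a standard argument (choosing $\log r$ so that the fractional parts of $n\log r$ together with the arguments of $z^n+w^n$ fill out the right region — this is where the hypothesis that $U$ is an \emph{interior of a closed set} is used to guarantee one can realize $\overline{\NO}=\overline U$ rather than something with extra boundary points). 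A clean way to package this: pick a sequence $n_j\to\infty$ along which $(z^{n_j},w^{n_j})$ converges to a prescribed point of $\T^2$ and $n_j\log r \bmod 2\pi$ converges to a prescribed value, using independence of $\pi,\log r,\alpha,\beta$ over $\Q$; varying the target over a countable dense subset of $U$ and using that the map $(t,\zeta,\eta)\mapsto e^t\cdot\frac12(\zeta+\eta)$ is continuous, one gets $U\subseteq\overline{\NO(T,x_0)}$, and with a complementary equidistribution/growth estimate one gets $\overline{\NO(T,x_0)}\subseteq\overline U$.

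The second task is the rigidity clause: for every $x\in S(\C^2)$, either $\NO(T,x)=\NO(T,x_0)$ or $\NO(T,x)$ is nowhere dense. Write $x=(x_1,x_2)$ with $|x_1|^2+|x_2|^2=1$; the functional realizing $(x,f)\in\Pi$ is $f(y)=\overline{x_1}y_1+\overline{x_2}y_2$, so $f(T^nx)=r^n(|x_1|^2z^n+|x_2|^2w^n)$. Put $a=|x_1|^2$, $b=|x_2|^2=1-a$. If $a=0$ or $a=1$ this is $\{r^n z^n\}$ or $\{r^n w^n\}$, which lies on a single logarithmic spiral (a nowhere dense set, since $z$ resp.\ $w$ being a fixed unimodular number makes $r^nz^n$ have prescribed modulus $r^n$ for each $n$). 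If $0<a<1$ and $a\neq\frac12$, then $\NO(T,x)=\{r^n(az^n+bw^n)\}$; I claim this is nowhere dense. The point is that $az^n+bw^n$ has the form $z^n(a+b(w/z)^n)$, and since $a\neq b$ the quantity $a+b\zeta$ for $\zeta\in\T$ traces a circle not centered at $0$, so $|az^n+bw^n|=|a+b(w/z)^n|$ ranges in a \emph{compact interval bounded away from $0$ and from $\infty$} — in fact in $[|a-b|,a+b]$ — whence $|r^n(az^n+bw^n)|\in[r^n|a-b|,r^n(a+b)]$, and these annuli, because $\log r$ is fixed and the interval has fixed finite logarithmic length, can only cover the plane if that length is "large enough"; by choosing $r$ close enough to $1$ relative to the separation of $a$ from $\frac12$... — no, that won't be uniform in $a$. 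The correct argument is instead: the modulus $|r^n(az^n+bw^n)|$, for each fixed $n$, is determined, and the set of achievable moduli is $\{r^n|a+b(w/z)^n|:n\}$; since $\{(w/z)^n\}$ is dense in $\T$ but the sequence $n\mapsto|a+b(w/z)^n|$ is "generic," density of the orbit would force (via a Baire/measure argument on the annuli) the arguments to equidistribute in a way incompatible with the single parameter $n$ unless $a=b$. I will formalize this by showing the orbit's closure has empty interior because it is contained in a countable union of real-analytic arcs; the only case where these arcs fill a region is $a=b=\frac12$, where the two radial contributions conspire — and in that case $\NO(T,x)=\{r^n\cdot\frac12(z^n+w^n)\}=\NO(T,x_0)$ exactly.

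The main obstacle I anticipate is the rigidity clause, specifically proving that $\NO(T,x)$ is nowhere dense for \emph{all} $a\in(0,1)\setminus\{\frac12\}$ \emph{simultaneously} with the same $(r,z,w)$, while also arranging $\overline{\NO(T,x_0)}=\overline U$ for the single value $a=\frac12$; the two requirements pull in opposite directions (one wants $\frac12(z^n+w^n)$ scaled by $r^n$ to be "as spread out as possible," the others want $az^n+bw^n$ scaled by $r^n$ to be "as confined as possible"). I expect this is resolved by a careful choice of $r$ (e.g.\ with $\log r$ irrational of a Diophantine type matched to $z,w$) so that the $a=\frac12$ orbit fills $\overline U$ through the exact cancellation/reinforcement of the two terms near $n$ where $z^n\approx -w^n$ (there $z^n+w^n$ is small, giving points near $0$ and, after scaling by the unbounded $r^n$, points everywhere in the relevant directions), whereas for $a\neq b$ the sum $az^n+bw^n$ never comes near $0$, so its modulus stays in a fixed proportion band $[|a-b|,a+b]$ and the scaled orbit sits in a countable union of annuli $r^n\cdot[|a-b|,a+b]$ whose union, crucially, \emph{together with} the restriction that within annulus $n$ only the single argument $\arg(az^n+bw^n)$ is hit, is nowhere dense. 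I would prove this last "single argument per annulus" nowhere-density via the observation that $\overline{\NO(T,x)}$ then meets each circle $|\zeta|=\rho$ (for $\rho$ not a limit of the $r^n|a+b(w/z)^n|$) in at most finitely many points, hence has empty interior by a Fubini-type argument on polar coordinates.
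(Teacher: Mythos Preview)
Your setup is correct and the rigidity clause is essentially right, though you overcomplicate it. For $x=(x_1,x_2)\in S(\C^2)$ with $a=|x_1|^2$, $b=|x_2|^2=1-a$, the numerical orbit is $\{r^n(az^n+bw^n):n\in\Z_+\}$. If $a\neq b$ then $|az^n+bw^n|\geq |a-b|>0$, so $|r^n(az^n+bw^n)|\geq r^n|a-b|\to\infty$; a sequence escaping to $\infty$ has no accumulation points in $\C$ and is therefore nowhere dense. No annuli or Fubini are needed. If $a=b=\tfrac12$ the orbit equals $\NO(T,x_0)$ identically. This is exactly what the paper does.

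The genuine gap is in your construction of $(r,z,w)$. Your plan relies on equidistribution and independence of $\pi,\log r,\alpha,\beta$ over $\Q$, but this cannot produce bounded orbit points. The modulus of the $n$-th orbit point is $\tfrac12 r^n|z^n+w^n|$, and $r^n\to\infty$; to land anywhere in a bounded set you need $|z^n+w^n|$ of order $r^{-n}$, i.e.\ $z^n$ must approximate $-w^n$ to \emph{exponential} precision. Equidistribution gives nothing of the sort: indeed, Proposition~\ref{znwn} shows that for \emph{Lebesgue almost every} $(z,w)\in\T^2$ one has $r^n|z^n+w^n|\to\infty$, so the orbit is nowhere dense. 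Your remark about ``$n\log r\bmod 2\pi$'' is a non sequitur---$r^n$ is a modulus, not a phase, and no congruence condition on $n\log r$ makes $r^n$ bounded.

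What is actually required is a Liouville-type construction: one must \emph{design} $z$ (and a companion rotation $u$) by an explicit rapidly convergent series so that along a sparse sequence $\{\nu_k\}$ one has $1+z^{\nu_k}$ of size exactly $\sim r^{-\nu_k}$ with a prescribed argument, forcing $r^{\nu_k}u^{\nu_k}(1+z^{\nu_k})$ to converge to preassigned targets $w_k$ chosen dense in $2U$; and simultaneously one must control the off-subsequence behaviour, proving $\liminf_{n\notin\{\nu_k\}}|1+z^n|^{1/n}>r^{-1}$ so that all other orbit points escape to $\infty$. Both halves are delicate number-theoretic estimates; the paper carries them out in Section~2.1 (Lemmas~\ref{esti1} and~\ref{esti2}) by writing $z=e^{\pi i\tau}$ with $\tau=\sum m_k/\nu_k$ for recursively defined $\nu_k$ growing like an iterated exponential, and choosing the integers $m_k$ to hit the targets. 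Your proposal contains no mechanism for either half.
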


\begin{proof} Let $p$ be an odd prime number. Choose a sequence ${\bf w}=\{w_k\}_{k\in\N}$ in $\C$ such that $\frac1k\leq |w_k|\leq k$ for every $k\in\N$ and $\{w_k:k\geq k_0\}$ is a dense subset of $2U$ for some $k_0\in\N$. By Lemmas~\ref{esti1} and~\ref{esti2}, (\ref{linuk11}) and (\ref{linnuk}) hold for $\nu_k=\nu_k(p)$, $z=z(p,{\bf w})$ and $u=u(p,{\bf w})$ defined in (\ref{nuk}), (\ref{tauz}) and (\ref{theu}) respectively. Now let $T\in L(\C^2)$ be the diagonal operator with the numbers $pu$ and $puz$ on the diagonal.
If $x=(a,b)\in S(\C^2)$, then 
$$
\langle T^nx,x\rangle =p^nu^n(|a|^2+|b|^2z^n)\ \ \text{for every $n\in\Z_+$}.
$$
If $|a|\neq |b|$, the above display yields $|\langle T^nx,x\rangle|\geq p^n||a|^2-|b|^2|\to \infty$ and $\NO(T,x)$ is nowhere dense in $\C$. If $|a|=|b|$, then $|a|=|b|=\frac1{\sqrt 2}$ and the above display reads
$$
2\langle T^nx,x\rangle =p^nu^n(1+z^n)\ \ \text{for every $n\in\Z_+$}.
$$
By (\ref{linuk11}), $\langle T^{\nu_k}x,x\rangle-\frac{w_k}2\to 0$ as $k\to \infty$. Since $\{w_k/2:k\geq k_0\}$ is a dense subset of $U$ and $U$ is the interior of a closed set, $U$ is the interior of the closure of $\{\langle T^{\nu_k}x,x\rangle:k\in\N\}$. By the above display and (\ref{linnuk}),  $|\langle T^nx,x\rangle|\to \infty$ as $n\to\infty$, $n\notin\{\nu_k:k\in\N\}$. Hence $U$ is the interior of the closure of $\{\langle T^{n}x,x\rangle:n\in\N\}=\NO(T,x)$. It remains to note that if $|a|=|b|$, then $\NO(T,x)$ does not depend on $x$.
\end{proof}

\subsection{Proof of Theorem~\ref{aq1}}

Let $p$ be an odd prime number. Choose a sequence ${\bf w}=\{w_k\}_{k\in\N}$ in $\C$ such that $\frac1k\leq |w_k|\leq k$ for every $k\in\N$ and $\{w_k:k\in\N\}$ is dense in $\C$. By Lemmas~\ref{esti1} and~\ref{esti2}, (\ref{linuk11}) and (\ref{linnuk}) hold for $\nu_k=\nu_k(p)$, $z=z(p,{\bf w})$ and $u=u(p,{\bf w})$ defined in (\ref{nuk}), (\ref{tauz}) and (\ref{theu}) respectively. Now let $T\in L(\C^2)$ be the diagonal operator with the numbers $pu$ and $puz$ on the main diagonal.

First, if $x_0=\bigl(\frac1{\sqrt 2},\frac1{\sqrt 2}\bigr)\in S(\C^2)$, then
$2\langle T^nx_0,x_0\rangle =p^nu^n(1+z^n)$ for every $n\in\Z_+$. By (\ref{linuk11}), $\langle T^{\nu_k}x_0,x_0\rangle-\frac{w_k}2\to 0$ as $k\to \infty$. Since $\{w_k:k\in\N\}$ is dense in $\C$, $\{\langle T^{\nu_k}x_0,x_0\rangle:k\in\N\}$ is dense in $\C$. Hence $\NO(T,x)$ is dense in $\C$ and $T\in \NH(\C^2)$.

Let $x=(a,b)\in S(\C^2)$. If $|a|\neq |b|$, as in the proof of Proposition~\ref{pno}, we have $|\langle T^nx,x\rangle|\geq p^n||a|^2{-}|b|^2|\to \infty$ and $\NO(T,x)$ is nowhere dense in $\C$. Hence $\NO(T^2,x)$, being a subset of $\NO(T,x)$,  is nowhere dense in $\C$. If $|a|=|b|$, then $2\langle (T^{2})^nx,x\rangle =p^{2n}u^{2n}(1+z^{2n})$ for $n\in\Z_+$. Since each $\nu_k$ is odd, (\ref{linnuk}) guarantees that
$\liminf\limits_{n\to\infty}|\langle (T^{2})^nx,x\rangle|^{1/2n}=p\liminf\limits_{n\to\infty}|1+z^{2n}|^{1/2n}\geq p^{2/3}>1$. Hence $|\langle (T^{2})^nx,x\rangle|\to \infty$ and $\NO(T^2,x)$ is nowhere dense in $\C$. Thus $T^2\notin \NH(\C^2)$. The proof of Theorem~\ref{aq1} is complete.

\subsection{Proof of Theorem~\ref{aq2}}

Let $p$ and $q$ be two odd prime numbers such that $p>q^{3/2}$, $\nu_k=\nu_k(p)$ and $\nu'_k=\nu_k(q)$ be as defined in (\ref{nuk}). Let also ${\bf w}=\{w_k\}_{k\in\N}$ and ${\bf w}'=\{w'_k\}_{k\in\N}$ be two sequences of complex numbers such that $\frac1k\leq |w_k|,|w'_k|\leq k$ for every $k\in\N$, $\{w_k:k\in\N\}$ is a dense subset of the half-plane $\{t\in\C:{\rm Re}\,t>0\}$ and $\{w'_k:k\in\N\}$ is a dense subset of the half-plane $\{t\in\C:{\rm Re}\,t<0\}$. Let also $z=z(p,{\bf w})$, $z'=z(q,{\bf w}')$, $u=u(p,{\bf w})$ and $u'=u(q,{\bf w}')$ be as defined in (\ref{tauz}) and (\ref{theu}) and let $T$ be the diagonal operator on $\C^4$ with the numbers $pu$, $puz$, $qu'$ and $qu'z'$ on the diagonal.

First, observe that if $x,y\in S(\C^4)$ are given by $x=\bigl(\frac1{\sqrt 2},\frac1{\sqrt 2},0,0\bigr)$ and
$y=\bigl(0,0,\frac1{\sqrt 2},\frac1{\sqrt 2}\bigr)$, then
$$
2\langle T^nx,x\rangle =p^nu^n(1+z^n)\ \ \text{and}\ \ 2\langle T^ny,y\rangle =q^n(u')^n(1+(z')^n)\ \ \text{for every $n\in\Z_+$}.
$$
By Lemma~\ref{esti2}, $\langle T^{\nu_k}x,x\rangle-\frac{w_k}{2}\to 0$  and $\langle T^{\nu'_k}y,y\rangle-\frac{w'_k}{2}\to 0$. Since $\{\frac{w_k}{2}:k\in\N\}\cup \{\frac{w'_k}{2}:k\in\N\}$ is dense in $\C$, $\NO(T,x)\cup \NO(T,y)$ is dense in $\C$. Hence the union of 2 numerical orbits of $T$ corresponding to elements of $\Pi(\C^4)$ is dense in $\C$. The proof will be complete if we prove that $T$ is not numerically hypercyclic. In order to do that, let
$x=(a,b,c,d)$ be an arbitrary element of $S(\C^4)$. It remains to show that $\NO(T,x)$ is not dense in $\C$. A direct computation yields
\begin{equation}\label{hdno}
\langle T^nx,x\rangle=p^nu^n(|a|^2+|b|^2z^n)+q^n(u')^n(|c|^2+|d|^2(z')^n)\ \ \text{for every $n\in\Z_+$}.
\end{equation}

{\bf Case 1:} $|a|\neq |b|$. By (\ref{hdno}), $|\langle T^nx,x\rangle|\geq ||a|^2-|b|^2|p^n(1+O(q^n/p^n))\to \infty$ as $n\to\infty$
since $q<p$. Hence $\NO(T,x)$ is non-dense in $\C$.

{\bf Case 2:} $|a|=|b|\neq 0$ and $|c|+|d|\neq 0$. Since $p>q^{3/2}$, we can pick $r\in (q,p^{2/3})$. By (\ref{linnuk}), there is $c>0$ such that
$p^n|1+z^n|\geq cr^n$ for every $n\in\N\setminus\{\nu_k:k\in\N\}$. According to (\ref{hdno}),
we have
$$
\text{$|\langle T^nx,x\rangle|\geq |a|^2cr^n-2q^n\to \infty$ as $n\to\infty$, $n\notin\{\nu_k:k\in\N\}$.}
$$
By (\ref{linuk}), $p^{\nu_k}|1+z^{\nu_k}|-|w_k|\to 0$. Since $|w_k|\leq k$, $p^{\nu_k}|1+z^{\nu_k}|=O(k)$ as $k\to\infty$. Pick $s\in(1,q^{2/3})$. If $|c|=|d|\neq 0$, the obvious observation that $\nu'_k\neq \nu_m$ for every $k,m\in\N$ and (\ref{linnuk}) imply that there is $c_1>0$ such that $|q^{\nu_k}(u')^{\nu_k}(|c|^2+|d|^2(z')^{\nu_k})|=q^{\nu_k}|c|^2|1+|(z')^{\nu_k})|\geq c_1s^{\nu_k}$ for every $k\in\N$. If $|c|\neq |d|$, then $q^n(u')^n(|c|^2+|d|^2(z')^n)\geq ||c|^2-|d|^2|q^n$ and therefore
$|q^{\nu_k}(u')^{\nu_k}(|c|^2+|d|^2(z')^{\nu_k})|\geq ||c|^2-|d|^2|q^{\nu_k}\geq ||c|^2-|d|^2|s^{\nu_k}$. Thus in any case there is $c_2>0$ such that $|q^{\nu_k}(u')^{\nu_k}(|c|^2+|d|^2(z')^{\nu_k})|\geq c_2s^{\nu_k}$ for every $k\in\N$. Combining these estimates with (\ref{hdno}), we get 
$|\langle T^{\nu_k}x,x\rangle|\geq c_2s^{\nu_k}+O(k)\to \infty$ as $k\to\infty$. 
Then by the above display $|\langle T^nx,x\rangle|\to \infty$ as $n\to\infty$ and $\NO(T,x)$ is non-dense in $\C$.

{\bf Case 3:} $a=b=0$ and $|c|\neq |d|$. By (\ref{hdno}), $|\langle T^{n}x,x\rangle|\geq ||c|^2-|d|^2|q^n\to \infty$ as $n\to\infty$.
Hence $\NO(T,x)$ is non-dense in $\C$.

{\bf Case 4:} $a=b=0$ and $|c|=|d|$.
By (\ref{hdno}), $2\langle T^{n}x,x\rangle=q^n(u')^n(1+(z')^n)$ for $n\in\N$. Then by (\ref{linnuk}), the lower limit of $|\langle T^{n}x,x\rangle|^{1/n}$ as $n\to\infty$, $n\notin\{\nu'_k:k\in\N\}$ is at least $q^{2/3}>1$. Hence 
$|\langle T^{n}x,x\rangle|\to \infty$ as $n\to\infty$, $n\notin\{\nu'_k:k\in\N\}$.
On the other hand, according to Lemma~\ref{esti2}, $\lim\limits_{k\to\infty}\Bigl(\langle T^{\nu'_k}x,x\rangle-\frac{w'_k}{2}\Bigr)=0$ .
Since each $w'_k$  is in the left half-plane, $\NO(T,x)$ has no accumulation points in the open right half-plane. Thus $\NO(T,x)$ is non-dense in $\C$.

{\bf Case 5:} $|a|=|b|$ and $c=d=0$.
By (\ref{hdno}), $2\langle T^{n}x,x\rangle=p^nu^n(1+z^n)$ for every $n\in\N$. Then according to (\ref{linnuk}), the lower limit of $|\langle T^{n}x,x\rangle|^{1/n}$ as $n\to\infty$, $n\notin\{\nu_k:k\in\N\}$ is at least $p^{2/3}>1$. Hence $|\langle T^{n}x,x\rangle|\to \infty$ as $n\to\infty$, $n\notin\{\nu_k:k\in\N\}$.
On the other hand, by Lemma~\ref{esti2}, $\lim\limits_{k\to\infty} \Bigl(\langle T^{\nu_k}x,x\rangle-\frac{w_k}{2}\Bigr)=0$. Since
each $w_k$  is in the right half-plane, $\NO(T,x)$ has no accumulation points in the open left half-plane. Hence
$\NO(T,x)$ is non-dense in $\C$. The proof of Theorem~\ref{aq2} is complete.

\section{Dense sums of powers\label{s3}}

We need a number of technical results. First, we state the Universality Theorem, which will be repeatedly used in this and in the subsequent sections. A topological space $X$ is called {\it Baire} if the intersection of countably many dense open subsets of $X$ is always dense in $X$. Recall that a {\it Polish space} is a separable topological space $X$, whose topology can be defined by a metric $d$ such that the metric space $(X,d)$ is complete. By the Baire Theorem, every Polish space is Baire. The following result is a particular case of the Universality Theorem \cite[p.~348--349]{ge1}, see also \cite{bama,gp-book}.

\begin{thmu}
Let $X$ be a Baire topological space, $Y$ be a Polish space and ${\cal F}=\{f_a:a\in A\}$ be a collection of continuous
maps $f_a:X\to Y$ such that the set $\{(x,f_a(x)):x\in X,\ a\in A\}$ is dense in $X\times Y$. Then
$\bigl\{x\in X:\{f_a(x):a\in A\}\ \ \text{is dense
in $Y$}\bigr\}$ is a dense $G_\delta$-subset of $X$. In particular, there is $x\in X$ such that $\{f_a(x):a\in A\}$ is dense in $Y$.
\end{thmu}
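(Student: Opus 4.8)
The plan is to invoke the Baire category theorem in the space $X$. Set $G=\{x\in X:\{f_a(x):a\in A\}\text{ is dense in }Y\}$ and aim to write $G$ as a countable intersection of dense open sets. First I would fix a countable base $\{V_n\}_{n\in\N}$ for the topology of $Y$, which exists because $Y$ is Polish, hence second countable. For each $n$ define
$$
U_n=\{x\in X:f_a(x)\in V_n\text{ for some }a\in A\}=\bigcup_{a\in A}f_a^{-1}(V_n).
$$
Since each $f_a$ is continuous and $V_n$ is open, $f_a^{-1}(V_n)$ is open in $X$, so $U_n$ is open. The next step is to observe that $x\in G$ if and only if for every nonempty open $V\subseteq Y$ there is $a\in A$ with $f_a(x)\in V$, and since every such $V$ contains some basic $V_n$ meeting it, this is equivalent to $x\in U_n$ for all $n$. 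Thus $G=\bigcap_{n\in\N}U_n$.

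The main point — and the only place the density hypothesis on $\{(x,f_a(x))\}$ enters — is to show each $U_n$ is dense in $X$. Fix $n$ and a nonempty open $W\subseteq X$; I must produce a point of $W\cap U_n$. The set $W\times V_n$ is nonempty and open in $X\times Y$, so by hypothesis it contains a point of the form $(x,f_a(x))$ with $x\in X$, $a\in A$. Then $x\in W$ and $f_a(x)\in V_n$, so $x\in W\cap U_n$. Hence $U_n$ is dense. I do not expect any real obstacle here; the argument is the standard transversality-to-Baire-category passage, and the hypotheses have been arranged to make each verification immediate.

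Finally, since $X$ is Baire and $G=\bigcap_n U_n$ is a countable intersection of dense open sets, $G$ is a dense $G_\delta$ in $X$; in particular $G\neq\varnothing$, which gives the last sentence of the statement. The only subtlety worth a word in the write-up is the reduction ``dense in $Y$ $\iff$ meets every basic open set $V_n$'': density means meeting every nonempty open set, and every nonempty open set contains some $V_n$, so meeting all the $V_n$ suffices; conversely it is necessary since each $V_n$ is itself open and nonempty (empty members of the base, if any, may simply be discarded). This completes the proof.
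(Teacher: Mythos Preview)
Your proof is correct and is exactly the standard argument for this universality criterion. Note that the paper does not actually prove Theorem~U: it is stated as a known result with a citation to \cite{ge1} (and to \cite{bama,gp-book}), so there is no ``paper's own proof'' to compare against. Your write-up is the customary Baire-category proof and would serve well as the supplied argument.
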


\subsection{Finite sums}

\begin{lemma}\label{wnh21} Let $z,w\in\T$ be independent and
$\{R_n\}_{n\in\N}$ be a sequence of positive numbers such that $R_n\to\infty$. Then
$$
\bigl\{(a,b)\in\T^2:\{R_n(az^n+bw^n):n\in\Z_+\}\ \ \text{is dense in $\C$}\bigr\}\ \ \ \text{is a dense $G_\delta$-subset of $\T^2$.}
$$
\end{lemma}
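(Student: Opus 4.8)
The plan is to derive the statement from the Universality Theorem~U, applied with $X=\T^2$, $Y=\C$, and the sequence of continuous maps $f_n\colon\T^2\to\C$, $f_n(a,b)=R_n(az^n+bw^n)$, $n\in\Z_+$. Since $\T^2$ is a compact metric space it is Polish, hence Baire, and the set in the lemma is exactly $\{(a,b)\in\T^2:\{f_n(a,b):n\in\Z_+\}\text{ is dense in }\C\}$. So, by Theorem~U, it suffices to prove that $G=\{((a,b),f_n(a,b)):(a,b)\in\T^2,\ n\in\Z_+\}$ is dense in $\T^2\times\C$.

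The engine of the density argument is the following elementary remark: for every $\gamma\in\C$ with $0<|\gamma|\le 1$ the circle $\T$ meets its translate $\gamma-\T$; concretely, putting $\hat\gamma=\gamma/|\gamma|$ and $\zeta_\gamma=\hat\gamma\,e^{i\arccos(|\gamma|/2)}$, one has $\zeta_\gamma\in\T$ and $|\gamma-\zeta_\gamma|^2=|\gamma|^2-2\,\Re(\overline{\gamma}\zeta_\gamma)+1=|\gamma|^2-|\gamma|^2+1=1$, so $\gamma-\zeta_\gamma\in\T$ as well. Moreover, if $\hat\gamma$ is held fixed and $|\gamma|\to 0$, then $\zeta_\gamma\to i\hat\gamma$ and $\gamma-\zeta_\gamma\to -i\hat\gamma$.

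Now fix $(a_0,b_0,c)\in\T^2\times\C$; since such points with $c\neq0$ are dense in $\T^2\times\C$, we may assume $c\neq0$, and we set $\hat c=c/|c|$. Let $\epsilon>0$. As $z,w$ are independent, $\{(z^n,w^n):n\in\Z_+\}$ is dense in $\T^2$, and so is every tail $\{(z^n,w^n):n\ge N\}$, because multiplication by the fixed element $(z^N,w^N)$ is a homeomorphism of $\T^2$ carrying the first set onto the second. Since $R_n\to\infty$, we may choose $n$ so large that $R_n\ge|c|$, that with $\gamma=c/R_n$ (so that $\hat\gamma=\hat c$) we have $|\zeta_\gamma-i\hat c|<\epsilon/2$ and $|(\gamma-\zeta_\gamma)+i\hat c|<\epsilon/2$, and that $|z^n-i\hat c\,a_0^{-1}|<\epsilon/2$ and $|w^n+i\hat c\,b_0^{-1}|<\epsilon/2$. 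Set $\zeta=\zeta_\gamma$, $\xi=\gamma-\zeta$, $a=\zeta z^{-n}$ and $b=\xi w^{-n}$; then $a,b\in\T$ and $f_n(a,b)=R_n(\zeta+\xi)=R_n\gamma=c$. Using $|u^{-1}-v^{-1}|=|u-v|$ for $u,v\in\T$ we get $|a-a_0|\le|\zeta-i\hat c|+|z^n-i\hat c\,a_0^{-1}|<\epsilon$ and, likewise, $|b-b_0|<\epsilon$. Hence $G$ is dense in $\T^2\times\C$, and Theorem~U yields the lemma.

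The only step that is not a routine computation is the choice of the parametrization $a=\zeta z^{-n}$, $b=\xi w^{-n}$, which makes $f_n(a,b)$ land exactly on $c$. A more naive approach---perturbing $a,b$ near fixed values and analysing the Minkowski sum $z^nU_1+w^nU_2$ of two small arcs---runs into trouble precisely because $c/R_n\to0$ forces $az^n$ and $bw^n$ to be nearly antiparallel, so that this Minkowski sum is a very thin set whose behaviour under the dilation by $R_n$ is delicate. Solving $az^n+bw^n=c/R_n$ outright avoids this, after which the only thing left to check is that one can simultaneously make $R_n$ large and $(z^n,w^n)$ close to the prescribed point of $\T^2$---and this is immediate from $R_n\to\infty$ together with the density of the tails of $\{(z^n,w^n):n\in\Z_+\}$.
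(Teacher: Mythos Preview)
Your proof is correct and follows essentially the same route as the paper: reduce to Theorem~U, then for given $(\alpha,\beta,v)$ exactly solve $az^n+bw^n=v/R_n$ with $a,b\in\T$ and use the density of the tails of $\{(z^n,w^n)\}$ in $\T^2$ to force $(a,b)\to(\alpha,\beta)$. The only cosmetic difference is your parametrization of the intersection $\T\cap(\gamma-\T)$ via $\zeta_\gamma=\hat\gamma\,e^{i\arccos(|\gamma|/2)}$, whereas the paper writes the same point as $s\,y$ with $y=t+i\sqrt{1-t^2}$ and $t=|v|/(2R_n)$; both yield $\zeta\to i\hat c$ as $|\gamma|\to 0$ and the rest of the argument is identical.
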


\begin{proof} Obviously, $f_n(a,b)=R_n(az^n+bw^n)$ is a continuous map from $\T^2$ to $\C$. By Theorem~U, the proof will be complete if we verify that
$$
\Lambda=\{(a,b,R_n(az^n+bw^n)):(a,b)\in\T^2,\ n\in\Z_+\}\ \ \text{is a dense subset of $\T\times\T\times\C$.}
$$
Pick $\alpha,\beta\in\T$ and $v\in\C\setminus\{0\}$ and let $s=\frac{v}{|v|}$ and $t_n=\frac{|v|}{2R_n}$. Clearly, $t_n\to 0$ and $v=2R_nst_n$ for each $n\in\N$. Since $z$ and $w$ are independent, $\{(z^n,w^n):n\in\Z_+\}$ is dense in $\T^2$ and there is a strictly increasing sequence $\{n_k\}_{k\in\N}$ of positive integers such that $t_{n_k}<1$ for each $k$,
$z^{n_k}\to is\alpha^{-1}$ and $w^{n_k}\to -is\beta^{-1}$ as $k\to\infty$. Set $y_k=t_{n_k}+i\sqrt{1-t_{n_k}^2}\in \T$, $a_k=sz^{-n_k}y_k$ and $b_k=sw^{-n_k}y_k^{-1}$. Then
$R_{n_k}(a_kz^{n_k}+b_kw^{n_k})=2st_{n_k}R_{n_k}=v$ and therefore $(a_k,b_k,v)\in\Lambda$ for each $k\in\N$. Since $y_k\to i$, $z^{n_k}\to is\alpha^{-1}$ and $w^{n_k}\to -is\beta^{-1}$,
 we have $a_k\to \alpha$ and $b_k\to\beta$. Hence the sequence $\{(a_k,b_k,v)\}$ of elements of $\Lambda$ converges to $(\alpha,\beta,v)$. Since $\alpha,\beta\in\T$ and $v\in\C\setminus\{0\}$ are arbitrary, $\Lambda$ is dense in $\T\times\T\times\C$.
\end{proof}

\begin{lemma}\label{wnh31} Let $z,w\in\T$ be independent, $k\in\N$, $m\in\Z$,  and
$\{R_n\}_{n\in\N}$, $\{r_n\}_{n\in\N}$  be two sequences of positive numbers such that $R_n\to\infty$, $r_n\to\infty$ and $\frac{R_n}{r_n}\to\infty$. Then
$$
\bigl\{(a,b)\in\T^2:\{R_nz^{mn}(az^{kn}+a^{-1}z^{-kn})+br_nw^n:n\in\Z_+\}\ \ \text{is dense in $\C$}\bigr\}\ \ \text{is a dense $G_\delta$-subset of $\T^2$.}
$$
\end{lemma}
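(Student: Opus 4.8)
The plan is to apply Theorem~U with the Baire space $\T^2$ and the Polish space $\C$, just as in the proof of Lemma~\ref{wnh21}. Define the continuous maps
$$
f_n(a,b)=R_nz^{mn}(az^{kn}+a^{-1}z^{-kn})+br_nw^n,\qquad (a,b)\in\T^2,\ n\in\Z_+.
$$
The whole task reduces to verifying that the set
$$
\Lambda=\bigl\{(a,b,f_n(a,b)):(a,b)\in\T^2,\ n\in\Z_+\bigr\}
$$
is dense in $\T\times\T\times\C$. Once this is done, Theorem~U immediately yields that the set in question is a dense $G_\delta$-subset of $\T^2$.

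For the density of $\Lambda$, fix a target point $(\alpha,\beta,v)\in\T\times\T\times\C$; I may assume $v\neq 0$ since such points are dense in $\C$. Write $s=v/|v|$. The idea is to use the first summand $R_nz^{mn}(az^{kn}+a^{-1}z^{-kn})$ — whose size is comparable to $R_n$ and which dominates because $R_n/r_n\to\infty$ — to hit a large prescribed complex number, while using the second summand $br_nw^n$ only as a small correction that simultaneously lets us steer $b$ toward $\beta$. Concretely, note that $az^{kn}+a^{-1}z^{-kn}=2\,\re(az^{kn})\in[-2,2]$ when $a\in\T$ and $z\in\T$; more usefully, writing $az^{kn}=e^{i\phi}$ the first summand equals $2R_nz^{mn}\cos\phi$, so by choosing $\phi$ appropriately we can make $2R_n\cos\phi$ equal to any prescribed value in $[-2R_n,2R_n]$, and then the argument $z^{mn}$ rotates it. Since $z,w$ are independent, $\{(z^n,w^n):n\in\Z_+\}$ is dense in $\T^2$, so along a suitable increasing sequence $n_j$ we can force $z^{mn_j}$ to converge to $s$ (note $z^m\in\T$, and if $z^m=1$ this is automatic; in general one uses density of $\{z^{n}\}$ in $\T$, passing to the appropriate residues, to get $z^{mn_j}\to s$), and simultaneously force $w^{n_j}$ to converge to a prescribed element of $\T$. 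Having arranged $z^{mn_j}\to s$, pick $\phi_j$ with $2R_{n_j}\cos\phi_j=|v|$ (possible once $R_{n_j}>|v|/2$), set $a_j=e^{i\phi_j}z^{-kn_j}$, and choose $b_j\in\T$ so that the correction term $b_jr_{n_j}w^{n_j}$ pushes the total to exactly $v$: this is solvable because $r_{n_j}>0$ gives us a full circle of radius $r_{n_j}$ of adjustment, and $|v - R_{n_j}z^{mn_j}(a_jz^{kn_j}+a_j^{-1}z^{-kn_j})|$ can be kept comparable to $r_{n_j}$ by a small perturbation of $\phi_j$; then $b_j=(v-\text{(first summand)})/(r_{n_j}w^{n_j})$ lies on $\T$. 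Finally check $a_j\to\alpha$ and $b_j\to\beta$ by choosing the limits of $z^{kn_j}$ and $w^{n_j}$ along the subsequence accordingly (independence of $z,w$ gives enough freedom to prescribe the limits of $z^{kn_j}$, $z^{mn_j}$, $w^{n_j}$ jointly up to the obvious constraints).

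The main obstacle is the bookkeeping in the last step: unlike Lemma~\ref{wnh21}, here the first coordinate $a$ enters the value $f_n(a,b)$ through $az^{kn}+a^{-1}z^{-kn}$, which is real-valued up to the rotation $z^{mn}$, so $a$ alone cannot produce an arbitrary complex value — we genuinely need the interplay of the two summands and the independence of $z,w$ to get two real degrees of freedom for hitting $v$ while still steering $(a,b)$ to $(\alpha,\beta)$. The hypothesis $R_n/r_n\to\infty$ is exactly what makes this work: it guarantees the dominant term can overshoot any fixed $|v|$ while the corrective term $r_nw^n$ still has enough amplitude (since $r_n\to\infty$) to close the remaining gap and absorb the constraint $|b_j|=1$. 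I would organize the argument so that we first choose a strictly increasing $\{n_j\}$ along which $z^{kn_j}$, $z^{mn_j}$, and $w^{n_j}$ converge to prescribed targets (using that independence of $z,w$ makes $\{(z^n,w^n)\}$ equidistributed in $\T^2$, hence $\{(z^{kn},z^{mn},w^n)\}$ dense in the appropriate subtorus), and only then solve for $\phi_j$ and $b_j$; this keeps all the limits under control and makes the convergences $a_j\to\alpha$, $b_j\to\beta$, $f_{n_j}(a_j,b_j)=v$ transparent.
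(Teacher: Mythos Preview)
Your overall strategy is right: apply Theorem~U and reduce to showing that $\Lambda$ is dense in $\T^2\times\C$. The idea of perturbing so that the residual $v-\text{(first summand)}$ has modulus exactly $r_{n_j}$ is also the correct one, and in fact it forces the first summand to have magnitude comparable to $r_{n_j}$ (not to $|v|$, as your first formulation suggested), which is precisely how the paper proceeds.

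The gap is in your handling of the three torus coordinates $z^{kn_j}$, $z^{mn_j}$, $w^{n_j}$. You write ``Having arranged $z^{mn_j}\to s$'' and later propose to ``prescribe the limits of $z^{kn_j}$, $z^{mn_j}$, $w^{n_j}$ jointly up to the obvious constraints.'' But the constraint here is fatal for your plan: the orbit $\{(z^{kn},z^{mn},w^n):n\in\Z_+\}$ is dense only in the $2$-dimensional subtorus $\{(\zeta^k,\zeta^m,\omega):\zeta,\omega\in\T\}$, so once you prescribe the limit of $z^{kn_j}$ (needed for $a_j\to\alpha$), the limit of $z^{mn_j}$ is forced into a finite set, which will not contain your chosen $s=v/|v|$ in general. (Note also that your parenthetical ``if $z^m=1$ this is automatic'' is off: since $z$ has infinite order, $z^m=1$ only when $m=0$, and then $z^{mn_j}\equiv 1$, which is $s$ only if $v>0$.) So you cannot simultaneously steer $z^{mn_j}\to s$, $a_j\to\alpha$, and $b_j\to\beta$.

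The paper's fix is a small but decisive change of variables: instead of trying to control $(z^{kn},z^{mn},w^n)$, control the pair $(z^{kn},\,w^nz^{-mn})$. Since $z^k$ and $wz^{-m}$ are independent (an easy consequence of the independence of $z,w$ and $k\in\N$), this pair is genuinely dense in $\T^2$. One then writes, for each $n$, the real intersection point $s_n>0$ of the circle $z^{-mn}v+r_n\T$ with $\R$, the corresponding $u_n\in\T$ with $z^{-mn}v+r_nu_n=s_n$, and sets
\[
q_n=\frac{s_n}{2R_n}+i\sqrt{1-\frac{s_n^2}{4R_n^2}},\qquad a_n=q_nz^{-kn},\qquad b_n=-z^{mn}w^{-n}u_n.
\]
Then $f_n(a_n,b_n)=v$ identically, $q_n\to i$, $u_n\to 1$, and choosing a subsequence with $z^{kn_j}\to i\alpha^{-1}$ and $w^{n_j}z^{-mn_j}\to -\beta^{-1}$ gives $a_{n_j}\to\alpha$, $b_{n_j}\to\beta$. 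The point is that the unwanted factor $z^{mn}$ is absorbed into $b_n$, so it never has to be controlled separately.
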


\begin{proof} Clearly, $f_n(a,b)=R_nz^{mn}(az^{kn}+a^{-1}z^{-kn})+br_nw^n$ is a continuous map from $\T^2$ to $\C$. By Theorem~U, the proof will be complete if we verify that
$$
\Lambda=\{(a,b,R_nz^{mn}(az^{kn}+a^{-1}z^{-kn})+br_nw^n):(a,b)\in\T^2,\ n\in\Z_+\}\ \ \text{is a dense subset of $\T\times\T\times\C$}.
$$
Pick $\alpha,\beta\in\T$ and $v\in\C$. Since $r_n\to\infty$, for each sufficiently large $n$, the circle $z^{-mn}v+r_n\T$ intersects the real line at 2 points with exactly one $s_n$ being in $(0,\infty)$. Let $u_n\in\T$ be the unique number such that $z^{-mn}v+r_nu_n=s_n$. Obviously, $u_n\to1$ and $\frac{s_n}{r_n}\to1$. Denote $b_n=-z^{mn}w^{-n}u_n$. Since $\frac{s_n}{r_n}\to 1$ and $\frac{r_n}{R_n}\to 0$, we have $\frac{s_n}{R_n}\to 0$. Thus $\frac{s_n}{R_n}\leq 2$ for all sufficiently large $n$. For such $n$, we can consider $q_n=\frac{s_n}{2R_n}+\sqrt{1-\frac{s_n^2}{4R_n^2}}\,i\in\T$. Clearly, $q_n\to i$ as $n\to\infty$. Let $a_n=q_nz^{-kn}$. It is easy to verify that
$$
\text{$R_n(a_nz^{kn}+a_n^{-1}z^{-kn})+r_nb_nw^n=v$ for each sufficiently large $n$.}
$$
Since $z$ and $w$ are independent, $\{(z^{kn},w^nz^{-mn}):n\in\Z_+\}$ is dense in $\T^2$. Thus there is a strictly increasing sequence $\{n_j\}_{j\in\N}$ of positive integers such that $z^{kn_j}\to i\alpha^{-1}$ and $w^{n_j}z^{-mn_j}\to -\beta^{-1}$ as $j\to\infty$.
Since $u_n\to 1$ and $q_n\to i$, we have $b_{n_j}=-w^{-n_j}z^{-mn_j}u_{n_j}\to \beta$ and $a_{n_j}=q_{n_j}z^{-kn_j}\to \alpha$. By the above display, $(a_{n_j},b_{n_j},v)\in\Lambda$ and therefore $(\alpha,\beta,v)$ is the limit of a sequence of elements of $\Lambda$. Since $\alpha,\beta\in\T$ and $v\in\C$ are arbitrary, $\Lambda$ is dense in $\T\times\T\times\C$.
\end{proof}

\begin{lemma}\label{3point} Let $u,w,z\in\T$ be independent and $\{R_n\}_{n\in\N}$ be a sequence of positive numbers such that $R_n\to\infty$. Then there exist $a,b,c\in\R_+$ such that $\{R_n(au^n+bw^n+cz^n):n\in\Z_+\}$ is dense in $\C$.
\end{lemma}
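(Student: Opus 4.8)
The plan is to invoke Theorem~U exactly as in the proofs of Lemmas~\ref{wnh21} and~\ref{wnh31}, but with the three nonnegative real parameters $a,b,c$ varying over a compact simplex rather than over $\T^2$. Concretely, let $\Delta=\{(a,b,c)\in\R_+^3:a+b+c=1\}$, a compact metric space, hence Polish and in particular Baire. For $n\in\Z_+$ define the continuous map $f_n:\Delta\to\C$ by $f_n(a,b,c)=R_n(au^n+bw^n+cz^n)$. By Theorem~U, once I verify that the set
$$
\Lambda=\{((a,b,c),f_n(a,b,c)):(a,b,c)\in\Delta,\ n\in\Z_+\}
$$
is dense in $\Delta\times\C$, I get a dense $G_\delta$-set of $(a,b,c)\in\Delta$ for which $\{R_n(au^n+bw^n+cz^n):n\in\Z_+\}$ is dense in $\C$; picking any such triple and dividing through by $1$ (it already has nonnegative entries, as required) finishes the lemma.

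So the heart of the matter is the density of $\Lambda$. Fix a target $((a_0,b_0,c_0),v)\in\Delta\times\C$. The idea is to use the independence of $u,w,z$ to choose a subsequence $n_j$ along which $(u^{n_j},w^{n_j},z^{n_j})$ approaches a prescribed point of $\T^3$, and then to perturb $(a_0,b_0,c_0)$ by amounts of order $1/R_n\to0$ so that $f_{n_j}$ lands exactly on $v$. The point is that the complex number $au^n+bw^n+cz^n$, viewed as a function of real increments $(\delta_a,\delta_b,\delta_c)$ with $\delta_a+\delta_b+\delta_c=0$, has a two-real-dimensional range: the increment is $\delta_a u^n+\delta_b w^n+\delta_c z^n=\delta_a(u^n-z^n)+\delta_b(w^n-z^n)$, and for $n$ chosen so that $u^n,w^n,z^n$ are near three pairwise-distinct points of $\T$, the vectors $u^n-z^n$ and $w^n-z^n$ are $\R$-linearly independent in $\C$ with a uniform lower bound on the area of the parallelogram they span. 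Hence given the error $e_n:=v/R_n-(a_0u^n+b_0w^n+c_0z^n)\to0$ (note $R_n\to\infty$, and the bracketed term is bounded by $1$), I can solve for $(\delta_a,\delta_b)$ of size $O(|e_n|/R_n)\to0$; then $(a_0+\delta_a,b_0+\delta_b,c_0-\delta_a-\delta_b)$ lies in $\Delta$ for all large $j$ (since the perturbation tends to $0$ and I may first move $(a_0,b_0,c_0)$ slightly into the relative interior of $\Delta$, or simply handle boundary cases separately), and by construction $f_{n_j}$ of this triple equals $v$. Letting $j\to\infty$, the perturbed triples converge to $(a_0,b_0,c_0)$ while the value stays $v$, so $((a_0,b_0,c_0),v)\in\overline\Lambda$.

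The one technical nuisance I expect to be the main obstacle is the constraint $a,b,c\ge0$: the correction $(\delta_a,\delta_b,\delta_c)$ could in principle push a coordinate negative if $(a_0,b_0,c_0)$ sits on the boundary of $\Delta$. I would deal with this exactly as density arguments usually do — it suffices to prove $\Lambda$ is dense in $(\mathrm{relint}\,\Delta)\times\C$, because $\mathrm{relint}\,\Delta$ is dense in $\Delta$ and $\Lambda$ restricted there already has closure containing $(\mathrm{relint}\,\Delta)\times\C$, hence containing $\Delta\times\C$; for interior points the perturbations, being $o(1)$, keep all coordinates strictly positive for large $j$. The second routine point is the choice of the limiting point in $\T^3$: I need $u^{n_j}\to\zeta_1$, $w^{n_j}\to\zeta_2$, $z^{n_j}\to\zeta_3$ with $\zeta_1,\zeta_2,\zeta_3$ distinct (say $1,i,-1$), which is possible precisely because independence of $u,w,z$ makes $\{(u^n,w^n,z^n):n\in\Z_+\}$ dense in $\T^3$; distinctness of the $\zeta_i$ is what gives the uniform nondegeneracy of the pair $(u^{n_j}-z^{n_j},\,w^{n_j}-z^{n_j})$ used above. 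Everything else is the same bookkeeping as in Lemma~\ref{wnh21}.
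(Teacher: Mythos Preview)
Your overall architecture is the same as the paper's: parametrise by the open simplex, apply Theorem~U, and for density of $\Lambda$ use the independence of $u,w,z$ to drive $(u^{n_j},w^{n_j},z^{n_j})$ toward a prescribed triple and then perturb $(a_0,b_0,c_0)$ by solving a $2\times 2$ real linear system. The gap is in the claim that the ``error'' is small. You write
\[
e_n:=\frac{v}{R_n}-(a_0u^n+b_0w^n+c_0z^n)\to 0,
\]
justifying it by ``$R_n\to\infty$ and the bracketed term is bounded by $1$''. Boundedness is not enough: with your fixed choice $\zeta=(1,i,-1)$ one has $a_0u^{n_j}+b_0w^{n_j}+c_0z^{n_j}\to a_0-c_0+ib_0$, which is typically nonzero, so $e_{n_j}$ does \emph{not} tend to $0$. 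Consequently the correction $(\delta_a,\delta_b)$, which has size $O(|e_{n_j}|)$ (not $O(|e_{n_j}|/R_{n_j})$ as you wrote), does not tend to $0$, and the perturbed triple need not lie in the simplex, let alone converge back to $(a_0,b_0,c_0)$.

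The missing idea, and this is precisely what the paper supplies, is to make the limiting point $\zeta=(\zeta_1,\zeta_2,\zeta_3)\in\T^3$ depend on the base point: choose $\zeta$ with pairwise distinct coordinates and $a_0\zeta_1+b_0\zeta_2+c_0\zeta_3=0$. Then along any subsequence with $(u^{n_j},w^{n_j},z^{n_j})\to\zeta$ one gets $a_0u^{n_j}+b_0w^{n_j}+c_0z^{n_j}\to 0$, hence $e_{n_j}\to 0$, and your linear-system correction is now genuinely $o(1)$. Such a $\zeta$ exists whenever $a_0,b_0,c_0$ satisfy the triangle inequalities (three positive reals can be realised as side lengths of a planar triangle), in particular on an open neighbourhood of $(\tfrac13,\tfrac13,\tfrac13)$; running Theorem~U on that neighbourhood already yields the existence statement. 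With this single adjustment your argument becomes correct and coincides with the paper's.
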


\begin{proof} Let $A=\{(a,b)\in\R^2:a>0,\ b>0,\ a+b<1\}$. Obviously, $A$ is a non-empty open subset of $\R^2$. Clearly, $f_n:A\to\C$, $f_n(a,b)=R_n(au^n+bw^n+cz^n)$ is continuous. By Theorem~U, the proof will be complete if we show that
$$
\Lambda=\{(a,b,R_n(au^n+bw^n+(1-a-b)z^n)):n\in\Z_+,\ (a,b)\in A\}\ \ \text{is dense in $A\times\C$.}
$$

Clearly, $M=\{(\alpha,\beta,\gamma)\in\T^3:\alpha\neq\beta,\ \alpha\neq\gamma,\ \beta\neq\gamma\}$ is a dense open subset of $\T^3$. Consider
$$
F:\R^2\times\T^3\to\C,\qquad F(x,\xi)=x_1\xi_1+x_2\xi_2+(1-x_1-x_2)\xi_3.
$$
Differentiating $F$, we easily confirm that
\begin{equation}\label{rankk}
\text{for every $(x,\xi)\in A\times M$, the differentials of both $F(x,\cdot)$ at $\xi$ and $F(\cdot,\xi)$ at $x$ have rank $2$.}
\end{equation}
Furthermore, for every $x\in A$, $F(x,M)$ is an open subset of $\C$ containing $0$.

Let $x=(a,b)\in A$ and $y\in\C$. Pick $\xi=(\alpha,\beta,\gamma)\in M$ such that $F(x,\xi)=0$. According to (\ref{rankk}), a standard implicit function argument yields that
\begin{equation}\label{rankk0}
\begin{array}{l}\text{for every sequence $\{\xi_j=(\alpha_j,\beta_j,\gamma_j)\}_{j\in\N}$ in $\T^3$ convergent to $\xi$ and for every}\\ \text{sequence $\{z_j\}_{j\in\N}$ in $\C$
convergent to $0$, there is a sequence $\{x_j=(a_j,b_j)\}_{j\in\N}$ in $\R^2$}\\ \text{convergent to $x$ such that $F(x_j,\xi_j)=z_j$ for each sufficiently large $j\in\N$.}
\end{array}
\end{equation}

Since $u,w,z$ are independent, $\{(u^n,w^n,z^n):n\in\Z_+\}$ is dense in $\T^3$. Thus there is a strictly increasing sequence $\{n_j\}_{j\in\N}$ of positive integers such that $u^{n_j}\to\alpha$, $w^{n_j}\to\beta$ and $z^{n_j}\to\gamma$. By (\ref{rankk0}), we can pick a sequence $\{x_j=(a_j,b_j)\}_{j\in\N}$ in $\R^2$ convergent to $x$ such that $F(x_j,\xi_j)=\frac{y}{R_{n_j}}$ for each sufficiently large $j$, where $\xi_j=(u^{n_j},w^{n_j},z^{n_j})$. Since $A$ is open, $x_j\in A$ for each sufficiently large $j$. By the definition of $F$, $R_{n_j}(a_ju^{n_j}+b_jw^{n_j}+(1-a_j-b_j)z^{n_j})=y$ and therefore $(a_j,b_j,y)\in \Lambda$ for each sufficiently large $j$. Since $(a_j,b_j,y)\to (a,b,y)$ and $x=(a,b)\in A$, $y\in\C$ are arbitrary, $\Lambda$ is dense in $A\times\C$.
\end{proof}

\begin{lemma}\label{fnh1} Let $z,w\in\T$ be independent, $\{R_n\}_{n\in\N}$ and $\{r_n\}_{n\in\N}$ be sequences of positive numbers such that $R_n\to\infty$ and $\frac{R_n}{r_n}\to\infty$, $U$ be a non-empty open subset of $\C$ and $\phi:U\to\C$ be continuous. Then
$$
\bigl\{a\in U:\{(R_na+r_n\phi(a))z^n+(R_n\overline{a}+r_n\overline{\phi(a)})w^n:n\in\N\}\ \text{is dense in $\C$}\bigr\}\ \ \text{is a dense $G_\delta$-subset of $U$.}
$$
\end{lemma}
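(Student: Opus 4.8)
The plan is to obtain this from Theorem~U, in the same spirit as Lemmas~\ref{wnh21}--\ref{3point}. The set $U$, being open in the Polish space $\C$, is itself Polish and hence Baire, and the maps $f_n\colon U\to\C$ given by $f_n(a)=(R_na+r_n\phi(a))z^n+(R_n\overline a+r_n\overline{\phi(a)})w^n$ are continuous; so Theorem~U reduces the assertion to the statement that $\Lambda=\{(a,f_n(a)):a\in U,\ n\in\Z_+\}$ is dense in $U\times\C$. Write $c_n(a)=R_na+r_n\phi(a)$, so that $f_n(a)=c_n(a)z^n+\overline{c_n(a)}\,w^n$. The structural feature that shapes (and obstructs) the argument is that for a fixed $n$ the $\R$-linear map $c\mapsto cz^n+\overline c\,w^n$ on $\C$ has image contained in a line through the origin, because $|z^n|=|w^n|=1$; thus no single $f_n$ has dense range, and the density of $\Lambda$ can only be produced by varying $n$ while simultaneously steering the direction of this line onto the target value $v$ and steering the vector $c_n(a)$ onto the orthogonal direction.

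The algebraic input that makes the two steerings compatible is that, $z,w\in\T$ being independent, so are $zw$ and $z\overline w$: a relation $(zw)^p(z\overline w)^q=1$ means $z^{p+q}w^{p-q}=1$, which forces $p=q=0$. Hence $\{((zw)^n,(z\overline w)^n):n\in\Z_+\}$ is dense in $\T^2$. Now fix a target $(\alpha,v)\in U\times\C$ and $\epsilon>0$ with the closed disc $\{a:|a-\alpha|\leq\epsilon\}$ contained in $U$, and set $M=\max\{|\phi(a)|:|a-\alpha|\leq\epsilon\}$. Pick $\zeta_0,\omega_0\in\T$ with $\zeta_0\omega_0=(v/|v|)^2$ if $v\neq0$ and with $\zeta_0/\omega_0=-\overline\alpha^2/|\alpha|^2$ if $\alpha\neq0$ (dropping the corresponding constraint if $v=0$ or $\alpha=0$). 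Using the density above, choose $n_j\to\infty$ with $(zw)^{n_j}\to\zeta_0\omega_0$ and $(z\overline w)^{n_j}\to\zeta_0/\omega_0$; passing to a subsequence we also get $z^{n_j}\to\zeta$ and $w^{n_j}\to\omega$ for some $\zeta,\omega\in\T$. For each large $j$ let $\eta_j$ be the square root of $z^{n_j}w^{n_j}$ nearest to $v/|v|$ (so $\eta_j\to v/|v|$ when $v\neq0$), let $\xi_j=z^{n_j}/\eta_j$, and note that then $z^{n_j}=\eta_j\xi_j$ and $w^{n_j}=\eta_j\overline{\xi_j}$, whence
$$
f_{n_j}(a)=\eta_j\bigl(c_{n_j}(a)\xi_j+\overline{c_{n_j}(a)\xi_j}\bigr)=2\eta_j\,\Re\!\bigl(c_{n_j}(a)\xi_j\bigr).
$$
Moreover the choice of $\zeta_0/\omega_0$ is made precisely so that $\alpha\xi$, with $\xi=\lim_j\xi_j$, is purely imaginary, hence $\Re(\alpha\xi_j)\to0$.

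It remains to solve $f_{n_j}(a_j)=v$ approximately inside the disc. I would look for $a_j$ of the form $\alpha+t\overline{\xi_j}$ with $t\in(-\epsilon,\epsilon)$ and consider the continuous real function
$$
g_j(t)=\Re\!\bigl(c_{n_j}(\alpha+t\overline{\xi_j})\,\xi_j\bigr)=R_{n_j}\Re(\alpha\xi_j)+R_{n_j}t+r_{n_j}\Re\!\bigl(\phi(\alpha+t\overline{\xi_j})\,\xi_j\bigr).
$$
Since $R_n\to\infty$, $R_n/r_n\to\infty$, $\Re(\alpha\xi_j)\to0$, and the last summand is bounded in absolute value by $r_{n_j}M$, for all large $j$ one has $g_j(-\epsilon)<|v|/2<g_j(\epsilon)$, so by the intermediate value theorem there is $t_j\in(-\epsilon,\epsilon)$ with $g_j(t_j)=|v|/2$. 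Put $a_j=\alpha+t_j\overline{\xi_j}$; then $a_j\in U$, $|a_j-\alpha|<\epsilon$, and $f_{n_j}(a_j)=2\eta_j\cdot(|v|/2)=|v|\eta_j\to v$ (and $f_{n_j}(a_j)=0=v$ when $v=0$). Thus $(a_j,f_{n_j}(a_j))\in\Lambda$ tends to $(\alpha,v)$, and since $(\alpha,v)$ was arbitrary, $\Lambda$ is dense in $U\times\C$; Theorem~U then gives the claim. I expect the only genuinely delicate point to be exactly this double alignment forced by the rank-one degeneracy, which is why the joint independence of $zw$ and $z\overline w$—rather than merely of $z$ and $w$—is what the argument needs.
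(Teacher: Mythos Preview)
Your proof is correct and follows essentially the same route as the paper. The paper fixes independent $s,u\in\T$ with $z=us$, $w=us^{-1}$ once and for all and then exploits the density of $\{(s^n,u^n)\}$ in $\T^2$; your $\eta_j$ and $\xi_j$ are, up to sign, just $u^{n_j}$ and $s^{n_j}$ reconstructed per index (note $u^2=zw$ and $s^2=z\overline w$), so the ``double alignment via independence of $zw$ and $z\overline w$'' is the same mechanism. One small point: you assert $(a_j,f_{n_j}(a_j))\to(\alpha,v)$ having only recorded $|a_j-\alpha|<\epsilon$; in fact $t_j\to0$ follows immediately from dividing $g_j(t_j)=|v|/2$ by $R_{n_j}$, so $a_j\to\alpha$ holds, but you might state this (or simply note that $\epsilon$ is arbitrary, which already suffices for density).
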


\begin{proof} Clearly, it is enough to prove the statement with $U$ replaced by an arbitrary non-empty open set whose closure is contained in $U$. Thus, without loss of generality, we can assume that $\phi$ is bounded: $c=\sup\{|\phi(u)|:u\in U\}<\infty$. Since $z,w$ are independent, we can pick independent $s,u\in\T$ such that $z=us$ and $w=us^{-1}$. Then
$$
(R_na+r_n\phi(a))z^n+(R_n\overline{a}+r_n\overline{\phi(a)})w^n=u^nR_n\bigl((a+\rho_n\phi(a))s^n+(\overline{a}+\rho_n\overline{\phi(a)})s^{-n}\bigr)
$$
for each $n\in\N$, where $\rho_n=\frac{r_n}{R_n}$.
By Theorem~U, it suffices to verify that
$$
\Lambda=\bigl\{\bigl(a,u^nR_n\bigl((a+\rho_n\phi(a))s^n+(\overline{a}+\rho_n\overline{\phi(a)})s^{-n}\bigr)\bigr):a\in U,\ n\in\N\bigr\}\ \ \text{is dense in $U\times\C$.}
$$

Fix $a\in U\setminus\{0\}$ and $y\in\C\setminus\{0\}$. Since $s,u$ are independent, $\{(s^n,u^n):n\in\N\}$ is dense in $\T^2$. Hence, we can find a strictly increasing sequence $\{n_k\}_{k\in\N}$ of positive integers such that $s^{n_k}\to \frac{i|a|}{a}$ and $u^{n_k}\to \frac{y}{|y|}$ as $k\to\infty$. Since $i|a|s^{-n_k}\to a\in U$ and $U$ is open, there is $\epsilon>0$ such that for each sufficiently large $k$, $is^{-n_k}|a|e^{ix}\in U$ for every $x\in[-\epsilon,\epsilon]$. Thus for each sufficiently large $k$, we can define the map
$$
\textstyle\xi_k:[-\epsilon,\epsilon]\to \R, \quad \xi_k(x)=-\frac{|y|}{R_{n_k}}+2\,{\rm Re}\,\bigl(i|a|e^{ix}+\rho_{n_k}\phi(is^{-n_k}|a|e^{ix})s^{n_k}\bigr).
$$
For every $\delta\in(0,\epsilon)$, we have
\begin{equation}\label{exi}
\begin{array}{l}
\min\limits_{x\in[-\epsilon,-\delta]}\xi_k(x)\geq 2|a|\sin \delta-\frac{|y|}{R_{n_k}}-c\rho_{n_k}\to 2|a|\sin\delta>0\ \ \text{as $k\to\infty$,}\\
\max\limits_{x\in[\delta,\epsilon]}\xi_k(x)\leq -2|a|\sin \delta-\frac{|y|}{R_{n_k}}+c\rho_{n_k}\to -2|a|\sin\delta<0\ \ \text{as $k\to\infty$.}.
\end{array}
\end{equation}
By (\ref{exi}), $\xi_k$ takes both positive and negative values for each sufficiently large $k$.
Since each $\xi_k$ is continuous, the Intermediate Value Theorem guarantees the existence of $x_k\in [-\epsilon,\epsilon]$ such that $\xi_k(x_k)=0$. Furthermore, the estimates (\ref{exi}) imply that $x_k\to 0$ as $k\to\infty$. Now we set $a_k=i|a|e^{ix_k}s^{-n_k}\in U$ and $y_k=u^{n_k}R_{n_k}\bigl((a_k+\rho_{n_k}\phi(a_k))s^{n_k}+(\overline{a_k}+\rho_{n_k}\overline{\phi(a_k)})s^{-n_k}\bigr)\in\C$. Obviously $(a_k,y_k)\in\Lambda$. Since $s^{n_k}\to \frac{i|a|}{a}$ and $x_k\to 0$, we have $a_k\to a$. Next, by definition of $a_k$,
$$
y_{k}=2R_{n_k}u^{n_k}\,{\rm Re}\,(a_ks^{n_k}+\rho_{n_k}\phi(a_k)s^{n_k})=2R_{n_k}u^{n_k}\,{\rm Re}\,(i|a|e^{ix_k}+\rho_{n_k}\phi(i|a|e^{ix_k}s^{-n_k})s^{n_k}).
$$
Using the above display together with the equality $\xi_k(x_k)=0$ and the definition of $\xi_k$, we arrive to $y_{k}=u^{n_k}|y|$. Since $u^{n_k}\to \frac{y}{|y|}$, we have $y_k\to y$. Thus $\{(a_k,y_k)\}$ is a sequence in $\Lambda$ convergent to $(a,y)$. Since $a\in U\setminus\{0\}$ and $y\in\C\setminus\{0\}$ are arbitrary, $\Lambda$ is dense in $U\times\C$.
\end{proof}

\begin{lemma}\label{123} Let $1<r<R$ and $k,m$ be distinct integers. Then 
$$
\bigl\{(z,w)\in\T^2:\{R^n(z^{kn}+z^{mn})+r^nw^n:n\in\Z_+\}\ \text{is dense in $\C$}\bigr\}\ \ \text{is a dense $G_\delta$ subset of $\T^2$.}
$$
\end{lemma}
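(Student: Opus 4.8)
The plan is to invoke Theorem~U with the Baire space $X=\T^2$ (compact metric, hence Polish), the Polish space $Y=\C$, and the continuous maps $f_n\colon\T^2\to\C$ given by $f_n(z,w)=R^n(z^{kn}+z^{mn})+r^nw^n$ for $n\in\Z_+$. It therefore suffices to prove that
$$
\Lambda=\bigl\{(z,w,f_n(z,w)):(z,w)\in\T^2,\ n\in\Z_+\bigr\}
$$
is dense in $\T^2\times\C$. I will fix a target $(\alpha,\beta)\in\T^2$ and $v\in\C$ and produce, for every sufficiently large $n$, a pair $(z_n,w_n)\in\T^2$ with $f_n(z_n,w_n)=v$ and $z_n\to\alpha$, $w_n\to\beta$ as $n\to\infty$; this yields $(\alpha,\beta,v)\in\overline{\Lambda}$, hence the density of $\Lambda$.

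Write $\alpha=e^{i\phi_0}$ and $d=k-m\in\Z\setminus\{0\}$. The key idea is that the first term $R^n(z^{kn}+z^{mn})=R^nz^{mn}(z^{dn}+1)$ can be made of size $O(r^n)$, i.e. small relative to $R^n$, by forcing $z^{dn}$ near $-1$. So, for each large $n$, pick $\phi_n$ of the form $(2l+1)\pi/(dn)$ with $l\in\Z$ nearest to $\phi_0$; then $|\phi_n-\phi_0|\leq\tfrac{\pi}{|dn|}$ and $e^{idn\phi_n}=-1$, and we put $\zeta_n=e^{imn\phi_n}\in\T$. For $t\in[0,\tfrac{\pi}{2})$ set $z(t)=\exp\!\bigl(i(\phi_n+\tfrac{2t}{dn})\bigr)$, so that $z(t)^{dn}=-e^{2it}$ and $z(t)^{mn}=\zeta_ne^{2imt/d}$; using $1-e^{2it}=-2i\sin(t)e^{it}$ one computes
$$
R^n\bigl(z(t)^{kn}+z(t)^{mn}\bigr)=-2iR^n\sin(t)\,\zeta_n\,e^{i(k+m)t/d}=:T_1(t),\qquad |T_1(t)|=2R^n\sin t .
$$
The function $g(t)=|v-T_1(t)|-r^n$ is continuous on $[0,\tfrac{\pi}{2})$, with $g(0)=|v|-r^n<0$ for $n$ large, whereas at $t=\tfrac{2r^n}{R^n}$ (which lies in $(0,\tfrac{\pi}{2})$ once $n$ is large) we have $g(t)\geq 2R^n\sin t-|v|-r^n$, a quantity tending to $+\infty$ with $n$. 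By the Intermediate Value Theorem there is $t_n^*\in(0,\tfrac{2r^n}{R^n})$ with $|v-T_1(t_n^*)|=r^n$; then $2R^n\sin t_n^*=|T_1(t_n^*)|\in[r^n-|v|,r^n+|v|]$, so $t_n^*=O(r^n/R^n)\to0$.

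Finally, put $T_2=v-T_1(t_n^*)$, so $|T_2|=r^n$; let $w_n$ be an $n$-th root of $T_2/r^n\in\T$ nearest to $\beta$ (hence within arclength $\pi/n$ of $\beta$), and let $z_n=z(t_n^*)$. Then $f_n(z_n,w_n)=T_1(t_n^*)+r^nw_n^n=T_1(t_n^*)+T_2=v$, so $(z_n,w_n,v)\in\Lambda$, while $\bigl|\phi_n+\tfrac{2t_n^*}{dn}-\phi_0\bigr|\leq\tfrac{\pi}{|dn|}+\tfrac{2t_n^*}{|dn|}\to0$ gives $z_n\to\alpha$ and, clearly, $w_n\to\beta$. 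Thus $\Lambda$ is dense in $\T^2\times\C$, and Theorem~U completes the proof. The one genuinely non-routine point is the observation that, since $z^{kn}$ and $z^{mn}$ are powers of a single variable $z$ (so that a direct argument with two independent rotating amplitudes, in the style of Lemma~\ref{wnh21}, is not available), one must first kill the magnitude of $R^n(z^{kn}+z^{mn})$ by pinning $z^{(k-m)n}$ near $-1$, and then recover the two missing real degrees of freedom from the magnitude of that term---tuned by an intermediate-value argument---and from the free phase of the term $r^nw^n$. After that, every perturbation involved is $O(1/n)$, so $z_n$ and $w_n$ genuinely converge to the prescribed targets.
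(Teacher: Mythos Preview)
Your proof is correct and follows essentially the same strategy as the paper: apply Theorem~U with $X=\T^2$, $Y=\C$, $f_n(z,w)=R^n(z^{kn}+z^{mn})+r^nw^n$, and verify that $\Lambda$ is dense by producing, for each target, a sequence hitting the target exactly while the base points converge. In both arguments the key step is an intermediate-value argument showing that one can choose $z_n$ so that $|v-R^n(z_n^{kn}+z_n^{mn})|=r^n$, after which the $w$-variable absorbs the remainder.

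The only difference is packaging. The paper substitutes $c=z^n$ and works with the map $c\mapsto R^nr^{-n}(c^k+c^m)-r^{-n}y$ on $\T$; since $k\neq m$ the function $|c^k+c^m|$ attains both $0$ and $2$, so the intermediate value $1$ (after scaling by $r^n$) is achieved at some $c_n$, and one then picks $z_n$ an $n$-th root of $c_n$ near $\alpha$ and $w_n$ an $n$-th root of the appropriate unimodular number near $\beta$. Your version unwraps this by parametrizing $z$ directly via its argument, pinning $z^{(k-m)n}=-1$ at a base angle near $\phi_0$ and perturbing by $t$; the substitution $c=z^n$ would collapse your computation to theirs. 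The paper's route is a few lines shorter, but your explicit parametrization makes the size estimate $t_n^*=O((r/R)^n)$ transparent and avoids any ambiguity about which $n$-th root of $c_n$ to take.
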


\begin{proof} By Theorem~U, it suffices to verify that
$$
\Lambda=\{(z,w,R^n(z^{kn}+z^{mn})+r^nw^n):z,w\in\T,\ n\in\Z_+\}\ \ \text{is dense in $\T\times\T\times\C$}.
$$
Let $a,b\in\T$ and $y\in\C$. Since $k\neq m$, for every $n\in\N$, there are $s_n,t_n\in\T$ such that $s_n^k+s_n^m=0$ and $|t_n^k+t_n^m|=2$. Consider the function $G_n:\T\to\C$, $G_n(z)=R^nr^{-n}(z^k+z^m)-r^{-n}y$. Then $|G_n(s_n)|=r^{-n}|y|<1$ for each sufficiently large $n$ and $|G_n(t_n)|\geq 2\frac{R^n}{r^n}-\frac{|y|}{r^n}>1$ for each sufficiently large $n$. Thus for $n$ large enough the function $|G_n|$ must attain the value $1$. That is, there is $c_n\in\T$ such that $G_n(c_n)=d_n\in\T$. Hence $R^n(c_n^k+c_n^m)+r^n d_n=y$ for each sufficiently large $n$. Now pick $z_n\in\T$ and $w_n\in\T$ such that $z_n^n=c_n$, $w_n^n=d_n$, $|z_n-a|<\frac{\pi}{n}$ and $|w_n-b|<\frac{\pi}{n}$. Since $R^n(z_n^{nk}+z_n^{nm})+r^n w_n^n=y$, we have $(z_n,w_n,y)\in\Lambda$. Since $z_n\to a$, $w_n\to b$ and $a,b\in\T$, $y\in\C$ are arbitrary, $\Lambda$ is dense in $\T\times\T\times\C$.
\end{proof}

\subsection{Proof of Proposition~\ref{znwn}}

Let $U_R=\bigl\{(z,w)\in\T^2:\{R^n(z^n+w^n):n\in\Z_+\}\ \text{is dense in $\C$}\bigr\}$.
We have to show that $U_R$ is a dense $G_\delta$-subset of $\T^2$ of Lebesgue measure $0$. By Theorem~U,
in order to show that $U_R$ is a dense $G_\delta$-subset of $\T^2$, it suffices to verify that
$$
\Lambda=\{(z,w,R^n(z^n+w^n)):z,w\in\T,\ n\in\Z_+\}\ \ \text{is dense in $\T\times\T\times\C$.}
$$
Let $y\in\C$ and $z,w\in\T$. Since $\T+\T\supset2\D$, for every $n$ satisfying $2R^n>|y|$, we can find $a_n,b_n\in\T$ such that $R^n(a_n+b_n)=y$. For each such $n$ we can pick $z_n,w_n\in\T$ for which $z_n^n=a_n$, $w_n^n=b_n$, $|z_n-z|<\frac{\pi}{n}$ and $|w_n-w|<\frac{\pi}{n}$. Since $y=R^n(a_n+b_n)=R^n(z_n^n+w_n^n)$, we have $(z_n,w_n,y)\in\Lambda$ for every sufficiently large $n$. Since $(z_n,w_n,y)\to (z,w,y)$ and $y\in\C$, $z,w\in\T$ are arbitrary, $\Lambda$ is dense in $\T\times\T\times\C$. By Theorem~U, $U_R$ is a dense $G_\delta$-subset of $\T^2$.

Now let $\mu$ be the normalized Lebesgue measure on $\T^2$. A straightforward computation yields
\begin{equation}\label{lebes}\textstyle
\mu\{(z,w)\in\T^2:|z^n+w^n|\leq c\}=\frac2\pi \arcsin\bigl(\frac c2\bigr)\ \ \text{for every $c\in[0,2]$ and $n\in\N$.}
\end{equation}
Pick $r\in(1,R)$. By (\ref{lebes}), $\sum\limits_{n=1}^\infty \mu\bigl\{(z,w)\in\T^2:|z^n+w^n|\leq \frac{r^n}{R^n}\bigr\}<\infty$. Hence for almost all $(z,w)\in\T^2$, $|z^n+w^n|>\frac{r^n}{R^n}$ for all sufficiently large $n$ and therefore $R^n|z^n+w^n|\to\infty$. Since the condition $R^n|z^n+w^n|\to\infty$ is incompatible with the membership in $U_R$, $\mu(U_R)=0$, as required.

\subsection{Infinite sums and integrals}

Recall that $A\subseteq \Z$ is called {\it syndetic} if there is a finite subset $B$ of $\Z$ such that $A+B=\Z$. Equivalently, $A$ is syndetic if $A=\{n_k:k\in\Z\}$, where $n_{k+1}>n_k$ for each $k\in\Z$ and $\{n_{k+1}-n_k:k\in\Z\}$ is bounded. 

\begin{lemma}\label{grou} Let $G$ be a compact topological group, $U$ be an open subset of $G$ and $g\in G$. Then the set $A=\{n\in\Z:g^n\in U\}$ is either empty or syndetic. In particular, if $U$ contains the identity element, then $A$ is always syndetic.
\end{lemma}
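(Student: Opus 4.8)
The plan is to reduce to the case $e\in U$ (where $e$ is the identity of $G$) and then to invoke compactness through an open cover argument carried out not in $G$ itself but in the closed subgroup generated by $g$.

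\textbf{Reduction.} Suppose $A=\{n\in\Z:g^n\in U\}$ is non-empty, and fix $n_0\in A$. Put $V=Ug^{-n_0}$. Since right translation by $g^{-n_0}$ is a homeomorphism of $G$, the set $V$ is open, and $e=g^{n_0}g^{-n_0}\in V$. For $n\in\Z$ one has $g^n\in U\iff g^{n}g^{-n_0}\in V\iff n-n_0\in A'$, where $A'=\{m\in\Z:g^m\in V\}$; hence $A=n_0+A'$. Since any translate of a syndetic set is syndetic (if $A'+B=\Z$ then $A+B=n_0+A'+B=\Z$), it suffices to prove the final, ``in particular'' assertion, so from now on assume $e\in U$.

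\textbf{Main step.} Let $H=\overline{\{g^n:n\in\Z\}}$ be the closure in $G$ of the cyclic subgroup generated by $g$. Being a closed subset of the compact space $G$, the set $H$ is compact; moreover $g^n\in H$ for every $n\in\Z$ and $\{g^n:n\in\Z\}$ is dense in $H$. Set $W=U\cap H$, which is open in $H$ and contains $e$. I claim that $\{g^mW:m\in\Z\}$ is an open cover of $H$. Indeed, for $h\in H$ the set $hW^{-1}$ is open in $H$ (inversion and left translation by $h$ are homeomorphisms of the topological group $H$) and contains $h$, because $e\in W$ forces $e\in W^{-1}$; by density of $\{g^m:m\in\Z\}$ in $H$ there is $m\in\Z$ with $g^m\in hW^{-1}$, i.e. $g^m=hw^{-1}$ for some $w\in W$, whence $h=g^mw\in g^mW$. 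Compactness of $H$ now yields $m_1,\dots,m_k\in\Z$ with $H=\bigcup_{j=1}^k g^{m_j}W$.

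\textbf{Conclusion.} Let $B=\{m_1,\dots,m_k\}$, a finite subset of $\Z$. Given an arbitrary $n\in\Z$, we have $g^n\in H$, so $g^n\in g^{m_j}W$ for some $j$, which gives $g^{n-m_j}\in W\subseteq U$, that is, $n-m_j\in A$ and $n=(n-m_j)+m_j\in A+B$. Hence $A+B=\Z$, so $A$ is syndetic, and by the reduction step this finishes the proof. The argument is essentially free of obstacles; the one point meriting care is that the open cover $\{g^mW:m\in\Z\}$ must be formed inside $H$ rather than inside $G$, since it need not cover $G$ — this is precisely why one passes to the closed subgroup generated by $g$ before applying compactness.
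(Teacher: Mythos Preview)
Your proof is correct and follows essentially the same approach as the paper: both pass to the subgroup generated by $g$ (you take its closure, the paper works with the pre-compact group $\{g^m:m\in\Z\}$ directly), translate so the open set contains the identity, and use compactness to extract a finite cover by translates. The only differences are cosmetic---you front-load the reduction to $e\in U$ and spell out the density argument for the cover, whereas the paper folds the translation into the main step.
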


\begin{proof} Since $G$ is a compact topological group, the subgroup $H=\{g^m:m\in\Z\}$ is pre-compact with respect to the induced topology. If $V=U\cap H$ is empty, then $A=\varnothing$. Otherwise, $V$ is a non-empty open subset of $H$. Hence we can pick $m\in\Z$ such that $g^mV$ is a neighborhood of the identity in $H$. Since $H$ is pre-compact, there is a finite set $C=\{g^{n_1},\dots,g^{n_k}\}\subseteq H$ such that $Cg^mV=H$. It follows that $B+A=\Z$, where $B=\{n_1+m,\dots,n_k+m\}$. Hence $A$ is syndetic.
\end{proof}

\begin{lemma}\label{toR} Let $M$ be a subset of $\T$ such that $1$ is an accumulation point of $M$, $A$ be a syndetic subset of $\Z$ and $k\in\N$. Then
$\{(z_1^n,\dots,z_k^n):n\in A\cap\N,\ z_1,\dots,z_k\in M\}$ is dense in $\T^k$.
\end{lemma}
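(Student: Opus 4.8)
The plan is to fix a point $(\zeta_1,\dots,\zeta_k)\in\T^k$ and $\epsilon\in(0,1)$, write $\zeta_j=e^{i\theta_j}$ with $\theta_j\in\R$, and produce a single $n\in A\cap\N$ together with $z_1,\dots,z_k\in M$, say $z_j=e^{it_j}$, for which $\mathrm{dist}(nt_j-\theta_j,2\pi\Z)<\epsilon$ for every $j$; since $|e^{i\alpha}-e^{i\beta}|\leq\mathrm{dist}(\alpha-\beta,2\pi\Z)$, this places $(z_1^n,\dots,z_k^n)$ within $\epsilon$ of the target in the sup-metric, and the density assertion follows. The natural strategy is to handle one coordinate at a time: starting from $B_0=A$, I will successively pick $z_1,\dots,z_k\in M$ and pass to syndetic subsets $A=B_0\supseteq B_1\supseteq\dots\supseteq B_k$ so that on $B_j$ the first $j$ coordinates are already within $\epsilon$ of their targets; then $B_k$ is nonempty and any $n\in B_k\cap\N$ does the job. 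The difficulty to be overcome is precisely the coordination of one admissible exponent $n\in A$ across all $k$ coordinates, and the key to it is a ``winding'' observation showing that the set of good exponents for any one coordinate is itself syndetic, provided the corresponding $z_j$ is chosen close enough to $1$.

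The winding observation is as follows. Suppose $B\subseteq\Z$ is such that every $\ell$ consecutive integers contain an element of $B$ (equivalently, consecutive elements of $B$ differ by at most $\ell$), suppose $z=e^{it}$ with $0<|t|<\epsilon/\ell$, and let $\theta\in\R$. Then $B'=\{n\in B:\mathrm{dist}(nt-\theta,2\pi\Z)<\epsilon\}$ again has the property that every $\ell'$ consecutive integers contain an element of $B'$, for some $\ell'$ depending only on $\ell,\epsilon$ and $t$. Indeed, fix $N\in\Z$ and list the elements of $B$ that are $\geq N$ in increasing order as $b_0<b_1<\dots$; then $b_0\leq N+\ell-1$ and, assuming $t>0$ (the case $t<0$ is symmetric), the reals $b_0t<b_1t<\dots$ form a strictly increasing sequence tending to $+\infty$ with consecutive gaps $(b_{i+1}-b_i)t\in[|t|,\ell|t|]\subseteq(0,\epsilon)$. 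Choose $m\in\Z$ with $\theta_*:=\theta+2\pi m\in(b_0t,b_0t+2\pi]$. Since the sequence increases past $\theta_*$ in steps shorter than $\epsilon$, there is a least $i\geq1$ with $b_it\geq\theta_*$, and then $\theta_*\leq b_it<\theta_*+\epsilon$, whence $\mathrm{dist}(b_it-\theta,2\pi\Z)<\epsilon$; moreover $(b_i-b_0)|t|=b_it-b_0t<2\pi+\epsilon$, so $b_i\leq b_0+(2\pi+\epsilon)/|t|\leq N+\ell+(2\pi+\epsilon)/|t|$. Thus $B'$ meets every block of $\ell':=\lceil\ell+(2\pi+\epsilon)/|t|\rceil$ consecutive integers, as claimed. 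No rationality hypothesis on $t/(2\pi)$ is needed; that is exactly what forcing $z$ near $1$ buys us, and it is also what makes iteration possible, since an argument based only on density of $\{z^n\}$ would leave no control over how the good exponents meet $A$.

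With the winding observation in hand the induction is immediate. Let $\ell_0\in\N$ be such that every $\ell_0$ consecutive integers meet $A$ (such $\ell_0$ exists because $A$ is syndetic), and set $B_0=A$. Given $B_{j-1}$ with the property that every $\ell_{j-1}$ consecutive integers meet it, use that $1$ is an accumulation point of $M$ to pick $z_j=e^{it_j}\in M$ with $0<|t_j|<\epsilon/\ell_{j-1}$, and set $B_j=\{n\in B_{j-1}:\mathrm{dist}(nt_j-\theta_j,2\pi\Z)<\epsilon\}$; by the winding observation there is $\ell_j\in\N$ such that every $\ell_j$ consecutive integers meet $B_j$. After $k$ steps, $B_k$ meets $[1,\ell_k]$, so we may choose $n\in B_k\cap\N$. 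Since $n\in B_j$ for every $1\leq j\leq k$, we obtain $\mathrm{dist}(nt_j-\theta_j,2\pi\Z)<\epsilon$, hence $|z_j^n-\zeta_j|<\epsilon$, for all $j$, which is what we wanted. Everything except the winding step of the previous paragraph is routine bookkeeping.
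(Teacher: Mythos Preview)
Your proof is correct. The core idea --- choosing each $z_j\in M$ close enough to $1$ so that $n\mapsto z_j^n$ winds slowly relative to the relevant gap constant --- is the same as in the paper, but the way the $k$ coordinates are combined is organized differently.

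The paper fixes the syndetic gap $q$ of $A$ once, chooses all the $z_j=e^{ib_j}$ at the outset with geometrically separated arguments $|b_1|\ll |b_2|\ll\dots\ll|b_k|$, then picks integers $m_1<m_2<\dots<m_k$ one per coordinate, chooses a single $n\in A$ within $q$ of $m_k$, and finally uses the geometric separation to bound the cumulative drift in all earlier coordinates at once. Your approach instead isolates a reusable lemma (your ``winding observation''): if $B$ is syndetic with gap $\leq\ell$ and $0<|t|<\epsilon/\ell$, then $\{n\in B:\mathrm{dist}(nt-\theta,2\pi\Z)<\epsilon\}$ is again syndetic with an explicit new gap constant. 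You then iterate, shrinking $A=B_0\supseteq B_1\supseteq\dots\supseteq B_k$ and choosing $z_j$ at step $j$ relative to the current constant $\ell_{j-1}$. This is slightly more modular and avoids the explicit cumulative-error estimate; the price is that the constants $\ell_j$ grow along the iteration, but since there are only $k$ steps this is harmless. Either way works.
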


\begin{proof} Let $u=(u_1,\dots,u_k)$ be an arbitrary point in $\T^k$. Then we can pick $\theta_j\in\R$ such that $u_j=e^{\theta_ji}$ for $1\leq j\leq k$. Fix temporarily $\delta\in(0,1/2)$. Since $A$ is syndetic, there is $q\in\N$ such that $A+\{1,\dots,q\}=\Z$. Since $1$ is an accumulation point of $M$, we can pick $b_1(\delta),\dots,b_k(\delta)\in\R$ such that $0<|b_k(\delta)|<\delta$, $0<|b_{j-1}(\delta)|<\delta |b_j(\delta)|$ for $2\leq j\leq k$ and $e^{b_j(\delta)i}\in M$ for $1\leq j\leq k$.

First, we choose the smallest positive integer $m_1$ such that $\dist(m_1b_1(\delta)-\theta_1,2\pi\Z)<|b_1(\delta)|$. Then we pick the smallest integer $m_2$ greater than $m_1$ such that $\dist(m_2b_2(\delta)-\theta_2,2\pi\Z)<|b_2(\delta)|$. We proceed the same way. Namely, having chosen $m_1,\dots,m_{j-1}$, we set $m_j$ to be the smallest integer greater than $m_{j-1}$ such that $\dist(m_jb_j(\delta)-\theta_j,2\pi\Z)<|b_j(\delta)|$. By definition of $q$, we can pick $n\in A$ such that $m_k\leq n\leq m_k+q-1$.
Clearly, $m_j-m_{j-1}\leq \frac{2\pi}{|b_j(\delta)|}$ for $2\leq j\leq k$. Hence
$$\textstyle
0\leq n-m_j\leq q-1+\sum\limits_{m=j+1}^k \frac{2\pi}{|b_k(\delta)|}\leq q-1+\frac{2\pi}{|b_j(\delta)|}\sum\limits_{m=1}^{k-j-2} \delta^m\leq q-1+\frac{2\pi\delta}{|b_j(\delta)|(1-\delta)} \ \
\text{for $1\leq j\leq k$,}
$$
where the third inequality is derived from $|b_{m-1}(\delta)|<\delta |b_m(\delta)|$. Hence
$$\textstyle
|nb_j(\delta)-m_jb_j(\delta)|\leq (q-1)|b_j(\delta)|+\frac{2\pi\delta}{1-\delta}\leq \delta(q-1+4\pi)\ \ \text{for $1\leq j\leq k$}
$$
since $|b_j(\delta)|<\delta$. Since $\dist(m_jb_j(\delta)-\theta_j,2\pi\Z)<|b_j(\delta)|$, we have $|u_j-e^{m_jb_j(\delta)i}|<|b_j(\delta)|<\delta$. According to the above display, $|e^{nb_j(\delta)i}-e^{m_jb_j(\delta)i}|<\delta(q-1+4\pi)$. Hence $|u_j-z_j^n|<\delta(q+4\pi)$, where $z_j=e^{b_j(\delta)i}$.
Since $z_j\in M$, $n\in A\cap \N$ and $\delta\in(0,1/2)$ is arbitrary, the desired result follows.
\end{proof}


\begin{corollary}\label{sico} Let $M$ be an infinite subset of $\T$, $A$ be a syndetic subset of $\Z$, $k,n_0\in\N$ and $u=(u_1,\dots,u_k)\in \T^k$. Then for every $\epsilon>0$, there exist $n,m\in A$ and $z,z_1,\dots,z_k\in M$ such that $m>n>n_0$, $|z^{-n}z_j^n-u_j|<\epsilon$ and $|z^{-m}z_j^m-u_j^2|<\epsilon$ for $1\leq j\leq k$.
\end{corollary}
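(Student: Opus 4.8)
The plan is to reduce to the case where $1$ is an accumulation point of $M$, and then to run the greedy approximation scheme from the proof of Lemma~\ref{toR} twice: once to produce $n$ with $z^{-n}z_j^n$ near $u_j$, and once more, forced to start above $n$, to produce $m>n$ with $z^{-m}z_j^m$ near $u_j^2$.

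First I would note that $z^{-n}z_j^n$ does not change when $z,z_1,\dots,z_k$ are all multiplied by a fixed element of $\T$. Since an infinite $M\subseteq\T$ has an accumulation point $w\in\T$, it suffices to prove the statement for $w^{-1}M$, so I may assume that $1$ is an accumulation point of $M$. Fix $q\in\N$ such that any $q$ consecutive integers contain an element of $A$ (this uses syndeticity), and fix a small $\delta>0$ to be specified at the end as a function of $\epsilon$ and $q$. As $1$ is an accumulation point of $M$, the principal arguments of the points of $M$ near $1$ accumulate at $0$; hence I can choose $b_0,b_1,\dots,b_k\in(-\delta,\delta)\setminus\{0\}$ with $e^{ib_j}\in M$, pairwise distinct absolute values, and $|b_{j-1}|<\delta^2|b_j|$ for $1\le j\le k$. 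Set $z=e^{ib_0}$ and $z_j=e^{ib_j}$ for $1\le j\le k$; then $z^{-1}z_j=e^{i\beta_j}$, where $\beta_j=b_j-b_0$, and provided $\delta$ is small the $\beta_j$ satisfy exactly the size and lacunarity conditions ($0<|\beta_k|<\delta$ and $0<|\beta_{j-1}|<\delta|\beta_j|$) placed on the small angles in the proof of Lemma~\ref{toR}.

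Now I would run the greedy scheme of that proof with small angles $\beta_j$ and targets $\psi_j$, where $u_j=e^{i\psi_j}$ with $\psi_j\in(-\pi,\pi]$: take $m_1$ to be the least integer $>n_0$ with $\dist(m_1\beta_1-\psi_1,2\pi\Z)<|\beta_1|$, then inductively the least $m_j>m_{j-1}$ with $\dist(m_j\beta_j-\psi_j,2\pi\Z)<|\beta_j|$, and finally some $n\in A$ with $m_k\le n<m_k+q$. The estimates in the proof of Lemma~\ref{toR} use only the size and lacunarity of the $\beta_j$ together with bounds on the \emph{gaps} between consecutive indices, so they are unaffected by the requirement $m_1>n_0$; they yield $n>n_0$ together with $|z^{-n}z_j^n-u_j|=|e^{in\beta_j}-u_j|<(q+4\pi)\delta$ for all $j$. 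I then repeat the scheme verbatim with the \emph{same} angles $\beta_j$ but targets $2\psi_j$ (so the target values are $u_j^2$), this time choosing $m_1'$ to be the least integer $>n$; this produces $m\in A$ with $m>n>n_0$ and $|z^{-m}z_j^m-u_j^2|<(q+4\pi)\delta$ for all $j$. Taking $\delta=\min\{\tfrac12,\ \epsilon/(q+4\pi+1)\}$ makes both error bounds $<\epsilon$, and undoing the translation by $w$ completes the proof.

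The only point genuinely beyond Lemma~\ref{toR} is the observation that one may re-use the same small angles $\beta_j$ with a shifted starting index to obtain a second exponent $m>n$ that hits the doubled targets $u_j^2$; everything else is bookkeeping — checking that the $\beta_j$ obey the hypotheses used in the proof of Lemma~\ref{toR}, and that the two lower-bound modifications ($m_1>n_0$, $m_1'>n$) leave the gap estimates there intact. I expect that bookkeeping, especially tracking the dependence of the error on $\delta$ and $q$ and handling the $j=1$ case of the gap estimate with the modified $m_1$, to be the fiddliest part, though none of it is conceptually hard.
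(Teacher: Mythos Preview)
Your argument is correct, but it follows a genuinely different route from the paper's proof. The paper applies Lemma~\ref{toR} once (to the translated set $M_\delta=\{v^{-1}z:z\in M,\ |z-v|<\delta\}$) to obtain $n\in A$ with $w_j^n\approx u_j$; it then picks an auxiliary $z\in M$ extremely close to the accumulation point $v$ (within $\delta/(2n)$), squares to get $z^{-2n}z_j^{2n}\approx u_j^2$, and finally uses syndeticity to replace $2n$ by a nearby $m\in A$, controlling the error via $|z-z_j|<2\delta$ and $|m-2n|<q$. So the paper exploits the specific relationship between the two target vectors ($u$ and $u^2$) through the squaring trick, and uses Lemma~\ref{toR} essentially as a black box.

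Your approach instead fixes the points $z,z_1,\dots,z_k$ (equivalently the difference angles $\beta_j$) once, and runs the greedy approximation mechanism \emph{inside} the proof of Lemma~\ref{toR} twice, with targets $u_j$ and then $u_j^2$, shifting the starting index each time. This is more flexible --- it would equally well produce $z^{-m}z_j^m$ close to an arbitrary second target $v_j$, not just $u_j^2$ --- at the cost of re-opening Lemma~\ref{toR}'s proof rather than citing it. The bookkeeping you flag (that the $\beta_j=b_j-b_0$ inherit the needed size and lacunarity bounds up to a harmless constant, and that imposing $m_1>n_0$ or $m_1'>n$ leaves the gap estimates $m_j-m_{j-1}\le 2\pi/|\beta_j|$ intact) is routine and goes through as you say; at worst the final error constant becomes $2\delta(q+4\pi)$ rather than $\delta(q+4\pi)$, which your choice of $\delta$ absorbs.
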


\begin{proof} Since $A$ is syndetic, there is $q\in\N$ such that $A+\{1,\dots,q\}=\Z$. Since $\T$ is compact and $M$ is infinite, there is an accumulation point $v\in\T$ of $M$. Then for each $\delta\in(0,1)$, $1$ is an accumulation point of $M_\delta=\{v^{-1}z:z\in M,\ |z-v|<\delta\}$. By Lemma~\ref{toR}, there are $n\in A$ and $w_1,\dots,w_k\in M_\delta$ such that $n>n_0$ and $|w_j^n-u_j|<\delta$ for $1\leq j\leq k$. Since $w_j\in M_\delta$, $w_j=\frac{z_j}{v}$ with $z_j\in M$ and $|z_j-v|<\delta$. Since $v$ is an accumulation point of $M$, we can find $z\in M$ such that $|z-v|<\frac{\delta}{2n}$. Since $|w_j^n-u_j|<\delta$, $w_j=\frac{z_j}{v}$ and $|z-v|<\frac{\delta}{2n}$, we have $|z^{-n}z_j^n-u_j|<2\delta$ for $1\leq j\leq k$. It follows that $|z^{-2n}z_j^{2n}-u^2_j|<4\delta$ for $1\leq j\leq k$. Now pick $m\in A\cap\{2n,2n+1,\dots,2n+q-1\}$. Since $|z-z_j|\leq |z-v|+|v-z_j|<2\delta$, we have
$|z^{-2n}z_j^{2n}-z^{-m}z_j^m|=|z_j^{m-2n}-z^{m-2n}|\leq (m-2n)|z_j-z|<2q\delta$. Hence
$$
|z^{-m}z_j^m-u_j^2|<|z^{-2n}z_j^{2n}-u_j^2|+|z^{-2n}z_j^{2n}-z^{-m}z_j^m|<4\delta+2q\delta=(2q+4)\delta.
$$
Thus for $\delta$ satisfying $(2q+4)\delta<\epsilon$, $|z^{-n}z_j^n-u_j|<\epsilon$ and $|z^{-m}z_j^m-u_j^2|<\epsilon$ for $1\leq j\leq k$, as required.
\end{proof}

The following lemma looks painfully technical. We need it nonetheless. For each $\epsilon>0$, let
\begin{equation}\label{pe}
P_\epsilon=\{u\in\T^{10}:\text{$|u_5^{-1}u_j-e^{2\pi ji/5}|<\epsilon$ and $|u_{10}^{-1}u_{5+j}-e^{4\pi ji/5}|<\epsilon$
\ \ for $1\leq j\leq 4$}\}.
\end{equation}

\begin{lemma}\label{penta}
There exist $\epsilon,d>0$ such that for every $u\in P_\epsilon$ and every $z,w\in d\,\D$, there are positive numbers  $a_1,\dots,a_5$ satisfying $a_1+{\dots}+a_5=1$, $a_1u_1+{\dots}+a_5u_5=z$ and $a_1u_6+{\dots}+a_5u_{10}=w$.
\end{lemma}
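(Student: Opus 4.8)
The plan is to recast the statement as the solvability, with positive coefficients, of a fixed $5\times5$ real linear system, and to establish this by perturbing off the ``ideal'' configurations in which $(u_1,\dots,u_5)$ and $(u_6,\dots,u_{10})$ are exact rotated copies of the set of fifth roots of unity. For $u\in\T^{10}$ let $M_u$ be the $5\times5$ real matrix whose first row is $(1,1,1,1,1)$, whose second and third rows are $(\mathrm{Re}\,u_1,\dots,\mathrm{Re}\,u_5)$ and $(\mathrm{Im}\,u_1,\dots,\mathrm{Im}\,u_5)$, and whose fourth and fifth rows are $(\mathrm{Re}\,u_6,\dots,\mathrm{Re}\,u_{10})$ and $(\mathrm{Im}\,u_6,\dots,\mathrm{Im}\,u_{10})$. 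Identifying $\C$ with $\R^2$ via $\lambda\mapsto(\mathrm{Re}\,\lambda,\mathrm{Im}\,\lambda)$ and writing $b(z,w)=(1,\mathrm{Re}\,z,\mathrm{Im}\,z,\mathrm{Re}\,w,\mathrm{Im}\,w)^{\mathsf T}\in\R^5$, a tuple $(a_1,\dots,a_5)$ fulfils $a_1+\dots+a_5=1$, $\sum_{j=1}^5 a_ju_j=z$ and $\sum_{j=1}^5 a_ju_{5+j}=w$ precisely when $M_ua=b(z,w)$. Hence it suffices to produce $\epsilon,d>0$ such that $M_u$ is invertible for every $u\in P_\epsilon$ and the vector $M_u^{-1}b(z,w)$ has all coordinates positive whenever $u\in P_\epsilon$ and $|z|,|w|<d$; the three required identities then follow by reading off $M_ua=b(z,w)$ row by row.

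Put $\xi=e^{2\pi i/5}$ and, for $s,t\in\T$, let $u^{*}=u^{*}(s,t)\in\T^{10}$ be given by $u^{*}_j=s\xi^{j}$ and $u^{*}_{5+j}=t\xi^{2j}$ for $1\leq j\leq5$ (so $u^{*}_5=s$ and $u^{*}_{10}=t$, as $\xi^5=1$); write $\Pi^{*}=\{u^{*}(s,t):s,t\in\T\}\subset\T^{10}$, a compact $2$-torus. The key algebraic fact is that $M_{u^{*}(s,t)}$ is invertible with $|\det M_{u^{*}(s,t)}|$ equal to a positive constant $\gamma$ independent of $s,t$. To see this, express $\mathrm{Re}\,\lambda=\tfrac12(\lambda+\bar\lambda)$ and $\mathrm{Im}\,\lambda=\tfrac1{2i}(\lambda-\bar\lambda)$; then $M_u=DN_u$, where $D$ is a fixed invertible matrix with $\det D=-\tfrac14$ and $N_u$ is the complex matrix with rows $(1,\dots,1)$, $(u_1,\dots,u_5)$, $(\bar u_1,\dots,\bar u_5)$, $(u_6,\dots,u_{10})$, $(\bar u_6,\dots,\bar u_{10})$. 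Setting $v_m=(\xi^m,\xi^{2m},\xi^{3m},\xi^{4m},\xi^{5m})\in\C^5$ for $0\leq m\leq4$, the rows of $N_{u^{*}(s,t)}$ are $v_0,\ s v_1,\ \bar s\,v_4,\ t v_2,\ \bar t\,v_3$ (using $v_4=\overline{v_1}$ and $v_3=\overline{v_2}$, since $\xi^{4k}=\overline{\xi^{k}}$ and $\xi^{3k}=\overline{\xi^{2k}}$). The vectors $v_0,\dots,v_4$ are the rows of the transpose of the Vandermonde matrix with the five distinct nodes $\xi,\xi^2,\dots,\xi^5$, hence form a basis of $\C^5$; since $|s|=|t|=1$, $|\det N_{u^{*}(s,t)}|=|\det[v_0,v_1,v_4,v_2,v_3]|\neq0$, independent of $s,t$, and the fact follows with $\gamma=\tfrac14|\det[v_0,v_1,v_4,v_2,v_3]|$.

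Next I transfer this to $P_\epsilon$ by compactness. Every $u\in P_\epsilon$ lies within $C\epsilon$ of the point $u^{*}(u_5,u_{10})\in\Pi^{*}$, because $u_5,u_{10}$ are matched exactly while $|u_j-u_5\xi^{j}|<\epsilon$ and $|u_{5+j}-u_{10}\xi^{2j}|<\epsilon$ for $1\leq j\leq4$; so $P_\epsilon$ lies in any prescribed neighbourhood of $\Pi^{*}$ once $\epsilon$ is small. Since $u\mapsto\det M_u$ is continuous and $|\det M_u|=\gamma>0$ on $\Pi^{*}$, for $\epsilon$ small we get $|\det M_u|\geq\gamma/2$ on $P_\epsilon$, so $M_u$ is invertible there; as the entries of $M_u$ lie in $[-1,1]$ the adjugate of $M_u$ is uniformly bounded, whence $\|M_u^{-1}\|\leq C_0$ on $P_\epsilon$ for some constant $C_0$. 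Moreover $\sum_{j=1}^5\xi^{j}=\sum_{j=1}^5\xi^{2j}=0$ gives $M_{u^{*}(s,t)}(\tfrac15,\dots,\tfrac15)^{\mathsf T}=(1,0,0,0,0)^{\mathsf T}=b(0,0)$, i.e.\ $M_{u^{*}(s,t)}^{-1}b(0,0)=(\tfrac15,\dots,\tfrac15)^{\mathsf T}$; by continuity of $u\mapsto M_u^{-1}b(0,0)$ near $\Pi^{*}$ and compactness of $\Pi^{*}$, shrinking $\epsilon$ further I may assume $\bigl\|M_u^{-1}b(0,0)-(\tfrac15,\dots,\tfrac15)^{\mathsf T}\bigr\|<\tfrac1{12}$ for all $u\in P_\epsilon$.

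Finally fix such an $\epsilon$ and choose $d>0$ with $\sqrt2\,C_0\,d<\tfrac1{12}$. For $u\in P_\epsilon$ and $|z|,|w|<d$, the vector $a=M_u^{-1}b(z,w)$ satisfies
$$
\bigl\|a-(\tfrac15,\dots,\tfrac15)^{\mathsf T}\bigr\|\leq\bigl\|M_u^{-1}\bigl(b(z,w)-b(0,0)\bigr)\bigr\|+\tfrac1{12}\leq C_0\sqrt{|z|^2+|w|^2}+\tfrac1{12}<\tfrac16,
$$
so every coordinate of $a$ exceeds $\tfrac15-\tfrac16=\tfrac1{30}>0$, and the identity $M_ua=b(z,w)$ gives $a_1+\dots+a_5=1$, $\sum_{j=1}^5 a_ju_j=z$ and $\sum_{j=1}^5 a_ju_{5+j}=w$, as required. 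The one genuinely delicate point is the algebraic identity $\det M_{u^{*}}\neq0$ at an ideal configuration: once one recognises that the five vectors involved are, up to unimodular scalars and a reordering, the rows of a Vandermonde matrix whose nodes are the five fifth roots of unity, everything else is a routine perturbation/compactness argument based on the fact that $P_\epsilon$ contracts onto the compact torus $\Pi^{*}$ as $\epsilon\to0$.
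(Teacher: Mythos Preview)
Your proof is correct and follows essentially the same strategy as the paper's: set up the problem as a real linear system, verify invertibility at the ideal ``pentagon'' configurations, and use continuity/compactness to perturb to all of $P_\epsilon$ with the barycenter $(\tfrac15,\dots,\tfrac15)$ guaranteeing positivity. The only cosmetic differences are that the paper eliminates $a_5=1-\sum_{j<5}a_j$ first and works with an $\R$-linear map $\R^4\to\C^2$, checking invertibility by writing out an explicit $4\times4$ real matrix, whereas you keep the full $5\times5$ system and recognize the Vandermonde structure directly---a slightly cleaner route to the same conclusion.
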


\begin{proof} For every $u\in\C^{10}$, consider the $\R$-linear map $S_u:\R^4\to\C^2$ given by the formula
$$\textstyle
S_u a=\Bigl(\sum\limits_{j=1}^4 (u_j-u_5)a_j,\sum\limits_{j=1}^4 (u_{5+j}-u_{10})a_j\Bigr).
$$
Let $\xi=e^{2\pi i/5}$ and for $\alpha,\beta\in\T$, let $u(\alpha,\beta)=
(\alpha\xi,\alpha\xi^2,\alpha\xi^3,\alpha\xi^4,\alpha,\beta\xi^2,\beta\xi^4,\beta\xi,\beta\xi^3,\beta)\in\T^{10}$.
Then
\begin{align*}
&S_{u(\alpha,\beta)}=ST_{\alpha,\beta}, \ \ \text{where}\ \ T_{\alpha,\beta}:\C^2\to\C^2,\ T_\alpha(z,w)=(\alpha z,\beta w)\ \ \text{and}
\\
&
S:\R^4\to\C^2,\ \ Sa= \left(\begin{array}{c}(\xi-1)a_1+(\xi^2-1)a_2+(\xi^3-1)a_3+(\xi^4-1)a_4\\ (\xi^2-1)a_1+(\xi^4-1)a_2+(\xi-1)a_3+(\xi^3-1)a_4\end{array}\right).
\end{align*}
First, observe that $T_\alpha$ is an isometry. Identifying $\C^2$ with $\R^4$ in a natural way, we see that $S$ is the linear operator on $\R^4$ with the matrix
$$
S=\left(\begin{array}{cccc} \cos(2\pi/5)-1&\cos(4\pi/5)-1&\cos(6\pi/5)-1&\cos(8\pi/5)-1\\
\sin(2\pi/5)&\sin(4\pi/5)&\sin(6\pi/5)&\sin(8\pi/5)\\
\cos(4\pi/5)-1&\cos(8\pi/5)-1&\cos(2\pi/5)-1&\cos(6\pi/5)-1\\
\sin(4\pi/5)&\sin(8\pi/5)&\sin(2\pi/5)&\sin(6\pi/5)
\end{array}
\right).
$$
One can verify (with some effort by hand and on the spot using an appropriate software) that the matrix in the above display is invertible. Thus $S_{u(\alpha,\beta)}$ is invertible for each $\alpha,\beta\in\T$. Since the set of invertible $\R$-linear operators from $\R^4$ to $\C^2$ is open, there is $\epsilon_0>0$ such that $S_u$ is invertible for every $u\in P_{\epsilon_0}$. Thus for each $u\in P_{\epsilon_0}$ and every $(z,w)\in\C^2$, there exists a unique $a(u,z,w)\in\R^4$ such that $S_ua(u,z,w)=(z-u_5,w-u_{10})$. Obviously, the map $(u,z,w)\mapsto a(u,z,w)$ is continuous on $P_{\epsilon_0}\times \C^2$. It is also straightforward to verify that $S_{u(\alpha,\beta)}\bigl(\frac15,\frac15,\frac15,\frac15\bigr)=(-\alpha,-\beta)$. Hence $a(u(\alpha,\beta),0,0)=\bigl(\frac15,\frac15,\frac15,\frac15\bigr)$. By continuity, there exist $\epsilon\in(0,\epsilon_0)$ and $d>0$ such that whenever $u\in P_\epsilon$ and $z,w\in d\,\D$, the components of $a(u,z,w)$ are positive and sum up to a number less than $1$. Now for $z,w\in d\D$ and $u\in P_\epsilon$, we set $a_j=a_j(u,z,w)$ for $1\leq j\leq 4$ and $a_5=1-\sum\limits_{j=1}^4 a_j(u,z,w)$. It is straightforward to see that the equality $S_ua(u,z,w)=(z-u_5,w-u_{10})$ is equivalent to $a_1u_1+{\dots}+a_5u_5=z$ and $a_1u_6+{\dots}+a_5u_{10}=w$, which completes the proof.
\end{proof}

\begin{lemma}\label{ddiaa} Let $M$ be an infinite countable subset of $\T$, $\omega:\R_+\to\R_+$ be a strictly increasing continuous concave function vanishing at $0$ and $\{R_n\}_{n\in\N}$ be a sequence of positive numbers such that $R_n\to\infty$. Then there exists $a\in\ell^1_+(M)$ such that
$$
\sum_{z\in M}\omega(a_z)<\infty\ \ \text{and}\ \ \Bigl\{R_k\sum_{z\in M} a_zz^k:k\in\Z_+\Bigr\}\ \ \text{is dense in $\C$.}
$$
\end{lemma}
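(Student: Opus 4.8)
The plan is to build $a$ as a norm-convergent sum of ``chunks'', where the $j$-th chunk is supported on five fresh points of $M$ and is designed to push the numerical sum $R_k\sum_z a_zz^k$ to within $2^{-j}$ of the $j$-th term $y_j$ of a fixed countable dense sequence $\{y_j\}_{j\in\N}$ in $\C$ (each value repeated infinitely often) at one moment $n_j$, while scarcely disturbing it at all other times. Masses and thresholds are fixed recursively, small enough at each stage. At stage $j$ I apply Corollary~\ref{sico} to $M$ minus the finitely many previously used points (still infinite) and to a syndetic set $B_{j-1}$ carried by the induction ($B_0=\N$), taking $k=4$, $u=(\xi,\xi^2,\xi^3,\xi^4)$ with $\xi=e^{2\pi i/5}$, and $\epsilon$ the constant from Lemma~\ref{penta}. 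This yields five distinct points $z^{(j)}_1,\dots,z^{(j)}_5\in M$ (the last being the normalising point) and two times $n_j<m_j$ in $B_{j-1}$, with $n_j$ as large as we wish, such that the $10$-tuple with entries $(z^{(j)}_l)^{n_j}$ ($1\le l\le5$) followed by $(z^{(j)}_l)^{m_j}$ ($1\le l\le5$) lies in the set $P_\epsilon$ of Lemma~\ref{penta}.

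Lemma~\ref{penta} then supplies positive weights summing to $c_j$ for which the chunk's partial sum $s_j(k):=\sum_l a_{z^{(j)}_l}(z^{(j)}_l)^k$ satisfies $s_j(m_j)=0$ and $s_j(n_j)=\bigl(y_j-R_{n_j}\sum_{i<j}s_i(n_j)\bigr)/R_{n_j}$ --- permissible because the modulus of the right-hand side is at most $|y_j|/R_{n_j}+\sum_{i<j}|s_i(n_j)|\le|y_j|/R_{n_j}+(j-1)\mu_{j-1}$, which is $\le d\,c_j$ once $\mu_{j-1}<d\,c_j/(2j)$ and $n_j$ is large enough that $|y_j|<\tfrac d2 c_jR_{n_j}$. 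With this choice $R_{n_j}\sum_{i\le j}s_i(n_j)=y_j$; the tail $|R_{n_j}\sum_{i>j}s_i(n_j)|\le R_{n_j}\sum_{i>j}c_i$ is held below $2^{-j}$ by taking the still-unchosen masses small relative to the already-fixed $R_{n_j}$; and disturbances by chunk $j$ at the earlier times $n_i$ ($i<j$) are at most $R_{n_i}c_j<2^{-i-j}$ by the same device. Hence the numerical sum of the final $a$ is within $2^{-j}$ of $y_j$ at time $n_j$ for every $j$, so it is dense in $\C$. Taking $c_j$ so small that also $\omega(c_j)<2^{-j}$ gives $\sum_{z\in M}\omega(a_z)\le5\sum_j\omega(c_j)<\infty$, since within chunk $j$ each weight is at most $c_j$, $\omega$ is increasing, and $\omega(0)=0$; and $\sum_j c_j<\infty$ is arranged likewise.

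The engine is the nested syndetic sets. Once chunks $1,\dots,j$ are placed, each $s_i$ ($i\le j$) is a fixed continuous function $F_i$ of the point $\bigl((z^{(i)}_l)^k\bigr)_{l}\in\T^5$ that vanishes at the orbit point $\bigl((z^{(i)}_l)^{m_i}\bigr)_l$; therefore $B_j:=\{k\in\N:|s_i(k)|<\mu_j\ \text{for}\ i\le j\}$ is precisely the set of $k$ for which $\bigl((z^{(i)}_l)^k\bigr)_{i\le j,\,l}$ lies in a prescribed open subset of a compact torus, and by Lemma~\ref{grou} it is syndetic provided it is non-empty, while we let $\mu_j\downarrow0$ as fast as the estimate above requires. \textbf{The main obstacle is exactly this non-emptiness} for arbitrarily small $\mu_j$: it is equivalent to the assertion that the orbit closure $\overline{\langle(z^{(i)}_l)_{i\le j,\,l}\rangle}$ contains a point whose $i$-th block is a zero of $F_i$ for each $i\le j$. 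I would secure it within the induction by spending the slack left by Corollary~\ref{sico} --- the choice of accumulation point of $M$ and of the normalising point $z^{(j)}_5$ --- so as to keep each new five-tuple of frequencies in sufficiently general position relative to the earlier ones (so that, e.g., $\bigl((z^{(i)}_l)^{m_i}\bigr)_{i\le j,\,l}$ itself belongs to the orbit closure), and, in the degenerate situations where $M$ is contained in a single monothetic subgroup of $\T$, by running the argument with that one-parameter group in place of the torus. Reconciling ``chunks must not accumulate mass'' with ``chunks must nearly vanish at all later target times'' is the technical heart of the proof.
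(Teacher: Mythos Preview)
Your construction is close in spirit to the paper's --- both hinge on Lemma~\ref{penta} and Corollary~\ref{sico} to place five-point chunks with two prescribed moments --- but where the paper wraps this in a Baire-category argument (Theorem~U applied on the Polish $G_\delta$ set $\Omega=\{a:\liminf_k|\sum_z a_zz^k|=0\}$), you attempt a direct recursion. That is fine in principle, but the obstacle you flag is a genuine gap, and your sketch does not close it.

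The trouble is your set $B_j=\{k:|s_i(k)|<\mu_j\text{ for all }i\le j\}$: for arbitrarily small $\mu_j$ you need a single time $k$ at which \emph{every} earlier chunk sum $s_i$ is small. Since each $s_i$ vanishes only at its own $m_i$, and you have arranged no rational relations among the five-tuples $(z^{(i)}_l)_l$ for different $i$, there is no reason the joint orbit in $\T^{5j}$ ever visits a common zero of all the $F_i$. ``Spending the slack in Corollary~\ref{sico}'' or retreating to a monothetic subgroup does not help: the corollary gives no control over the subgroup your points generate. The clean repair --- and this is precisely the paper's mechanism --- is to track the \emph{cumulative} sum $S_j=\sum_{i\le j}s_i$ rather than the individual $s_i$. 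At stage $j$, use Lemma~\ref{penta} to force $s_j(m_j)=-S_{j-1}(m_j)$ (permissible since $m_j\in B_{j-1}$ makes $|S_{j-1}(m_j)|<\mu_{j-1}<dc_j$), so that $S_j(m_j)=0$; then redefine $B_j=\{k:|S_j(k)|<\mu_j\}$. Now $S_j(k)$ is a continuous function of the single point $\bigl((z^{(i)}_l)^k\bigr)_{i\le j,\,l}$ in a compact abelian group and $B_j\ni m_j$, so Lemma~\ref{grou} makes $B_j$ syndetic for every $\mu_j>0$ --- no genericity of frequencies required. Your remaining estimates go through verbatim with $S_{j-1}$ in place of $\sum_{i<j}s_i$. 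In the paper this appears as the inclusion $\Omega_0\subseteq\Omega$: each step lands in $\Omega_0$ (the new cumulative $a$ has $\sum a_zz^m=0$ at one time $m$), hence in $\Omega$, so the syndetic set for the next step is automatically non-empty.
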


\begin{proof} First, note that $Q(M)=\Bigl\{a\in\ell^1_+(M):\sum\limits_{z\in M}\omega(a_z)<\infty\Bigr\}$ equipped with the metric $\rho(a,b)=\sum\limits_{z\in M}\omega(|a_z-b_z|)$ is a separable complete metric space. Let
$$
\Omega=\biggl\{a\in Q(M):\liminf_{k\to\infty}\Bigl|\sum_{z\in M} a_zz^k\Bigr|=0\biggr\}.
$$
Then $\Omega$ is a non-empty ($0\in\Omega$) $G_\delta$-subset of the Polish space $Q(M)$ (it is by no means dense!). Indeed,
$$
\Omega=\bigcap_{m,n\in\N}\bigcup_{k=m}^\infty \biggl\{a\in Q(M): \Bigl|\sum_{z\in M} a_zz^k\Bigr|<\frac1n\biggr\}.
$$

Let also
$$
\Omega_0=\Bigl\{a\in Q(M):\sum_{z\in M} a_zz^k=0\ \ \text{for some $k\in\N$}\Bigr\}.
$$
First, we observe that $\Omega_0$ is a subset of $\Omega$. Indeed, let $a\in\Omega_0$. Then there is $k\in\N$ such that $\sum\limits_{z\in M} a_zz^k=0$. Consider the compact metrizable topological group $G=\T^M$ and $g\in G$ defined by $g(z)=z$ for $z\in M$. By Lemma~\ref{grou}, for every neighborhood $U$ of $1_G$  in $G$ there are infinitely many $n\in\N$ such that $g^n\in U$. Hence there is a strictly increasing sequence $\{n_j\}_{j\in\N}$ of positive integers such that $g^{n_j}\to 1_G$. Then
$$
\lim_{j\to\infty}\sum_{z\in M} a_zz^{k+n_j}=\sum_{z\in M} a_zz^{k}=0\ \ \text{and therefore $a\in \Omega$}.
$$

Let $d,\epsilon>0$ be the numbers provided by Lemma~\ref{penta}. First, we shall verify the following claim:
\begin{equation}\label{claiM}
\begin{array}{l}
\text{For each $a\in\Omega$, $q\in\N$, $\delta>0$ and $y\in\delta d\,\D$, there exist $n\in\N$ and $b\in Q(M)$}\\
\text{such that $n>q$, $\rho(b,0)<5\omega(\delta)$, $a+b\in\Omega_0$ and $\sum\limits_{z\in M}(a_z+b_z)z^n=y$.}
\end{array}
\end{equation}

Let $a\in\Omega$, $q\in\N$, $\delta>0$ and $y\in\delta d\D$. Clearly, $\theta=\delta d-|y|>0$. Denote
$A=\biggl\{n\in\Z:\Bigl|\sum\limits_{z\in M}a_zz^n\Bigr|<\theta\biggr\}$. 
Since $a\in\Omega$, the set $A$ is infinite and therefore non-empty. Clearly, $A=\{n\in\Z:g^n\in U\}$, where $U=\Bigl\{f\in G:\Bigl|\sum\limits_{z\in M}a_zf(z)\Bigr|<\theta\Bigr\}$. Since $U$ is an open subset of the compact topological group $G$, Lemma~\ref{grou} implies that $A$ is syndetic. Let $\xi=e^{2\pi i/5}$. By Corollary~\ref{sico}, there exist $m,n\in A$ and $z,z_1,\dots,z_5\in M$ such that $m>n>q$ and $|z_j^n-z^n\xi^j|<\epsilon/2$, $|z_j^m-z^m\xi^{2j}|<\epsilon/2$ for $1\leq j\leq 5$. Then $(z_1^n,\dots,z_5^n,z_1^m,\dots,z_5^m)$ belongs to the set $P_\epsilon$ defined in (\ref{pe}). Since $n,m\in A$, we have
\begin{align*}
&\textstyle
\Bigl|\frac{y}{\delta}-\frac1{\delta}\sum\limits_{z\in M}a_zz^n\Bigr|\leq \frac{|y|}{\delta}+\frac1\delta\Bigl|\sum\limits_{z\in M}a_zz^n\Bigr|<\frac{|y|}{\delta}+\frac1\delta(\delta d-|y|)=d,
\\
&\textstyle
\Bigl|-\frac1{\delta}\sum\limits_{z\in M}a_zz^m\Bigr|=\frac1\delta\Bigl|\sum\limits_{z\in M}a_zz^m\Bigr|<\frac1\delta(\delta d-|y|)<d.
\end{align*}
By Lemma~\ref{penta}, there exist positive numbers $\beta_1,\dots,\beta_5$ such that
\begin{equation}\label{match}
\text{$\sum\limits_{j=1}^5\beta_j=1$, \ $\sum\limits_{j=1}^5\beta_jz_j^n=\frac{y}{\delta}-\frac1{\delta}\sum\limits_{z\in M}a_zz^n$ \ and \
$\sum\limits_{j=1}^5\beta_jz_j^m=-\frac1{\delta}\sum\limits_{z\in M}a_zz^m$.}
\end{equation}
Define $b\in Q(M)$ by setting $b_z=0$ if $z\in M\setminus\{z_1,\dots,z_5\}$ and $b_{z_j}=\delta\beta_j$ for $1\leq j\leq 5$. The equation (\ref{match}) now reads
$\sum\limits_{z\in M}(a_z+b_z)z^n=y$ and $\sum\limits_{z\in M}(a_z+b_z)z^m=0$. The latter implies $a+b\in\Omega_0$. Since the non-zero components of $b$ sum up to $\delta$ and there are only 5 of them $\rho(b,0)<5\omega(\delta)$. This concludes the proof of Claim~(\ref{claiM}).

Since $\Omega$ is a non-empty $G_\delta$-subset of a Polish space $Q(M)$, $\Omega$ is a Polish space in its own right. Since for every $k\in\N$, the map $f_k:\Omega\to\C$, $f_k(a)=R^k\sum\limits_{z\in M} a_zz^k$ is continuous, Theorem~U guarantees that the proof will be complete if we show that
$$
\Lambda=\biggl\{\Bigl(a,R^k\sum\limits_{z\in M} a_zz^k\Bigr):a\in\Omega,\ k\in\N\biggr\}\ \ \text{is dense in $\Omega\times\C$.}
$$
Since $R_n\to\infty$, (\ref{claiM}) yields that for every $\delta>0$, $y\in\C$ and $a\in\Omega$, there are $n\in\N$ and $b\in\ell^1_+(M)$ such that $\rho(b,0)<\delta$ (and therefore $\rho(a,a+b)<\delta$), $a+b\in\Omega$ and $\sum\limits_{z\in M}(a_z+b_z)z^n=\frac{y}{R^n}$. Hence $(a+b,y)\in\Lambda$. Since $a\in \Omega$, $y\in\C$  and $\delta>0$ are arbitrary, $\Lambda$ is dense in $\Omega\times\C$, which completes the proof.
\end{proof}

\begin{lemma}\label{ddIaa00} Let $(\Omega,{\cal F},\mu)$ be a measure space. Assume that the essential range of $g\in L^\infty(\mu)$ contains a sequence $\{\lambda_n\}_{n\in\N}$ such that $|\lambda_1|>1$, $\{|\lambda_n|\}_{n\in\N}$ is strictly increasing and $\frac{\lambda_n}{|\lambda_n|}$ are pairwise distinct. Then there exist a non-negative real valued function $a\in L^1(\mu)$ for which
$$
\biggl\{\int_\Omega g^k a\,d\mu:k\in\Z_+\biggr\}\ \ \text{is dense in $\C$.}
$$
\end{lemma}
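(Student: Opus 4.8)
The plan is to reduce the assertion to a purely sequential statement in $\ell^1_+(\N)$ and then build the weight by a direct recursion. Put $R=\|g\|_\infty$. Passing to a subsequence of $\{\lambda_n\}$, we may assume the phases $u_n=\lambda_n/|\lambda_n|$ converge to some $v\in\T$ with $u_n\neq v$ for every $n$; write $r_n=|\lambda_n|$, so $1<r_1<r_2<\dots$. Since the $\lambda_n$ have pairwise distinct moduli, each $\lambda_n$ is isolated from $\{\lambda_j:j\neq n\}$, so we may fix radii $\delta_n>0$ with the discs $B(\lambda_n,\delta_n)$ pairwise disjoint, $r_n+\delta_n<r_{n+1}$, and --- this is where the care lies --- $\delta_n$ chosen astronomically small: so small that $(r_n+\delta_n)^k-r_n^k$ is negligible (say below $2^{-n}$) for every exponent $k$ up to a bound $F(n)$ determined, as the analysis below will show, by how fast the phases $u_m$ ($m\geq n$) approach $v$. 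Because $\lambda_n$ lies in the essential range of $g$, the set $E_n=\{x:|g(x)-\lambda_n|<\delta_n\}$ has positive measure; choose a measurable $A_n\subseteq E_n$ with $0<\mu(A_n)<\infty$ (possible as $\mu$ is semi-finite; in the intended applications $\mu$ is even $\sigma$-finite). The $A_n$ are pairwise disjoint; set $a^{(n)}=\mu(A_n)^{-1}{\bf 1}_{A_n}$, so $\|a^{(n)}\|_1=1$ and, for $c\in\ell^1_+(\N)$, the function $a_c=\sum_n c_na^{(n)}\in L^1(\mu)$ is non-negative with $\|a_c\|_1=\|c\|_1$ and $\int_\Omega g^ka_c\,d\mu=\sum_n c_n\eta_n^{(k)}$, where $\eta_n^{(k)}=\mu(A_n)^{-1}\int_{A_n}g^k\,d\mu$ satisfies $|\eta_n^{(k)}-\lambda_n^k|\leq(r_n+\delta_n)^k-r_n^k$ and $|\eta_n^{(k)}|\leq R^k$. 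So it suffices to exhibit $c\in\ell^1_+(\N)$ for which $\{\sum_n c_n\eta_n^{(k)}:k\in\Z_+\}$ is dense in $\C$.

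Note that $\sum_n c_n\eta_n^{(k)}$ is close to the idealized sum $\sum_n c_n\lambda_n^k$, and the latter escapes to $\infty$ for a typical $c$ (the term of largest modulus eventually dominates), so the required $c$ must be engineered so that its sums return near prescribed values along a sparse set of times; a Baire-category argument over $\ell^1_+(\N)$ is of no use here. Instead, fix a dense sequence $\{y_j\}_{j\in\N}$ in $\C$ and construct $c$ together with exponents $k_1<k_2<\dots$ so that $\sum_n c_n\eta_n^{(k_j)}$ lies within $2^{-j}$ of $y_j$ for each $j$. At step $j$ I adjoin a few positive weights at indices $m_1<\dots<m_5$ all larger than every index used so far, at a very large exponent $k_j$.

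The choice of the $m_i$ and of $k_j$ rests on Section~\ref{s3}. Since $u_n\to v$, the set $\{u_n:n>N\}$ accumulates at $v$ for every $N$, so --- dividing by $v$ --- Lemma~\ref{toR} supplies indices $m_1<\dots<m_5>N$ and an exponent $k_j$, which may be taken as large as we wish (and, by inspection of the proof, no larger than a fixed function $F(m_5)$ of the top index used), for which $u_{m_1}^{k_j},\dots,u_{m_5}^{k_j}$ are, up to one common unimodular factor, as close as we please to the five fifth roots of unity. By the linear algebra behind Lemma~\ref{penta} (the positive cone spanned by points near the fifth roots of unity is all of $\C$), we can then realize any complex number as $\sum_{i=1}^5\beta_i\lambda_{m_i}^{k_j}$ with $\beta_i>0$ and $\sum_i\beta_i$ at most a constant multiple of the target's modulus divided by $r_{m_1}^{k_j}$. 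We take the target to be $y_j-\sum_{n\leq n_{j-1}}c_n\lambda_n^{k_j}$; its modulus is at most a constant times $(r_{n_{j-1}}+\delta_{n_{j-1}})^{k_j}$, and since $r_{m_i}>r_{n_{j-1}}+\delta_{n_{j-1}}$ for each $i$, the total added mass $\sum_i\beta_i$ is of order $\bigl((r_{n_{j-1}}+\delta_{n_{j-1}})/r_{m_1}\bigr)^{k_j}$, which tends to $0$ as $k_j\to\infty$; thus $\sum_i\beta_i$ can be kept below $2^{-j}$.

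Three estimates, all using only $r_{m_i}>1$, then finish the argument. (i) At each already-fixed exponent $k_i$ with $i<j$, the weights adjoined at step $j$ alter $\sum_n c_n\eta_n^{(k_i)}$ by at most $(\sum_i\beta_i)\,r_{m_5}^{k_i}$, which equals a fixed quantity times $\bigl((r_{n_{j-1}}+\delta_{n_{j-1}})/r_{m_1}\bigr)^{k_j-k_i}\to 0$ as $k_j\to\infty$; taking $k_j$ large enough keeps the total later perturbation of each earlier approximation below $2^{-j}$. (ii) The discrepancies $\eta_n^{(k)}-\lambda_n^k$ at the time $k_j$ --- both those from the weights placed at step $j$ and those from all later steps --- are kept below $2^{-j}$: this is exactly what forces $\delta_n$ to be so small, since $(r_n+\delta_n)^k-r_n^k$ must be tiny for every exponent $k\leq F(n)$ that can ever be paired with index $n$. (iii) The total mass $\sum_j(\text{mass at step }j)<\sum_j2^{-j}<\infty$, so the limit $c$ lies in $\ell^1_+(\N)$. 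Hence $\sum_n c_n\eta_n^{(k_j)}$ is within $2^{-j}$ of $y_j$ for all $j$, the set $\{\sum_n c_n\eta_n^{(k)}:k\in\Z_+\}$ is dense, and $a=a_c$ is the required function. The main obstacle is precisely this reconciliation: the weights $\beta_i$, the geometric growth $|\lambda_{m_i}|^k$, and the error radius $(|\lambda_{m_i}|+\delta_{m_i})^k-|\lambda_{m_i}|^k$ all depend on the exponent $k_j$, which is produced by --- not freely chosen in --- the Lemma~\ref{toR}/Lemma~\ref{penta} machinery; it can, however, be bounded by a fixed function $F$ of the largest index used, so that, every other ingredient being either frozen once $k_j$ is out or decaying like a ratio of moduli raised to the power $k_j$, committing in advance to an $F$-tiny $\delta_n$ --- which costs nothing, since $A_n$ may be shrunk freely --- neutralizes the discrepancy term.
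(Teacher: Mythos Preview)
Your recursion is a viable alternative to the paper's approach, but the error bookkeeping has a genuine gap. You target $y_j-\sum_{n\leq n_{j-1}}c_n\lambda_n^{k_j}$, yet what must actually be hit is $y_j-\sum_{n\leq n_{j-1}}c_n\eta_n^{(k_j)}$; the mismatch $\sum_{n\leq n_{j-1}}c_n\bigl(\eta_n^{(k_j)}-\lambda_n^{k_j}\bigr)$ is bounded only by $\sum_{n\leq n_{j-1}}c_n\bigl[(r_n+\delta_n)^{k_j}-r_n^{k_j}\bigr]$, and each summand tends to $\infty$ with $k_j$ for any fixed $\delta_n>0$. Your ``commit in advance to an $F$-tiny $\delta_n$'' cannot rescue this: you claim $k_j\leq F(m_5)$ with $m_5$ the top \emph{current} index, but the offending indices are the \emph{earlier} ones $n<m_5$, for which (with $F$ increasing) $F(n)\leq F(m_5)$---the wrong direction. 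No a-priori $\delta_n$ tames the discrepancy at an old index against unboundedly large future exponents, and your item~(ii), which treats only current and later weights, simply leaves this term unaccounted for.

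The repair is to subtract the \emph{actual} earlier contribution: take the target to be $y_j-\sum_{n\leq n_{j-1}}c_n\eta_n^{(k_j)}$ (your bound $(r_{n_{j-1}}+\delta_{n_{j-1}})^{k_j}$ fits this, so you may have half-intended it). Then the only $\eta$-versus-$\lambda$ error at time $k_j$ comes from the fresh indices $m_1,\dots,m_5$, and since the sets $A_{m_i}$ are not yet fixed you may choose $\delta_{m_i}$ \emph{after} $k_j$ is known; the whole ``$F$-tiny'' apparatus becomes superfluous. (Also, in~(i) use the uniform bound $|\eta_n^{(k)}|\leq R^k$; your displayed $k_j-k_i$ exponent is not correct as written.) For comparison, the paper proceeds quite differently: it applies Theorem~U on the full cone $Z$ of non-negative $a\in L^1(\mu)$ vanishing on $\{|g|\geq\lim r_n\}$, and need only show $\{(a,\int g^ka\,d\mu)\}$ dense in $Z\times\C$. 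Given $(a,y)$ it uses Corollary~\ref{sico} to place \emph{three} phases near the cube roots of unity and then approximates the resulting convex combination of $\lambda_{j_1}^{k},\lambda_{j_2}^{k},\lambda_{j_3}^{k}$ by $\int b\,g^{k}\,d\mu$ for some $b\in Z$ of small norm; since $b$ ranges over arbitrary $L^1$ functions rather than ones concentrated on pre-selected neighbourhoods of the $\lambda_n$, the $\eta$-versus-$\lambda$ issue never arises.
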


Applying the above lemma to the situation of a purely atomic measure space, we immediately obtain the following corollary.

\begin{corollary}\label{ddIaa} Let $\{\lambda_n\}_{n\in\N}$ be a bounded sequence in $\C$ such that $|\lambda_1|>1$, the sequence $\{|\lambda_n|\}_{n\in\N}$ is strictly increasing and the numbers $\frac{\lambda_n}{|\lambda_n|}$ are pairwise distinct. Then there exists $a\in\ell^1_+(\N)$ such that
$$
\Bigl\{\sum_{n=1}^\infty a_n \lambda_n^k :k\in\Z_+\Bigr\}\ \ \text{is dense in $\C$.}
$$
\end{corollary}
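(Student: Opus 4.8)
The corollary is precisely the purely atomic instance of Lemma~\ref{ddIaa00}, so the whole of the argument is to unwind that specialization. Take $(\Omega,\F,\mu)$ to be $\N$ with the counting measure and put $g(n)=\lambda_n$. Since $\{\lambda_n\}_{n\in\N}$ is bounded, $g\in L^\infty(\mu)$ with $\|g\|_\infty=\sup_n|\lambda_n|<\infty$; and since every singleton has positive measure, the essential range of $g$ is $\overline{\{\lambda_n:n\in\N\}}$ and in particular contains the given sequence $\{\lambda_n\}_{n\in\N}$, which by hypothesis satisfies $|\lambda_1|>1$, has strictly increasing moduli and unimodular parts $\frac{\lambda_n}{|\lambda_n|}$ pairwise distinct. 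Thus the hypotheses of Lemma~\ref{ddIaa00} are met. Identifying $L^1(\mu)$ with $\ell^1(\N)$, the non-negative real valued members of $L^1(\mu)$ with $\ell^1_+(\N)$, and noting $\int_\Omega g^ka\,d\mu=\sum_{n=1}^\infty a_n\lambda_n^k$ (an absolutely convergent series, as $\sum_n|\lambda_n|^ka_n\le(\sup_n|\lambda_n|)^k\|a\|_1<\infty$), Lemma~\ref{ddIaa00} yields exactly an $a\in\ell^1_+(\N)$ with $\bigl\{\sum_{n=1}^\infty a_n\lambda_n^k:k\in\Z_+\bigr\}$ dense in $\C$. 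Hence, granting Lemma~\ref{ddIaa00}, nothing remains; I expect no obstacle here.

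Since the proof of Lemma~\ref{ddIaa00} has not appeared yet, I would also indicate how to obtain it, as that is where the content lies. First I would reduce it to the atomic case just used: because each $\lambda_n$ lies in the essential range of $g$ and the $\lambda_n$ are pairwise distinct, one may choose pairwise disjoint $E_n\in\F$ of positive finite measure on which $\|g-\lambda_n\|_\infty$ is as small as desired, look for the density among functions $a$ supported on $\bigsqcup_nE_n$ and constant on each $E_n$, and observe that then $\int_\Omega g^ka\,d\mu$ differs from $\sum_na_n\lambda_n^k$ (with $a_n$ the $\mu$-mass of $a$ on $E_n$) by a quantity controllable term by term; this turns the lemma into the sequence statement of the corollary.

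That sequence statement I would prove with the Universality Theorem, on the template of Lemma~\ref{ddiaa}. The Polish space would be a $G_\delta$-subset of $\ell^1_+(\N)$ — the natural candidate, as there, being $\Omega_\lambda=\bigl\{a\in\ell^1_+(\N):\liminf_{k\to\infty}\bigl|\sum_na_n\lambda_n^k\bigr|=0\bigr\}$ — the target space $Y=\C$, and the continuous maps $f_k(a)=\sum_na_n\lambda_n^k$ (well defined since $\{\lambda_n\}$ is bounded). Theorem~U reduces the matter to showing $\{(a,f_k(a)):a\in\Omega_\lambda,\ k\in\Z_+\}$ dense in $\Omega_\lambda\times\C$: given $a\in\Omega_\lambda$, $y\in\C$, $\delta>0$, one wants a perturbation $b$ of small $\ell^1$-norm, supported on finitely many large indices, and a time $k$ with $\sum_n(a_n+b_n)\lambda_n^k$ near $y$ and $a+b$ again in $\Omega_\lambda$. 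One selects $k$ from the subsequence on which $|f_k(a)|$ is small (this is where membership in $\Omega_\lambda$ enters), then uses Corollary~\ref{sico} — applied to $M=\{\lambda_n/|\lambda_n|:n\text{ large}\}$ — to find five indices whose $k$-th powers have arguments near a rotated regular pentagon at two times $k<k'$, and Lemma~\ref{penta} to obtain positive weights on those indices realizing the correction $y-f_k(a)$ at time $k$ while simultaneously forcing $\sum_n(a_n+b_n)\lambda_n^{k'}=0$, which keeps $a+b$ in $\Omega_\lambda$; the weights are small because the required values are divided through by $|\lambda_n|^k\ge1$, which becomes huge once $k$ is large. The main obstacle — and the only serious departure from Lemma~\ref{ddiaa}, where all the $\lambda_n$ sat on $\T$ and so $|\lambda_n|^k\equiv1$ — is controlling the interaction of the moduli $|\lambda_n|^k$ with the "small-value" times: one must check both that $\Omega_\lambda$ is rich enough (an analogue of the $\Omega_0\subseteq\Omega$ argument via Lemma~\ref{grou}) and that the pentagon of indices can be taken among indices large enough that their $k$-th powers neither swamp the already committed mass of $a$ nor blow up the needed perturbation. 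The accumulation of $|\lambda_n|$ at $\sup_n|\lambda_n|$ is what should make that balance work.
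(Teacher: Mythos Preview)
Your derivation of the corollary from Lemma~\ref{ddIaa00} is correct and is exactly what the paper does: the paper's entire proof of Corollary~\ref{ddIaa} is the one-line remark that it is the purely atomic case of Lemma~\ref{ddIaa00}.

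Your appended sketch of how to prove Lemma~\ref{ddIaa00} itself, however, is more elaborate than the paper's actual argument, because you model it too closely on Lemma~\ref{ddiaa} (the constant-modulus case). The paper does \emph{not} reduce to the atomic case, does \emph{not} work inside a liminf-zero $G_\delta$ set $\Omega_\lambda$, and does \emph{not} need pentagons or the two-time trick of Lemma~\ref{penta}. Instead it exploits the strict increase of $|\lambda_n|$ directly: one works on the full closed set $Z=\{a\in L^1_+(\mu):a=0\text{ on }|g|\geq r\}$, where $r=\lim|\lambda_n|$; given $a\in Z$ supported on $\{|g|<r_{m-1}\}$ and a target $y$, the existing contribution $\int ag^k$ is $O(r_{m-1}^k)$, so after dividing by $r_m^k$ it becomes as small as one likes. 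Then Corollary~\ref{sico} with \emph{three} indices $>m$ (a triangle, giving $\tfrac13\D$ in its convex hull) suffices to hit the rescaled target at a single time $k$, and the required $\ell^1$-mass of the correction is $\leq(r_m/r_{j})^k\to 0$. No recurrence mechanism is needed to stay inside the parameter space, since $Z$ is already closed. The machinery you propose would work, but the strictly increasing moduli make the delicate parts of Lemma~\ref{ddiaa} unnecessary here.
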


\begin{proof}[Proof of Lemma~$\ref{ddIaa00}$] Let $r_n=|\lambda_n|$, $z_n=\frac{\lambda_n}{|\lambda_n|}$ and $r=\lim r_n$. Let $\Omega_0=\{\omega\in\Omega:|g(\omega)|<r\}$ and $Z$ be the set of all real-valued non-negative $f\in L^1(\mu)$ vanishing outside $\Omega_0$. Clearly, $Z$ is a closed subset of the Banach space $L^1(\mu)$. Since for every $k\in\N$, the map $F_k:Z\to\C$, $F_k(f)=\int_\Omega g^kf\,d\mu$ is continuous, Theorem~U guarantees that the proof will be complete if we show that
$$
\Lambda=\biggl\{\Bigl(a,\int g^kf\,d\mu\Bigr):a\in Z,\ k\in\N\biggr\}\ \ \text{is dense in $Z\times\C$.}
$$

Let $U$ be an arbitrary non-empty open subset of $Z\times\C$. Since $r_n\to r$, we can find $(a,y)\in U$ and $m\geq 2$ such that $a$ vanishes outside $\{\omega\in\Omega:|g(x)|<r_{m-1}\}$.
Clearly $M=\{z_j:j>m\}$ is an infinite subset of $\T$. Since $|g|\leq r_{m-1}$ on the support of $a$, $\Bigl|\int ag^k\,d\mu\Bigr|\leq \|a\|_1r_{m-1}^k$. Since $r_{m-1}<r_m$, we can find
$q\in\N$ such that $\frac{r_{m-1}^q\|a\|_1}{r_m^q}+\frac{|y|}{r_m^q}<\frac13$ and therefore
$$
\frac{y}{r_m^{k}}-\frac1{r_m^k}\int_\Omega ag^k \,d\mu \in \frac13\,\D\ \ \text{for each $k\geq q$}.
$$
An elementary geometric argument ensures that there is $\epsilon>0$ (one can even find the biggest $\epsilon$) such that whenever $u,v,w\in\T$ satisfy $\bigl|\frac{u}{v}-e^{2\pi i/3}\bigr|<\epsilon$ and $\bigl|\frac{w}{v}-e^{4\pi i/3}|<\epsilon$, the disk $\frac13\D$ is contained in the convex hull of $u$, $v$ and $w$. Corollary~\ref{sico} applied with $A=\Z$ guarantees that for every integer $s\geq q$ there are $u_s,v_s,w_s\in M$ and $k_s\in\N$ such that $k_s>s$, $\bigl|\frac{u_s^{k_s}}{v_s^{k_s}}-e^{2\pi i/3}\bigr|<\epsilon$ and $\bigl|\frac{u_s^{k_s}}{v_s^{k_s}}-e^{4\pi i/3}\bigr|<\epsilon$. Thus $\frac13\D$ is contained in the convex hull of $u_{s}^{k_s}$, $v_s^{k_s}$ and $w_s^{k_s}$. Since $u_s,v_s,w_s\in M$, there are $j_{1,s},j_{2,s},j_{3,s}\in\{m+1,m+2,\dots\}$ such that $u_s=z_{j_{1,s}}$, $v_s=z_{j_{2,s}}$ and $w_s=z_{j_{3,s}}$. Since $\frac13\D$ is contained in the convex hull of $u_{s}^{k_s}$, $v_s^{k_s}$ and $w_s^{k_s}$, by the above display, there  exist $\alpha_1,\alpha_2,\alpha_3\in\R_+$ such that
$$
\text{$\alpha_1+\alpha_2+\alpha_3=1$ \ and}\ \
\alpha_1z_{j_{1,s}}^{k_s}+\alpha_2z_{j_{2,s}}^{k_s}+\alpha_3z_{j_{3,s}}^{k_s}=\frac{y}{r_m^{k_s}}-\frac1{r_m^{k_s}}\int_\Omega ag^{k_s} \,d\mu.
$$
Using the definitions of $r_n$ and $z_n$ and multiplying by $r_m^{k_s}$, we get
\begin{equation}\label{eqy}
\frac{\alpha_1r_m^{k_s}}{r_{j_{1,s}}^{k_s}}\lambda_{j_{1,s}}^{k_s}+
\frac{\alpha_2r_m^{k_s}}{r_{j_{2,s}}^{k_s}}\lambda_{j_{2,s}}^{k_s}+
\frac{\alpha_3r_m^{k_s}}{r_{j_{3,s}}^{k_s}}\lambda_{j_{3,s}}^{k_s}+
\int_\Omega ag^{k_s} \,d\mu=y.
\end{equation}
Note that for every $k\in\N$, the set
$$
\biggl\{\int_\Omega bg^k:b\in Z,\ \|b\|=1\biggr\}
$$
is convex and is a dense subset of the closed convex hull of the set
$$
\{z^k:\text{$z$ is in the essential range of $g$ and $|z|<r$}\}.
$$
Thus there is $b_s\in Z$ such that
\begin{equation}\label{bsn}
\|b_s\|_1=\frac{\alpha_1r_m^{k_s}}{r_{j_{1,s}}^{k_s}}+
\frac{\alpha_2r_m^{k_s}}{r_{j_{2,s}}^{k_s}}+
\frac{\alpha_3r_m^{k_s}}{r_{j_{3,s}}^{k_s}}
\end{equation}
and
$$
\biggl|\int_\Omega b_sg^{k_s}-\biggl(\frac{\alpha_1r_m^{k_s}}{r_{j_{1,s}}^{k_s}}\lambda_{j_{1,s}}^{k_s}+
\frac{\alpha_2r_m^{k_s}}{r_{j_{2,s}}^{k_s}}\lambda_{j_{2,s}}^{k_s}+
\frac{\alpha_3r_m^{k_s}}{r_{j_{3,s}}^{k_s}}\lambda_{j_{3,s}}^{k_s}\biggr)\biggr|<2^{-s}.
$$
The above display together with (\ref{eqy}) gives
$$
y_s=\int_\Omega (a_s+b_s)g^{k_s}\,d\mu\to y\ \ \text{as $s\to\infty$.}
$$
Since $r_{l,s}>r_m$ for $1\leq l\leq 3$, (\ref{bsn}) implies that $\|b_s\|_1\to 0$ as $s\to\infty$. By definition of $y_s$, $(a+b_s,y_s)\in \Lambda$ for each $s\geq q$. Since $a+b_s\to a$, $y_s\to y$ and $U$ is open, $\Lambda$ meets $U$. Since $U$ is an arbitrary open subset of $Z\times\C$, $\Lambda$ is dense in $Z\times\C$, which completes the proof.
\end{proof}

For the sake of brevity, we use the following fairly standard notation:
$$
\widehat{\mu}(n)=\int_\T z^n\,d\mu(z)\ \ \text{for $n\in\Z$ for a Borel measure $\mu$ on $\T$.}
$$
As usual, for $g\in L^1(\mu)$, $g\mu$ stands for the measure absolutely continuous with respect to $\mu$ with the density (=Radon--Nykodim derivative) $g$.
For a non-negative Borel measure on $\T$, we denote the set of all real-valued non-negative $f\in L^1(\mu)$ by the symbol $L^1_+(\mu)$.

\begin{lemma}\label{proba} Let $\mu$ be a purely non-atomic Borel probability measure on $\T$ and $\{R_n\}_{n\in\N}$ be a sequence of positive numbers such that $R_n\to\infty$. Then there is $a\in L^1_+(\mu)$ such that $\{R_n\widehat{a\mu}(n):n\in\N\}$ is dense in $\C$.
\end{lemma}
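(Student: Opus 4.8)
\emph{Proof plan.} The plan is to deduce this from Theorem~U. Set $Z=L^1_+(\mu)$. Since $C(\T)$ is separable and dense in $L^1(\mu)$, the space $L^1(\mu)$ is separable, so $Z$, being closed in $L^1(\mu)$, is a Polish space. For $n\in\N$ let $f_n\colon Z\to\C$, $f_n(a)=R_n\widehat{a\mu}(n)=R_n\int_\T z^na(z)\,d\mu(z)$; this is continuous because $|f_n(a)-f_n(b)|\le R_n\|a-b\|_1$. By Theorem~U it is enough to prove that $\Lambda=\{(a,f_n(a)):a\in Z,\ n\in\N\}$ is dense in $Z\times\C$; in fact this will even produce a dense $G_\delta$ of admissible $a$.

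I would first record two structural facts. Since $\mu$ has no atoms, its support $S$ has no isolated points (an isolated point of $S$ would be an atom), so $S$ is perfect and every nonempty relatively open subset of $S$ is uncountable. Using this and a routine countability argument, pick $w_1,w_2,w_3\in S$ with $w_2=w_1e^{i\delta_2}$ and $w_3=w_1e^{i\delta_3}$ such that $1,\delta_2/2\pi,\delta_3/2\pi$ are linearly independent over $\Q$ (choose $w_2$ avoiding the countably many points with $\delta_2/2\pi\in\Q$, then $w_3$ avoiding the countably many points producing a $\Q$-linear relation). By the Weyl equidistribution theorem the sequence $(e^{in\delta_2},e^{in\delta_3})$ is equidistributed in $\T^2$, so for a fixed small $\epsilon_0>0$ the set $\mathcal G=\{n\in\N:|e^{in\delta_2}-e^{2\pi i/3}|<\epsilon_0,\ |e^{in\delta_3}-e^{-2\pi i/3}|<\epsilon_0\}$ has positive density, and for every $n\in\mathcal G$ the triangle $\mathrm{conv}\{w_1^n,w_2^n,w_3^n\}=w_1^n\cdot\mathrm{conv}\{1,e^{in\delta_2},e^{in\delta_3}\}$ is a near-equilateral triangle containing the disk $\rho_0\overline{\D}$ for some fixed $\rho_0>0$ (the ideal triangle $\mathrm{conv}\{1,e^{2\pi i/3},e^{-2\pi i/3}\}$ has incenter $0$ and inradius $\tfrac12$, and $w_1^n\in\T$ acts by rotation). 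Secondly, by the classical Wiener theorem applied to the non-atomic measure $a_0\mu$, the set $\{n:|\widehat{a_0\mu}(n)|\ge\theta\}$ has density $0$ for every $\theta>0$; hence $\mathcal G\cap\{n:|\widehat{a_0\mu}(n)|<\theta\}$ has positive upper density, in particular is infinite.

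Now fix $a_0\in Z$, $y\in\C$, $\delta>0$. Put $\theta=\tfrac14\delta\rho_0$ and pick $n\in\mathcal G$ so large that $|\widehat{a_0\mu}(n)|<\theta$ and $|y|/R_n<\tfrac14\delta\rho_0$; let $\zeta=y/R_n-\widehat{a_0\mu}(n)$, so $|\zeta|<\tfrac12\delta\rho_0$ (if $\zeta=0$ take $a=a_0$). Since $\rho_0\overline{\D}\subseteq\mathrm{conv}\{w_1^n,w_2^n,w_3^n\}$, writing $\rho_0\zeta/|\zeta|=\sum_{j=1}^3\beta_jw_j^n$ with $\beta_j\ge0$, $\sum_j\beta_j=1$, and rescaling yields $\zeta=\sum_{j=1}^3\alpha_jw_j^n$ with $\alpha_j\ge0$ and $\sum_j\alpha_j=|\zeta|/\rho_0<\tfrac12\delta$. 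As $w_j\in S$, every arc about $w_j$ has positive $\mu$-measure, so choose an arc $I_j\ni w_j$ with $|I_j|<(nR_n)^{-1}$ and put $g_j=\mathbf 1_{I_j}/\mu(I_j)\ge0$; then $\int g_j\,d\mu=1$ and $|\widehat{g_j\mu}(n)-w_j^n|\le n|I_j|$. With $b=\sum_j\alpha_jg_j\ge0$ we get $\|b\|_1=\sum_j\alpha_j<\delta$ and $|\widehat{b\mu}(n)-\zeta|\le(\sum_j\alpha_j)\max_jn|I_j|<\delta/R_n$. Finally set $a=a_0+b\in Z$; then $\|a-a_0\|_1<\delta$, and since $R_n\widehat{a_0\mu}(n)+R_n\zeta=y$ we obtain $|f_n(a)-y|=R_n|\widehat{b\mu}(n)-\zeta|<\delta$. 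Thus $(a,f_n(a))\in\Lambda$ lies within $\delta$ of $(a_0,y)$; as $(a_0,y,\delta)$ was arbitrary, $\Lambda$ is dense in $Z\times\C$, and Theorem~U completes the proof.

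The delicate point, and the step I expect to be the main obstacle, is the simultaneous choice of the frequency $n$: it must lie in the Bohr-type set $\mathcal G$ (so that the three points $w_j^n$ surround $0$ with a margin $\rho_0$ \emph{independent of $n$}, which is exactly what lets an arbitrarily small nonnegative perturbation realize any sufficiently small value of $\widehat{b\mu}(n)$) and, at the same time, among the frequencies where $\widehat{a_0\mu}(n)$ is small. This is where absence of atoms is used twice — once through perfectness of $\supp\mu$ to manufacture three points with $\Q$-independent angular gaps, and once through Wiener's theorem, which confines the frequencies with $|\widehat{a_0\mu}(n)|$ not small to a set of density $0$ and hence lets them be avoided inside the positive-density set $\mathcal G$.
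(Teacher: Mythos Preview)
Your proof is correct, and it takes a route genuinely different from the paper's. Both arguments reduce to Theorem~U and then show that $\Lambda$ is dense in $L^1_+(\mu)\times\C$ by perturbing a given $a_0$ at a well-chosen frequency $n$; the difference lies in how the good frequency is produced.

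The paper first proves a quantitative, syndetic version of Wiener's theorem: for any non-atomic $\nu$ and any $\epsilon>0$, the set $\{n:|\widehat{\nu}(n)|<\epsilon\}$ is syndetic (this is Claim~(\ref{sysy}), obtained by a Ces\`aro average and the Fubini trick). It then feeds this syndetic set into the paper's earlier combinatorial machinery (Corollary~\ref{sico}, which works for an arbitrary infinite $M\subset\T$ and an arbitrary syndetic $A$) to locate, for any $q$-gon precision, an $n\in A$ at which $q$ support points raised to the $n$th power nearly form a regular $q$-gon around $0$. The perturbation is then a convex combination of densities supported near those points.

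You replace both ingredients by classical tools: Weyl equidistribution gives you a set $\mathcal G$ of \emph{positive density} at which three fixed support points (chosen with $\Q$-independent angular gaps, which perfectness of $\supp\mu$ allows) yield a uniformly fat triangle around $0$; classical Wiener's theorem gives that $\{n:|\widehat{a_0\mu}(n)|\ge\theta\}$ has \emph{density zero}; positive density minus density zero is positive density, hence infinite. Your approach is more self-contained and avoids the syndetic apparatus; the paper's approach reuses infrastructure already built for other results and yields the slightly sharper syndetic statement about small Fourier coefficients, but that extra strength is not needed here.
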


\begin{proof} We start by proving the following claim.
\begin{equation}\label{sysy}
\begin{array}{l}
\text{For every purely non-atomic Borel probability measure $\nu$ on $\T$}\\ \text{and each $\epsilon>0$, the set $\{n\in\Z:|\widehat{\nu}(n)|<\epsilon\}$ is syndetic.}
\end{array}
\end{equation}
By the Fubini theorem,
$$
|\widehat{\nu}(k)|^2=\widehat{\nu}(k)\overline{\widehat{\nu}(k)}=\int_\T z^k\,d\nu(z)\int_\T w^{-k}\,d\nu(w)=
\int_{\T\times\T}\Bigl(\frac zw\Bigr)^k\,d(\nu\times\nu)(z,w).
$$
For each $m\in\Z$ and $n\in\N$, we sum up these equalities for $m\leq k\leq m+n-1$:
\begin{align*}
&\qquad\frac1{n}\sum_{k=m}^{m+n-1}|\widehat{\nu}(k)|^2=\int_{\T\times\T}h_{m,n}(z,w)\,d(\nu\times\nu)(z,w),
\\
&\text{where}\ \ h_{m,n}(z,w)=\frac1{n}\Bigl(\frac zw\Bigr)^m\sum_{k=0}^{n-1} \frac{z^k}{w^k}=
\frac{z^m(w^n-z^n)}{n\,w^{m+n-1}(w-z)}.
\end{align*}
For each $\delta\in(0,2)$, we can split the above integral:
$$
\frac1{n}\sum_{k=m}^{m+n-1}|\widehat{\nu}(k)|^2=\int_{A_\delta}h_{m,n}(z,w)\,d(\nu\times\nu)(z,w)+
\int_{B_\delta}h_{m,n}(z,w)\,d(\nu\times\nu)(z,w),
$$
where $A_\delta=\{(z,w)\in\T^2:|z-w|<\delta\}$ and $B_\delta=\{(z,w)\in\T^2:|z-w|\geq\delta\}$.
Note that $|w-z|\geq \delta$ and $|w^n-z^n|\leq 2$ for $(z,w)\in B_\delta$. Hence $|h_{m,n}(z,w)|\leq \frac{2}{\delta n}$ for $(z,w)\in B_\delta$.
Thus
$$
\biggl|\int_{B_\delta}h_{m,n}(z,w)\,d(\nu\times\nu)(z,w)\biggr|\leq \frac{2}{\delta n}(\nu\times\nu)(B_\delta)\leq \frac{2}{\delta n}.
$$
On the other hand, for every $z,w\in\T$ and $n\in\N$, $h_{m,n}(z,w)$, being the average of several elements of $\T$, satisfies $|h_{m,n}(z,w)|\leq 1$. Hence
$$
\biggl|\int_{A_\delta}h_{m,n}(z,w)\,d(\nu\times\nu)(z,w)\biggr|\leq (\nu\times\nu)(A_\delta).
$$
Combining the last three displays, we get
$$
\frac1{n}\sum_{k=m}^{m+n-1}|\widehat{\nu}(k)|^2\leq \frac{2}{\delta n}+(\nu\times\nu)(A_\delta).
$$
Since $\nu$ is purely non-atomic, we have $\lim\limits_{\delta\to0}(\nu\times\nu)(A_\delta)=0$. Thus we can find $\delta\in(0,2)$ such that
$(\nu\times\nu)(A_\delta)<\frac{\epsilon^2}2$. Having $\delta$ fixed, we pick $n\in\N$ such that $\frac{2}{\delta n}<\frac{\epsilon^2}{2}$. By the above display,
$$
\frac1{n}\sum_{k=m}^{m+n-1}|\widehat{\nu}(k)|^2< \frac{\epsilon^2}{2}+\frac{\epsilon^2}{2}=\epsilon^2\ \ \text{for every $m\in\Z$}.
$$
According to the above display, among $n$ consecutive numbers $|\widehat{\nu}(k)|$ there is at least one strictly less than $\epsilon$. Hence the set $\{n\in\Z:|\widehat{\nu}(n)|<\epsilon\}$ is syndetic, which proves Claim~(\ref{sysy}).

We also need the following fact.
\begin{equation}\label{meas} \begin{array}{l} \text{Let $\nu$ be a Borel probability measure on $\T$ with infinite support, $r\in(0,1)$,}\\ \text{$m\in\N$, $A\subseteq \Z$ be syndetic,
and $\{w_n\}_{n\in A,\ n> m}$ be a sequence in $\D$.}\\ \text{Then there exist $a\in L^1_+(\nu)$ and $n\in A$ satisfying $n>m$, $\|a\|_1=1$ and $\widehat{a\nu}(n)=rw_n$.}\end{array}
\end{equation}

Let $K$ be the support of $\nu$. For each $n\in\Z$ denote $K^n=\{z^n:z\in K\}$. It is easy to see that the set $B_n=\{\widehat{a\nu}(n):a\in L^1_+(\nu),\ \|a\|_1=1\}$ is convex, is contained and is dense in the convex span of $K^n$. In particular, $B_n$ contains the interior of the convex span of $K^n$. Since $r<1$, we can pick $q\in\N$ large enough in such a way that $r\overline{\D}$ is contained in the interior of the convex span of the points $e^{2\pi ij/q}$ for $1\leq j\leq q$. Then there is $\epsilon>0$ such that $r\D$ is contained in the convex span of $u_1,\dots,u_q$ whenever $\bigl|\frac{u_j}{u_q}-e^{2\pi ij/q}\bigr|<\epsilon$ for $1\leq j\leq q-1$. By Corollary~\ref{sico} we can find $n\in A$ and $z,z_1,\dots,z_q\in K$ such that $n>m$ and $\bigl|\frac{z_j^n}{z^{n}}-e^{2\pi ij/q}\bigr|<\epsilon/2$ for $1\leq j\leq q$. It follows that $\bigl|\frac{z_j^n}{z_q^n}-e^{2\pi ij/q}\bigr|<\epsilon$ for $1\leq j\leq q-1$. Thus $r\D$ is in the convex span of $z_1^n,\dots,z_q^n$. Hence $r\D\subset B_n$. In particular, $rw_n\in B_n$ and therefore there is $a\in L^1_+(\nu)$, such that $\|a\|_1=1$ and $\widehat{a\nu}(n)=rw_n$, which completes the proof of Claim~(\ref{meas}).

Since each of the maps $a\mapsto R_n\widehat{a\mu}(n)$ from $L^1_+(\mu)$ to $\C$ is continuous, Theorem~U ensures that in order to complete the proof Lemma~\ref{proba} it suffices to verify that
$$
\Lambda=\{(a,R_n\widehat{a\mu}(n)):n\in\N,\ a\in L^1_+(\mu)\}\ \ \text{is  dense in $L^1_+(\mu)\times \C$}.
$$
Fix arbitrary $a\in L^1_+(\mu)$, $z\in \C$ and $\epsilon>0$. By (\ref{sysy}), $A=\bigl\{n\in\Z:\widehat{a\mu}(n)<\frac\epsilon4\bigr\}$
is syndetic. Since $R_n\to \infty$, there is $n_0\in\N$ such that $\bigl|\frac{z}{R_n\epsilon}\bigr|<\frac14$ for each $n\geq n_0$. Thus
$$\textstyle
w_n\in \frac12 \D,\ \ \text{for every $n\in A$ satisfying $n>n_0$, where}\ \ w_n=\frac{z}{R_n\epsilon}-\frac{\widehat{a\mu}(n)}{\epsilon}.
$$
By (\ref{meas}), there is $\beta\in L^1_+(\nu)$ and $n\in A$ satisfying $n>n_0$, $\|\beta\|_1=1$ and $\widehat{\beta\mu}(n)=w_n$. Using the definition of $w_n$, we can rewrite the last equality as $\widehat {b\mu}(n)=z$, where $b=a+\epsilon\beta$.
Then $(b,z)\in\Lambda$ and $\|b-a\|_1=\epsilon$. Since $\epsilon>0$ is arbitrary, $(a,z)$ belongs to the closure of $\Lambda$. Since $a\in L^1_+(\mu)$ and $z\in \C$ are arbitrary, $\Lambda$ is dense in $L^1_+(\mu)\times \C$, which completes the proof.
\end{proof}

\section{Proof of Theorems~\ref{suffwnh} and~\ref{suffsnh}\label{s4}}

We start with the following lemma, which must be known. We include the proof for the reader's convenience and since we were unable to find it in the literature anyway.

\begin{lemma}\label{cnvefu} Let $\{e_1,\dots,e_n\}$ be a basis in an $n$-dimensional Banach space $X$ and
$\{e^*_1,\dots,e^*_n\}$ be the dual basis in $X^*:$ $e^*_j(e_k)=\delta_{j,k}$. Then for every $c\in\R_+^n$ such that $c_1+{\dots}+c_n=1$, there is $(x,f)\in \Pi(X)$ for which $e^*_j(x)f(e_j)=c_j$ for $1\leq j\leq n$.
\end{lemma}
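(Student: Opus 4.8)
The plan is as follows. Since $f(x)=\sum_{j=1}^n e^*_j(x)f(e_j)$, the system $e^*_j(x)f(e_j)=c_j$ automatically forces $f(x)=\sum_j c_j=1$, so it suffices to produce $x\in S(X)$ and $f\in S(X^*)$ with $f(x)=1$ (i.e.\ $f$ a norming functional of $x$) and $e^*_j(x)f(e_j)=c_j$ for every $j$. First I would reduce to the case $c_j>0$ for all $j$: if $Z=\{j:c_j=0\}$ is nonempty, put $Y=\spann\{e_j:j\notin Z\}$; the functionals $e^*_j|_Y$ with $j\notin Z$ form the dual basis of $\{e_j\}_{j\notin Z}$ in $Y^*$, so applying the statement in $Y$ yields $(y,g)\in\Pi(Y)$ with $e^*_j(y)g(e_j)=c_j$ for $j\notin Z$; extending $g$ to $f\in X^*$ with $\|f\|=1$ by Hahn--Banach and setting $x=y$ works, since $e^*_j(x)=e^*_j(y)=0$ for $j\in Z$ kills the remaining coordinates.

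Now assume $c_j>0$ for all $j$. Writing $x=\sum_j a_je_j$ with $a_j=e^*_j(x)$, the equations $e^*_j(x)f(e_j)=c_j$ force $a_j\neq0$ and $f(e_j)=c_j/a_j$, hence $f=\sum_j(c_j/a_j)e^*_j$; thus the task becomes: find $a\in(\C\setminus\{0\})^n$ with $\bigl\|\sum_j a_je_j\bigr\|_X=1$ and $\bigl\|\sum_j(c_j/a_j)e^*_j\bigr\|_{X^*}=1$. Since the norms these two expressions induce on $\C^n$ are dual to one another under the pairing $\langle a,b\rangle=\sum_j a_jb_j$, one always has $\bigl\|\sum_j a_je_j\bigr\|\cdot\bigl\|\sum_j(c_j/a_j)e^*_j\bigr\|\geq\bigl|\sum_j a_j\cdot(c_j/a_j)\bigr|=\sum_j c_j=1$; and since the product $g(a):=\bigl\|\sum_j a_je_j\bigr\|\,\bigl\|\sum_j(c_j/a_j)e^*_j\bigr\|$ is invariant under replacing $a$ by $\lambda a$ ($\lambda\in\C\setminus\{0\}$), it suffices to show $\inf_a g(a)=1$: at a point with $g(a)=1$ the functional $\sum_j(c_j/a_j)e^*_j$ is norming for $\sum_j a_je_j$, and after normalising we obtain the required $(x,f)$. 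The infimum is attained: $g$ is continuous, and on the compact sphere $\{\|\sum_j a_je_j\|=1\}$ one has $g(a)\geq c_j/(|a_j|\,\|e_j\|)\to\infty$ as $a$ approaches any coordinate hyperplane, so the minimum is not pushed to the boundary and is attained at some $a^0$, with value $m\geq1$.

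The heart of the matter is to prove $m=1$, and \textbf{this is the step I expect to be the main obstacle}. I would first reduce to the case where $X$ is smooth and strictly convex: every norm on $X$ is a uniform-on-$B(X)$ limit of such norms, and given the statement for each approximant (with the same basis) a convergent subsequence of the corresponding pairs in $\Pi(X_\varepsilon)$ has limit in $\Pi(X)$ with the right coordinate products. When $X$ is smooth and strictly convex the duality map $J\colon S(X)\to S(X^*)$ (sending $x$ to its unique norming functional) is a homeomorphism, and the problem is equivalent to finding a fixed point of the $\T$-equivariant map $x\mapsto\widehat x/\|\widehat x\|$ with $\widehat x:=\sum_j(c_j/J(x)(e_j))e_j$: if $\widehat{x^*}/\|\widehat{x^*}\|=x^*$ then comparing $e_j$-coefficients gives $e^*_j(x^*)J(x^*)(e_j)=c_j/\|\widehat{x^*}\|$, and summing over $j$ (the left side totalling $J(x^*)(x^*)=1$) forces $\|\widehat{x^*}\|=1$, so $(x^*,J(x^*))$ is the desired pair; equivalently one seeks a fixed point of the induced self-map $\overline F$ of $\C P^{n-1}$, which is continuous off the codimension-$2$ set where some $J(x)(e_j)$ vanishes. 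The difficulty is that $\overline F$ has the ``wrong'' degree for elementary fixed-point theorems: already for the model space $\ell^2_n$ with the standard basis, $\overline F$ is essentially the moment map and is orientation-reversing with Lefschetz number $0$ (its fixed-point set is a whole torus), so one must exploit the specific structure --- an induction on $n$ in which the coordinate subspaces $\{e^*_j=0\}$ are carried into the faces of the simplex $\Delta=\{c\in\R_+^n:\sum c_j=1\}$, where the lower-dimensional cases pin down the behaviour near $\partial\Delta$, combined with a degree/homotopy argument adapted to this stratification (or, alternatively, a direct analysis of the minimiser $a^0$ beyond first order, the first-order optimality conditions alone being insufficient).
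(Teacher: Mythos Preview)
Your reduction to $c_j>0$, the observation that once $x=\sum_j a_je_j$ is fixed the only candidate functional is $f=\sum_j(c_j/a_j)e^*_j$, the duality inequality $\|x\|\,\|f\|\geq1$, and the compactness argument for attainment of the infimum are all correct and set up exactly the right problem. The gap is precisely where you place it, but your diagnosis that ``first-order optimality conditions alone are insufficient'' is mistaken, and the smooth-approximation/degree route is unnecessary.

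The paper's idea is to change the objective: instead of minimizing the product $g(a)=\bigl\|\sum a_je_j\bigr\|\cdot\bigl\|\sum(c_j/a_j)e^*_j\bigr\|$, maximize
\[
F(w)=\sum_{j=1}^n c_j\log|e^*_j(w)|
\]
over the compact ball $B(X)$. The maximum is attained at some $x\in S(X)$ with all $e^*_j(x)\neq0$; set $f=\sum_j(c_j/e^*_j(x))\,e^*_j$ as before, so $f(x)=1$ and $e^*_j(x)f(e_j)=c_j$. The key point is that the directional derivative of $F$ at $x$ along any $u\in X$ equals ${\rm Re}\,f(u)$: near $x$ write $F={\rm Re}\,G$ with $G(w)=\sum_j c_j\log e^*_j(w)$ for a holomorphic branch of $\log$, and differentiate $t\mapsto G(x+tu)$ at $t=0$ to get $\sum_j c_j\,e^*_j(u)/e^*_j(x)=f(u)$. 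Now if $\|f\|>1$, pick $y\in S(X)$ with $f(y)$ real and $f(y)>1$; then $x+t(y-x)\in B(X)$ for $t\in[0,1]$, and
\[
\frac{d}{dt}\Big|_{t=0^+}F\bigl(x+t(y-x)\bigr)={\rm Re}\,f(y-x)=f(y)-f(x)=f(y)-1>0,
\]
contradicting the maximality of $F$ at $x$. Hence $\|f\|\leq1$, and since $f(x)=1$ we get $(x,f)\in\Pi(X)$.

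So a single first-order argument does the job; the logarithmic objective is engineered so that its gradient at the optimum \emph{is} the functional $f$, which turns ``$F$ cannot increase into $B(X)$'' directly into ``$\|f\|\leq1$''. Your product functional $g$ lacks this feature, which is why the Lagrange conditions for $g$ looked inconclusive.
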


\begin{proof} It is easy to check that $\{(e^*_1(x)f(e_1),\dots,e^*_n(x)f(e_n)):(x,f)\in\Pi(X)\}$ is a closed subset of $\C^n$. Hence it is enough to prove the result in the case $c_j>0$ for every $j$. From now on we shall just assume
that $c_j>0$ for $1\leq j\leq n$.
Consider the continuous map
$$\textstyle
F:X\to [-\infty,+\infty),\quad F(w)=\sum\limits_{j=1}^n c_j \log |e^*_j(w)|.
$$
Since $B(X)$ is compact and the map $t\mapsto F(tx)$ increases on $[0,1]$ for each $x\in S(X)$, $F\bigr|_{B(X)}$ attains its maximal value at some point $x\in S(X)$. Obviously, $e^*_j(x)\neq 0$ for $1\leq j\leq n$. Consider
$$\textstyle
f\in X^*,\quad f(y)=\sum\limits_{j=1}^n \frac{c_je^*_j(y)}{e^*_j(x)}\ \ \text{for $y\in X$}.
$$
Clearly, $f(x)=1$ and $e^*_j(x)f(e_j)=c_j$ for $1\leq j\leq n$. In order to clinch the proof it suffices to show that $|f|$ is bounded by $1$ on $S(X)$. Indeed, then $(x,f)\in\Pi(X)$ and $(x,f)$ satisfies all desired conditions.

Assume the contrary. Then there is $y\in S(X)$ such that $f(y)$ is real and $f(y)>1$. Clearly, $x+tu\in B(X)$ for every $t\in[0,1]$, where $u=y-x$. Moreover, $f(u)=f(y)-f(x)=f(y)-1>0$. Since $F\bigr|_{B(X)}$ attains its maximum at $x$, $F(x+tu)\leq F(x)$ for $t\in[0,1]$ and therefore
$$\textstyle
\limsup\limits_{t\to 0+} \frac{F(x+tu)-F(x)}t\leq 0.
$$
Choose $z\in\T$ other than $1$ or each of $\frac{e^*_j(x)}{|e^*_j(x)|}$, let $\C_-=\C\setminus z\R_+$ and $X_-=\{w\in X:e^*_j(w)\in \C_-\ \ \text{for $1\leq j\leq n$}\}$. Set $\log$ to be the standard (real on the positive half of the real line) branch of the logarithm on $\C_-$. Then $F\bigr|_{X_-}$ is exactly the real part of the function $G:X_-\to\C$, $G(w)=\sum\limits_{j=1}^n c_j \log e^*_j(w)$. Since $x+tu\in X_-$ for each sufficiently small $t$, the function $H(t)=G(x+tu)$ is defined in a neighborhood of $0$. Differentiating, we obtain
\begin{align*}\textstyle
\lim\limits_{t\to 0} \frac{F(x+tu)-F(x)}t&= \textstyle {\rm Re}\,H'(0)=
{\rm Re}\sum\limits_{j=1}^n \Bigl[c_j\frac{d}{dt}\log(e^*_j(x)+te^*_j(u))\Bigr]_{t=0}
\\
&= \textstyle {\rm Re}\sum\limits_{j=1}^n \Bigl[\frac{c_ju_j}{e^*_j(x)+te^*_j(u)}\Bigr]_{t=0}={\rm Re}\sum\limits_{j=1}^n \frac{c_je^*_j(u)}{e^*_j(x)}={\rm Re}\,f(u)=f(u)>0.
\end{align*}
The obvious contradiction between the last two displays completes the proof.
\end{proof}

\begin{lemma}\label{asasas} Let $T\in L(X)$, $\lambda_1,\dots,\lambda_n\in\sigma_p(T)$ and $c_1,\dots,c_n\in\R_+$ satisfy $c_1+{\dots}+c_n=1$. Then there exists $(x,f)\in\Pi(X)$ such that
$f(T^kx)=c_1\lambda_1^k+{\dots}+c_n\lambda_n^k$ for every $k\in\Z_+$.
\end{lemma}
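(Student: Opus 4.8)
The plan is to reduce to the finite-dimensional situation of Lemma~\ref{cnvefu} by restricting $T$ to the linear span of suitable eigenvectors. First I would pass to distinct eigenvalues: let $\mu_1,\dots,\mu_m$ be the pairwise distinct numbers among $\lambda_1,\dots,\lambda_n$ and set $d_i=\sum_{j:\lambda_j=\mu_i}c_j$, so that $d_i\in\R_+$, $d_1+\dots+d_m=1$ and $\sum_{j=1}^n c_j\lambda_j^k=\sum_{i=1}^m d_i\mu_i^k$ for every $k\in\Z_+$. For each $i$ pick an eigenvector $e_i$ with $Te_i=\mu_ie_i$; since eigenvectors corresponding to pairwise distinct eigenvalues are linearly independent, $\{e_1,\dots,e_m\}$ is a basis of the $T$-invariant $m$-dimensional subspace $Y=\spann\{e_1,\dots,e_m\}$, which I equip with the norm inherited from $X$.

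Next I would apply Lemma~\ref{cnvefu} to the Banach space $Y$ with basis $\{e_1,\dots,e_m\}$ and the vector $(d_1,\dots,d_m)\in\R_+^m$, obtaining $(x,f_0)\in\Pi(Y)$ with $e_i^*(x)f_0(e_i)=d_i$ for $1\leq i\leq m$, where $\{e_1^*,\dots,e_m^*\}$ is the dual basis. By the Hahn--Banach theorem I extend $f_0$ to $f\in X^*$ with $\|f\|=\|f_0\|_{Y^*}=1$. Since $x\in S(Y)\subseteq S(X)$, we have $f(x)=f_0(x)=1$ and $\|f\|=1$, hence $(x,f)\in\Pi(X)$.

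Finally I would compute the numerical orbit. Writing $x=\sum_{i=1}^m e_i^*(x)e_i$ and using $T^ke_i=\mu_i^ke_i$ together with $f(e_i)=f_0(e_i)$, one gets
$$
f(T^kx)=\sum_{i=1}^m e_i^*(x)\mu_i^k f(e_i)=\sum_{i=1}^m \mu_i^k\,e_i^*(x)f_0(e_i)=\sum_{i=1}^m d_i\mu_i^k=\sum_{j=1}^n c_j\lambda_j^k
$$
for every $k\in\Z_+$, as required. The only point requiring care is the passage to pairwise distinct eigenvalues, which is what turns the chosen eigenvectors into a genuine basis and thus puts us in a position to invoke Lemma~\ref{cnvefu}; the rest is a routine Hahn--Banach extension and a direct computation, so I do not anticipate a serious obstacle.
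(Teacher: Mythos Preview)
Your proof is correct and is essentially identical to the paper's: both reduce to pairwise distinct eigenvalues, apply Lemma~\ref{cnvefu} on the finite-dimensional span of the corresponding eigenvectors, and then extend the functional to $X$ via Hahn--Banach. The only cosmetic difference is that the paper dismisses the reduction with a ``without loss of generality'' while you spell out the aggregation $d_i=\sum_{j:\lambda_j=\mu_i}c_j$ explicitly.
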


\begin{proof} Without loss of generality we can assume that $\lambda_j$ are pairwise distinct. Choose $e_j\in S(X)$ such that $Te_j=\lambda_je_j$ for $1\leq j\leq n$. Since $\lambda_j$ are pairwise distinct, $e_j$ form a basis of the $n$-dimensional space $E=\spann\{e_1,\dots,e_n\}$. Let $e^*_1,\dots,e^*_n$ be the dual basis in $E^*$: $e^*_k(e_j)=\delta_{i,j}$. By Lemma~\ref{cnvefu}, there is $(x,g)\in \Pi(E)$ such that $e^*_j(x)g(e_j)=c_j$ for $1\leq j\leq n$. By the Hahn--Banach Theorem, there is $f\in X^*$ such that $f\bigr|_E=g$ and $\|f\|=1$. Since $\|x\|=1$ and $f(x)=g(x)=1$, $(x,f)\in\Pi(X)$. Finally, using the equalities $e^*_j(x)g(e_j)=c_j$, we get
\[
f(T^kx)=g(T^nx)=g(e^*_1(x)\lambda_1^ke_1+{\dots}+e^*_n(x)\lambda_n^ke_n)=c_1\lambda_1^k+{\dots}+c_n\lambda_n^k\ \ \text{for each $k\in\Z_+$.} \qedhere
\]
\end{proof}

\subsection{Proof of Theorem~\ref{suffwnh}}

Let $T\in L(X)$ and $\lambda_1,\lambda_2\in\sigma_p(T)$ be such that $|\lambda_1|=|\lambda_2|=R>1$ and $z=\frac{\lambda_1}{R}$, $w=\frac{\lambda_2}{R}$ are independent in $\T$. Then we can pick linearly independent $e_1,e_2\in X$ such that $Te_1=\lambda_1e_1$ and $Te_2=\lambda_2 e_2$. By Lemma~\ref{wnh21}, there are $a,b\in\T$ for which
$$
\text{$O=\{R^n(az^n+bw^n):n\in\Z_+\}=\{a\lambda_1^n+b\lambda_2^n:n\in\Z_+\}$ is dense in $\C$.}
$$
By the Hahn--Banach Theorem, there is $f\in X^*$ such that $f(e_1)=a$ and $f(e_2)=b$. Then $O(T,e_1+e_2,f)=\{a\lambda_1^n+b\lambda_2^n:n\in\Z_+\}$ is dense in $\C$. By Proposition~\ref{ele1}, $T$ is weakly numerically hypercyclic. That is, (\ref{suffwnh}.1) is sufficient for the weak  numeric hypercyclicity of $T$.

Assume that $T\in L(X)$ satisfies (\ref{suffwnh}.2). That is, there exist independent $\lambda_1,\lambda_2\in\T$ such that $\ker(T-\lambda_1I)^2\neq \ker(T-\lambda_1I)$ and $\ker(T-\lambda_2I)^2\neq \ker(T-\lambda_2I)$. An elementary linear algebra argument allows us to pick linearly independent $e_1,e_2,e_3,e_4\in X$ such that $(T-\lambda_1I)e_1=(T-\lambda_2I)e_3=0$, $(T-\lambda_1I)e_2=\lambda_1e_1$ and $(T-\lambda_2I)e_4=\lambda_2e_3$. Then $T^n e_2=\lambda_1^n(e_2+ne_1)$ and $T^n e_4=\lambda_2^n(e_4+ne_3)$ for every $n\in\N$. By Lemma~\ref{wnh21}, there are $a,b\in\T$ such that $\{n(a\lambda_1^{n}+b\lambda_2^{n}):n\in\N\}$ is dense in $\C$. By the Hahn--Banach Theorem, there is $f\in X^*$ for which $f(e_4)=f(e_2)=0$, $f(e_1)=a$ and $f(e_3)=b$. Then $f(T^n(e_2+e_4))=n(a\lambda_1^{n}+b\lambda_2^{n})$ for every $n\in\N$. Hence $O(T,e_2+e_4,f)$ is dense in $\C$ and  $T\in\WNH(X)$ according to Proposition~\ref{ele1}. Thus (\ref{suffwnh}.2) is sufficient for the weak  numeric hypercyclicity of $T$.

Assume that $T\in L(X)$ satisfies (\ref{suffwnh}.3). Then there exist $\lambda_1,\lambda_2,\lambda_3\in\sigma_p(T)$ such that $|\lambda_1|=|\lambda_2|=R>|\lambda_3|=r>1$, $\frac{\lambda_1}{\lambda_2}$ has infinite order in $\T$ and $s=\frac{\lambda_1}{R}$, $y=\frac{\lambda_3}{r}$ are independent. If $s$ and $u=\frac{\lambda_2}{R}$ are independent, then (\ref{suffwnh}.1) is satisfied and therefore $T$ is weakly numerically hypercyclic. It remains to consider the case when $s$ and $u$ are not independent. In this case there are $z,f,g\in \T$ and $l,j\in\Z$ such that $f$ and $g$ have finite order, $s=fz^l$ and $u=gz^j$ and $l\geq j$. Since $\frac{s}{u}$ has infinite order, we have $l\neq j$ and therefore $l>j$. Since $s$ and $y$ are independent, $z$ and $y$ are independent. Since $f$ and $g$ have finite order, there is an even $d\in\N$ such that $f^d=g^d=1$. Then $s^{dn}=z^{kn}z^{mn}$, $u^{dn}=z^{-kn}z^{mn}$ and $y^{dn}=w^n$ for every $n\in\Z_+$, where $k=\frac d2(l-j)\in\N$, $m=\frac d2(l+j)\in\Z$ and $w=y^d\in\T$. Since $z$ and $y$ are independent, $z$ and $w$ are independent as well. By Lemma~\ref{wnh31}, there exist $a,b\in\T$ such that
$$
O=\{R^{dn}z^{mn}(az^{kn}+a^{-1}z^{-kn})+br^{dn}w^n:n\in\Z_+\}\ \ \text{is dense in $\C$.}
$$
Since $\lambda_1$, $\lambda_2$ and $\lambda_3$ are distinct eigenvalues of $T$, we can pick linearly independent $e_1,e_2,e_3\in X$ such that $Te_1=\lambda_1e_1$, $Te_2=\lambda_2e_2$ and $Te_3=\lambda_3e_3$. By the Hahn--Banach Theorem, we can find $h\in X^*$ for which $h(e_1)=a$, $h(e_2)=a^{-1}$ and $h(e_3)=b$. Then for each $n\in\Z_+$,
$$
h(T^{dn}(e_1+e_2+e_3))=a\lambda_1^{dn}+a^{-1}\lambda_2^{dn}+b\lambda_3^{dn}=
R^{dn}z^{mn}(az^{kn}+a^{-1}z^{-kn})+br^{dn}w^n.
$$
Thus $O$ is a subset of $O(T,e_1+e_2+e_3,h)$. Hence $O(T,e_1+e_2+e_3,h)$ is dense in $\C$ and $T\in\WNH(X)$ according to Proposition~\ref{ele1}. That is, (\ref{suffwnh}.3) is sufficient for the weak  numeric hypercyclicity of $T$.

Finally, assume that $T\in L(X)$ satisfies (\ref{suffwnh}.4). That is, there exist $\lambda_1,\lambda_2\in\sigma_p(T)$  and $\lambda_3\in\T$ such that $|\lambda_1|=|\lambda_2|=R>1$, $\frac{\lambda_1}{\lambda_2}$ has infinite order in the group $\T$,  $s=\frac{\lambda_1}{R}$, $\lambda_3$ are independent and $\ker(T-\lambda_3I)^2\neq \ker(T-\lambda_3I)$.
Exactly as in the proof of the previous part, we can find $d,k\in\N$, $m\in\Z$ and independent $z,w\in\T$ such that $s^{dn}=z^{kn}z^{mn}$, $u^{dn}=z^{-kn}z^{mn}$ and $y=w^n$ for every $n\in\Z_+$, where $u=\frac{\lambda_2}{R}$ and $y=\lambda_3$. By Lemma~\ref{wnh31}, there exist $a,b\in\T$ for which
$$
O=\{R^{dn}z^{mn}(az^{kn}+a^{-1}z^{-kn})+bdnw^n:n\in\Z_+\}\ \ \text{is dense in $\C$.}
$$
Pick linearly independent $e_1,e_2,e_3,e_4\in X$ such that $Te_1=\lambda_1e_1$, $Te_2=\lambda_2e_2$, $(T-\lambda_3I)e_3=0$ and $(T-\lambda_3I)e_4=\lambda_3e_3$. By the Hahn--Banach Theorem, we can find $h\in X^*$ for which $h(e_1)=a$, $h(e_2)=a^{-1}$, $h(e_3)=b$ and $h(e_4)=0$. Then for each $n\in\Z_+$,
$$
h(T^{dn}(e_1+e_2+e_4))=a\lambda_1^{dn}+a^{-1}\lambda_2^{dn}+bdn\lambda_3^{dn}=
R^{dn}z^{mn}(az^{kn}+a^{-1}z^{-kn})+bdnw^n.
$$
Thus $O$ is a subset of $O(T,e_1+e_2+e_4,h)$. Hence $O(T,e_1+e_2+e_4,h)$ is dense in $\C$ and $T\in\WNH(X)$ according to Proposition~\ref{ele1}. Thus (\ref{suffwnh}.4) is sufficient for the weak  numeric hypercyclicity of $T$. The proof of Theorem~\ref{suffwnh} is complete.

\subsection{Proof of Theorem~\ref{suffsnh}}

Assume that $T\in L(X)$ satisfies (\ref{suffsnh}.1). By Lemma~\ref{asasas}, there is $(x,f)\in \Pi(X)$ such that $f(T^kx)=c_1\lambda_1^k+{\dots}+c_n\lambda_n^k$ for each $k\in\Z_+$. Hence $O(T,x,f)$ is dense in $\C$ and $T\in\NH(X)$. Since every operator similar to $T$ satisfies the same conditions, $T\in \SNH(X)$. Thus (\ref{suffsnh}.1) implies the strong numeric hypercyclicity of $T$. For further references we prove the following slight modification of this result.

\begin{lemma}\label{snsnsn0} Let $T\in L(\C^n)$ be the diagonal operator with the numbers $\lambda_1,\dots,\lambda_n$ on the diagonal. Then $T\in \NH(\C^n)$ if and only if $T\in \SNH(\C^n)$ if and only if there exist $c_1,\dots,c_n\in\R_+$ such that $\{c_1\lambda_1^k+{\dots}+c_n\lambda_n^k:k\in \Z_+\}$ is dense in $\C$.
\end{lemma}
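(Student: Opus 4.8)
The statement is essentially a repackaging of results already established, so the plan is to verify the cyclic chain $(\text{third condition})\Rightarrow T\in\SNH(\C^n)\Rightarrow T\in\NH(\C^n)\Rightarrow(\text{third condition})$, with the only implication carrying any real content being the last one.

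For $(\text{third})\Rightarrow T\in\SNH(\C^n)$, I would first normalize the coefficients. If $\{c_1\lambda_1^k+\dots+c_n\lambda_n^k:k\in\Z_+\}$ is dense in $\C$, then the $c_j$ are not all zero (otherwise this set is $\{0\}$), so $C=c_1+\dots+c_n>0$; since multiplication by the positive scalar $C^{-1}$ is a homeomorphism of $\C$, we may replace each $c_j$ by $c_j/C$ and assume $c_1+\dots+c_n=1$. Now let $S\in L(\C^n)$ be any operator similar to $T$, say $S=P^{-1}TP$ with $P$ invertible. If $Te_j=\lambda_je_j$ for the standard basis vector $e_j$, then $S(P^{-1}e_j)=\lambda_j(P^{-1}e_j)$, so $\lambda_1,\dots,\lambda_n\in\sigma_p(S)$. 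By Lemma~\ref{asasas} applied to $S$, there is $(x,f)\in\Pi(\C^n)$ with $f(S^kx)=c_1\lambda_1^k+\dots+c_n\lambda_n^k$ for all $k\in\Z_+$; hence $O(S,x,f)$ is dense in $\C$ and $S\in\NH(\C^n)$. As $S$ ranges over all operators similar to $T$, this shows $T\in\SNH(\C^n)$. The implication $T\in\SNH(\C^n)\Rightarrow T\in\NH(\C^n)$ is immediate, since $T$ is similar to itself.

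It remains to show $T\in\NH(\C^n)\Rightarrow(\text{third})$, and here I would use that $\C^n$ with its Euclidean norm is a Hilbert space together with the last bullet of Proposition~\ref{ele}: $T\in\NH(\C^n)$ gives some $x=(x_1,\dots,x_n)\in S(\C^n)$ for which $\NO(T,x)=\{\langle T^kx,x\rangle:k\in\Z_+\}$ is dense in $\C$. Since $Te_j=\lambda_je_j$, a direct computation gives $\langle T^kx,x\rangle=\sum_{j=1}^n\lambda_j^k|x_j|^2$, so putting $c_j=|x_j|^2\in\R_+$ (with $c_1+\dots+c_n=\|x\|^2=1$) we conclude that $\{c_1\lambda_1^k+\dots+c_n\lambda_n^k:k\in\Z_+\}$ is dense in $\C$, which is exactly the third condition.

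There is no genuine obstacle in this argument; the only points that are not purely formal are the scalar normalization of the $c_j$ in the first implication and the observation that every operator similar to a diagonal operator retains all of its diagonal entries as eigenvalues (with eigenvectors obtained by transporting the standard basis), which is what makes Lemma~\ref{asasas} applicable to the entire similarity class of $T$. I expect these to be the least routine — but still entirely elementary — steps.
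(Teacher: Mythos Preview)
Your proof is correct and follows essentially the same approach as the paper: the paper invokes the already-established condition (\ref{suffsnh}.1) (whose proof is precisely the Lemma~\ref{asasas} argument you spell out) for the implication $(\text{third})\Rightarrow\SNH$, and the $\NH\Rightarrow(\text{third})$ step via $\langle T^kx,x\rangle=\sum_j|x_j|^2\lambda_j^k$ is identical.
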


\begin{proof}If there exist $c_1,\dots,c_n\in\R_+$ such that $\{c_1\lambda_1^k+{\dots}+c_n\lambda_n^k:k\in \Z_+\}$ is dense in $\C$, (\ref{suffsnh}.1) is satisfied, which implies that  $T\in \SNH(\C^n)$ and therefore $T\in \NH(\C^n)$. Now assume that $T\in \NH(\C^n)$. Then there is $x\in S(\C^n)$ for which $\NO(T,x)$ is dense in $\C$. A direct calculation shows that $\NO(T,x)=\{|x_1|^2\lambda_1^k+{\dots}+|x_n|^2\lambda_n^k:k\in\Z_+\}$. That is, $\{c_1\lambda_1^k+{\dots}+c_n\lambda_n^k:k\in \Z_+\}$ is dense in $\C$ with $c_j=|x_j|^2$.
\end{proof}

Assume that $T\in L(X)$ satisfies (\ref{suffsnh}.2). Then there exist $\lambda_1,\lambda_2,\lambda_3\in\sigma_p(T)$ such that $|\lambda_1|=|\lambda_2|=|\lambda_3|=R>1$ and $\frac{\lambda_1}{R}$, $\frac{\lambda_2}{R}$, $\frac{\lambda_3}{R}$ are independent. By Lemma~\ref{3point}, there are $c_1,c_2,c_3\in\R_+$ such that $\{c_1\lambda_1^k+c_2\lambda_2^k+c_3\lambda_3^k:k\in\Z_+\}$ is dense in $\C$. Thus (\ref{suffsnh}.1) is satisfied and $T\in\SNH(X)$. That is, (\ref{suffsnh}.2) implies the strong numeric hypercyclicity of $T$. The proof of Theorem~\ref{suffsnh} is complete.

\section{Proof of Propositions~\ref{suffnh} and~\ref{suffnh1}\label{s5}}

Throughout this section $\hh$ is a Hilbert space and $T\in L(\hh)$.

\subsection{Proof of Proposition~\ref{suffnh}}

Assume that $\lambda_1,\lambda_2\in\sigma_p(T)$ are such that $|\lambda_1|=|\lambda_2|=R>1$, the eigenspaces $\ker(T-\lambda_1 I)$ and $\ker(T-\lambda_2I)$ are non-orthogonal and
$\frac{\lambda_1}{R}$, $\frac{\lambda_2}{R}$ are independent in $\T$. Pick $x,y\in S(\hh)$ such that $Tx=\lambda_1x$, $Ty=\lambda_2 y$ and $\langle x,y\rangle =c>0$. Note that there is $\epsilon=\epsilon(c)>0$ such that the set $\bigl\{\frac{1+cu}{1+cu^{-1}}:u\in\T\bigr\}$ contains $J_\epsilon=\{e^{i\theta}:-\epsilon<\theta<\epsilon\}$.
By Lemma~\ref{wnh21}, there is $a\in J_\epsilon$ such that the set $O=\{\lambda^n+a\mu^n:n\in\Z_+\}$ is dense in $\C$. Since $a\in J_\epsilon$, there is $u\in\T$ such that $a=\frac{1+cu}{1+cu^{-1}}$. Consider the vector $q=\frac{x+uy}{\|x+uy\|}\in S(\hh)$ and denote $b=\|x+uy\|^{-2}$. Then for each $n\in\Z_+$,
$$
\langle T^n q,q\rangle = b\langle \lambda^nx+u\mu^ny,x+uy\rangle=b((1+cu)\lambda^n+(1+cu^{-1})\mu^n)=
b((1+cu^{-1}))(\lambda^n+a\mu^n).
$$
Thus the density of $O$ implies the density of $\NO(T,q)$ and therefore $T\in\NH(\C^2)$. This completes the proof of Proposition~\ref{suffnh}.

\subsection{Proof of Proposition~\ref{suffnh1}}

Assume that $\lambda_1,\lambda_2\in\C$ are such that $\ker(T-\lambda_jI)^2\neq \ker(T-\lambda_jI)$ for $j\in\{1,2\}$, $|\lambda_1|=|\lambda_2|\geq1$ and $\frac{\lambda_1}{|\lambda_1|}$, $\frac{\lambda_2}{|\lambda_2|}$ are independent in $\T$. Since $\ker(T-\lambda_jI)^2\neq \ker(T-\lambda_jI)$, there are two 2-dimensional $T$-invariant subspaces $E_1$ and $E_2$ of $\hh$ such that $E_1\cap E_2=\{0\}$, $(T-\lambda_j I)^2\bigr|_{E_j}=0$ and $(T-\lambda_j I)\bigr|_{E_j}\neq 0$ for $j\in\{1,2\}$. In particular, $L_j=E_j\cap \ker (T-\lambda_jI)$ are one-dimensional for $j\in\{1,2\}$. If $L_1$ and $L_2$ are non-orthogonal, $T\in\NH(\hh)$ according to Proposition~\ref{suffnh}. It remains to consider the case when $L_1$ and $L_2$ are orthogonal. It is straightforward to see that there are $e_1,\dots,e_4\in S(\hh)$ such that $e_1\in L_1$, $e_3\in L_2$, $\{e_1,e_2\}$ is a linear basis in $E_1$, $\{e_3,e_4\}$ is a linear basis in $E_2$, $\langle e_1,e_2\rangle=\langle e_1,e_3\rangle=\langle e_3,e_4\rangle=0$, while $g_{1,4}=\langle e_1,e_4\rangle$, $g_{2,3}=\langle e_2,e_3\rangle$ and $g_{2,4}=\langle e_2,e_4\rangle$ are non-negative real numbers. The above properties of the restrictions of $T$ to $E_1$ and $E_2$ yield the existence of non-zero $\alpha,\beta\in\C$ such that $Te_1=\lambda_1 e_1$, $Te_2=\lambda_1(e_2+\alpha e_1)$, $Te_3=\lambda_2 e_3$ and $Te_4=\lambda_2(e_4+\beta e_3)$. Then
\begin{align}\notag
\langle T^nx,x\rangle&=\lambda_1^n\bigl(|a|^2+|b|^2+g_{1,4}a\overline{d}+g_{2,3}b\overline{c}+g_{2,4}b\overline{d}+\alpha nb(\overline{a}+g_{1,4}\overline{d})\bigr)
\\
&\ \ +\lambda_2^n\bigl(|c|^2+|d|^2+g_{1,4}d\overline{a}+g_{2,3}c\overline{b}+g_{2,4}d\overline{b}+\beta nd(\overline{c}+g_{2,3}\overline{b})\bigr)\label{ORB}
\\
&\qquad\text{for $a,b,c,d\in\C$ and $n\in\N$, where $x=ae_1+be_2+ce_3+de_4$.}\notag
\end{align}
Consider the degree $2$ $\R$-polynomial map
$$
\Phi:\C^4\to \R^2\times\C^2,\quad
\Phi\left(\begin{array}{c}a\\ b\\ c\\ d\end{array}\right)=\left(\begin{array}{l}{\rm Re}\,(|a|^2+|b|^2+g_{1,4}a\overline{d}+g_{2,3}b\overline{c}+g_{2,4}b\overline{d})\\
{\rm Re}\,(|c|^2+|d|^2+g_{1,4}d\overline{a}+g_{2,3}c\overline{b}+g_{2,4}d\overline{b})\\
\alpha b(\overline{a}+g_{1,4}\overline{d})\\
\beta d(\overline{c}+g_{2,3}\overline{b})\end{array}\right).
$$
First, observe that $\Phi(1,0,1,0)=(1,1,0,0)$. Looking at $\Phi$ as at a function of $8$ real variables ${\rm Re}\,a$, ${\rm Im}\,a$, ${\rm Re}\,b$, ${\rm Im}\,b$,
${\rm Re}\,c$, ${\rm Im}\,c$, ${\rm Re}\,d$ and ${\rm Im}\,d$ taking values in $\R^6$ with the components ordered as $\Phi_1$, $\Phi_2$, ${\rm Re}\,\Phi_3$, ${\rm Im}\,\Phi_3$, ${\rm Re}\,\Phi_4$, ${\rm Im}\,\Phi_4$, we can calculate the Jacobi matrix $d\,\Phi$ of $\Phi$ at the point $(1,0,1,0)$:
$$
d\,\Phi(1,0,1,0)=\left(\begin{array}{cccccccc}2&0&g_{2,3}&0&0&0&g_{1,4}&0\\
0&0&g_{2,3}&0&2&0&g_{1,4}&0\\
0&0&{\rm Re}\,\alpha&-{\rm Im}\,\alpha&0&0&0&0\\
0&0&{\rm Im}\,\alpha&{\rm Re}\,\alpha&0&0&0&0\\
0&0&0&0&0&0&{\rm Re}\,\beta&-{\rm Im}\,\beta\\
0&0&0&0&0&0&{\rm Im}\,\beta&{\rm Re}\,\beta
\end{array}\right).
$$
Since the complex numbers $\alpha$ and $\beta$ are non-zero, it is an easy exercise to see that the rank of the above $6$ by $8$ matrix is $6$. Indeed, after removing the two zero columns, we are left with an invertible $6$ by $6$ matrix. The Implicit Function Theorem says that there is $\epsilon>0$ and a smooth (even real-analytic) map $\Psi:W=(1-\epsilon,1+\epsilon)\times (1-\epsilon,1+\epsilon)\times \epsilon\D\times \epsilon\D\to\C^4$ such that $\Psi(1,1,0,0)=(1,0,1,0)$ and $\Phi\circ\Psi={\rm Id}_W$. Now we define
$$
a,b,c,d:\epsilon\D\to\C, \qquad (a(z),b(z),c(z),d(z))=\Psi(1,1,z,\overline{z}).
$$
The equality $\Phi\circ\Psi={\rm Id}_W$ yields
\begin{align}
&\alpha b(z)(\overline{a(z)}+g_{1,4}\overline{d(z)})=z,\quad
\beta d(z)(\overline{c(z)}+g_{2,3}\overline{b(z)})=\overline{z},\label{one1}
\\
&{\rm Re}\,(|a(z)|^2+|b(z)|^2+g_{1,4}a(z)\overline{d(z)}+g_{2,3}b(z)\overline{c(z)}+g_{2,4}b(z)\overline{d(z)})=1,\label{one3}
\\
&{\rm Re}\,(|c(z)|^2+|d(z)|^2+g_{1,4}d(z)\overline{a(z)}+g_{2,3}c(z)\overline{b(z)}+g_{2,4}d(z)\overline{b(z)})=1\label{one4}
\end{align}
for every $z\in\epsilon\D$.
Denote
$$
\phi(z)=|a(z)|^2+|b(z)|^2+g_{1,4}a(z)\overline{d(z)}+g_{2,3}b(z)\overline{c(z)}+g_{2,4}b(z)\overline{d(z)}\ \ \text{for $z\in\epsilon\D$}.
$$
Using (\ref{one3}) and (\ref{one4}), we obtain 
$$
\overline{\phi(z)}=|c(z)|^2+|d(z)|^2+g_{1,4}d(z)\overline{a(z)}+g_{2,3}c(z)\overline{b(z)}+g_{2,4}d(z)\overline{b(z)}\ \ \text{for $z\in\epsilon\D$}.
$$
By the above two displays, (\ref{ORB}) and (\ref{one1}),
$$
\langle T^nx(z),x(z)\rangle =\lambda_1^n(\phi(z)+nz)+\lambda_2^n(\overline{\phi(z)}+n\overline{z})\ \ \text{for every $z\in\epsilon\D$ and $n\in\N$,}
$$
where $x(z)=a(z)e_1+b(z)e_2+c(z)e_3+d(z)e_4$. Applying Lemma~\ref{fnh1} with $z=\frac{\lambda_1}{|\lambda_1|}$, $w=\frac{\lambda_2}{|\lambda_2|}$, $R_n=n|\lambda_1|^n$, $r_n=|\lambda_1|^n$ and $U=\epsilon\D$, we find that there is $u\in\epsilon\D$ such that
$$
\text{$O=\{\lambda_1^n(\phi(u)+nu)+\lambda_2^n(\overline{\phi(u)}+n\overline{u}):n\in\N\}$ is dense in $\C$.}
$$
By the above two displays $\NO(T,y)$, being a positive scalar multiple of $O$, is dense in $\C$, where $y=\frac{x(u)}{\|x(u)\|}$. Thus $T\in\NH(\hh)$. The proof of Proposition~\ref{suffnh1} is complete.

\section{Proof of Theorem~\ref{suffwnhid}\label{s6}}

\begin{lemma}\label{weco} Let $\{x_n\}_{n\in\N}$ be a weakly convergent sequence in a Banach spaces $X$, which is not norm-convergent. Then for every sequence $\{r_n\}_{n\in\N}$ of positive numbers such that $r_n\to\infty$, there is $f\in X^*$ such that $\{r_nf(x_n):n\in\N\}$ is dense in $\C$.
\end{lemma}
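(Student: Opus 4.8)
The plan is to reduce to a weakly null situation and then feed it into the Universality Theorem. Let $x$ be the weak limit of $\{x_n\}$ and put $y_n=x_n-x$; then $\{y_n\}$ is bounded, weakly null, and $\|y_n\|\not\to0$ (otherwise $\{x_n\}$ would be norm convergent). It suffices to find $f\in X^*$ with $f(x)=0$ such that $\{r_nf(y_n):n\in\N\}$ (or even just its values along one subsequence) is dense in $\C$, because then $r_nf(x_n)=r_nf(y_n)+r_nf(x)=r_nf(y_n)$. Note the constraint $f(x)=0$ is genuinely forced: if $f(x)\neq0$ then $|r_nf(x_n)|\geq r_n|f(x)|-|r_nf(y_n)|$, which along a subsequence where $r_n$ grows fast must tend to $\infty$, ruling out density. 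So the real content is to prescribe the values of $r_nf$ on a subsequence of $\{y_n\}$ densely while keeping $f$ annihilating $x$.

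Next I would extract the right subsequence. Choose an infinite $A_0\subseteq\N$ with $\|y_n\|\geq\delta>0$ for $n\in A_0$. By the Bessaga--Pelczynski selection principle, $\{y_n\}_{n\in A_0}$ has an infinite subset $A_1$ along which it is a basic sequence. Since for a basic sequence the closed spans of its tails intersect in $\{0\}$ (by uniqueness of basis expansions), if $x\neq0$ there is a cofinite $A_2\subseteq A_1$ with $x\notin E$, where $E$ denotes the closed linear span of $\{y_n:n\in A_2\}$; if $x=0$ take $A_2=A_1$. Enumerate $A_2=\{m_1<m_2<\cdots\}$ and set $e_k=y_{m_k}$; this is a seminormalized basic sequence, hence its coordinate functionals $e_k^*\in E^*$ satisfy $\|e_k^*\|\leq 2K/\delta$, $K$ the basis constant. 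Extend each $e_k^*$ to $\tilde e_k^*\in X^*$ with $\tilde e_k^*(x)=0$: when $x\notin E$ first extend by zero on the line $\C x$ (this is bounded since $\dist(x,E)>0$), then apply Hahn--Banach; when $x=0$ just apply Hahn--Banach. One obtains $\tilde e_k^*(e_j)=\delta_{jk}$, $\tilde e_k^*(x)=0$, and $C:=\sup_k\|\tilde e_k^*\|<\infty$.

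Finally I would invoke Theorem~U. Let $Z=\{h\in X^*:h(x)=0\}$, a closed subspace of $X^*$ and hence a Baire space, which contains every $\tilde e_k^*$, and let $\phi_k:Z\to\C$, $\phi_k(h)=r_{m_k}h(e_k)$, which is continuous. I claim $\Lambda=\{(h,\phi_k(h)):h\in Z,\ k\in\N\}$ is dense in $Z\times\C$: given $h_0\in Z$ and $w\in\C$, set $t_k=w/r_{m_k}-h_0(e_k)$, which tends to $0$ because $e_k\to0$ weakly and $r_{m_k}\to\infty$; then $h_k=h_0+t_k\tilde e_k^*\in Z$ satisfies $\|h_k-h_0\|\leq C|t_k|\to0$ and $\phi_k(h_k)=r_{m_k}(h_0(e_k)+t_k)=w$, so $(h_k,w)\to(h_0,w)$. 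By Theorem~U there is $f\in Z$ with $\{\phi_k(f):k\in\N\}$ dense in $\C$. Since $f(x)=0$, we get $r_{m_k}f(x_{m_k})=r_{m_k}f(e_k)=\phi_k(f)$, so $\{r_nf(x_n):n\in\N\}\supseteq\{\phi_k(f):k\in\N\}$ is dense in $\C$, as required.

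The main obstacle is the tension between the two demands on $f$: that $\{r_nf(x_n)\}$ fill up $\C$, yet $f$ kill the weak limit $x$. Resolving it requires the chosen subsequence to be uniformly transversal to $x$, and this is precisely what the combination of a basic subsequence (Bessaga--Pelczynski) together with the triviality of tail-span intersections provides — it yields biorthogonal functionals of uniformly bounded norm that vanish on $x$. Once that structure is in place, the reduction, the density-of-graph verification, and the application of Theorem~U are all routine.
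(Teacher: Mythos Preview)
Your argument is correct. The reduction to a weakly null sequence, the extraction of a basic subsequence via Bessaga--Pe\l czy\'nski, the uniform extension of the coordinate functionals so that they annihilate the weak limit $x$ (using that $x$ lies outside the closed span of some tail), and the verification of the density hypothesis of Theorem~U all go through as you wrote them.

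The paper's proof, however, follows a different and more self-contained route. Rather than invoking Bessaga--Pe\l czy\'nski, it performs an elementary extraction: writing $L_n=\spann\{u,x_1,\dots,x_n\}$ where $u$ is the weak limit, it observes that $\sup_{m>n}\dist(x_m,L_n)$ has a positive limit (otherwise $x_n\to u$ in norm), passes to a subsequence with $\dist(x_{n+1},L_n)\geq s>0$, and uses Hahn--Banach to obtain functionals $f_n$ with $f_n(x_n)=1$, $f_n(u)=0$, $f_n(x_j)=0$ for $j<n$, and $\|f_n\|\leq 1/s$. These are only ``upper-triangular'' rather than fully biorthogonal, so instead of Theorem~U the paper refines to a further subsequence $\{n_k\}$ along which the cross-terms $f_{n_k}(x_{n_m})$ for $m>k$ are small, shows that the map $R:\ell^1\to\ell^1$, $(Ra)_m=\sum_k a_kf_{n_k}(x_{n_m})$ satisfies $\|R-I\|\leq\tfrac12$, and then explicitly solves $Ra=b$ for a prescribed dense target sequence $b$, producing $f=\sum_k a_kf_{n_k}$ directly.

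In short: your approach is more modular, leaning on two standard black boxes (Bessaga--Pe\l czy\'nski and Theorem~U) to keep the argument short; the paper's approach is fully constructive and avoids both, at the cost of the small-perturbation $\ell^1$-operator argument. Either is perfectly acceptable here.
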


\begin{proof} Let $s_n=\sup\{\dist(x_m,L_n):m>n\}$, where $L_n=\spann\{u,x_1,\dots,x_n\}$ and $u$ is the weak limit of $\{x_n\}$. Clearly, $\{s_n\}$ is a decreasing sequence of non-negative numbers and therefore $s_n\to 2s\geq 0$. It is an elementary exercise to show that the equality $s=0$ implies the norm convergence of $\{x_n\}$ to $u$. Thus $s>0$ and passing to a subsequence, if necessary, we can assume that $\dist(x_{n+1},L_n)\geq s>0$ for every $n\in\N$. This estimate combined with the Hahn--Banach Theorem provides $f_n\in X^*$ such that $f_n(x_n)=1$, $f_n(u)=0$, $f_n(x_j)=0$ for $j<n$ and $\|f_n\|\leq \frac1s$. Since $f_n(u)=0$ and $x_k\to u$ weakly, we have $f_n(x_m)\to 0$ as $m\to\infty$. Thus we can choose a strictly increasing sequence $\{n_k\}_{k\in\N}$ of positive integers such that $|f_{n_k}(x_{n_m})|<2^{-m}$ whenever $m>k$ and $r_{n_k}>2^k$ for each $k$.

Now we shall verify that the map $R:\ell^1(\N)\to\ell^1(\N)$ defined by $(Ra)_m=\sum\limits_{k=1}^\infty a_kf_{n_k}(x_{n_m})$ is a well-defined continuous linear operator. Indeed, since $\{f_{n_k}\}$ is bounded,
for every $a\in \ell^1(\N)$, the series $\sum\limits_{k=1}^\infty a_kf_{n_k}$ is absolutely convergent in $X^*$ and therefore defines $g_a=\sum\limits_{k=1}^\infty a_kf_{n_k}\in X^*$. Thus $a\mapsto (Ra)_m=g_a(x_{n_m})$ is a continuous linear functional on $\ell^1(\N)$. Next,
$$
\|Ra-a\|_1=\sum_{m=1}^\infty |(Ra)_m-a_m|\leq \sum_{k,m\in\N,\ k\neq m}|a_k||f_{n_k}(x_{n_m})|.
$$
Since $|f_{n_k}(x_{n_m})|<2^{-m}$ for $m>k$ and $|f_{n_k}(x_{n_m})|=0$ for $m<k$, the above display yields
$$
\|Ra-a\|_1\leq \sum_{k=1}^\infty |a_k|\sum_{m=k+1}^\infty 2^{-m}\leq \frac12 \sum_{k=1}^\infty |a_k|=\frac{\|a\|_1}{2}.
$$
Thus $R$ is bounded. Furthermore, $\|R-I\|\leq\frac12<1$ and therefore $R$ is invertible.

Pick a dense sequence $\{w_k\}_{k\in\N}$ in $\C$ such that $|w_k|\leq k$ for every $k\in\N$. Then the sequence $b$ defined by $b_k=\frac{w_k}{r_{n_k}}$ belongs to $\ell^1(\N)$ since $r_{n_k}>2^k$. Since $R$ is invertible, there is $a\in\ell^1(\N)$ such that $Ra=b$. That is, $g_a(x_{n_m})=b_m$ for every $m\in\N$. According to the definition of $b_m$, $g_a(r_{n_k}x_{n_k})=w_k$ for each $k\in\N$. Hence $\{r_ng_a(x_n):n\in\N\}$ is dense in $\C$.
\end{proof}

\begin{lemma}\label{opera} Let $X$ and $Y$ be Banach spaces and $\{T_n\}_{n\in\N}$ be a sequence of continuous linear operators from $Y$ to $X$. Assume also that
$$\textstyle
\Omega=\biggl\{(y,f)\in Y\times X^*:\liminf\limits_{n\to\infty}\frac{1+|f(T_ny)|}{\bigl\|T_n\bigr\|}=0\biggr\}\ \ \text{is not nowhere dense in $Y\times X^*$.}
$$
Then there is $(y,f)\in Y\times X^*$ such that $\{f(T_ny):n\in\N\}$ is dense in $\C$.
\end{lemma}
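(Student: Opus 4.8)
The plan is to deduce the statement from Theorem~U. For $n\in\N$ let $\phi_n\colon Y\times X^*\to\C$ be the jointly continuous map $\phi_n(y,f)=f(T_ny)$. Since $\Omega$ is not nowhere dense, the interior $W$ of $\overline{\Omega}$ is a non-empty open subset of $Y\times X^*$; being an open subset of a Banach space, $W$ is a Baire space, and $\Omega\cap W$ is dense in $W$. By Theorem~U applied to the Baire space $W$, the Polish space $\C$ and the continuous maps $\phi_n|_W$, it suffices to prove that $\Lambda=\{((y,f),\phi_n(y,f)):(y,f)\in W,\ n\in\N\}$ is dense in $W\times\C$; indeed, then the set of $(y,f)\in W$ for which $\{\phi_n(y,f):n\in\N\}$ is dense in $\C$ is a dense $G_\delta$ subset of $W$, hence non-empty, and any such $(y,f)$ does the job. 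As each $\phi_n$ is continuous and $\Omega\cap W$ is dense in $W$, this will follow from the claim: for every $(y_0,f_0)\in\Omega$, every $w\in\C$ and every $\delta>0$ there are $(y,f)\in Y\times X^*$ with $\|y-y_0\|<\delta$, $\|f-f_0\|<\delta$ and some $n\in\N$ with $f(T_ny)=w$.

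To prove the claim, fix $(y_0,f_0)\in\Omega$ and, using the definition of $\Omega$, choose a strictly increasing sequence $\{n_j\}_{j\in\N}$ in $\N$ with $\epsilon_j=\frac{1+|f_0(T_{n_j}y_0)|}{\|T_{n_j}\|}\to0$; then $\|T_{n_j}\|\to\infty$ and $\frac{|f_0(T_{n_j}y_0)|}{\|T_{n_j}\|}\to0$. I would distinguish three cases according to the behaviour of $\frac{\|T_n^*f_0\|}{\|T_n\|}$ and $\frac{\|T_ny_0\|}{\|T_n\|}$ along $\{n_j\}$. \emph{Case A:} $\|T_n^*f_0\|\geq c\|T_n\|$ for some $c>0$ along an infinite subset of $\{n_j\}$. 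Then fix for each such $n$ a vector $u_n\in Y$, $\|u_n\|\leq1$, with $|f_0(T_nu_n)|\geq\tfrac12\|T_n^*f_0\|$, keep $f=f_0$ and set $y=y_0+t_nu_n$ with $t_n=\frac{w-f_0(T_ny_0)}{f_0(T_nu_n)}$; then $f(T_ny)=w$, while $|t_n|\leq\frac{2(|w|+|f_0(T_ny_0)|)}{c\|T_n\|}\to0$. \emph{Case B:} $\|T_ny_0\|\geq c\|T_n\|$ for some $c>0$ along an infinite subset of $\{n_j\}$. Symmetrically, take (Hahn--Banach) $h_n\in X^*$ with $\|h_n\|\leq1$ and $h_n(T_ny_0)=\|T_ny_0\|$, keep $y=y_0$, and set $f=f_0+s_nh_n$ with $s_n=\frac{w-f_0(T_ny_0)}{h_n(T_ny_0)}$; again $f(T_ny)=w$ and $|s_n|\to0$.

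The remaining and crucial \emph{Case C} is when $\frac{\|T_{n_j}^*f_0\|}{\|T_{n_j}\|}\to0$ and $\frac{\|T_{n_j}y_0\|}{\|T_{n_j}\|}\to0$: now $T_{n_j}$ is, asymptotically, large only in a direction almost annihilated by $f_0$ and almost disjoint from $T_{n_j}y_0$, so neither one-variable perturbation helps, and the idea is to perturb $y$ and $f$ simultaneously by one small parameter. For each $j$ pick $u_j\in Y$, $\|u_j\|\leq1$, with $\|T_{n_j}u_j\|\geq\tfrac12\|T_{n_j}\|$, and then (Hahn--Banach) $h_j\in X^*$ with $\|h_j\|=1$ and $h_j(T_{n_j}u_j)=\|T_{n_j}u_j\|$. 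Putting $y=y_0+tu_j$ and $f=f_0+th_j$ gives $f(T_{n_j}y)=C_jt^2+B_jt+A_j$, where $A_j=f_0(T_{n_j}y_0)$, $B_j=f_0(T_{n_j}u_j)+h_j(T_{n_j}y_0)$ and $C_j=\|T_{n_j}u_j\|\in[\tfrac12\|T_{n_j}\|,\|T_{n_j}\|]$. Since $C_j\neq0$, the quadratic $C_jt^2+B_jt+(A_j-w)=0$ has a root $t_j$, and the elementary estimate $|t_j|\leq\frac{|B_j|}{|C_j|}+\sqrt{\frac{|w-A_j|}{|C_j|}}$ combined with $|B_j|\leq\|T_{n_j}^*f_0\|+\|T_{n_j}y_0\|$, $|w-A_j|\leq|w|+|f_0(T_{n_j}y_0)|$ and $|C_j|\geq\tfrac12\|T_{n_j}\|$ yields $|t_j|\to0$ (using the Case~C hypothesis together with $\epsilon_j\to0$). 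Taking $t=t_j$ we get $\|y-y_0\|=|t_j|\|u_j\|\leq|t_j|\to0$, $\|f-f_0\|=|t_j|\to0$ and $f(T_{n_j}y)=w$.

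In each case, choosing $n$ (respectively $n_j$) far enough in the relevant subsequence makes both perturbations have norm less than $\delta$, and the target $(y,f)$ lies in $W$ as soon as $\delta$ is small enough; this proves the claim and hence the lemma. I expect Case~C to be the main obstacle: the point is to see that single perturbations genuinely fail there, and that the right cure is a simultaneous two-variable perturbation which turns $f(T_ny)=w$ into a quadratic in one parameter whose leading coefficient has the maximal order $\|T_n\|$ while the lower-order coefficients are $o(\|T_n\|)$.
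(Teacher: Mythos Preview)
Your proof is correct and follows the same overall strategy as the paper: reduce to Theorem~U on an open set where $\Omega$ is dense, and prove the required density of $\Lambda$ via a three-case analysis at a point $(y_0,f_0)\in\Omega$.

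The details of the case split and of the crucial third case differ from the paper's. The paper fixes a single auxiliary pair $y_n,h_n$ with $h_n(T_ny_n)=\|T_ny_n\|\geq\tfrac12\|T_n\|$ and splits according to whether $\frac{|f_0(T_ny_n)|}{h_n(T_ny_n)}$ or $\frac{|h_n(T_ny_0)|}{h_n(T_ny_n)}$ stays bounded below along a subsequence; you instead split on the more intrinsic quantities $\frac{\|T_n^*f_0\|}{\|T_n\|}$ and $\frac{\|T_ny_0\|}{\|T_n\|}$, picking the auxiliary vectors only after the case is decided. In the third case the paper perturbs $y$ by a \emph{fixed} amount $\delta y_n$ (chosen once so that the resulting point stays in the open set) and then solves linearly for the perturbation $r_nh_n$ of $f$, whereas you perturb both variables by the \emph{same} complex parameter $t$ and solve the resulting quadratic $C_jt^2+B_jt+(A_j-w)=0$. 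Your version is a bit slicker: the root-product identity already gives a root with $|t_j|\leq\sqrt{|A_j-w|/C_j}\to0$, so the $|B_j|/|C_j|$ term in your estimate is not even needed. The paper's two-step argument avoids the quadratic but has to commit to the size of the $y$-perturbation in advance. Both routes yield the same conclusion with comparable effort.
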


\begin{proof} Since $\Omega$ is not nowhere dense, $\Omega\cap U$ is dense in $U$ for some non-empty open set $U\subset Y\times X^*$. Let $V$ be an arbitrary non-empty open subset of $U$.
We shall prove that
\begin{equation}\label{clai}
\text{there exist $n\in \N$ and $(x,f)\in V$ such that $n>m$ and $f(T_nx)=z$.}
\end{equation}
Since $\Omega\cap U$ is dense in $U$, we can pick $(h,u)\in \Omega\cap V$. By definition of $\Omega$, there is an infinite subset $A$ of $\N$ such that
$$\textstyle
\lim\limits_{n\to\infty\atop n\in A}\|T_n\|=\infty\ \ \ \text{and}\ \ \
\lim\limits_{n\to\infty\atop n\in A}\frac{h(T_nu)}{\|T_n\|}=0.
$$
For every $n\in A$, choose $y_n\in S(Y)$ for which $2\|T_ny_n\|\geq \|T_n\|$. For each $n\in A$, the Hahn--Banach Theorem provides $h_n\in S(X^*)$ such that $h_n(T_ny_n)=\|T_ny_n\|$. In particular, $2h_n(T_ny_n)\geq \|T_n\|\to \infty$ as $n\to\infty$, $n\in A$.

\smallskip

{\bf Case 1:} $\liminf\limits_{n\to\infty\atop n\in A}\frac{|h(T_ny_n)|}{h_n(T_ny_n)}>0$.

\smallskip

In this case there are an infinite subset $B$ of $A$ and $c>0$ such that $|h(T_ny_n)|>2ch_n(T_ny_n)$ for each $n\in B$. For $n\in B$ denote $\delta_n=\frac{z-h(T_n u)}{h(T_ny_n)}$. From the above display and the inequality $|h(T_ny_n)|>2ch_n(T_ny_n)\geq c\|T_n\|$ it follows that $\delta_n\to 0$ as $n\to\infty$, $n\in B$. Then we can choose $n\in B$  such that $n>m$ and $(u+\delta_ny_n,h)\in V$. Plugging the definition of $\delta_n$ into the expression $h(T_n(u+\delta_ny_n))$, we after cancellations get $h(T_n(u+\delta_ny_n))=z$.
Thus $n$  and $(x,f)=(u+\delta_ny_n,h)$ satisfy (\ref{clai}).

\smallskip

{\bf Case 2:} $\liminf\limits_{n\to\infty\atop n\in A}\frac{|h_n(T_nu)|}{h_n(T_ny_n)}>0$.

\smallskip

In this case there are an infinite subset $B$ of $A$ and $c>0$ such that $|h_n(T_nu)|>2ch_n(T_ny_n)$ for each $n\in B$. For $n\in B$ denote $\delta_n=\frac{z-h(T_n u)}{h_n(T_nu)}$. From the above display and the inequality $|h(T_ny_n)|>2ch_n(T_ny_n)\geq c\|T_n\|$ it follows that $\delta_n\to 0$ as $n\to\infty$, $n\in B$. Then we can choose $n\in B$  such that $n>m$ and $(u,h+\delta_nh_n)\in V$. Plugging the definition of $\delta_n$ into the expression $(h+\delta_nh_n)(T_nu)$, we after cancellations get $(h+\delta_nh_n)(T_nu)=z$.
Thus $n$  and $(x,f)=(u,h+\delta_nh_n)$ satisfy (\ref{clai}).

\smallskip

{\bf Case 3:} $\frac{h(T_ny_n)}{h_n(T_ny_n)}\to 0$ and $\frac{h_n(T_nu)}{h_n(T_ny_n)}\to 0$ \ \ as $n\to\infty$, $n\in A$.

\smallskip

Fix $\delta>0$ such that $(u+\delta B(Y))\times (h+\delta B(X^*))\subseteq V$. In our case $h_n(T_n(u+\delta y_n))=h_n(T_nu)+\delta h_n(T_ny_n)\neq 0$ for all sufficiently large $n\in A$. This allows us to define
$$\textstyle
r_n=\frac{z-h(T_n(u+\delta y_n))}{h_n(T_n(u+\delta y_n))}=\frac{z-h(T_nu)-\delta h(T_ny_n)}{\delta h_n(T_ny_n)+h_n(T_nu)}.
$$
Since $\frac{h(T_ny_n)}{h_n(T_ny_n)}\to 0$ and $\frac{h_n(T_nu)}{h_n(T_ny_n)}\to 0$ \ \ as $n\to\infty$, $n\in A$, we have $r_n\to 0$  as $n\to\infty$, $n\in A$. Thus we can pick $n\in A$ such that $n>m$ and $|r_n|<\delta$. Set
$x=u+\delta y_n$ and $f=h+r_nh_n$. Since $(u+\delta B(Y))\times (h+\delta B(X^*))\subseteq V$, we have $(x,f)\in V$.
Finally, plugging the definition of $r_n$ into the expression $f(T_nx)=(h+r_nh_n)(T_nu+\delta T_ny_n)$, we after cancellations get $f(T_nx)=z$. This completes the proof of Claim (\ref{clai}).

According to (\ref{clai}), 
$$
\Lambda=\{(y,f,f(T_ny)):(y,f)\in U,\ n\in\Z_+\}\ \ \text{is dense in $U\times \C$.}
$$
By Theorem~U, $W=\bigl\{(y,f)\in U:\{f(T_ny):n\in\Z_+\}\ \text{is dense in}\ \C\bigr\}$ 
is a dense $G_\delta$ subset of $U$. In particular, $W\neq\varnothing$ and there is $(y,f)\in Y\times X^*$ such that $\{f(T_ny):n\in\N\}$ is dense in $\C$.
\end{proof}

\begin{corollary}\label{sufwe1} Let $T\in L(X)$, $A$ be an infinite subset of $\N$ and $\{x_k\}_{k\in\N}$ be a sequence in $X$ such that $T^nx_k\neq 0$ for every $(n,k)\in\Z_+\times\N$ and 
\begin{equation}\label{fir}
\textstyle \text{$\|T^nx_1\|\to\infty$ and for each $k\in\N$, $\frac{\|T^n x_{k+1}\|}{\|T^nx_k\|}\to\infty$  as $n\to\infty$, $n\in A$.}
\end{equation}
Then $T\in \WNH(X)$.
\end{corollary}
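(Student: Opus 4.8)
The plan is to derive the statement directly from Lemma~\ref{opera} together with Proposition~\ref{ele1}. First I would pass to the closed subspace $Z=\overline{\spann}\{x_k:k\in\N\}$ of $X$, enumerate $A=\{a_1<a_2<{\dots}\}$ in increasing order, and put $T_n=T^{a_n}\bigr|_Z\in L(Z,X)$. Restricting to $Z$ is the essential trick: it is what forces the exceptional set
$$\Omega=\Bigl\{(y,f)\in Z\times X^*:\liminf_{n\to\infty}\frac{1+|f(T_ny)|}{\|T_n\|}=0\Bigr\}$$
of Lemma~\ref{opera} to be dense in $Z\times X^*$, whereas in $X\times X^*$ it could well be nowhere dense.

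The computation making this work rests on one elementary observation: since $x_{k+1}\in Z$ we have $\|T_n\|\geq\|T^{a_n}x_{k+1}\|/\|x_{k+1}\|$ for all $n$ and $k$. Taking $k=1$ and using $\|T^{a_n}x_1\|\to\infty$ (part of (\ref{fir}); note $\|T^{a_n}x_1\|$ is finite and nonzero by hypothesis) gives $\|T_n\|\to\infty$. For general $k$, combining the same lower bound with (\ref{fir}) yields, for each fixed $k$,
$$\frac{\|T^{a_n}x_k\|}{\|T_n\|}\leq\|x_{k+1}\|\,\frac{\|T^{a_n}x_k\|}{\|T^{a_n}x_{k+1}\|}\longrightarrow 0\qquad(n\to\infty).$$
Hence for an arbitrary finite linear combination $y=\sum_k c_kx_k\in\spann\{x_k:k\in\N\}$ and an arbitrary $f\in X^*$ we get $|f(T_ny)|\leq\|f\|\sum_k|c_k|\,\|T^{a_n}x_k\|$, and dividing by $\|T_n\|\to\infty$ shows $\frac{1+|f(T_ny)|}{\|T_n\|}\to0$; in particular $(y,f)\in\Omega$. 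Thus $\Omega$ contains $\spann\{x_k:k\in\N\}\times X^*$, which is dense in $Z\times X^*$ since $\spann\{x_k\}$ is dense in $Z$. So $\Omega$ is not nowhere dense in $Z\times X^*$.

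Now Lemma~\ref{opera}, applied with the Banach spaces $Z$ and $X$ and the operators $T_n$, produces $(y,f)\in Z\times X^*$ such that $\{f(T_ny):n\in\N\}=\{f(T^{a_n}y):n\in\N\}$ is dense in $\C$. Since $\{f(T^{a_n}y):n\in\N\}\subseteq\{f(T^my):m\in\Z_+\}=O(T,y,f)$, the numerical orbit $O(T,y,f)$ is dense in $\C$, and Proposition~\ref{ele1} gives $T\in\WNH(X)$.

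I do not expect a serious obstacle: the only delicate point is the bookkeeping of the norm inequalities, specifically recognizing that one must work inside $Z$ rather than all of $X$, so that $\|T_n\|$ is bounded below by $\|T^{a_n}x_{k+1}\|/\|x_{k+1}\|$ — precisely the estimate that makes $\Omega$ a dense set and lets Lemma~\ref{opera} apply. Everything else is a direct invocation of the two preceding results.
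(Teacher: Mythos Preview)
Your proposal is correct and follows essentially the same approach as the paper: pass to the closed span $Z$ of the $x_k$, observe that for every $y\in\spann\{x_k\}$ and $f\in X^*$ the ratio $(1+|f(T^ny)|)/\|T^n\bigr|_Z\|$ tends to $0$ along $A$, so the set $\Omega$ of Lemma~\ref{opera} is dense in $Z\times X^*$, and conclude via Lemma~\ref{opera} and Proposition~\ref{ele1}. Your version is in fact slightly more explicit than the paper's (you enumerate $A$ and take $T_n=T^{a_n}\bigr|_Z$, whereas the paper takes $T_n=T^n\bigr|_Y$ for all $n$ and relies on the $\liminf$ in the definition of $\Omega$), but the substance is identical.
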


\begin{proof} Let $E=\spann\{x_k:k\in\N\}$ and $Y$ be the closure of $E$. By (\ref{fir}), $\|T^nx\|=o\bigl(\bigl\|T^n\bigr|_Y\bigr\|\bigr)$ for each $x\in E$ as $n\to\infty$, $n\in A$. By Lemma~\ref{opera}, applied to $T_n=T^n\bigr|_{Y}:Y\to X$, there are $(x,f)\in Y\times X^*$ such that $\{f(T^nx):n\in\Z_+\}=O(T,x,f)$ is dense in $\C$. Hence $T\in\WNH(X)$.
\end{proof}

\begin{corollary}\label{sufwe2} Let $T\in L(X)$ be such that $\Bigl\{x\in X:\liminf\limits_{n\to\infty}\frac{1+\|T^nx\|}{\|T^n\|}=0\Bigr\}$ is dense in $X$. Then $T\in \WNH(X)$.
\end{corollary}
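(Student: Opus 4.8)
The plan is to derive the statement from Lemma~\ref{opera} applied to the sequence of operators $T_n=T^n\colon X\to X$ (so that $Y=X$). Write $D=\bigl\{x\in X:\liminf_{n\to\infty}\frac{1+\|T^nx\|}{\|T^n\|}=0\bigr\}$; by hypothesis $D$ is dense in $X$. Note in passing that $D\neq\varnothing$ forces $\|T^n\|\neq 0$ for every $n$: if $T^n=0$ for some $n$, then $T^m=0$ for all $m\geq n$, so the quotient $\frac{1+\|T^mx\|}{\|T^m\|}$ equals $+\infty$ for all large $m$ and $D$ would be empty. Thus all quotients below are well defined. The key point is that the set
$$
\Omega=\Bigl\{(y,f)\in X\times X^*:\liminf_{n\to\infty}\tfrac{1+|f(T^ny)|}{\|T^n\|}=0\Bigr\}
$$
contains $D\times X^*$, hence is dense in $X\times X^*$, and in particular is not nowhere dense.

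To verify the inclusion $D\times X^*\subseteq\Omega$, fix $x\in D$ and $f\in X^*$. For every $n$ we have $1+|f(T^nx)|\leq 1+\|f\|\,\|T^nx\|\leq\max(1,\|f\|)\,(1+\|T^nx\|)$, using the elementary inequality $1+ab\leq\max(1,b)(1+a)$, valid for all $a,b\geq 0$. Dividing by $\|T^n\|$ and passing to the lower limit gives
$$
\liminf_{n\to\infty}\frac{1+|f(T^nx)|}{\|T^n\|}\;\leq\;\max(1,\|f\|)\,\liminf_{n\to\infty}\frac{1+\|T^nx\|}{\|T^n\|}\;=\;0,
$$
so $(x,f)\in\Omega$, as claimed.

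Since $\Omega$ is not nowhere dense, Lemma~\ref{opera} produces $(y,f)\in X\times X^*$ for which $\{f(T^ny):n\in\N\}$ is dense in $\C$; adjoining the single point $f(y)$ shows that $O(T,y,f)=\{f(T^ny):n\in\Z_+\}$ is dense in $\C$, and Proposition~\ref{ele1} then yields $T\in\WNH(X)$. There is no serious obstacle in this argument: the only thing to check is the elementary inclusion $D\times X^*\subseteq\Omega$, and the entire analytic content has already been packaged into Lemma~\ref{opera} (whose own proof splits into the three cases treated above).
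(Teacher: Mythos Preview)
Your proof is correct and follows exactly the approach the paper intends: the paper's own proof is the single line ``Just apply Lemma~\ref{opera} with $Y=X$ and $T_n=T^n$,'' and you have simply unpacked why the hypotheses of that lemma are met, namely that $D\times X^*\subseteq\Omega$ forces $\Omega$ to be dense and hence not nowhere dense. The extra care you take (checking $\|T^n\|\neq 0$ and the elementary inequality $1+ab\leq\max(1,b)(1+a)$) is fine and fills in details the paper leaves implicit.
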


\begin{proof} Just apply Lemma~\ref{opera} with $Y=X$ and $T_n=T^n$.
\end{proof}

Assume that (\ref{suffwnhid}.1) is satisfied. Then there is a sequence $\{z_n\}_{n\in\N}$ in $\sigma_p(T)$ such that $1<|z_1|<|z_2|<{\dots}$ Pick $x_k\in S(X)$ such that $Tx_k=z_kx_k$ for each $k\in\N$. Then $\|T^n x_1\|=|z_1|^n\to \infty$ and $\frac{\|T^n x_{k+1}\|}{\|T^n x_k\|}=\Bigl|\frac{z_{k+1}}{z_k}\Bigr|^n\to\infty$ as $n\to\infty$. By Corollary~\ref{sufwe1}, $T\in \WNH(X)$.
Thus (\ref{suffwnhid}.1) implies the weak numeric hypercyclicity of $T$.

Assume that (\ref{suffwnhid}.3) is satisfied. That is, there is a cyclic vector $x$ for $T$ satisfying $\liminf\limits_{n\to\infty}\frac{1+\|T^nx\|}{\|T^n\|}=0$. Let $p\in\C[z]$ and $y=p(T)x$. Then $\|T^ny\|\leq \|p(T)\|\|T^nx\|$ and therefore $\liminf\limits_{n\to\infty}\frac{1+\|T^ny\|}{\|T^n\|}=0$. Since the set $\{p(T)x:p\in\C[z]\}$ is dense in $X$, $T\in\WNH(X)$ by Corollary~\ref{sufwe2}. Thus (\ref{suffwnhid}.3) implies the weak numeric hypercyclicity of $T$.

Assume that (\ref{suffwnhid}.4) is satisfied. That is, there are $x\in X$ and an infinite set $A\subseteq \N$ such that $\|T^nx\|\to\infty$ as $n\to\infty$, $n\in A$ and the sequence $\bigl\{\frac{T^nx}{\|T^nx\|}\bigr\}_{n\in A}$ is weakly convergent but is not norm convergent. Let $r_n=\|T^nx\|$ and $x_n=\frac{T^nx}{\|T^nx\|}$. Then $\{r_n\}_{n\in A}$ converges to $\infty$, while $\{x_n\}_{n\in A}$ converges weakly but not in the norm. By Lemma~\ref{weco}, there is $f\in X^*$ such that $\{r_nf(x_n):n\in A\}$ is dense in $\C$. Since $\{r_nf(x_n):n\in A\}$ is a subset of $O(T,x,f)$, $O(T,x,f)$ is dense in $\C$. By Proposition~\ref{ele1}, $T\in \WNH(X)$. Thus (\ref{suffwnhid}.4) implies the weak numeric hypercyclicity of $T$.

Finally, assume that (\ref{suffwnhid}.2) is satisfied. That is, there is $r>1$ such that $\sigma_p(T)\cap r\T$ is infinite. Then we can choose a countable infinite set $M\subset\T$ and $e_z\in S(X)$  such that $Te_z=rze_z$ for each $z\in M$. Note that $e_z$ are linearly independent. Let $E=\spann\{e_z:z\in M\}$. Then the closure of $E$ is a closed $T$-invariant subspace. By Proposition~\ref{ele00}, it suffices to prove the weak numeric hypercyclicity of the restriction of $T$ to the closure of $E$. Thus, without loss of generality, we can assume that $E$ is dense in $X$. If $\limsup\limits_{n\to\infty}\frac{\|T^n\|}{r^n}=\infty$, then $\liminf\limits_{n\to\infty}\frac{1+\|T^nx\|}{\|T^n\|}=0$ for every $x\in E$ since $\|T^nx\|=O(r^n)$. In this case $T\in\WNH(X)$ by Corollary~\ref{sufwe2}. It remains to consider the case $\limsup\limits_{n\to\infty}\frac{\|T^n\|}{r^n}<\infty$. Then there is $c\geq 1$ such that $\|T^n\|\leq cr^n$ for each $n\in\Z_+$. For every $z\in M$, consider the unique linear functional $e^*_z:E\to \C$ such that $e^*_z(e_z)=1$ and $e^*_z(e_w)=0$ if $w\neq z$. First, we shall prove that each $e^*_z$ is bounded and satisfies $\|e^*_z\|\leq c$. Assume the contrary. Then there is $z_0\in M$  and $x=\sum\limits_{j=0}^m c_jz_j$ ($z_j$ are assumed pairwise distinct) such that $\|x\|=1$ and $c_0=e^*_{z_0}(x)=a>c$. Since $\|T^n\|\leq cr^n$, we have $\|x_n\|\leq c$ for each $n\in\Z_+$, where $x_n=z_0^{-n}r^{-n}T^nx=ae_{z_0}+\sum\limits_{j=1}^m c_jz_j^nz_0^{-n}e_{z_j}$. Then
$$
\biggl\|\sum_{n=0}^{k-1}x_n\biggr\|=\biggl\|kae_{z_0}+\sum_{j=1}^mc_j \Bigl(\sum_{n=0}^{k-1}z_j^nz_0^n\Bigr)e_{z_j}\biggr\|\leq ck\ \ \text{for each $k\in\N$}.
$$
By the triangle inequality,
$$
ka\leq kc+\sum_{j=1}^m |c_j|\Bigl|\sum_{n=0}^{k-1}z_j^nz_0^n\Bigr|=ck+\sum_{j=1}^m \frac{|c_j||1-z_j^kz_0^{-k}|}{|1-z_jz_0^{-1}|}\leq ck+\sum_{j=1}^m \frac{2|c_j|}{|z_j-z_0|}\ \ \text{for every $k\in\N$.}
$$
Since the last sum does not depend on $k$, it follows that $a\leq c$. This contradiction proves that each $e^*_z$ is bounded and has the norm at most $c$. Hence we can uniquely extend each $e_z^*$ to $X$ by continuity and thus treat them as elements of $X^*$. By Lemma~\ref{ddiaa}, there is $a\in\ell^1_+(M)$ such that $\sum\limits_{z\in M}\sqrt{a_z}<\infty$ and $O=\Bigl\{r^k\sum\limits_{z\in M}a_zz^k:k\in\N\Bigr\}$ is dense in $\C$. 
Since $\sum\limits_{z\in M}\sqrt{a_z}<\infty$, $\|e_z\|=1$  and $\|e^*_z\|\leq c$ for every $z\in M$, the series $x=\sum\limits_{z\in M}\sqrt{a_z}e_z$ and $f=\sum\limits_{z\in M}\sqrt{a_z}e^*_z$ converge absolutely in the Banach spaces $X$ and $X^*$ respectively. Using the equalities $Te_z=rze_z$ and $e^*_z(e_w)=\delta_{z,w}$ for $z,w\in M$, we have
$$
f(T^kx)=r^kf\Bigl(\sum_{w\in M}\sqrt{a_w}w^ke_w\Bigr)=r^k\sum_{z,w\in M}\sqrt{a_za_w}w^ke^*_z(e_w)=r^k\sum_{z\in M}a_zz^k\ \ \text{for each $k\in\N$.}
$$
Hence $O(T,x,f)=O$ is dense in $\C$. By Proposition~\ref{ele00}, $T\in\WNH(X)$. That is, (\ref{suffwnhid}.2) implies the weak numeric hypercyclicity of $T$.
 This completes the proof of Theorem~\ref{suffwnhid}.

\section{Proof of Theorems~\ref{2dim} and \ref{3dim}\label{s7}}

First, we shall establish some obstacles for an operator to be weakly numerically hypercyclic.

\begin{lemma}\label{abcde} Let $T\in L(X)$ and $X=Y\oplus Z$, where $Y$ and $Z$ are closed $T$-invariant subspaces of $X$ such that $T\bigr|_Y\in L(Y)$ is power bounded, $Z$ is finite dimensional and the distinct eigenvalues of $T\bigr|_Z\in L(Z)$ have distinct absolute values. Then $T\notin \WNH(X)$.
\end{lemma}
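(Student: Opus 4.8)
The plan is to invoke Proposition~\ref{ele1}: it suffices to prove that $O(T,x,f)$ fails to be dense in $\C$ for every $x\in X$ and every $f\in X^*$. Write $x=y+w$ with $y\in Y$ and $w\in Z$. Since $Y$ and $Z$ are $T$-invariant and $X=Y\oplus Z$, we get $T^nx=T^ny+T^nw$ with $T^ny\in Y$ and $T^nw\in Z$, hence $f(T^nx)=b_n+f(T^nw)$ where $b_n=f(T^ny)$. The power-boundedness of $T|_Y$ makes $\{b_n\}$ a bounded sequence, with $|b_n|\leq\|f\|\,\|y\|\sup_m\|(T|_Y)^m\|$.

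Next I would describe $f(T^nw)$ through the Jordan decomposition of the finite-rank operator $T|_Z$. Let $\mu_1,\dots,\mu_k$ be its distinct eigenvalues, ordered so that $|\mu_1|>\dots>|\mu_k|$ (the moduli are distinct by hypothesis), and let $Z=Z_1\oplus\dots\oplus Z_k$ be the corresponding generalized-eigenspace decomposition with $T|_{Z_j}=\mu_jI+N_j$, $N_j$ nilpotent. Writing $w=w_1+\dots+w_k$ and using the finite expansion $(\mu_jI+N_j)^n=\mu_j^n\sum_{l\geq0}\binom{n}{l}\mu_j^{-l}N_j^l$, one obtains $T^nw_j=\mu_j^np_j(n)$ for a $Z_j$-valued polynomial $p_j$, so $f(T^nw)=\sum_{j=1}^k\mu_j^nq_j(n)$ with $q_j(n)=f(p_j(n))$ scalar polynomials.

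The final step is a dichotomy based on the dominant term among the indices with $|\mu_j|\geq1$. The indices with $|\mu_j|<1$ contribute summands tending to $0$, so they are harmless. Let $j^*$ be the smallest index with $|\mu_{j^*}|\geq1$ and $q_{j^*}\not\equiv0$; if no such index exists, then $f(T^nx)=b_n+o(1)$ is bounded. Otherwise every other surviving index has strictly smaller modulus than $|\mu_{j^*}|$ (those with $j<j^*$ contribute $0$; those with $j>j^*$ have $|\mu_j|<|\mu_{j^*}|$; and if $|\mu_{j^*}|=1$ these last ones already have $|\mu_j|<1$), so $f(T^nw)=\mu_{j^*}^nq_{j^*}(n)+e_n$ where $e_n\to0$ when $|\mu_{j^*}|=1$, and $e_n$ is a polynomial times $\rho^n$ with $\rho<|\mu_{j^*}|$ when $|\mu_{j^*}|>1$. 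If $|\mu_{j^*}|=1$ and $q_{j^*}$ is a non-zero constant, then $f(T^nx)=b_n+O(1)$ is bounded; in every remaining case $|\mu_{j^*}^nq_{j^*}(n)|\to\infty$, since $|q_{j^*}(n)|\to\infty$ whenever $\deg q_{j^*}\geq1$ and $|\mu_{j^*}|^n\to\infty$ whenever $|\mu_{j^*}|>1$, and hence $|f(T^nx)|\to\infty$ because $b_n$ is bounded and $e_n$ is negligible. Finally, a bounded subset of $\C$ is not dense in $\C$, and a set $\{z_n:n\in\Z_+\}$ with $|z_n|\to\infty$ meets every bounded disk in only finitely many points and is therefore not dense in $\C$ either. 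In all cases $O(T,x,f)$ is non-dense, so $T\notin\WNH(X)$.

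The main obstacle I anticipate is the bookkeeping around this dominant Jordan term — in particular, correctly handling a non-trivial Jordan block sitting at modulus $1$, where it is the polynomial factor $q_{j^*}(n)$, not the geometric factor $|\mu_{j^*}|^n$, that forces $|f(T^nx)|\to\infty$. The hypothesis that the distinct eigenvalues of $T|_Z$ have distinct absolute values is exactly what prevents any cancellation from hiding this dominant term.
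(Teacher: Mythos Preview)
Your argument is correct and follows essentially the same route as the paper's proof: decompose $x$ along $Y\oplus Z$, express $f(T^nw)$ as $\sum_j \mu_j^n q_j(n)$ via the Jordan form, isolate the nonzero term of largest modulus, and conclude that $O(T,x,f)$ is either bounded or escapes to infinity. The only cosmetic differences are the ordering convention and your phrasing of $e_n$ as ``a polynomial times $\rho^n$'' (it is a finite sum of such terms, but the conclusion $|e_n|=o(|\mu_{j^*}|^n|q_{j^*}(n)|)$ is unaffected).
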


\begin{proof} Since the eigenvalues of $T\bigr|_Z$ have distinct absolute values, we can write $\sigma(T\bigr|_Z)=\{z_1,\dots,z_m\}$ with $|z_1|<{\dots}<|z_m|$. Considering the Jordan normal form of $T\bigr|_Z$, it is easy to see that for every $x\in X$ and $f\in X^*$ there exist $y\in Y$, $g\in Y^*$ and polynomials $p_1,\dots,p_m\in \C[z]$ such that
$$\textstyle
f(T^nx)=g(T^ny)+\sum\limits_{j=1}^m p_j(n)z_j^n\ \ \text{for every $n\in\Z_+$}.
$$
If $p_1={\dots}=p_m=0$, $\{f(T^nx)\}_{n\in\Z_+}=\{g(T^ny)\}_{n\in\Z_+}$ is bounded since $T\bigr|_Y$ is power bounded. Thus $O(T,x,f)$ is bounded in $\C$. Otherwise, we can choose $q\in\N$ such that $1\leq q\leq m$, $p_q\neq 0$ and $p_j=0$ whenever $j>q$. Then
$$\textstyle
f(T^nx)=g(T^ny)+z_q^n\Bigl(p_q(n)+\sum\limits_{j=1}^{q-1} p_j(n)\bigl(\frac{z_j}{z_q}\bigr)^n\Bigr)\ \ \text{for every $n\in\Z_+$}.
$$
Since $p_q\neq 0$, $\{g(T^ny)\}_{n\in\Z_+}$ is bounded and $\bigl|\frac{z_j}{z_q}\bigr|<1$ for each $j$ in the above sum, the last display ensures that $\{f(T^nx)\}_{n\in\Z_+}$ is bounded if $|z_q|<1$ and if $|z_q|=1$ and $p_q$ is constant and that $|f(T^nx)|\to\infty$ as $n\to\infty$ otherwise. In any case $O(T,x,f)$ is non-dense in $\C$ and therefore $T\notin\WNH(X)$.
\end{proof}

\begin{lemma}\label{tech} Let $f$ be a holomorphic function defined in a neighborhood of $\T$ and $B$ be a discrete closed subset of $\R$ $(=$a subset of $\R$ with no accumulation points$)$. Then $Bf(\T)$ is nowhere dense in $\C$.
\end{lemma}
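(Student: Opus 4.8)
The plan is to bound the \emph{closure} of $Bf(\T)$ by a countable union of closed nowhere dense subsets of $\C$ and then quote the Baire category theorem; note that the naive observation that $Bf(\T)=\bigcup_{b\in B}bf(\T)$ is a countable union of nowhere dense sets only shows it is meager, which is not enough for nowhere density.

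First I would dispose of the trivial case $f\equiv0$. Otherwise $f$ is holomorphic and not identically $0$ on some open annulus $A\supset\T$, so its zeros in $A$ are isolated and $f$ has only finitely many zeros $z_1=e^{i\phi_1},\dots,z_N=e^{i\phi_N}$ on $\T$, say $f(z)=c_j(z-z_j)^{k_j}\bigl(1+o(1)\bigr)$ near $z_j$ with $c_j\neq0$ and $k_j\in\N$. I would also record that $f(\T)$, being the image of a compact interval under the $C^\infty$ map $\theta\mapsto f(e^{i\theta})$, has planar Lebesgue measure $0$ and is compact, hence nowhere dense; consequently every dilate $bf(\T)$ with $b\in B$ is compact and nowhere dense.

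The key step is the inclusion
$$
\overline{Bf(\T)}\ \subseteq\ \Bigl(\bigcup_{b\in B}bf(\T)\Bigr)\ \cup\ \Bigl(\bigcup_{j=1}^{N}\R\,\beta_j\Bigr),\qquad \beta_j=c_j\,\bigl(ie^{i\phi_j}\bigr)^{k_j}\neq0 .
$$
To prove it, let $w=\lim_n b_nf(z_n)$ with $b_n\in B$, $z_n\in\T$, and pass to a subsequence with $z_n\to z_*\in\T$. If $\{b_n\}$ is bounded, then (as $B$ is closed and discrete in $\R$) it is finite, so along a further subsequence $b_n\equiv b$ and $w\in\overline{bf(\T)}=bf(\T)$. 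If $|b_n|\to\infty$, then $f(z_*)\neq0$ is impossible, since it would force $|b_nf(z_n)|\to\infty$; hence $z_*=z_j$ for some $j$, and from $e^{i\theta}-e^{i\phi_j}=ie^{i\phi_j}(\theta-\phi_j)\bigl(1+o(1)\bigr)$ together with the local shape of $f$ one gets $f(z_n)=\beta_j t_n^{k_j}\bigl(1+o(1)\bigr)$ with $t_n=\theta_n-\phi_j\in\R$, $t_n\to0$; thus $b_nf(z_n)=\beta_j\,\bigl(b_nt_n^{k_j}\bigr)\bigl(1+o(1)\bigr)$ with $b_nt_n^{k_j}\in\R$, so the limit $w$ lies on the line $\R\beta_j$. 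This is exactly where the hypothesis $B\subseteq\R$ enters: for $B=\C$ the set $\bigcup_{b}b\,f(\T)$ can already be dense (e.g.\ $f(z)=z-1$, where it fills $\C\setminus i\R$).

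Finally, the right-hand side of the displayed inclusion is a countable union of closed nowhere dense subsets of $\C$, so if $\overline{Bf(\T)}$ contained an open set it would contain a closed disk $D$, and $D$ would be covered by countably many sets closed and nowhere dense in $D$, contradicting Baire's theorem for the complete metric space $D$; hence $\overline{Bf(\T)}$ has empty interior and $Bf(\T)$ is nowhere dense. I expect the main obstacle to be the case $|b_n|\to\infty$ in the key inclusion, namely pinning down the finitely many extra accumulation lines coming from the zeros of $f$ on $\T$ via the local factorization of $f$; when $f$ has no zero on $\T$ this is immediate, since $|f|\geq\delta>0$ on $\T$ then forces $bf(\T)\subseteq\{|w|\geq\delta|b|\}$, so only finitely many dilates meet any bounded disk.
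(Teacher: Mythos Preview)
Your proof is correct and follows essentially the same route as the paper's: both dispose of $f\equiv0$, use the finitely many zeros of $f$ on $\T$ with local expansions $f(z)=c_j(z-z_j)^{k_j}(1+o(1))$, and show that any accumulation point arising from $|b_n|\to\infty$ must lie on one of the finitely many lines $\R\,c_j(iz_j)^{k_j}$. The only difference is the final step: the paper observes that $Bf(\T)\cup\Lambda$ is closed and that a generic line through the origin meets it in a countable set, hence it has empty interior; you instead bound the closure by the countable union $\bigcup_b bf(\T)\cup\bigcup_j\R\beta_j$ and invoke Baire on a hypothetical closed disk. Your Baire wrap-up is arguably cleaner, but the two arguments are interchangeable and rest on the same analysis of where the extra limit points can go.
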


\begin{proof} If $f$ is identically $0$, the result is trivial. Otherwise $f$ has only finitely many zeros $s_1,\dots,s_k$ (if any) in $\T$. Looking at the Taylor expansion of $f$ about $s_j$, we see that for each $j$, $f(z)=a_j(z-s_j)^{k_j}(1+O(z-s_j))$ as $z\to s_j$, where $a_j\in\C\setminus\{0\}$ and $k_j\in\N$. Let $v\in\C\setminus \{0\}$ be an accumulation point of $Bf(\T)$, which does not belong to $Bf(\T)$. Then it is easy to see that $v=\lim r_n f(w_n)$, where $\{r_n\}_{n\in\N}$ and $\{w_n\}_{n\in\N}$ are sequences in $B$ and $\T$ respectively such that $|r_n|\to\infty$. Convergence of $\{r_nf(w_n)\}$ yields $f(w_n)\to 0$. Passing to a subsequence, if necessary, we can assume that $w_n\to s_j$ as $n\to\infty$ for some $j$. Then $r_n f(w_n)=r_n a_j(w_n-s_j)^{k_j}(1+O(w_n-s_j))$ as $n\to\infty$. This equality together with $r_nf(w_n)\to v$ implies that $a_jv^{-1}(iw_j)^{k_j}\in\R$. That is $v$ belongs to the union $\Lambda$ of the lines $a_j(iw_j)^{k_j}\R$ for $1\leq j\leq k$. Hence $Bf(\T)\cup\Lambda$ is closed in $\C$. Furthermore, for every line $L$ through the origin, which is not in $\Lambda$, $L\cap (Bf(\T)\cup\Lambda)$ is countable. It follows that $Bf(\T)\cup\Lambda$ has empty interior and therefore, being closed, is nowhere dense in $\C$. Thus $Bf(\T)$ is nowhere dense in $\C$.
\end{proof}

\begin{lemma}\label{tech1} Let $R>r>1$, $a,b,c\in\C$ and $k,m,j\in\Z$. Then the set $\Omega=\{R^n(az^k+bz^m)+cr^nz^j:n\in\Z_+,\ z\in\T\}$ is nowhere dense in $\C$.
\end{lemma}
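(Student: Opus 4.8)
The plan is to reduce the statement, via the Baire category theorem, to controlling the set of limit points of $\Omega$ arising from $n\to\infty$, and then to pin those points down by a local Taylor expansion near the zeros on $\T$ of $g(z):=az^k+bz^m$. Write $h_n(z)=R^ng(z)+cr^nz^j$; as a function of $z$ this is a Laurent polynomial, hence holomorphic on $\C\setminus\{0\}$, so each $h_n(\T)$ is a Lipschitz image of $\T$, hence a compact set of planar measure zero, in particular nowhere dense. First I dispose of the degenerate cases. If $g\equiv0$ (that is, $a=b=0$, or $k=m$ and $a+b=0$), then $\Omega=\{cr^nz^j:n\in\Z_+,\ z\in\T\}$, which is a point or a discrete set when $cz^j$ is constant on $\T$ (i.e.\ $c=0$ or $j=0$), and otherwise equals the union $\bigcup_n|c|r^n\T$ of concentric circles with radii tending to $\infty$, a closed set with dense complement; either way it is nowhere dense. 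If $c=0$, then $\Omega=\{R^ng(z):n\in\Z_+,\ z\in\T\}=Bg(\T)$ with $B=\{R^n:n\in\Z_+\}$ a discrete closed subset of $\R$ and $g$ holomorphic in a neighbourhood of $\T$, so Lemma~\ref{tech} applies directly. It remains to treat the case $g\not\equiv0$, $c\neq0$.

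Let $\Omega_\infty$ be the set of all $v\in\C$ of the form $v=\lim_l h_{n_l}(w_l)$ with $n_l\to\infty$ and $w_l\in\T$. Using compactness of $\T$ one checks that $\overline\Omega=\Omega\cup\Omega_\infty$ (a convergent sequence in $\Omega$ either has bounded exponents, a subsequential limit then lying in some $h_n(\T)\subseteq\Omega$, or has a subsequence with $n_l\to\infty$), and that $\Omega_\infty$ is closed. Since $\overline\Omega=\bigl(\bigcup_{n\geq0}h_n(\T)\bigr)\cup\Omega_\infty$ is a countable union of closed sets and each $h_n(\T)$ is nowhere dense, if $\Omega_\infty$ is also nowhere dense then any closed disk contained in $\overline\Omega$ would be a meagre subset of itself, contradicting the Baire category theorem. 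Hence it suffices to prove that $\Omega_\infty$ is nowhere dense.

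Take $v\in\Omega_\infty\setminus\{0\}$, say $v=\lim h_{n_l}(w_l)$ with $n_l\to\infty$. Since $|cr^{n_l}w_l^j|=|c|r^{n_l}\to\infty$ while $h_{n_l}(w_l)$ stays bounded, $R^{n_l}|g(w_l)|=|h_{n_l}(w_l)-cr^{n_l}w_l^j|=O(r^{n_l})$, so $|g(w_l)|=O(\rho^{n_l})\to0$ with $\rho:=r/R\in(0,1)$; passing to a subsequence, $w_l\to s$ with $g(s)=0$. If $g$ has no zero on $\T$ this is impossible, whence $\Omega_\infty\subseteq\{0\}$ and we are done; otherwise $g$ has finitely many zeros $s_1,\dots,s_p$ on $\T$, and it is enough to show that for each $i$ the set $\Omega_\infty^{(i)}$ of such limits with $w_l\to s_i$ is nowhere dense. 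Fix $s=s_i$, of order $d\geq1$, and write $g(z)=\beta(z-s)^d(1+\psi(z))$ near $s$, $\beta\neq0$, $\psi(s)=0$, $\psi$ holomorphic. With $w_l=se^{i\theta_l}$, $\theta_l\to0$, one has
\[
h_n(se^{i\theta})=\beta s^d i^d R^n\theta^d\,\Psi(\theta)+cs^jr^nE(\theta),
\]
where $\Psi(\theta)=\bigl(\tfrac{e^{i\theta}-1}{i\theta}\bigr)^d(1+\psi(se^{i\theta}))$ and $E(\theta)=e^{ij\theta}$ are holomorphic near $0$ with $\Psi(0)=E(0)=1$. Writing $h_n(se^{i\theta})=\beta s^di^d\bigl(R^n\theta^d\Psi(\theta)-\lambda r^nE(\theta)\bigr)$ with $\lambda:=-c/(\beta s^{d-j}i^d)\neq0$, boundedness of $h_{n_l}(w_l)$ gives $R^n\theta_l^d\Psi(\theta_l)=\lambda r^nE(\theta_l)+O(1)$, and since $\rho^n\gg R^{-n}$ (as $r>1$) this forces $\theta_l^d=\lambda\rho^{n_l}(1+o(1))$.

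The main obstacle is the final step, the identification of $\Omega_\infty^{(i)}$. Since $\theta_l\in\R$, $\theta_l^d$ is real (and nonnegative when $d$ is even); hence if $\lambda\notin\R$, or if $d$ is even and $\lambda<0$, the relation $\theta_l^d=\lambda\rho^{n_l}(1+o(1))$ is impossible for large $l$ (the right side has modulus $\asymp\rho^{n_l}$ but a non-negligible imaginary part, respectively negative real part), so $\Omega_\infty^{(i)}\subseteq\{0\}$. In the remaining case, where $\lambda$ is real and admits a real $d$-th root, each $\theta_l$ lies near one of the at most two real $d$-th roots $\theta_0(n_l)$ of $\lambda\rho^{n_l}$; writing $\theta_l^d=\lambda\rho^{n_l}(1+\delta_{n_l})$ with $\delta_{n_l}\in\R$ real, one finds $h_n(se^{i\theta_l})=\beta s^di^d\bigl(\lambda r^n\delta_n+(\text{terms of size }O(r^n\rho^{en/d})\text{ for }e\geq1)\bigr)$, and the terms are organised by the scales $r^n\rho^{en/d}=\bigl((r^{d+e}/R^e)^{1/d}\bigr)^n$, only finitely many of which fail to tend to $0$. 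A careful inspection of these finitely many ``dangerous'' terms shows that either some surviving one forces $|h_{n_l}(w_l)|\to\infty$ (so no such $v$ exists), or the surviving ones are collinear (with the real quantity $\lambda r^n\delta_n$ contributing only along $\beta s^d i^d$ and the rest pinned to a finite list of directions), confining $v$ to a finite union of affine lines of direction $\beta s^di^d$. In every case $\Omega_\infty^{(i)}$ lies in a finite union of lines together with $\{0\}$, hence is nowhere dense, and together with the reduction above this proves that $\Omega$ is nowhere dense. The delicate point is exactly this bookkeeping: one must verify, uniformly in the small parameter $\theta$, that the single real degree of freedom $\theta$ cannot simultaneously meet the two real constraints imposed by approximating a generic bounded target for infinitely many $n$ — equivalently, that the higher order terms of the expansion at $s$ cannot conspire to produce a two-dimensional set of limit points.
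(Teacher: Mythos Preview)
Your overall strategy matches the paper's: write $\overline{\Omega}=\Omega\cup\Omega_\infty$ (the paper calls $\Omega_\infty$ by $\Lambda$), observe that $\Omega$ is a countable union of analytic arcs, and then pin down $\Omega_\infty$ by a local expansion at the zeros of $g(z)=az^k+bz^m$ on $\T$. Your Baire-category packaging of the first step is a clean variant of the paper's ``countably many curves plus finitely many lines have empty interior'' argument, and your treatment of the degenerate cases ($g\equiv0$, $c=0$, $g$ zero-free on~$\T$) is fine.

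The genuine gap is the last paragraph. You write that ``a careful inspection of these finitely many dangerous terms'' forces $v$ into finitely many affine lines, but you do not perform that inspection; you even flag this yourself as ``the delicate point.'' This is exactly where the paper does the work, and the work is not automatic. Two remarks on how to close it:

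\smallskip
\textbf{(i)} For $g(z)=az^k+bz^m$ with $|a|=|b|$ and $k\neq m$, every zero on $\T$ is \emph{simple}: from $as^k=-bs^m$ one gets $g'(s)=s^{-1}(k\,as^k+m\,bs^m)=(m-k)bs^{m-1}\neq0$. So $d=1$ throughout, and your general-$d$ framework collapses to a single scale hierarchy $r^n\rho^{en}=(r^{1+e}/R^{e})^n$.

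\smallskip
\textbf{(ii)} With $d=1$ and $\lambda\in\R$, write $\theta_l=\lambda\rho^{n_l}(1+\delta_l)$ with $\delta_l\in\R$. A second-order expansion gives
\[
\frac{h_{n_l}(w_l)}{\beta s i}\;=\;\lambda r^{n_l}\delta_l\;+\;\gamma\lambda^2\Bigl(\frac{r^2}{R}\Bigr)^{n_l}\;+\;o(1)
\]
for a specific constant $\gamma\in\C$ (coming from $\Psi'(0)$ and $E'(0)$). The case distinction is then $r^2<R$, $r^2=R$, $r^2>R$: in the first case the second term vanishes and $v\in\beta s i\,\R$; in the third case the second term explodes with a fixed argument, and since the first term is real it cannot cancel it unless $\gamma\in\R$, forcing $\Omega_\infty^{(i)}\subseteq\{0\}$ (and if $\gamma\in\R$ one iterates, but for the specific $g$ here the next scale is already $<1$); in the borderline case $r^2=R$ one gets $v\in\beta s i(\gamma\lambda^2+\R)$, an affine line. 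This is precisely the computation the paper carries out, only the paper first performs the change of variables $a=\beta\alpha^t$, $b=\beta\alpha^{-t}$, $u=\alpha z$ to reduce to the symmetric form $u^t+u^{-t}=2\sin(t\rho)/i$, which makes the real/imaginary split transparent and the case $R=r^2$ visible.

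\smallskip
So your outline is correct and your ``affine lines'' conclusion is in fact the right one, but as written the argument stops one computation short of a proof. Supplying the explicit second-order expansion and the trichotomy on $r^2$ versus $R$ (or, following the paper, the symmetrising substitution) would complete it.
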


\begin{proof} It is clear that the set of accumulation points of $\Omega$ that do not belong to $\Omega$ is contained in the set $\Lambda$ of $\lim\limits_{q\to\infty} \bigl( R^{n_q}(az_q^k+bz_q^m)+cr^{n_q}z_q^j\bigr)$, where $\{z_q\}_{q\in\N}$ is a sequence in $\T$ and $\{n_q\}_{q\in\N}$ is a strictly increasing sequence of positive integers. Since the closure of $\Omega$ coincides with $\Omega\cup\Lambda$, it suffices to show that $\Omega\cup\Lambda$ has empty interior. Since the set $\Omega$ does not change if we replace $z$ by $z^2$, we can, without loss of generality, assume that $k$ and $m$ are even.

If $|a|\neq |b|$, then every sequence in the last limit runs off to infinity and therefore $\Lambda=\varnothing$. Same happens if $a=b=0$ and $c\neq 0$. In these cases $\Omega\cup\Lambda=\Omega$ has empty interior as a union of countably many smooth curves. If $c=0$, the desired result follows from Lemma~\ref{tech}. The case $k=m$ is trivial.
Thus without loss of generality, we can assume that $k\neq m$, $|a|=|b|=1$ and $c\neq 0$. Let $v=\lim\limits_{q\to\infty} R^{n_q}(az_q^k+bz_q^m)+cr^{n_q}z_q^j\in\Lambda$. Since $R>r>1$, this limit can only exist if $az_q^k+bz_q^m\to 0$. Passing to a subsequence, if necessary, we may assume that $z_q\to w$ and $aw^k+bw^m=0$. Since $k$ and $m$ are even, we can write $k=s+t$ and $m=s-t$, where $s=\frac{k+m}2\in\Z$ and $t=\frac{k-m}2\in\Z\setminus\{0\}$. As with every pair of elements of $\T$, we can find $\alpha,\beta\in\T$ such that $a=\beta \alpha^t$ and $b=\beta\alpha^{-t}$. Then
$$
R^{n_q}(az_q^k+bz_q^m)+cr^{n_q}z_q^j=\beta\alpha^{-s} R^{n_q}(u_q^t+u_q^{-t})u_q^s +cr^{n_q}\alpha^{-j}u_q^j, 
\ \ \ \text{where $u_q=\alpha z_q$}.
$$
Then $u_q\to x$ with $x^t+x^{-t}=0$ and $R^{n_q}(u_q^t+u_q^{-t})+dr^{n_q}u_q^l\to gv$, where $l=j-s$, $d=c\alpha^{s-j}\beta^{-1}$ and $g=\alpha^s\beta^{-1}$. Since $u_q\to x$, we can write $u_q=xe^{\rho_q i}$, where $\{\rho_q\}$ is a convergent to 0 sequence of real numbers. The equation $x^t+x^{-t}=0$ implies that $x^t=i$ or $x^t=-i$. Passing to a subsequence, if necessary, we can assume that either each $\rho_q$ is positive or that each $\rho_q$ is negative. We consider the case when each $\rho_q$ is positive and $x^t=-i$ (the other 3 cases are similar). Since $2ix^tR^{n_q}\sin(t\rho_q)+dr^{n_q}x^le^{l\rho_qi} \to gv$, we have $2R^{n_q}\sin(t\rho_q)+dr^{n_q}x^le^{l\rho_qi} \to gv$. Since $e^{l\rho_qi}\to 1$, it follows that
$dx^l=-h$, where $h>0$ (otherwise $|2R^{n_q}\sin(t\rho_q)+dr^{n_q}x^le^{l\rho_qi}|\to\infty$). Thus
$(2R^{n_q}\sin(t\rho_q)-hr^{n_q}\cos(l\rho_q))-ir^{n_q}\sin(l\rho_q)\to gv$. Hence $r^{n_q}\sin(l\rho_q)\to-{\rm Im}\,(gv)$ and therefore $r^{n_q}\sin(t\rho_q)\to-\frac tl{\rm Im}\,(gv)$. Then either $gv$ is real or $\{r^{n_q}\sin(t\rho_q)\}$ converges to a non-zero number. In the latter case the boundedness of $\{2R^{n_q}\sin(t\rho_q)-hr^{n_q}\cos(l\rho_q)\}$ (which follows from the convergence) implies that $R=r^2$. Indeed, if $R\neq r^2$, one of the sequences $\{2R^{n_q}\sin(t\rho_q)\}$ and $\{hr^{n_q}\cos(l\rho_q)\}$ runs to infinity faster than the other. In the case $R=r^2$, by looking at the Taylor series expansions of $\sin$ and $\cos$, one sees that $2R^{n_q}\sin(t\rho_q)-hr^{n_q}\cos(l\rho_q)=O(r^{-n_q})$ and therefore $2R^{n_q}\sin(t\rho_q)-hr^{n_q}\cos(l\rho_q)\to 0$. Hence ${\rm Re}\,(gv)=0$. Thus in any case $gv$ is either real or purely imaginary. Hence $v$ belongs to the union of two lines $g^{-1}\R$ and $ig^{-1}\R$. Since there are finitely many possibilities to choose $x$, $\Lambda$ is contained in the union of finitely many lines through the origin. Thus $\Omega\cup\Lambda$ is a part of the union of countably many smooth closed curves and finitely many lines. Hence $\Omega\cup\Lambda$ has empty interior, as required.
\end{proof}

\begin{lemma}\label{ddiag} Let $T\in L(\C^n)$ be a diagonal operator with the diagonal entries $d_1,\dots,d_n$ such that $|d_1|={\dots}=|d_n|\neq0$ and for every $j,k\in\{1,\dots,n\}$, $\frac{d_j}{|d_j|}$ and $\frac{d_k}{|d_k|}$ are not independent. Then $T\notin\WNH(\C^n)$.
\end{lemma}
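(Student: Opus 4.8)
The plan is to invoke Proposition~\ref{ele1}: it suffices to show that $O(T,x,f)$ is nowhere dense in $\C$ for \emph{every} $x\in\C^n$ and $f\in(\C^n)^*$. Write $R=|d_1|=\dots=|d_n|>0$ and $z_j=\frac{d_j}{|d_j|}\in\T$. A direct computation shows that, for appropriate $c_1,\dots,c_n\in\C$ depending on $x$ and $f$,
\[
f(T^kx)=\sum_{j=1}^n c_jd_j^k=R^k\sum_{j=1}^n c_jz_j^k\qquad\text{for every }k\in\Z_+.
\]
If $R\leq1$, then $\|T^k\|=R^k\leq1$, so $T$ is power bounded and $T\notin\WNH(\C^n)$ by Proposition~\ref{ele00}; thus from now on $R>1$.

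The heart of the matter is a structural observation: the hypothesis forces all the $z_j$ to lie on a single ``one-parameter progression'' in $\T$. Let $\Gamma$ be the subgroup of $\T$ generated by $z_1,\dots,z_n$ and let $v_1,\dots,v_n$ be the images of $z_1,\dots,z_n$ in the torsion-free quotient $\Gamma/T(\Gamma)\cong\Z^s$. Since $z_i,z_j$ are not independent, there is $(m_1,m_2)\in\Z^2\setminus\{0\}$ with $z_i^{m_1}z_j^{m_2}=1$, and this relation projects to $m_1v_i+m_2v_j=0$; hence $v_1,\dots,v_n$ are pairwise $\Q$-linearly dependent. Pairwise linearly dependent vectors all lie on one line through the origin, so their $\Q$-span is at most one-dimensional; as they generate $\Z^s$, this gives $s\leq1$. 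Therefore $\Gamma$ is the internal direct product of a (possibly trivial) infinite cyclic group $\langle\omega\rangle$, $\omega\in\T$, and a finite cyclic group of $d$-th roots of unity, so each $z_j$ can be written as
\[
z_j=\omega^{k_j}\zeta_j,\qquad k_j\in\Z,\quad \zeta_j^{\,d}=1.
\]
(Alternatively, one avoids the classification of finitely generated abelian groups: if every $z_j$ has finite order this holds with $\omega=1$; otherwise some $z_j$, say $z_1$, has infinite order, whence for each $j$ a nonzero power of $z_j$ is an integer power of $z_1$, and extracting a common root of $z_1$ produces the required $\omega$.)

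Now split the orbit by residues of the exponent modulo $d$. Fix $l\in\{0,\dots,d-1\}$ and put $k=l+dm$. Since $\zeta_j^{\,k}=\zeta_j^{\,l}$ and $\omega^{kk_j}=\omega^{lk_j}\bigl((\omega^d)^m\bigr)^{k_j}$, we obtain
\[
\sum_{j=1}^n c_jz_j^k=g_l\bigl((\omega^d)^m\bigr),\qquad g_l(\xi)=\sum_{j=1}^n c_j\omega^{lk_j}\zeta_j^{\,l}\,\xi^{k_j},
\]
and $g_l$, being a Laurent polynomial, is holomorphic in a neighbourhood of $\T$. Also $R^k=R^l(R^d)^m$ and $(\omega^d)^m\in\T$, so
\[
\Bigl\{R^k\sum_{j=1}^n c_jz_j^k:k\in\Z_+,\ k\equiv l\ (\mathrm{mod}\ d)\Bigr\}\subseteq B_l\,g_l(\T),\qquad B_l=\{R^l(R^d)^m:m\in\Z_+\}.
\]
As $R>1$, the set $B_l$ tends to $+\infty$, hence has no accumulation point and is a discrete closed subset of $\R$; by Lemma~\ref{tech}, $B_l\,g_l(\T)$ is nowhere dense in $\C$. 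Taking the union over the $d$ residue classes, $O(T,x,f)$ is contained in a finite union of nowhere dense sets, so it is nowhere dense and in particular not dense in $\C$. Since $x$ and $f$ were arbitrary, Proposition~\ref{ele1} gives $T\notin\WNH(\C^n)$.

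The main obstacle is the structural step: realising that ``pairwise non-independent'' is far stronger than it looks and confines the unimodular ratios $z_j$ to a rank-one free part together with torsion, which is precisely what turns the problem into a single application of Lemma~\ref{tech}. The remaining work---writing the eigenvalue dynamics in terms of powers of one $\omega\in\T$ and splitting by residues modulo $d$---is entirely routine.
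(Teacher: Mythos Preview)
Your proof is correct and follows essentially the same route as the paper: reduce to $R>1$, write each $z_j$ as $\omega^{k_j}$ times a root of unity of order dividing $d$, split the orbit by residues modulo $d$, and apply Lemma~\ref{tech} to each residue class. The paper simply asserts the structural decomposition $d_j=Ru_jw^{k_j}$ (with $u_j$ of finite order) without argument, whereas you actually justify it via the rank of $\Gamma/T(\Gamma)$; otherwise the arguments are the same.
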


\begin{proof} Let $R=|d_1|$. If $R\leq 1$, then every numerical orbit of $T$ is bounded and therefore $T$ is not weakly numerically hypercyclic. It remains to consider the case $R>1$. Since every pair $\frac{d_j}{R}$, $\frac{d_k}{R}$ is not independent, there are $w\in\T$, $u_1,\dots,u_n\in\T$ of finite order and $k_1,\dots,k_n\in\Z$ such that $d_j=Ru_jw^{k_j}$ for $1\leq j\leq n$. Since $u_j$ have finite order, we can pick $m\in\N$ such that $u_1^m={\dots}=u_n^m=1$.
It is easy to see that every numerical orbit $O(T,f,x)$ has the shape
$$
O(T,f,x)=\{a_1d_1^k+{\dots}+a_nd_n^k:k\in\Z_+\},\ \ \text{where $a_j\in\C$.}
$$
Taking into account that $d_j=Ru_jw^{k_j}$ and $u_1^m={\dots}=u_n^m=1$, we see that
$$
O(T,f,x)=\bigcup_{q=0}^{m-1} M_q,\ \ \text{where}\ \
M_q=\{R^{km+q}(a_1u_1^qw^{qk_1}w^{kmk_1}+{\dots}+a_nu_n^q w^{qk_n}w^{kmk_n}):k\in\Z_+\}.
$$
Now for $0\leq q\leq m-1$, consider the Laurent polynomials $f_q(z)=\sum\limits_{j=1}^n a_ju_j^qw^{qk_j}z^{k_j}$. Clearly $M_q\subseteq f_q(\T)B_q$, where $B_q=\{R^{km+q}:k\in\Z_+\}$. By Lemma~\ref{tech}, each $M_q$ is nowhere dense in $\C$. By the above display $O(T,f,x)$ is nowhere dense in $\C$. Hence $T\notin \WNH(\C^n)$.
\end{proof}

\begin{lemma}\label{ddiagon} Let $T\in L(\C^3)$ be a diagonal operator with non-zero diagonal entries $d_1,d_2,d_3$ such that for every $j,k\in\{1,2,3\}$ for which $d_jd_k\neq 0$, $\frac{d_j}{|d_j|}$ and $\frac{d_k}{|d_k|}$ are not independent. Then $T\notin\WNH(\C^3)$.
\end{lemma}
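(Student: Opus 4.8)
Lemma~\ref{ddiagon} will be proved by a case analysis on the moduli $r_j=|d_j|>0$, which we order so that $r_1\le r_2\le r_3$; put $u_j=d_j/r_j\in\T$. Every numerical orbit of the diagonal operator $T$ has the form $O_\alpha=\{\alpha_1d_1^k+\alpha_2d_2^k+\alpha_3d_3^k:k\in\Z_+\}$ with $\alpha\in\C^3$ arbitrary, so by Proposition~\ref{ele1} it suffices to prove that every $O_\alpha$ is non-dense in $\C$. Let $Y$ be the span of the coordinates $j$ with $r_j\le1$ and $Z$ the span of the remaining ones; then $\C^3=Y\oplus Z$, both summands are $T$-invariant, and $T|_Y$ is power bounded. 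If $r_3\le1$, then $T$ itself is power bounded, so $T\notin\WNH(\C^3)$ by Proposition~\ref{ele00}. If $r_3>1$ and the eigenvalues of $T|_Z$ have pairwise distinct absolute values --- which is exactly the case when $r_2\le1<r_3$, or $r_1\le1<r_2<r_3$, or $1<r_1<r_2<r_3$ --- then $T\notin\WNH(\C^3)$ by Lemma~\ref{abcde}. If $1<r_1=r_2=r_3$, then $T$ satisfies the hypotheses of Lemma~\ref{ddiag}, so again $T\notin\WNH(\C^3)$.

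This leaves the configurations in which exactly two of the moduli coincide and exceed $1$: namely $1<r_1=r_2<r_3$, $1<r_1<r_2=r_3$, and $r_1\le1<r_2=r_3$. In the first, $|d_3|$ is strictly the largest: if $\alpha_3\ne0$ the term $\alpha_3d_3^k$ dominates and $|O_\alpha|\to\infty$, while if $\alpha_3=0$ the set $O_\alpha$ is a numerical orbit of the two-dimensional diagonal operator with entries $d_1,d_2$, which lies outside $\WNH(\C^2)$ by Lemma~\ref{ddiag}; either way $O_\alpha$ is non-dense. For the remaining two configurations relabel so that $|d_2|=|d_3|=:R>1$. Since every pair $u_a,u_b$ is dependent, the argument used in the proof of Lemma~\ref{ddiag} produces a single $w\in\T$, finite-order $v_j\in\T$ and integers $p_j$ with $u_j=v_jw^{p_j}$ for $j=1,2,3$; fix $M\in\N$ with $v_1^M=v_2^M=v_3^M=1$ and split $O_\alpha$ into its pieces $O_\alpha\cap\{k\equiv q\pmod M\}$, $0\le q<M$. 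When $1<r_1<r_2=r_3=R$, setting $\tilde R=R^M$ and $\tilde r=r_1^M$ (so $\tilde R>\tilde r>1$), a direct expansion shows that the $q$-th piece is a subset of $\{\tilde R^l(az^{p_2}+bz^{p_3})+c\,\tilde r^lz^{p_1}:l\in\Z_+,\ z\in\T\}$ for suitable $a,b,c\in\C$, which is nowhere dense by Lemma~\ref{tech1}; hence the finite union $O_\alpha$ is nowhere dense.

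The crux is the configuration $r_1\le1<r_2=r_3=R$. If $r_1<1$, then $\alpha_1d_1^k\to0$, so, up to a sequence tending to $0$, $O_\alpha$ is contained in every neighbourhood of the numerical orbit $A_0=\{\alpha_2d_2^k+\alpha_3d_3^k:k\in\Z_+\}$ of the two-dimensional diagonal operator with entries $d_2,d_3$; by Lemma~\ref{ddiag} and the argument in its proof $A_0$ is nowhere dense, and therefore $O_\alpha$ is non-dense. If $r_1=1$, the same splitting turns each piece $O_\alpha\cap\{k\equiv q\pmod M\}$ into a subset of $\Omega_q:=\{\tilde R^lg_q(z)+h_q(z):l\in\Z_+,\ z\in\T\}$ for Laurent polynomials $g_q,h_q$ and $\tilde R=R^M>1$ --- the modulus of $d_1$ contributes nothing, as $|d_1|=1$ --- and the proof reduces to the claim that every set of the form $\Omega=\{\tilde R^lg(z)+h(z):l\in\Z_+,\ z\in\T\}$, with $\tilde R>1$ and $g,h$ Laurent polynomials, is nowhere dense in $\C$.

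I expect this claim to be the main obstacle, and I would prove it by the method of Lemmas~\ref{tech} and~\ref{tech1}. For each fixed $l$ the image $\Gamma_l$ of the map $z\mapsto\tilde R^lg(z)+h(z)$ is a compact real-analytic curve, so it suffices to control the accumulation points of $\Omega$ not lying on any $\Gamma_l$: each such point is a limit $\lim_q\bigl(\tilde R^{l_q}g(z_q)+h(z_q)\bigr)$ with $l_q\to\infty$, and since $h$ is bounded on $\T$ the existence of the limit forces $\tilde R^{l_q}g(z_q)$ to converge, hence $g(z_q)\to0$; passing to a subsequence, $z_q\to s$ with $g(s)=0$. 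Using $g(z)=a(z-s)^m(1+o(1))$ near $s$ and writing $z_q=se^{it_q}$ with $t_q\to0$, one checks that $\tilde R^{l_q}g(z_q)$ is asymptotically a \emph{real} multiple of $a(is)^m$, so the limit lies on the single line $h(s)+\R\,a(is)^m$. Since $g$ has only finitely many zeros on $\T$, $\overline\Omega$ is contained in a countable union of real-analytic curves together with finitely many lines; in particular $\overline\Omega$ has planar Lebesgue measure zero and, being closed, is nowhere dense. (The subtle possibility from Lemma~\ref{tech1}, in which a second unbounded term makes the limit set two-dimensional, does not occur here because $h$ stays bounded.) Putting the cases together, every $O_\alpha$ is non-dense in $\C$, and hence $T\notin\WNH(\C^3)$.
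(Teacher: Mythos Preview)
Your proof is correct and follows essentially the same route as the paper: the same reduction via Lemma~\ref{abcde} and Lemma~\ref{ddiag}, the same use of Lemma~\ref{tech1} for the configuration $1<r_1<r_2=r_3$, and the same two-dimensional reduction when the largest modulus is strictly dominant. The only substantive difference is that where the paper dismisses the case $|d_3|\le 1<|d_1|=|d_2|$ with ``one can proceed similar to the proof of Lemma~\ref{ddiag} (we leave this easy bit to the reader)'', you actually carry it out, proving the auxiliary claim that $\{\tilde R^lg(z)+h(z):l\in\Z_+,\ z\in\T\}$ is nowhere dense for Laurent polynomials $g,h$; your argument for this (finitely many zeros of $g$ on $\T$, local expansion forcing accumulation points onto finitely many lines) is correct and is a clean hybrid of the proofs of Lemmas~\ref{tech} and~\ref{tech1}.
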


\begin{proof} If $|d_1|,|d_2|,|d_3|$ are pairwise distinct, the result follows from Lemma~\ref{abcde}. If $|d_1|=|d_2|=|d_3|$, the result follows from Lemma~\ref{ddiag}. Thus we can assume that $|d_1|=|d_2|\neq |d_3|$.
It is easy to see that every numerical orbit $O(T,f,x)$ has the shape
$$
O(T,f,x)=\{a_1d_1^k+a_2d_2^k+a_3d_3^k:k\in\Z_+\},\ \ \text{where $a_j\in\C$.}
$$
If $|d_1|=|d_2|<|d_3|$ and $a_3\neq 0$, then the sequence $\{a_1d_1^k+a_2d_2^k+a_3d_3^k\}$ is bounded if $|d_3|\leq 1$ and converges to infinity otherwise, ensuring that $O(T,f,x)$ is non-dense in $\C$. If $a_3=0$, then we fall under the jurisdiction of Lemma~\ref{ddiag} with $n=2$ and again $O(T,f,x)$ is non-dense in $\C$. In the case $|d_1|=|d_2|>|d_3|$ and $|d_3|\leq 1$, one can proceed similar to the proof of Lemma~\ref{ddiag} (we leave this easy bit to the reader). It remains to consider the case $|d_1|=|d_2|>|d_3|>1$ and $a_3\neq 0$. Let $R=|d_1|=|d_2|$ and $r=|d_3|$. Since every pair $\frac{d_j}{|d_j|}$ and $\frac{d_k}{|d_k|}$ is not independent, there is $w\in\T$, $u_1,u_2,u_3\in\T$ of finite order and $k_1,k_2,k_3\in\Z$ such that $d_1=Ru_1w^{k_1}$, $d_2=Ru_2w^{k_2}$ and $d_3=ru_3w^{k_3}$. Since $u_j$ have finite order, we can pick $m\in\N$ such that $u_1^m=u_2^m=u_3^m=1$. Then
\begin{align*}
O(T,f,x)&=\bigcup_{q=0}^{m-1} M_q,\ \ \text{where}\ \
\\
M_q&=\{R^{km+q}(a_1u_1^qw^{qk_1}w^{kmk_1}+a_2u_2^qw^{qk_2}w^{kmk_2})+r^{km+q}a_3u_3^qw^{qk_3}w^{kmk_3}:k\in\Z_+\}.
\end{align*}
Now each $M_q$ is contained in a set of the shape $\{R^{mk}(\alpha z^{k_1}+\beta z^{k_2})+r^{mn}\gamma z^{k_3}:k\in\Z_+,\ z\in\T\}$ with $\alpha,\beta,\gamma\in\C$ being constants. Then
 Lemma~\ref{tech1} guarantees that each $M_q$ is nowhere dense in $\C$. By the above display $O(T,f,x)$ is nowhere dense in $\C$. Hence $T\notin \WNH(\C^3)$.
\end{proof}

\subsection{Proof of Theorem~\ref{2dim}}

Let $T\in L(\C^2)$ and $\sigma(T)=\{\lambda_1,\lambda_2\}$ with $\lambda_1,\lambda_2\in\C$. By Theorem~\ref{suffwnh}, if $|\lambda_1|=|\lambda_2|>1$ and $\frac{\lambda_1}{|\lambda_1|}$, $\frac{\lambda_2}{|\lambda_2|}$ are independent, then $T\in\WNH(\C^2)$. If $|\lambda_1|\neq|\lambda_2|$ or $\lambda_1=\lambda_2$, then $T\notin\WNH(\C^2)$ by Lemma~\ref{abcde}. If $|\lambda_1|=|\lambda_2|\leq 1$ and $\lambda_1\neq\lambda_2$, then $T$ is power bounded and therefore $T\notin\WNH(\C^2)$. Finally, if $|\lambda_1|=|\lambda_2|>1$, $\lambda_1\neq\lambda_2$ and $\frac{\lambda_1}{|\lambda_1|}$ ,$\frac{\lambda_2}{|\lambda_2|}$ are not independent, then $T\notin\WNH(\C^2)$ according to Lemma~\ref{ddiag}. Thus $T\in\WNH(\C^2)$ if and only if $|\lambda_1|=|\lambda_2|>1$ and $\frac{\lambda_1}{|\lambda_1|}$, $\frac{\lambda_2}{|\lambda_2|}$ are independent in $\T$.

If $\{\lambda_1^k+\lambda_2^k:k\in\Z_+\}$ is dense in $\C$, then $T\in\SNH(\C^2)$ by Theorem~\ref{suffsnh}. Assume now that $T\in \SNH(\C^2)$. Then $T\in \WNH(\C^2)$ and therefore $|\lambda_1|=|\lambda_2|>1$ and $\lambda_1\neq \lambda_2$. Then $T$ is similar to the diagonal operator with the numbers $\lambda_1$ and $\lambda_2$ on the diagonal. By Lemma~\ref{snsnsn0}, there exist $a,b\geq 0$ such that $\{a\lambda_1^n+b\lambda_2^n:n\in\Z_+\}$ is dense in $\C$. Since $|a\lambda_1^n+b\lambda_2^n|\geq |a-b|R^n$, it follows that $a=b$. Hence $\{\lambda_1^n+\lambda_2^n:n\in\Z_+\}$ is dense in $\C$. Thus $T\in \SNH(\C^2)$ if and only if $\sigma(T)=\{\lambda_1,\lambda_2\}$ with $\{\lambda_1^k+\lambda_2^k:k\in\Z_+\}$ being dense in $\C$.

Now assume that $T\in\WNH(\C^2)$ and $T$ is not unitarily equivalent to a diagonal operator. As we have already shown, $|\lambda_1|=|\lambda_2|>1$ and $\frac{\lambda_1}{|\lambda_1|}$, $\frac{\lambda_2}{|\lambda_2|}$ are independent in $\T$. Since $T$ is not unitarily equivalent to a diagonal operator, the eigenspaces $\ker(T-\lambda_1 I)$ and $\ker(T-\lambda_2I)$ are non-orthogonal. By Proposition~\ref{suffnh}, $T\in\NH(\C^2)$. Finally, assume that $T\in \NH(\C^2)$. Then $T\in \WNH(\C^2)$. If $T$ is unitarily equivalent to a diagonal operator, Lemma~\ref{snsnsn0} implies that $T\in\SNH(\C^2)$. These observations amount to the fact that $T\in \NH(\C^2)$ if and only if either $T\in\SNH(\C^2)$ or $T\in \WNH(\C^2)$ and $T$ is not unitarily equivalent to a diagonal operator. The proof of Theorem~\ref{2dim} is complete.

\subsection{Proof of Theorem~\ref{3dim}}

Let $T\in L(\C^3)$. If there are $\lambda_1,\lambda_2\in\sigma(T)$ such that $|\lambda_1|=|\lambda_2|>1$ and $\frac{\lambda_1}{|\lambda_1|}$, $\frac{\lambda_2}{|\lambda_2|}$ are independent in $\T$, then (\ref{suffwnh}.1) is satisfied and Theorem~\ref{suffwnh} implies that $T\in\WNH(\C^3)$. If $\sigma(T)=\{\lambda_1,\lambda_2,\lambda_3\}$ with $|\lambda_1|=|\lambda_2|>|\lambda_3|>1$, $\frac{\lambda_1}{\lambda_2}$ having infinite order in the group $\T$ and $\frac{\lambda_1}{|\lambda_1|}$, $\frac{\lambda_3}{|\lambda_3|}$ being independent in $\T$, then (\ref{suffwnh}.3) is satisfied and Theorem~\ref{suffwnh} implies that $T\in\WNH(\C^3)$. If distinct members of $\sigma(T)$ have distinct absolute values, then $T\notin\WNH(\C^3)$ according to Lemma~\ref{abcde}. If $\frac{\lambda_j}{|\lambda_j|}$ are pairwise dependent for non-zero $\lambda_j$, Lemma~\ref{ddiagon} guarantees that $T\notin\WNH(\C^3)$.
Consider the case $|\lambda_1|>|\lambda_2|=|\lambda_3|=R$ and either $R\leq 1$ or $\frac{\lambda_2}{R}$ and $\frac{\lambda_3}{R}$ are not independent. If $R\leq 1$, then each numerical orbit of $T$ is either bounded or escapes to infinity at the rate $|\lambda_1|^n$ (provided $|\lambda_1|>1$). If $R>1$ and $\frac{\lambda_2}{R}$ and $\frac{\lambda_3}{R}$ are not independent, then $T\notin\WNH(\C^3)$. Indeed, each numerical orbit of $T$ has the shape $\{a\lambda_1^n+b\lambda_2^n+c\lambda_3^n:n\in\Z_+\}$ if $\lambda_2\neq \lambda_3$ and $\{a\lambda_1^n+b\lambda_2^n+cn\lambda_2^n:n\in\Z_+\}$ if $\lambda_2=\lambda_3$. In any case if $a\neq 0$, this is a sequence escaping to infinity. If $a=0$ we fall under the jurisdiction of Lemma~\ref{ddiag} and the numerical orbit is non-dense. If $\lambda_3$ is $0$, the problem is easily reduced to the 2-dimensional situation, already covered by Theorem~\ref{2dim}. It remains to notice that we have considered all the possibilities (up to the ordering of the eigenvalues).

\section{Proof of Theorem~\ref{suffsnhid}\label{s8}}

\begin{lemma} \label{shau} Let $\{e_n\}_{n\in\N}$ be a Schauder basis in a reflexive Banach space $X$, $\{e_n^*\}_{n\in\N}$ be the corresponding sequence of coordinate functionals and $a\in\ell^1_+(\N)$ be such that $\sum\limits_{n=1}^\infty a_n=1$. Then there is $(x,f)\in\Pi(X)$ such that $e_n^*(x)f(e_n)=a_n$ for each $n\in\N$.
\end{lemma}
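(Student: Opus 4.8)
The plan is to mimic the proof of Lemma~\ref{cnvefu}, but since $B(X)$ is no longer norm-compact I would instead produce $(x,f)$ as a weak / weak-$*$ limit of the solutions of the finite-dimensional problems already solved by Lemma~\ref{cnvefu}; this is exactly where reflexivity is used. Fix $N_0\in\N$ with $\sigma_N:=\sum_{k=1}^N a_k>0$ for all $N\ge N_0$. For such $N$ put $X_N=\spann\{e_1,\dots,e_N\}$, an $N$-dimensional Banach space under the norm inherited from $X$ (the $e_j$ are linearly independent because they form part of a Schauder basis). Since $e_j^*(e_k)=\delta_{j,k}$ for all $j,k$, the restrictions $e_1^*\bigr|_{X_N},\dots,e_N^*\bigr|_{X_N}$ are precisely the coordinate functionals of the basis $e_1,\dots,e_N$ of $X_N$. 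Applying Lemma~\ref{cnvefu} to $X_N$ with the weights $c_j=a_j/\sigma_N$ (non-negative, summing to $1$) I obtain $(x_N,g_N)\in\Pi(X_N)$ with $e_j^*(x_N)\,g_N(e_j)=a_j/\sigma_N$ for $1\le j\le N$. By Hahn--Banach extend $g_N$ to $f_N\in X^*$ with $\|f_N\|=1$; then $f_N(e_j)=g_N(e_j)$, so $\|x_N\|=1$ and $e_j^*(x_N)\,f_N(e_j)=a_j/\sigma_N$ for $1\le j\le N$.

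Now I would extract limits. Since $X$ is reflexive, so is $X^*$, and by the Eberlein--\v Smulian theorem bounded sequences in $X$ and in $X^*$ have weakly convergent subsequences (for $X^*$ the weak and weak-$*$ topologies coincide because $X$ is reflexive). Passing to a subsequence $\{N_m\}$ I may assume $x_{N_m}\to x$ weakly with $x\in B(X)$ and $f_{N_m}\to f$ weak-$*$ with $f\in B(X^*)$. Fix $j\in\N$. For all $m$ with $N_m\ge j$ we have $e_j^*(x_{N_m})\,f_{N_m}(e_j)=a_j/\sigma_{N_m}$; letting $m\to\infty$ and using that $e_j^*$ is a fixed functional (so $e_j^*(x_{N_m})\to e_j^*(x)$), that $e_j$ is a fixed vector (so $f_{N_m}(e_j)\to f(e_j)$), and that $\sigma_{N_m}\to\sum_k a_k=1$, I get
$$
e_j^*(x)\,f(e_j)=a_j\qquad\text{for every }j\in\N.
$$

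It remains to verify $(x,f)\in\Pi(X)$, and here is the only delicate point: I cannot conclude $f(x)=1$ directly from $f_{N_m}(x_{N_m})=1$, because a product of a weakly convergent sequence and a weak-$*$ convergent sequence need not converge to the product of the limits. The way around this is to recover $f(x)$ from the identities just established: using the Schauder expansion $x=\sum_{j=1}^\infty e_j^*(x)e_j$ together with the linearity and continuity of $f$,
$$
f(x)=\sum_{j=1}^\infty e_j^*(x)\,f(e_j)=\sum_{j=1}^\infty a_j=1.
$$
Combined with $\|x\|\le1$ and $\|f\|\le1$ this gives $1=f(x)\le\|f\|\,\|x\|\le1$, hence $\|x\|=\|f\|=1$. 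Thus $(x,f)\in\Pi(X)$ and $e_j^*(x)\,f(e_j)=a_j$ for all $j$, as required. To summarize, the main obstacle is the lack of norm-compactness of $B(X)$, handled by reflexivity plus a weak/weak-$*$ compactness argument, and the subsidiary obstacle is computing $f(x)$ for the limiting pair, handled by the Schauder-expansion device above rather than by passing to the limit in $f_{N_m}(x_{N_m})=1$.
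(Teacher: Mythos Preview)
Your proof is correct and follows essentially the same route as the paper's: apply Lemma~\ref{cnvefu} on each finite-dimensional span, extend by Hahn--Banach, extract weak limits via reflexivity, and recover $f(x)=1$ from the Schauder expansion rather than from the limit of $f_{N_m}(x_{N_m})$. The only cosmetic difference is in the choice of finite-dimensional weights: you rescale $a_1,\dots,a_N$ by their partial sum $\sigma_N$, while the paper keeps $a_1,\dots,a_{n-1}$ unchanged and puts the remainder $1-\sum_{k<n}a_k$ on the last coordinate; both are valid normalizations summing to $1$.
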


\begin{proof} Let $X_n=\spann\{e_1,\dots,e_n\}$. By Lemma~\ref{cnvefu}, for every $n\in\N$, we can find $(x_n,g_n)\in\Pi(X_n)$ such that $e_k^*(x_n)g_n(e_k)=a_k$ for $1\leq k<n$ and $e_n^*(x_n)g_n(e_n)=1-a_1-{\dots}-a_{n-1}$. By the Hahn--Banach Theorem, for each $n\in\N$, there is $f_n\in S(X^*)$ such that $f_n\bigr|_{X_n}=g_n$. Then for every $n\in\N$,
$$
(x_n,f_n)\in\Pi(X)\ \ \text{and}\ \ e_k^*(x_n)f_n(e_k)=a_k\ \ \text{for $1\leq k<n$}.
$$
Since $X$ is reflexive, every bounded sequence in $X\times X^*$ has a weakly convergent subsequence. Hence there is a strictly increasing sequence $\{n_m\}_{m\in\N}$ of positive integers such that $\{x_{n_m}\}_{m\in\N}$ converges weakly to $x\in X$ and $\{f_{n_m}\}_{m\in\N}$ converges weakly to $f\in X^*$. Since $B(X)$ is weakly closed in $X$ and $B(X^*)$ is weakly closed in $X^*$, we have $\|x\|\leq 1$ and $\|f\|\leq 1$. According to the above display, for every $k\in\N$, $e_k^*(x_{n_m})f_{n_m}(e_k)=a_k$ for all sufficiently large $m$. Hence,
$$
e_k^*(x)f(e_k)=\lim_{m\to\infty}e_k^*(x_{n_m})f_{n_m}(e_k)=a_k\ \ \text{for each $k\in\N$}.
$$
It follows that
$$\textstyle
f(x)=f\Bigl(\sum\limits_{k=1}^\infty e_k^*(x)e_k\Bigr)=\sum\limits_{k=1}^\infty e_k^*(x)f(e_k)=\sum\limits_{k=1}^\infty a_k=1.
$$
Since $f(x)=1$, $\|x\|\leq 1$ and $\|f\|\leq 1$, $(x,f)\in\Pi(X)$. Thus $(x,f)$ satisfies all desired conditions.
\end{proof}

Assume that $T\in L(X)$ satisfies (\ref{suffsnhid}.3). That is, there exists $\lambda\in\C$ be such that $|\lambda|\geq 1$ and $T-\lambda I$ is a semi-Fredholm operator of positive index. Since the index is locally constant, $T-zI$ is a semi-Fredholm operator of positive index for every $z\in \lambda+\epsilon \D$ for a sufficiently small $\epsilon$. Since every semi-Fredholm operator of positive index is non-injective, $z+\epsilon\D\subseteq \sigma_p(T)$. Since $|\lambda|\geq 1$, there is $r>1$ such that $J=(\lambda+\epsilon \D)\cap (r\T)\neq \varnothing$. Then $J$ is a non-trivial open arc of the circle $r\T$. By Proposition~\ref{znwn}, we can find $z,w\in J\subset \sigma_p(T)$ such that $\{z^n+w^n:n\in\Z_+\}$ is dense in $\C$. Thus (\ref{suffsnh}.1) is satisfied and therefore $T\in \SNH(X)$ by Theorem~\ref{suffsnh}. Hence (\ref{suffsnhid}.3) implies the strong numeric hypercyclicity of $T$.

Assume now that $T\in L(X)$ satisfies (\ref{suffsnhid}.4). That is, $X$ be reflexive and there exists $\lambda\in\C$ such that $|\lambda|\geq 1$ and $T-\lambda I$ is a semi-Frdholm operator of negative index. It is well-known and easy to see that if $X$ is reflexive and $R\in L(X)$, then $R$ is semi-Fredholm if and only if $R^*$ is semi-Fredholm and ${\bf i}(R^*)=-{\bf i}(R)$. Thus $T^*-\lambda I$ is a semi-Fredholm operator of positive index. By the previous part of the proof, $T^*\in \SNH(X^*)$. By Proposition~\ref{ele00}, $T\in \SNH(X)$. Thus (\ref{suffsnhid}.4) implies the strong numeric hypercyclicity of $T$.

Assume that $T\in L(X)$ satisfies (\ref{suffsnhid}.1). That is, $X$ is reflexive and there is a Schauder basic sequence $\{e_n\}_{n\in\N}$ in $X$ such that $Te_n=\lambda_ne_n$ with $\lambda_n\in\C$ for each $n\in\N$ and for some $c\in\ell^1_+(\N)$,
\begin{equation}\label{Mkkm}\textstyle
\Bigl\{\sum\limits_{j=1}^\infty c_j\lambda_j^k:k\in\Z_+\Bigr\}\ \ \text{is dense in $\C$.}
\end{equation}
Without loss of generality, we may assume that $\sum\limits_{n=1}^\infty c_n=1$. Let $Y$ be the closed linear span of the sequence $\{e_n\}_{n\in\N}$. By Lemma~\ref{shau}, there is $(x,g)\in \Pi(Y)$ such that $e_n^*(x)g(e_n)=c_n$ for every $n\in\Z_+$, where $e_n^*\in Y^*$  are the coordinate functionals for the Schauder basis $\{e_n\}_{n\in\N}$. By the Hahn--Banach Theorem, there is $f\in X^*$ such that $f\bigr|_Y=g$ and $(x,f)\in\Pi(X)$. Then 
\begin{equation*}\textstyle
f(T^kx)=g(T^kx)=g\Bigl(\sum\limits_{n=1}^\infty \lambda_n^ke^*_n(x)e_n\Bigr)=\sum\limits_{n=1}^\infty \lambda_n^ke^*_n(x)g(e_n)=\sum\limits_{n=1}^\infty c_n\lambda_n^k\ \ \text{for each $k\in\Z_+$}. 
\end{equation*}
By (\ref{Mkkm}), $O(T,x,f)$ is dense in $\C$ and $T\in\NH(X)$. Since every operator similar to $T$ satisfies the same conditions, $T\in \SNH(X)$. Thus (\ref{suffsnhid}.1) implies the strong numeric hypercyclicity of $T$.

Finally, assume that $T\in L(X)$ satisfies (\ref{suffsnhid}.2). That is, $X$ is reflexive and there is a Schauder basic sequence $\{e_n\}_{n\in\N}$ in $X$ such that $Te_n=\lambda_ne_n$ where $\lambda_n\in\C$ are such that $|\lambda_1|>1$, the sequence $\{|\lambda_n|\}_{n\in\N}$ is $($maybe non-strictly$)$ increasing and the numbers $\frac{\lambda_n}{|\lambda_n|}$ are pairwise distinct elements of $\T$. Passing to a subsequence, if necessary, we can assume that either $\{|\lambda_n|\}_{n\in\N}$ is strictly increasing or that $\{|\lambda_n|\}_{n\in\N}$ is constant. If $\{|\lambda_n|\}_{n\in\N}$ is strictly increasing, Corollary~\ref{ddIaa} provides
$c\in\ell^1_+(\N)$ satisfying (\ref{Mkkm}). If $|\lambda_n|=r>1$ for every $n\in\N$, by Lemma~\ref{ddiaa}, there is $c\in\ell^1_+(\N)$ for which (\ref{Mkkm}) holds. In any case (\ref{suffsnhid}.1) is satisfied and therefore $T\in\SNH(X)$. Thus (\ref{suffsnhid}.2) implies the strong numeric hypercyclicity of $T$. This completes the proof of Theorem~\ref{suffsnhid}.

\section{Proof of Theorem~\ref{clos}\label{s9}}

\begin{lemma}\label{triv} Let $(x,f)\in\Pi(X)$. Then for every $y\in S(X)\cap \ker f$, there exists $g\in X^*$ such that $g(x)=0$, $g(y)=1$ and $\|g\|\leq 2$.
\end{lemma}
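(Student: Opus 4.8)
The plan is to produce the functional explicitly on the two-dimensional subspace $E=\spann\{x,y\}$ and then extend it by the Hahn--Banach theorem. First I would check that $x$ and $y$ are linearly independent: if $y=cx$ for some scalar $c$, then $0=f(y)=cf(x)=c$, so $y=0$, contradicting $\|y\|=1$. Consequently every $v\in E$ has a unique representation $v=ax+by$ with $a,b\in\C$, and we may define $g_0\in E^*$ by $g_0(ax+by)=b$; by construction $g_0(x)=0$ and $g_0(y)=1$.

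The only substantive step is the norm estimate $\|g_0\|\leq 2$. For $v=ax+by\in E$, applying $f$ gives $f(v)=af(x)+bf(y)=a$, so $|a|=|f(v)|\leq\|f\|\,\|v\|=\|v\|$; then, since $\|y\|=1$ and $\|x\|=1$,
$$|g_0(v)|=|b|=\|by\|=\|v-ax\|\leq\|v\|+|a|\,\|x\|\leq 2\|v\|.$$
Hence $\|g_0\|\leq 2$, and the Hahn--Banach theorem yields an extension $g\in X^*$ of $g_0$ with $\|g\|=\|g_0\|\leq 2$, satisfying $g(x)=0$ and $g(y)=1$. I do not expect any real obstacle: the identity $f(v)=a$ together with the triangle inequality is the entire content of the lemma, so the proof is essentially the computation just sketched.
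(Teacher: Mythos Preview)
Your proof is correct and is essentially identical to the paper's own argument: define the functional on $E=\spann\{x,y\}$, use $f(v)=a$ to bound $|a|\leq\|v\|$, estimate $|b|=\|v-ax\|\leq 2\|v\|$, and extend by Hahn--Banach.
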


\begin{proof} Let $E=\spann\{x,y\}$. Since $f(x)=1$ and $f(y)=0$, $x$ and $y$ are linearly independent and therefore form a basis in $E$. Consider $\phi\in E^*$ defined by $\phi(x)=0$ and $\phi(y)=1$. Then $u=f(u)x+\phi(u)y$ for every $u\in E$. Hence $|\phi(u)|=\|u-f(u)x\|\leq \|u\|+\|f\|\|u\|=2\|u\|$. That is, $\|\phi\|\leq 2$. By the Hahn--Banach Theorem, there is $g\in X^*$ such that $\|g\|\leq 2$ and $g\bigr|_{E}=\phi$. Obviously, $g$ satisfies all desired conditions.
\end{proof}

\begin{lemma}\label{noncl} Let $T\in L(X)$ be such that $T(X)$ is non-closed in $X$ and $\epsilon>0$. Then there is a $2$-dimensional subspace $E$ of $X$ such that $\|T\bigr|_E\|_{L(E,X)}<\epsilon$.
\end{lemma}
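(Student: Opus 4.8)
The plan is to manufacture a two-dimensional subspace $E=\spann\{e_1,e_2\}$ out of two unit vectors on which $T$ is tiny, arranging that $e_1$ and $e_2$ are separated by a norm-one functional, so that this functional controls the coordinates of every element of $E$.

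First I would record the elementary fact that an operator with non-closed range is never bounded below: if $\|Tx\|\ge c\|x\|$ for all $x\in X$ with some $c>0$, then for any sequence with $Tx_n$ convergent one has $\|x_n-x_m\|\le c^{-1}\|Tx_n-Tx_m\|$, so $\{x_n\}$ is Cauchy, $x_n\to x$, and $\lim Tx_n=Tx\in T(X)$; hence $T(X)$ would be closed, contrary to hypothesis. Therefore $\inf\{\|Tx\|:x\in S(X)\}=0$, and I may choose $e_1\in S(X)$ with $\|Te_1\|<\epsilon/4$.

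Next, by the Hahn--Banach theorem pick $f\in S(X^*)$ with $f(e_1)=1$ and set $Y=\ker f$, so that $X=Y\oplus\spann\{e_1\}$. The crucial point is that $T(Y)$ is again non-closed: since $T(X)=T(Y)+\spann\{Te_1\}$ and a finite-dimensional extension of a closed subspace of a Banach space is closed, closedness of $T(Y)$ would force closedness of $T(X)$. Running the first step again for the operator $T|_Y\colon Y\to X$ (the ``bounded below $\Rightarrow$ closed range'' argument is insensitive to the target space), I obtain $e_2\in S(Y)\subseteq S(X)$ with $\|Te_2\|<\epsilon/4$.

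Finally, since $f(e_1)=1\ne 0=f(e_2)$, the vectors $e_1,e_2$ are linearly independent, so $E=\spann\{e_1,e_2\}$ is two-dimensional. For $e=ae_1+be_2\in E$ one has $|a|=|f(e)|\le\|e\|$ and $|b|=\|be_2\|=\|e-ae_1\|\le\|e\|+|a|\le 2\|e\|$, whence $\|Te\|\le|a|\,\|Te_1\|+|b|\,\|Te_2\|\le\frac34\epsilon\|e\|<\epsilon\|e\|$; thus $\|T|_E\|_{L(E,X)}<\epsilon$. The main obstacle, and the reason a naive choice of two short vectors is not enough, is that their difference might be very short while having a large image under $T$; splitting off the hyperplane $\ker f$ is precisely what prevents this, so that step is the heart of the proof.
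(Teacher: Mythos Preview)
Your proof is correct and follows essentially the same route as the paper: pick $e_1\in S(X)$ with $\|Te_1\|$ small, separate off the hyperplane $\ker f$ via Hahn--Banach, argue that $T|_{\ker f}$ is not bounded below (since otherwise $T(\ker f)$ would be closed of finite codimension in $T(X)$, forcing $T(X)$ closed), and then estimate on $E=\spann\{e_1,e_2\}$. The only cosmetic difference is that the paper packages your final coordinate estimate $|a|\le\|e\|$, $|b|\le 2\|e\|$ into a preliminary lemma (Lemma~\ref{triv}) producing a functional $g$ with $g(e_1)=0$, $g(e_2)=1$, $\|g\|\le 2$, whereas you inline that computation directly.
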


\begin{proof} Since $T(X)$ is non-closed, $\inf\{\|Tx\|:x\in S(X)\}=0$. Hence we can pick $x\in S(X)$ for which $\|Tx\|<\frac{\epsilon}{4}$. By the Hahn--Banach theorem there is $f\in X^*$ such that $(x,f)\in\Pi(X)$. Next, observe that $\inf\{\|Tx\|:x\in S(X)\cap\ker f\}=0$. Indeed, otherwise $T(\ker f)$ is closed in $X$ and is a subspace of $T(X)$ of codimension at most 1. Since a subspace of a Banach space with a finite codimensional closed subspace is closed itself, that would have implied that $T(X)$ is closed.

Thus $\inf\{\|Tx\|:x\in S(X)\cap\ker f\}=0$ and therefore we can choose $y\in S(X)\cap \ker f$ such that $\|Ty\|<\frac{\epsilon}4$. By Lemma~\ref{triv}, there is $g\in X^*$ such that $\|g\|\leq 2$, $g(x)=0$ and $g(y)=1$. Clearly $E=\spann\{x,y\}$ is a 2-dimensional subspace of $X$. Moreover, for each $u\in B(E)$,
$$\textstyle
\|Tu\|=\|f(u)Tx+g(u)Ty\|\leq \|f\|\|Tx\|+\|g\|\|Ty\|<\frac{\epsilon}{4}+2\frac{\epsilon}{4}=\frac{3\epsilon}{4}.
$$
Hence $\|T\bigr|_E\|_{L(E,X)}\leq \frac{3\epsilon}{4}<\epsilon$.
\end{proof}

\begin{lemma}\label{appRO} Let $T\in L(X)$, $E$ be a $2$-dimensional subspace of $X$ and $S:E\to X$ be a linear map. Then there is $R\in L(X)$ such that $R\bigr|_E=S$ and $\|T-R\|\leq 3\|S-T\bigr|_E\|_{L(E,X)}$.
\end{lemma}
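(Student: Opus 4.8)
The plan is to realize $R$ in the form $R=T+U$, where $U$ is a norm-controlled finite-rank extension of the difference map $S-T\bigr|_E\colon E\to X$. Once such a $U$ is produced with $\|U\|\leq 3\|S-T\bigr|_E\|_{L(E,X)}$, we are done immediately: $R\bigr|_E=T\bigr|_E+(S-T\bigr|_E)=S$, and $\|T-R\|=\|U\|$. So the whole content is a norm-controlled Hahn--Banach-type extension of a map out of a $2$-dimensional subspace, and the natural tool for that is exactly Lemma~\ref{triv}, which is why its constant $2$ will reappear as the constant $3$ in the conclusion.

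Concretely, I would first set up adapted coordinates on $E$. Pick $x\in E$ with $\|x\|=1$ and, by the Hahn--Banach theorem, $f\in X^*$ with $\|f\|=1$ and $f(x)=1$, so that $(x,f)\in\Pi(X)$. Since $f\bigr|_E$ is a nonzero linear functional on the $2$-dimensional space $E$, its kernel is $1$-dimensional, so we may choose $y\in E$ with $\|y\|=1$ and $f(y)=0$; then $y\in S(X)\cap\ker f$. Lemma~\ref{triv} now supplies $g\in X^*$ with $g(x)=0$, $g(y)=1$ and $\|g\|\leq 2$. From $f(x)=1$, $f(y)=0$, $g(x)=0$, $g(y)=1$ one checks at once that $\{x,y\}$ is a basis of $E$ and that every $u\in E$ satisfies $u=f(u)x+g(u)y$, i.e. $f$ and $g$ are the coordinate functionals of this basis.

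Then I would simply define
$$
Rw=Tw+f(w)(Sx-Tx)+g(w)(Sy-Ty),\qquad w\in X,
$$
which is clearly a bounded operator on $X$. For $u\in E$, substituting $u=f(u)x+g(u)y$ and using linearity of $T$ and of $S$ gives $Ru=f(u)Sx+g(u)Sy=Su$, so $R\bigr|_E=S$. For the norm, for every $w$ with $\|w\|\leq 1$,
$$
\|Rw-Tw\|\leq\|f\|\,\|Sx-Tx\|+\|g\|\,\|Sy-Ty\|\leq\|(S-T\bigr|_E)x\|+2\|(S-T\bigr|_E)y\|,
$$
and since $\|x\|=\|y\|=1$ each term is at most $\|S-T\bigr|_E\|_{L(E,X)}$, so $\|T-R\|\leq 3\|S-T\bigr|_E\|_{L(E,X)}$.

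There is no real obstacle here; the only points needing a line of care are verifying that the chosen $x,y$ genuinely form a basis of $E$ with $f,g$ as coordinate functionals, and observing that $Sx-Tx=(S-T\bigr|_E)x$ and $Sy-Ty=(S-T\bigr|_E)y$ because $x,y\in E$. One could instead invoke an Auerbach basis of $E$ to extend $S-T\bigr|_E$ and obtain the slightly better constant $2$, but recycling Lemma~\ref{triv} keeps the argument self-contained, and the constant $3$ is all that is needed in the applications.
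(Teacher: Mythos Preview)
Your argument is correct and essentially identical to the paper's own proof: both pick $x\in S(E)$, use Hahn--Banach to get $(x,f)\in\Pi(X)$, choose $y\in S(E)\cap\ker f$, invoke Lemma~\ref{triv} for $g$, and define $R$ by the same rank-two perturbation formula with the same norm estimate. Your closing remark about an Auerbach basis yielding constant $2$ is a nice aside not present in the paper.
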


\begin{proof} Take any $x\in S(E)$. By the Hahn--Banach Theorem, there exists $f\in X^*$ such that $(x,f)\in\Pi(X)$. Pick $y\in S(E)$ such that $f(y)=0$. By Lemma~\ref{triv}, there is $g\in X^*$ such that $\|g\|\leq 2$, $g(x)=0$ and $g(y)=1$. Now we define $R\in L(X)$ be the formula $Ru=Tu+f(u)(Sx-Tx)+g(u)(Sy-Ty)$. It is straightforward to see that $Rx=Sx$ and $Ry=Sy$. Since $\{x,y\}$ is a basis in $E$, $R\bigr|_E=S$. Next, for every $u\in X$, using the relations $\|x\|=\|y\|=\|f\|=1$ and $\|g\|\leq 2$, we have
$$
\|(T-R)u\|=\|f(u)(T-S)x+g(u)(T-S)y\|\leq 3\|u\|\cdot\|S-T\bigr|_E\|_{L(E,X)}.
$$
Hence $\|T-R\|\leq 3\|S-T\bigr|_E\|_{L(E,X)}$.
\end{proof}

\begin{lemma}\label{fdapp} Let $E$ be a $2$-dimensional Banach space and $T\in L(E)$ be such that $\sigma(T)=\{s,t\}$ and $|s|=|t|\geq 1$. Then there is a sequence $\{T_n\}_{n\in\N}$ in $\SNH(E)$ such that $\|T_n-T\|\to 0$.
\end{lemma}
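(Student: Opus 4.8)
The plan is to reduce the statement to operators on $\C^2$, and then, rather than perturbing $\tilde T$ itself, to perturb its two eigenvalues, using Proposition~\ref{znwn} to force the resulting power sums to be dense while Theorem~\ref{2dim} certifies strong numeric hypercyclicity.

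First I would note that strong numeric hypercyclicity depends only on the similarity class: since similarity is an equivalence relation, two similar operators $S$ and $S'$ (on possibly different Banach spaces) have the same family of operators similar to them, so $S\in\SNH$ exactly when $S'\in\SNH$. Fix a linear isomorphism $\Phi\colon E\to\C^2$ and set $\tilde T=\Phi T\Phi^{-1}\in L(\C^2)$, so that $\sigma(\tilde T)=\{s,t\}$. If I can produce a sequence $\tilde T_n\in\SNH(\C^2)$ with $\|\tilde T_n-\tilde T\|\to0$, then $T_n:=\Phi^{-1}\tilde T_n\Phi$ is similar to $\tilde T_n$, hence $T_n\in\SNH(E)$, and $\|T_n-T\|\leq\|\Phi\|\,\|\Phi^{-1}\|\,\|\tilde T_n-\tilde T\|\to0$. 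Thus it suffices to approximate $\tilde T$ inside $L(\C^2)$.

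Next I would choose the perturbed eigenvalues. Write $R=|s|=|t|\geq1$ and $s=R\zeta$, $t=R\eta$ with $\zeta,\eta\in\T$, and for each $n$ put $R_n=R+\frac1n>1$. By Proposition~\ref{znwn} the set $U_{R_n}=\{(z,w)\in\T^2:\{R_n^k(z^k+w^k):k\in\Z_+\}\text{ is dense in }\C\}$ is dense in $\T^2$, so I may pick $(z_n,w_n)\in U_{R_n}$ with $|z_n-\zeta|+|w_n-\eta|<\frac1n$, and I set $\lambda_1^{(n)}=R_nz_n$ and $\lambda_2^{(n)}=R_nw_n$. Then $\lambda_1^{(n)}\to s$ and $\lambda_2^{(n)}\to t$; moreover $\lambda_1^{(n)}\neq\lambda_2^{(n)}$, since $z_n=w_n$ would make $\{2R_n^kz_n^k:k\in\Z_+\}$ — whose elements have moduli $2R_n^k\to\infty$ — fail to be dense in $\C$, contradicting $(z_n,w_n)\in U_{R_n}$; and finally $\{(\lambda_1^{(n)})^k+(\lambda_2^{(n)})^k:k\in\Z_+\}=\{R_n^k(z_n^k+w_n^k):k\in\Z_+\}$ is dense in $\C$.

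Finally I would realize these eigenvalues by a small perturbation. Fix an invertible $P\in L(\C^2)$ and $\beta\in\C$ with $P^{-1}\tilde TP$ acting on the standard basis by $e_1\mapsto se_1$, $e_2\mapsto\beta e_1+te_2$ (Schur or Jordan triangularization), and set $\tilde T_n=PS_nP^{-1}$, where $S_n\in L(\C^2)$ acts by $e_1\mapsto\lambda_1^{(n)}e_1$, $e_2\mapsto\beta e_1+\lambda_2^{(n)}e_2$. Then $\sigma(\tilde T_n)=\{\lambda_1^{(n)},\lambda_2^{(n)}\}$ is a two-point set whose power sums $\{(\lambda_1^{(n)})^k+(\lambda_2^{(n)})^k:k\in\Z_+\}$ are dense in $\C$, so Theorem~\ref{2dim} gives $\tilde T_n\in\SNH(\C^2)$; and $\|\tilde T_n-\tilde T\|\leq\|P\|\,\|P^{-1}\|\,(|\lambda_1^{(n)}-s|+|\lambda_2^{(n)}-t|)\to0$. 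Pulling back through $\Phi$ as in the second paragraph produces the required sequence in $\SNH(E)$. I do not expect a genuine obstacle here; the only delicate point is the degenerate case $R=1$ (and more generally keeping the common modulus strictly above $1$), which is precisely why I work with $R_n=R+\frac1n$ instead of $R$, together with the routine bookkeeping that the perturbed eigenvalues stay close to $s$ and $t$.
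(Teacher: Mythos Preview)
Your proof is correct and follows essentially the same approach as the paper: triangularize, perturb the eigenvalues using Proposition~\ref{znwn} so that the power sums are dense, and invoke Theorem~\ref{2dim}. The paper works directly in $E$ with a basis $\{x,y\}$ satisfying $Tx=sx$, $Ty=ty+ax$ (avoiding the explicit transfer to $\C^2$), but this is only a cosmetic difference; your explicit use of $R_n=R+\tfrac1n$ to guarantee $R_n>1$ in the borderline case $|s|=|t|=1$ is in fact a bit more careful than the paper's terse appeal to Proposition~\ref{znwn}.
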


\begin{proof} Since $|s|=|t|\geq 1$, by Proposition~\ref{znwn}, we can choose two sequence $\{s_n\}_{n\in\N}$ and $\{t_n\}_{n\in\N}$ such that $s_n\to s$, $t_n\to t$ and $\{s_n^k+t_n^k:k\in\Z_+\}$ is dense in $\C$ for every $n\in\N$. Since $\sigma(T)=\{s,t\}$, we can choose a linear basis $\{x,y\}$ in $E$ such that $Tx=sx$ and $Ty=ty+ax$ for some $a\in\C$. For each $n\in\N$ consider $T_n\in L(X)$ given by $T_nx=s_n x$ and $T_ny=t_n y+ax$. Since $s_n\to s$ and $t_n\to t$, we have $\|T_n-T\|\to 0$. On the other hand, $\sigma(T_n)=\{s_n,t_n\}$ for each $n\in\N$. By Theorem~\ref{2dim}, each $T_n$ belongs to $\SNH(E)$.
\end{proof}

Since $\WNH(X)\supseteq \NH(X)\supseteq \SNH(X)$, we have 
(\ref{clos}.3)$\Longrightarrow$(\ref{clos}.2)$\Longrightarrow$(\ref{clos}.1). Next, it is easy to see that the set $\Omega(X)$ of $T\in L(X)$ satisfying (\ref{clos}.4) is operator norm open. Thus in order to verify that (\ref{clos}.4) implies (\ref{clos}.3), it suffices to show that $\Omega(X)\cap \WNH(X)=\varnothing$. Let $T\in\Omega(X)$. Then $X=Y\oplus Z$, where $Y$ and $Z$ are closed $T$-invariant subspaces, $T\bigr|_Y\in L(Y)$ has the spectral radius $<1$, $Z$ has dimension $n\in\Z_+$ and $\sigma\bigl(T\bigr|_Z\bigr)=\{z_1,\dots,z_n\}$ satisfies $1\leq |z_1|<{\dots}<|z_n|$. By Lemma~\ref{abcde}, $T\notin \WNH(X)$, which proves the implication (\ref{clos}.4)$\Longrightarrow$(\ref{clos}.3).

It remains to verify that (\ref{clos}.1) implies (\ref{clos}.4). In order to do this, it suffices to show that every $T\in L(X)\setminus\Omega(X)$ is in the operator norm closure of $\SNH(X)$. Let $T\in L(X)\setminus\Omega(X)$. The relation $T\notin \Omega(X)$ can happen for various reasons.

{\bf Case 1:} \ There is a 2-dimensional $T$-invariant subspace $E$ of $X$ such for which $\sigma\bigl(T\bigr|_E\bigr)=\{s,t\}$ and $|s|=|t|\geq 1$. In this case, according to Lemma~\ref{fdapp}, there is a sequence $\{S_n\}_{n\in\N}$ in $\SNH(E)$ such that $\|S_n-T\bigr|_E\|\to 0$. By Lemma~\ref{appRO}, there is a sequence $\{R_n\}_{n\in\N}$ in $L(X)$ such that $R_n\bigr|_E=S_n$ for each $n\in\N$ and $\|R_n-T\|\to 0$. Since the restriction of each $R_n$ to the invariant subspace $E$ is strongly numerically hypercyclic, Proposition~\ref{ele00} implies that $R_n\in \SNH(X)$ for each $n\in\N$. Thus $T$ is in the norm closure of $\SNH(X)$.

Note that the situation of $T$ having a normal eigenvalue $\lambda$ of multiplicity $\geq 2$ such that $|\lambda|\geq 1$ falls into Case~1. The same happens if there are two distinct $s,t\in\sigma_p(T)$ such that 
$|s|=|t|\geq 1$. This means that if Case~1 does not hold, we can pick $\lambda\in\sigma(T)$ such that 
$$
\text{$|\lambda|\geq 1$, $\lambda$ is not a normal eigenvalue of $T$ and $\dim\ker(T-\lambda I)\leq 1$}.
$$
We keep the notation $\lambda$ for such a spectral point for the rest of the proof.

{\bf Case 2:} \ $(T-\lambda I)(X)$ is non-closed in $X$. Let $\epsilon>0$. By Lemma~\ref{noncl}, we can pick a 2-dimensional subspace $E$ of $X$ such that $\|(T-\lambda I)\bigr|_{E}\|<\frac{\epsilon}{4}$. According to Lemma~\ref{appRO}, there is $S\in L(X)$ such that $S\bigr|_E=\lambda I_E$ and $\|T-S\|<\frac{3\epsilon}{4}$. Now $S$ falls under the jurisdiction of Case~1. Thus there is $R\in \SNH(X)$ such that $\|S-R\|<\frac{\epsilon}{4}$. Since $\|T-R\|\leq \|T-S\|+\|S-R\|<\frac{3\epsilon}{4}+\frac{\epsilon}{4}=\epsilon$ and  $\epsilon>0$  is arbitrary, $T$ belongs to the norm closure of $\SNH(X)$.

Thus it remains to consider the case when $(T-\lambda I)(X)$ is closed in $X$.

{\bf Case 3:} \ $\ker(T-\lambda I)$ is one-dimensional and $(T-\lambda I)(X)=X$.
In this case $T-\lambda I$ is a Fredholm operator of index 1. By Theorem~\ref{suffsnhid}, $T\in \SNH(X)$.

{\bf Case 4:} \ $\dim (X/(T-\lambda I)(X))>\dim (\ker(T-\lambda I))$.
In this case $T-\lambda I$ is a semi-Fredholm operator of negative index. Since $X$ is reflexive, Theorem~\ref{suffsnhid} implies that $T\in \SNH(X)$.

If $\ker(T-\lambda I)=\{0\}$ and $(T-\lambda I)(X)=X$, we have $\lambda\notin\sigma(T)$, which is a contradiction. Finally, if $\dim(\ker(T-\lambda I))=\dim (X/(T-\lambda I)(X))=1$, $\lambda$ is a normal eigenvalue of $T$, which is again a contradiction. This completes the proof of the implication (\ref{clos}.1)$\Longrightarrow$(\ref{clos}.4) and that of Theorem~\ref{clos}.

\section{Proof of Theorem~\ref{normaLLL}\label{s10}}

\begin{lemma}\label{normal1} Let $T$ be a bounded normal operator on a Hilbert space $\hh$. Assume also that there exists a sequence $\{z_n\}_{n\in\N}$ in $\sigma(T)$ such that $1<|z_1|<|z_2|<{\dots}$ \ Then $T\in \WNH(X)$.
\end{lemma}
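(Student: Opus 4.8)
The plan is to reduce the statement to Corollary~\ref{sufwe1} by producing, with the help of the spectral theorem, a sequence of unit vectors $\{x_n\}_{n\in\N}$ in $\hh$ whose orbits grow at geometrically separated rates. First I would invoke the spectral theorem for the bounded normal operator $T$: let $F$ denote its spectral measure on the Borel subsets of $\C$ (supported on $\sigma(T)$), and for each unit vector $v$ let $\mu_v$ be the probability measure $\mu_v(B)=\langle F(B)v,v\rangle$, so that $\|T^kv\|^2=\int_\C|w|^{2k}\,d\mu_v(w)$ for all $k\in\Z_+$. The key elementary fact I would use is that if $v$ lies in the range of $F(U)$ for some Borel set $U$, then $\mu_v$ is carried by $\overline U$; hence if $\overline U$ lies in an annulus $\{\rho\leq|w|\leq\rho'\}$, then $\rho^k\leq\|T^kv\|\leq(\rho')^k$ for every $k\in\Z_+$.

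Next I would set $r_n=|z_n|$ (so $1<r_1<r_2<{\dots}$) and choose radii $\epsilon_n>0$ small enough that the discs $U_n=\{w\in\C:|w-z_n|<\epsilon_n\}$ have, under the map $w\mapsto|w|$, pairwise disjoint images arranged in increasing order and all bounded below by $1$; concretely, pick $\epsilon_n$ with $r_1-\epsilon_1>1$ and $r_n+\epsilon_n<r_{n+1}-\epsilon_{n+1}$ for every $n$, which is possible since $\{r_n\}$ is strictly increasing. Because each $z_n\in\sigma(T)$, the projection $F(U_n)$ is non-zero, so I can choose a unit vector $x_n$ in its range. By the fact recalled above, $(r_n-\epsilon_n)^k\leq\|T^kx_n\|\leq(r_n+\epsilon_n)^k$ for all $k\in\Z_+$. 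In particular $T^kx_n\neq 0$ for every $(k,n)$, $\|T^kx_1\|\to\infty$ (since $r_1-\epsilon_1>1$), and for each $n$ the ratio $\|T^kx_{n+1}\|/\|T^kx_n\|$ is bounded below by $\bigl((r_{n+1}-\epsilon_{n+1})/(r_n+\epsilon_n)\bigr)^k\to\infty$ as $k\to\infty$. Thus $\{x_n\}_{n\in\N}$ together with $A=\N$ verifies the hypotheses of Corollary~\ref{sufwe1}, which yields $T\in\WNH(\hh)$.

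I do not expect a serious obstacle here. The only points requiring a little care are the choice of the $\epsilon_n$ and the justification that a vector $v$ in the range of $F(U_n)$ has scalar spectral measure supported on $\overline{U_n}$ — but this is immediate from $F(B)v=F(B\cap U_n)v$ for $v\in F(U_n)\hh$ together with $(\C\setminus\overline{U_n})\cap U_n=\varnothing$. It is worth noting that condition (\ref{suffwnhid}.1) of Theorem~\ref{suffwnhid} cannot be applied directly, since the points $z_n\in\sigma(T)$ need not be eigenvalues of $T$; replacing eigenvectors by vectors in the spectral subspaces $F(U_n)\hh$, as above, is precisely what circumvents this difficulty, and the normality of $T$ is what makes the resulting norm estimates on $\|T^kx_n\|$ available.
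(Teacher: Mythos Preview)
Your argument is correct and follows essentially the same route as the paper: both localize near each $z_n$ via the spectral theorem (you use the spectral projections $F(U_n)$, the paper uses continuous functional calculus with bump functions $\phi_n$ supported in small discs around $z_n$) to obtain unit vectors $x_n$ satisfying $(|z_n|-\epsilon_n)^k\leq\|T^kx_n\|\leq(|z_n|+\epsilon_n)^k$, and then apply Corollary~\ref{sufwe1}. The paper's choice $\epsilon_n=\frac13\min\{|z_n|-|z_{n-1}|,|z_{n+1}|-|z_n|\}$ (with $z_0=1$) is precisely the kind of separation you describe.
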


\begin{proof} Let $r_k=\frac13\min\{|z_k|-|z_{k-1}|,|z_{k+1}|-|z_k|\}$ for $k\in\N$, where we assume $z_0=1$. For each $k\in\N$ pick a continuous function $\phi_k:\C\to[0,1]$ such that $\phi_k(z_k)=1$ and $\phi_k$ vanishes outside the disk $z_k+r_k\D$. The functional calculus for normal operators allows us to consider the operators $\phi_k(T)$. Since $z_k\in \sigma(T)$ and $\phi_k(z_k)\neq 0$, $\phi_k(T)\neq 0$ for every $k\in\N$. Thus we can pick $x_k\in S(\hh)\cap \phi_k(T)(\hh)$. The latter inclusion and the fact that $\phi_k$ vanishes outside the disk $z_k+r_k\D$ implies that $(|z_k|-r_k)^n\leq \|T^nx_k\|\leq (|z_k|+r_k)^n$ for every $k\in\N$ and every $n\in\Z_+$. These estimates together with the inequalities $|z_1|-r_1>1$ and $|z_{k+1}|-r_{k+1}>|z_k|+r_k$ for $k\in\N$ imply that $\|T^n x_1\|\to \infty$ and $\frac{\|T^n x_{k+1}\|}{\|T^n x_k\|}\to\infty$ as $n\to\infty$. By Corollary~\ref{sufwe1}, $T\in \WNH(X)$.
\end{proof}

\begin{lemma}\label{posa} Let $T$ be a normal operator on Hilbert space $\hh$ such that the set $\{-|z|:z\in \sigma(T)\}$ is well-ordered and for every $r>1$, the set $\sigma(T)\cap r\T$ has at most one element. Then for each  $x\in\hh$ and $f\in\hh^*$, either $\{f(T^nx):n\in\Z_+\}$ is bounded or $|f(T^nx)|\to\infty$. In particular, $T\notin\WNH(\hh)$.
\end{lemma}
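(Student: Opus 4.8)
The plan is to use the spectral theorem for the normal operator $T$ to turn numerical orbits into moment sequences of finite complex measures, and then show such sequences are either bounded or escape to infinity. Represent $f$ by Riesz as $f(z)=\langle z,y\rangle$ with $y\in\hh$, and let $E$ be the spectral measure of $T$. Then $\mu=\mu_{x,y}$ defined by $\mu(B)=\langle E(B)x,y\rangle$ is a finite complex Borel measure on $\sigma(T)$ with $|\mu|(\sigma(T))\leq\|x\|\,\|y\|$, and
$$
f(T^nx)=\langle T^nx,y\rangle=\int_{\sigma(T)}\lambda^n\,d\mu(\lambda)\qquad(n\in\Z_+).
$$
So it suffices to prove that this moment sequence is bounded or tends to $\infty$ in modulus.

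Next I would isolate the part of the spectrum that matters. Put $\Sigma_{>1}=\{z\in\sigma(T):|z|>1\}$. By hypothesis the modulus is injective on $\Sigma_{>1}$, and $\{-|z|:z\in\Sigma_{>1}\}$ is a subset of the well-ordered set $\{-|z|:z\in\sigma(T)\}$, hence well-ordered, hence countable (a well-ordered subset $A$ of $\R$ is countable, since each $a\in A$ other than $\max A$ has a least element $a^+$ of $A$ above it, and the non-empty open intervals $(a,a^+)$ are pairwise disjoint). Therefore $\Sigma_{>1}=\{\lambda_k:k\in K\}$ for some $K\subseteq\N$, the restriction of $\mu$ to $\Sigma_{>1}$ is purely atomic, and with $c_k:=\mu(\{\lambda_k\})$ we have $\sum_{k\in K}|c_k|\leq|\mu|(\sigma(T))<\infty$. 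Splitting $\sigma(T)=\Sigma_{>1}\cup\{z\in\sigma(T):|z|\leq1\}$ gives
$$
\langle T^nx,y\rangle=\sum_{k\in K}c_k\lambda_k^n+b_n\qquad(n\in\Z_+),
$$
where the series converges absolutely (as $\sum_k|c_k|\,|\lambda_k|^n\leq\|T\|^n\sum_k|c_k|<\infty$) and $|b_n|\leq M:=|\mu|(\{z\in\sigma(T):|z|\leq1\})$ for every $n$.

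Now suppose $\{\langle T^nx,y\rangle\}$ is unbounded; I must derive $|\langle T^nx,y\rangle|\to\infty$. Since $\{b_n\}$ is bounded, some $c_k\neq0$, so $\{|\lambda_k|:c_k\neq0\}$ is a non-empty subset of $\{|z|:z\in\sigma(T)\}$; by well-orderedness it has a maximum $R$, attained at a unique index $m$ by injectivity of the modulus on $\Sigma_{>1}$. Applying the same reasoning to $\{|\lambda_k|:c_k\neq0,\ k\neq m\}$, this set (if non-empty) has a maximum $R'$ with $1<R'<R$; in any case $|\lambda_k|\leq R'<R$ for all $k\neq m$ with $c_k\neq0$. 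Hence $|c_m\lambda_m^n|=|c_m|R^n$, while $\bigl|\sum_{k\neq m}c_k\lambda_k^n\bigr|\leq(R')^n\sum_k|c_k|$ and $|b_n|\leq M$ are both $o(R^n)$ since $R>R'>1$. Thus $\langle T^nx,y\rangle=c_m\lambda_m^n(1+o(1))$, so $|\langle T^nx,y\rangle|=|c_m|R^n(1+o(1))\to\infty$ because $c_m\neq0$ and $R>1$. For the last assertion: if $T$ were weakly numerically hypercyclic, Proposition~\ref{ele1} would furnish $x\in\hh$ and $f\in\hh^*$ with $O(T,x,f)=\{f(T^nx):n\in\Z_+\}$ dense in $\C$, hence unbounded yet not tending to $\infty$ in modulus — contradicting the dichotomy just established; so $T\notin\WNH(\hh)$.

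The substance of the argument lies in the two uses of well-orderedness: first to force $\Sigma_{>1}$ to be countable (so that $\mu$ restricted there is a norm-summable atomic measure), and then to produce a single dominant atom $\lambda_m$ together with a uniform spectral gap $R-R'>0$ separating it from all remaining atoms of modulus greater than $1$; without either hypothesis (well-ordering, or one point per circle $r\T$) the spectrum could, e.g., fill an annulus and the conclusion would fail. The rest is routine spectral-theoretic bookkeeping as above.
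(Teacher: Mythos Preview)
Your proof is correct and follows essentially the same route as the paper: use the spectral theorem to write $f(T^nx)$ as a bounded term plus an $\ell^1$-weighted sum of $n$th powers over the countable set $\{z\in\sigma(T):|z|>1\}$, then invoke well-orderedness to extract a unique dominant term. The only cosmetic differences are that the paper phrases the decomposition as ``normal contraction $\oplus$ diagonal operator'' rather than via the spectral measure $\mu_{x,y}$, and that the paper handles the tail $\sum_{|t|<c}a_t t^n/t_0^n$ by dominated convergence for sums rather than by your second application of well-orderedness to produce the gap $R'<R$.
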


\begin{proof} Since every well-ordered subset of $\R$ is countable, $M=\{z\in\sigma(T):|z|>1\}$ is (finite or) countable. By the Spectral Theorem, $T$ is unitarily equivalent to the orthogonal direct sum of a normal contraction and a diagonal (with respect to an orthonormal basis) operator with only elements of $M$ on the diagonal.
Taking this into account, we see that for each $f\in\hh^*$ and $x\in \hh$,
\begin{equation}\label{equ}\textstyle
f(T^nx)=b_n+\sum\limits_{t\in M} a_tt^n,\ \ \text{where $a\in\ell^1(M)$ and $b\in\ell^\infty(\Z_+)$.}
\end{equation}
If $a=0$, then $\{f(T^nx):n\in\Z_+\}=\{b_n:n\in\Z_+\}$ is bounded. Assume now that $a\neq 0$. Since $\{-|z|:z\in \sigma(T)\}$ is well-ordered, there is $c=\max\{|t|: t\in M,\ a_t\neq 0\}$. Since $\sigma(T)\cap c\T$ is a singleton, there is the unique $t_0\in M$ satisfying $|t_0|=c$. Then 
$$\textstyle
|f(T^nx)|=|a_{t_0}|c^n\biggl|1+\frac{b_n}{a_{t_0}t_0^n}+\sum\limits_{t\in M,\ |t|<c}\frac{a_tt^n}{a_{t_0}t_0^n}\biggr|=|a_{t_0}|c^n(1+o(1))\ \ \text{as $n\to\infty$}.
$$
Since $c>1$, $|f(T^nx)|\to\infty$.
\end{proof}

\begin{lemma}\label{nonono} Let $T$ be a normal operator on Hilbert space $\hh$ such that there is a sequence $\{\lambda_n\}_{n\in\N}$ in $\sigma(T)$ such that $1<|\lambda_1|<|\lambda_2|<{\dots}$ and the numbers $\frac{\lambda_j}{|\lambda_j|}$ are pairwise distinct. Then $T\in\NH(\hh)$.
\end{lemma}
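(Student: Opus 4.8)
The plan is to reduce the lemma to Lemma~\ref{ddIaa00} by means of the multiplication–operator form of the spectral theorem; the arithmetic content is entirely contained in Lemma~\ref{ddIaa00}, which we are free to invoke, so the work here is purely a translation. A naive attempt to take an orthonormal family $\{e_j\}$ with $e_j$ in the range of the spectral projection $E(\lambda_j+r_j\D)$ and set $x=\sum_j\sqrt{a_j}e_j$ fails, because $\langle T^ne_j,e_j\rangle$ does not approximate $\lambda_j^n$ uniformly in $n$; the functional–calculus route below sidesteps this.

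\textbf{Step 1: reduce to the separable case.} Since $\lambda_n\in\sigma(T)$, the spectral projection $E(\lambda_n+\tfrac1m\D)$ of $T$ is non-zero for all $n,m\in\N$. I would pick unit vectors $v_{n,m}$ in their ranges and let $\mathcal K$ be the smallest closed subspace containing all $v_{n,m}$ that is invariant under both $T$ and $T^*$; then $\mathcal K$ is separable, reduces $T$, and $E(\cdot)\mathcal K\subseteq\mathcal K$, so the spectral measure of $T|_{\mathcal K}$ does not vanish on any $\lambda_n+\tfrac1m\D$, whence $\lambda_n\in\sigma(T|_{\mathcal K})$ for every $n$. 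By the third item of Proposition~\ref{ele} it suffices to prove $T|_{\mathcal K}\in\NH(\mathcal K)$, so from now on $\hh$ may be assumed separable. (One could skip this step and apply the spectral theorem to $T$ directly, as Lemma~\ref{ddIaa00} is stated for an arbitrary measure space; I keep it only to stay within the most standard version of the spectral theorem.)

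\textbf{Step 2: apply the spectral theorem and Lemma~\ref{ddIaa00}.} Next I would invoke the fact that a normal operator on a separable Hilbert space is unitarily equivalent to multiplication $M_g$ by some $g\in L^\infty(\mu)$ on $L^2(\mu)$ for a suitable finite measure space $(\Omega,\mathcal F,\mu)$, with the essential range of $g$ equal to $\sigma(M_g)=\sigma(T)$. In particular the essential range of $g$ contains all the $\lambda_n$, and the hypotheses provide $1<|\lambda_1|<|\lambda_2|<\cdots$ together with pairwise distinct $\frac{\lambda_j}{|\lambda_j|}$. Thus Lemma~\ref{ddIaa00} yields a non-negative $a\in L^1(\mu)$ with $\bigl\{\int_\Omega g^k a\,d\mu:k\in\Z_+\bigr\}$ dense in $\C$.

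\textbf{Step 3: produce the vector.} Finally, density forces $\int_\Omega a\,d\mu>0$, and rescaling $a$ multiplies the dense set by a positive constant (hence preserves density), so I may normalize to $\int_\Omega a\,d\mu=1$. Then $h=\sqrt a\in L^2(\mu)$ is a unit vector with $\langle M_g^kh,h\rangle=\int_\Omega g^k|h|^2\,d\mu=\int_\Omega g^k a\,d\mu$; transporting $h$ back through the unitary equivalence gives $x\in S(\hh)$ with $\NO(T,x)=\bigl\{\int_\Omega g^k a\,d\mu:k\in\Z_+\bigr\}$ dense in $\C$, so $T\in\NH(\hh)$ by the last item of Proposition~\ref{ele}. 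The only point requiring care is Step~1 together with the precise form of the spectral theorem used — namely that the essential range of $g$ is exactly $\sigma(T)$ and that the underlying measure space may be taken finite; the normalization and the computation in Step~3 are routine.
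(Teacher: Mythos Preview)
Your proof is correct and follows essentially the same route as the paper: model $T$ as multiplication by $g$ on $L^2(\mu)$ via the spectral theorem, invoke Lemma~\ref{ddIaa00} to obtain a non-negative $a\in L^1(\mu)$ with $\{\int g^ka\,d\mu:k\in\Z_+\}$ dense, normalize, and set $x=\sqrt{a}$. The paper omits your Step~1 entirely (as you yourself observe is permissible, since Lemma~\ref{ddIaa00} is stated for arbitrary measure spaces), but the argument is otherwise identical.
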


\begin{proof} By the Spectral Theorem we can assume that $\hh=L^2(\mu)$, where $(\Omega,{\cal F},\mu)$ is a measure space and $Tf=gf$ for $f\in\hh$, where $g\in L^\infty(\mu)$. Then the spectrum of $T$ is exactly the essential range of $g$. By Lemma~\ref{ddIaa00}, we can pick a non-negative real valued function $a\in L^1(\mu)$ for which
$$
\text{the set}\ \ \ \biggl\{\int_\Omega g^k a\,d\mu:k\in\Z_+\biggr\}\ \ \text{is dense in $\C$.}
$$
Without loss of generality, we may assume that $a$ is normalized to satisfy $\|a\|_1=1$. Then $x=\sqrt{a}\in S(\hh)$ and $\langle T^nx,x\rangle=\int_\Omega g^k a\,d\mu$ for every $n\in\Z_+$. Thus $\NO(T,x)$ is dense in $\C$ and therefore $T\in\NH(\hh)$.
\end{proof}

Now we are ready to prove Theorem~\ref{normaLLL}. Let $\hh$ be a Hilbert space, $T\in L(\hh)$ and $k\in\N$ be such that $T^k$ is normal. First, assume that there is a sequence $\{\lambda_n\}_{n\in\N}$ in $\sigma(T)$ such that $1<|\lambda_1|<|\lambda_2|<{\dots}$. Then $\{\lambda_n^k\}_{n\in\N}$ is a sequence in $\sigma(T^k)$ satisfying $1<|\lambda_1^k|<|\lambda_2^k|<{\dots}$. Since $T^k$ is normal, Lemma~\ref{normal1} yields the weak numeric hypercyclicity of $T^k$. Then $T$ is weakly numerically hypercyclic, which completes the proof of (\ref{normaLLL}.1).

Next, assume that there is a sequence $\{\lambda_n\}_{n\in\N}$ in $\sigma(T)$ such that $1<|\lambda_1|<|\lambda_2|<{\dots}$ and the numbers $\frac{\lambda_j}{|\lambda_j|}$ are pairwise distinct. Then $\{\lambda_n^k\}_{n\in\N}$ is a sequence in $\sigma(T^k)$ satisfying $1<|\lambda_1^k|<|\lambda_2^k|<{\dots}$ and the set $\bigl\{\frac{\lambda_n^k}{|\lambda_n^k|}:n\in\N\bigr\}$ is infinite. Then there is a strictly increasing sequence $\{n_j\}$ of positive integers such that the numbers $\frac{\lambda_{n_j}^k}{|\lambda_{n_j}^k|}$ are pairwise distinct. Since $T^k$ is normal, Lemma~\ref{nonono} implies that $T^k$ is numerically hypercyclic. Hence $T$ is numerically hypercyclic, which completes the proof of (\ref{normaLLL}.2).

Now assume that $T^k$ is self-adjoint. Then $T^2k$ is self-adjoint and non-negative definite. If $\{-|z|:z\in\sigma(T),\ |z|>1\}$ is not well-ordered, we can pick a sequence $\{\lambda_n\}_{n\in\N}$ in $\sigma(T)$ such that $1<|\lambda_1|<|\lambda_2|<{\dots}$ By (\ref{normaLLL}.1), $T\in\WNH(\hh)$. Assume now that
$\{-|z|:z\in\sigma(T),\ |z|>1\}$ is well-ordered. Since $\sigma(T^{2k})\subset[0,\infty)$, $\{-|z|:z\in\sigma(T^{2k}),\ |z|>1\}$ is well-ordered and $\sigma(T^{2k})\cap r\T$ has at most one element for each $r>1$. By Lemma~\ref{posa}, for every $x\in\hh$ and $f\in\hh^*$, $O(T^{2k},x,f)$ is either bounded or nowhere dense in $\C$. Now let $x\in\hh$ and $f\in \hh^*$. Then $O(T,x,f)=\bigcup\limits_{j=0}^{2k-1}O(T^{2k},T^jx,f)$.
Hence $O(T,x,f)$ is the union of finitely many sets each of which is either bounded or nowhere dense. Thus $O(T,x,f)$ is not dense in $\C$ (the set of accumulation points of $O(T,x,f)$ is bounded). Then $T\notin\WNH(\hh)$, which completes the proof of (\ref{normaLLL}.3).

Finally, assume that there exist $r>1$ and a $T$-invariant subspace ${\cal K}$ such that $U=\frac1rT^k\bigr|_{\cal K}\in L({\cal K})$ is a unitary operator with infinite spectrum.
If $\sigma_p(U)$ is infinite, then there are a sequence $\{z_n\}_{n\in\N}$ of pairwise distinct elements of $\T$ and an orthonormal sequence $\{e_n\}_{n\in\N}$ in ${\cal K}\subseteq\hh$ such that $T^ke_n=Ue_n=rz_ne_n$ for every $n\in\N$. Thus $T^k$ satisfies (\ref{suffsnh}.4) and therefore $T^k\in\SNH(\hh)\subset\NH(\hh)$ by Theorem~\ref{suffsnh}. Hence $T\in\NH(\hh)$. It remains to consider the case when $\sigma_p(U)$ is finite. Since $\sigma(U)$ is infinite, the Spectral Theorem tells us that the restriction of $U$ to an appropriate invariant subspace ${\cal K}_0$ is unitarily equivalent to the multiplication by the argument operator $M\in L(L^2(\mu))$, $Mf(z)=zf(z)$, where $\mu$ is a Borel probability purely non-atomic measure on $\T$. By Proposition~\ref{ele}, in order to prove that $T$ is numerically hypercyclic, it suffices to verify that $rM$ is numerically hypercyclic. By Lemma~\ref{proba}, there is $a\in L^1_+(\mu)$ such that $\|a\|_1=1$ and the set $O=\{r^n\widehat{a\mu}(n):n\in\N\}$ is dense in $\C$. Now it is easy to see that $x=\sqrt{a}\in S(L^2(\mu))$ and $\langle M^nx,x\rangle =\widehat{a\mu}(n)$ for each $n\in\Z$. Hence $\NO(rM,x)=O$ is dense in $\C$. Thus $rM$ is numerically hypercyclic and therefore $T$ is. This completes the proof of (\ref{normaLLL}.4) and that of Theorem~\ref{normaLLL}.

\section{Proof of Propositions~\ref{scalar} and~\ref{scalar2}\label{s11}}

Let $T\in L(\C^2)$. Pick $w\in\T$ such that at least one of the eigenvalues of $wT$ is real. By Theorem~\ref{2dim}, $wT\notin\WNH(\C^2)$. This proves Proposition~\ref{scalar}.
The rest of this section is devoted to the proof of Proposition~\ref{scalar2}, which is more sophisticated. Denote
$$
{\cal M}=\biggl\{(r,z,w)\in(1,\infty)\times\T\times\T:
\left(\!\begin{array}{cc}\!\!rz\!\!&0\\ 0&\!\!rw\!\!\end{array}\!\right)
\in\NH(\C^2)\biggr\}.
$$
By Theorem~\ref{2dim}, we can rewrite the definition of ${\cal M}$ in the following equivalent way.
\begin{equation}\label{calm}
{\cal M}=\bigl\{(r,z,w)\in(1,\infty)\times\T\times\T:\{r^n(z^n+w^n):n\in\Z_+\}\ \ \text{is dense in $\C$}\bigr\}.
\end{equation}

Then the sets $M(z,w)$ featuring in Proposition~\ref{scalar2} can be written as
\begin{align*}
M(z,w)&=\biggl\{r>1:\left(\!\begin{array}{cc}\!\!rz\!\!&0\\ 0&\!\!rw\!\!\end{array}\!\right)\in\NH(\C^2)\biggr\}=\{r>1:(r,z,w)\in{\cal M}\}
\\
&=\bigl\{r>1:\{r^n(z^n+w^n):n\in\Z_+\}\ \ \text{is dense in $\C$}\bigr\}\ \ \text{for $z,w\in\T$}.
\end{align*}
If $\{V_k\}_{k\in\N}$ is a base of the topology of $\C$, then we can write
$$
{\cal M}=\bigcap_{k,m\in\N}\bigcup_{n=m}^\infty \bigl\{(r,z,w)\in(1,\infty)\times\T\times\T:r^n(z^n+w^n)\in V_k\bigr\}.
$$
This immediately implies that ${\cal M}$ is a $G_\delta$ subset of $(1,\infty)\times\T\times\T$ and therefore each $M(z,w)$ is a $G_\delta$-subset of $(1,\infty)$.

\begin{lemma}\label{muzz} The set ${\cal M}$ is a dense $G_\delta$-subset of $(1,\infty)\times\T\times\T$. Furthermore,
$$
W=\bigl\{(z,w)\in\T^2:\text{$M(z,w)$ is dense in $(1,\infty)$}\bigr\}\ \ \ \text{is a dense $G_\delta$ subset of $\T^2$.}
$$
\end{lemma}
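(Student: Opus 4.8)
The plan is to combine the Baire category theorem with the approximation already carried out in the proof of Proposition~\ref{znwn}, and then to transfer the resulting genericity statement from the total space to the $\T^2$-fibres by a Kuratowski--Ulam type argument.

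First, recall from the discussion preceding the lemma that $\M=\bigcap_{k,m\in\N}G_{k,m}$, where $\{V_k\}_{k\in\N}$ is a fixed countable base of $\C$ and
$$
G_{k,m}=\bigcup_{n=m}^\infty\bigl\{(r,z,w)\in(1,\infty)\times\T^2:r^n(z^n+w^n)\in V_k\bigr\}
$$
is open in the Polish space $X=(1,\infty)\times\T\times\T$. I first show that each $G_{k,m}$ is dense in $X$. Given $(r_0,z_0,w_0)\in X$, a neighbourhood of it, and $m\in\N$, pick $y\in V_k$. Exactly as in the proof of Proposition~\ref{znwn} applied with $R=r_0$, for every sufficiently large $n$ one finds $a_n,b_n\in\T$ with $r_0^n(a_n+b_n)=y$ and then $z_n,w_n\in\T$ with $z_n^n=a_n$, $w_n^n=b_n$, $|z_n-z_0|<\pi/n$ and $|w_n-w_0|<\pi/n$; choosing $n\geq m$ large enough makes $(r_0,z_n,w_n)$ lie in the prescribed neighbourhood while $r_0^n(z_n^n+w_n^n)=y\in V_k$, so $(r_0,z_n,w_n)\in G_{k,m}$. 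Hence each $G_{k,m}$ is dense, and the Baire theorem gives that $\M=\bigcap_{k,m}G_{k,m}$ is a dense $G_\delta$-subset of $X$. (Alternatively, the first assertion follows directly from Theorem~U with $f_n(r,z,w)=r^n(z^n+w^n)$, the density of $\{(r,z,w,f_n(r,z,w)):n\in\Z_+,\ (r,z,w)\in X\}$ in $X\times\C$ being the same computation; but the Baire phrasing feeds more smoothly into the rest.)

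For the statement about $W$, fix a countable base $\{I_l\}_{l\in\N}$ of $(1,\infty)$, write $G_{k,m}(z,w)=\{r>1:(r,z,w)\in G_{k,m}\}$ for $(z,w)\in\T^2$ (an open subset of $(1,\infty)$), and note that $M(z,w)=\bigcap_{k,m}G_{k,m}(z,w)$, so $M(z,w)$ is always $G_\delta$. By the Baire theorem for $(1,\infty)$, $M(z,w)$ is dense if and only if every $G_{k,m}(z,w)$ is dense, i.e. if and only if $G_{k,m}(z,w)\cap I_l\neq\varnothing$ for all $k,m,l$. Let $p:X\to\T^2$ be the coordinate projection; then $W_{k,m,l}:=\{(z,w)\in\T^2:G_{k,m}(z,w)\cap I_l\neq\varnothing\}=p\bigl(G_{k,m}\cap(I_l\times\T^2)\bigr)$. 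Since $p$ is an open map and $G_{k,m}\cap(I_l\times\T^2)$ is open in $X$, the set $W_{k,m,l}$ is open in $\T^2$; and it is dense, because for every non-empty open $O\subseteq\T^2$ the non-empty open set $I_l\times O$ meets the dense set $G_{k,m}$. Therefore $W=\bigcap_{k,m,l}W_{k,m,l}$ is a countable intersection of open dense subsets of the compact metric space $\T^2$, hence a dense $G_\delta$-subset of $\T^2$ by the Baire theorem.

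The whole argument is routine once Proposition~\ref{znwn} is available; the only point requiring a little care is the reduction of ``$M(z,w)$ is dense'' to a countable family of conditions that are \emph{open} in $(z,w)$, which uses both that each fibre $M(z,w)$ is $G_\delta$ (so that Baire applies fibrewise) and that each $G_{k,m}$ is genuinely open in the product $X$ (so that its projection to $\T^2$ is open).
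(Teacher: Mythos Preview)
Your proof is correct and follows essentially the same approach as the paper: both use the representation $\M=\bigcap_{k,m}G_{k,m}$ together with Baire, and both reduce ``$M(z,w)$ is dense'' to a countable family of open-in-$(z,w)$ conditions via Baire in the fibre. The only organizational difference is that the paper handles $W$ in two steps---first showing density by exhibiting the dense $G_\delta$ subset $\bigcap_n\{(z,w):r_n\in M(z,w)\}\subseteq W$ for a countable dense set $\{r_n\}\subset(1,\infty)$ (each factor is dense $G_\delta$ by Proposition~\ref{znwn}), and then separately verifying that $W$ is $G_\delta$ via an explicit intersection over rational intervals---whereas you do both at once by writing $W=\bigcap_{k,m,l}W_{k,m,l}$ with $W_{k,m,l}=p\bigl(G_{k,m}\cap(I_l\times\T^2)\bigr)$ open and dense. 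Your packaging as a Kuratowski--Ulam type projection argument is slightly cleaner, but the underlying ideas are identical.
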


\begin{proof} By Proposition~\ref{znwn}, for every $r>1$, the set $r\T\cap{\cal M}$ is dense in $r\T$. Hence $\cal M$ is dense in $(1,\infty)\times\T\times\T$. Let $\{r_n:n\in\N\}$ be a dense subset of $(1,\infty)$. By Proposition~\ref{znwn}, the set $W_n=\{(z,w)\in\T^2:r_n\in M(z,w)\}$ is a dense $G_\delta$ subset of $\T^2$ for every $n\in\N$. Hence $\bigcap\limits_{n\in\N}W_n$ is a dense $G_\delta$ subset of $\T^2$, being also a subset of $W$. Hence $W$ is dense in $\T^2$. It remains to show that $W$ is a $G_\delta$ set. 

Let $\{V_k\}_{k\in\N}$ is a base of the topology of $\C$. It is easy to verify that 
$$
A_{a,b,m,k}=\bigcup_{n=m}^\infty \bigl\{(z,w)\in\T^2:(z^n+w^n)[a,b]\cap V_k\neq\varnothing\bigr\}
$$
is an open subset of $\T^2$ whenever $m,k\in\N$ and $1<a<b<\infty$. It remains to notice that 
$$
W=\bigcap_{m,k\in\N}\bigcap_{a,b\in\Q,\ 1<a<b} A_{a,b,m,k}
$$
to conclude that $W$ is a $G_\delta$ subset of $\T^2$. 
\end{proof}

By Lemma~\ref{2dim}, $M(z,w)=\varnothing$ if $z$ and $w$ are not independent. Actually, the equality $M(z,w)=\varnothing$ is a rather typical occurrence.

\begin{lemma}\label{typy}
The set\ \ $\bigl\{(z,w)\in\T^2:M(z,w)\neq\varnothing\bigr\}$\ \ has zero Lebesgue measure in $\T^2$.
\end{lemma}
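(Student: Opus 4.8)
The plan is to bound the set in question by a Borel null set, using the explicit measure formula (\ref{lebes}) together with the Borel--Cantelli lemma. The first step is the key reduction: I claim that $M(z,w)\neq\varnothing$ forces $\liminf_{n\to\infty}|z^n+w^n|^{1/n}<1$. Indeed, if $r\in M(z,w)$ then by (\ref{calm}) the set $\{r^n(z^n+w^n):n\in\Z_+\}$ is dense in $\C$, so it cannot be true that $r^n|z^n+w^n|\to\infty$ (otherwise the sequence would eventually leave every disk). Hence there is an infinite $A\subseteq\N$ with $C:=\sup_{n\in A}r^n|z^n+w^n|<\infty$, which gives $|z^n+w^n|^{1/n}\leq C^{1/n}r^{-1}$ for $n\in A$ and therefore $\liminf_{n\to\infty}|z^n+w^n|^{1/n}\leq r^{-1}<1$. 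Consequently
\[
\bigl\{(z,w)\in\T^2:M(z,w)\neq\varnothing\bigr\}\subseteq\Omega:=\Bigl\{(z,w)\in\T^2:\liminf_{n\to\infty}|z^n+w^n|^{1/n}<1\Bigr\}=\bigcup_{k\geq2}\Omega_k,
\]
where $\Omega_k=\bigl\{(z,w)\in\T^2:\liminf_{n\to\infty}|z^n+w^n|^{1/n}\leq1-\tfrac1k\bigr\}$, the last equality being immediate since every real number less than $1$ is $\leq1-\tfrac1k$ for some $k\geq2$.

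Next I would show that $\mu(\Omega_k)=0$ for each $k$, where $\mu$ is the normalized Lebesgue measure on $\T^2$. Fix $k\geq2$ and choose $\rho\in(1-\tfrac1k,1)$. If $(z,w)\in\Omega_k$ then $|z^n+w^n|^{1/n}<\rho$ for infinitely many $n$, i.e.\ $(z,w)\in E_n:=\{(z,w)\in\T^2:|z^n+w^n|<\rho^n\}$ for infinitely many $n$; thus $\Omega_k\subseteq\bigcap_m\bigcup_{n\geq m}E_n$. By (\ref{lebes}) and the elementary inequality $\arcsin t\leq\tfrac\pi2 t$, valid for $t\in[0,1]$, one gets $\mu(E_n)\leq\tfrac2\pi\arcsin(\rho^n/2)\leq\tfrac12\rho^n$, and $\sum_n\mu(E_n)<\infty$ because $\rho<1$. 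The Borel--Cantelli lemma then gives $\mu\bigl(\bigcap_m\bigcup_{n\geq m}E_n\bigr)=0$, hence $\mu(\Omega_k)=0$. Since a countable union of null sets is null, $\mu(\Omega)=0$; the set in the statement, being a subset of the Borel null set $\Omega$, is then Lebesgue-null as well.

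I do not expect a genuine obstacle, so the main point to get right is the reduction in the first step: a priori $M(z,w)\neq\varnothing$ is a condition quantified over the continuum of radii $r>1$, but the argument only needs the qualitative consequence that \emph{some} $r>1$ makes the exponential decay rate of $|z^n+w^n|$ strictly smaller than $1$ --- a statement with no quantitative dependence on $r$ --- so it reduces to the countably many thresholds $1-\tfrac1k$ and a single application of Borel--Cantelli. This is the same mechanism as in the measure-zero half of Proposition~\ref{znwn}, but rephrased purely in terms of the decay exponent so as to cover all admissible $r$ simultaneously rather than a fixed countable set of them.
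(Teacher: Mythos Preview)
Your proof is correct and rests on the same two ingredients as the paper's: the measure formula (\ref{lebes}) and Borel--Cantelli. The paper's argument is a touch more economical, though. Rather than passing to the decay exponent $\liminf|z^n+w^n|^{1/n}$ and decomposing into the countably many $\Omega_k$'s, the paper applies Borel--Cantelli just once with the \emph{polynomial} threshold $|z^n+w^n|\leq n^{-2}$: since $\sum_n \tfrac2\pi\arcsin(1/(2n^2))<\infty$, almost every $(z,w)$ satisfies $|z^n+w^n|>n^{-2}$ for all sufficiently large $n$, whence $r^n|z^n+w^n|>r^n n^{-2}\to\infty$ for \emph{every} $r>1$ simultaneously, giving $M(z,w)=\varnothing$. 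So the worry you raise in your last paragraph --- how to handle the continuum of radii --- is resolved more directly by choosing a single subexponential threshold than by slicing into countably many exponential ones; your route works, but the detour through the $\Omega_k$ is avoidable.
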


\begin{proof} Let $\mu$ be the normalized Lebesgue measure on $\T^2$. By (\ref{lebes}), $\sum\limits_{n=1}^\infty \mu\bigl\{(z,w){\in}\T^2:|z^n{+}w^n|\leq \frac1{n^{2}}\bigr\}<\infty$.
Hence for almost all $(z,w)\in\T^2$, $|z^n+w^n|>n^{-2}$ for all sufficiently large $n$ and therefore $r^n|z^n+w^n|\to\infty$ for every $r>1$.  Hence $M(z,w)=\varnothing$ for almost all $z,w\in\T^2$. \end{proof}

On the other hand, it turns out that if $M(z,w)$ is non-empty, then it is infinite.

\begin{lemma}\label{root} Let $z,w\in\T$, $r\in M(z,w)$ and $k$ be an odd positive integer. Then $r^{1/k}\in M(z,w)$.
\end{lemma}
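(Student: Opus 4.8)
The plan is to reduce the claim to the subsequence of indices divisible by $k$ and then exploit the hypothesis that $k$ is odd, i.e. $(-1)^k=-1$. Since $r\in M(z,w)$, Theorem~\ref{2dim} guarantees that $z$ and $w$ are independent; in particular $w/z$ has infinite order, so $z^n+w^n\neq0$ for every $n$, and $\{r^n(z^n+w^n):n\in\Z_+\}$ is dense in $\C$. As $(r^{1/k})^{kn}(z^{kn}+w^{kn})=r^n(z^{kn}+w^{kn})$, the set $\{r^n(z^{kn}+w^{kn}):n\in\Z_+\}$ is contained in the orbit $\{(r^{1/k})^m(z^m+w^m):m\in\Z_+\}$, so it suffices to prove that $\{r^n(z^{kn}+w^{kn}):n\in\Z_+\}$ is dense in $\C$.

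Write $d_n=r^n(z^n+w^n)$ and, for $\epsilon\in(0,1)$, let $G_\epsilon=\{n\in\Z_+:|z^n+w^n|<\epsilon\}$. For $n\notin G_\epsilon$ one has $|d_n|\geq\epsilon r^n\to\infty$, so every bounded ball contains $d_n$ for only finitely many $n\notin G_\epsilon$; since $\{d_n:n\in\Z_+\}$ is dense, it follows that $\{d_n:n\in G_\epsilon\}$ is dense in $\C$ for every $\epsilon$. Now fix $n\in G_\epsilon$ and put $\mu_n=-w^n/z^n\in\T$, so that $|\mu_n-1|=|z^n+w^n|<\epsilon$ and $z^n+w^n=z^n(1-\mu_n)$. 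Using $k$ odd, $w^{kn}=(-z^n\mu_n)^k=-z^{kn}\mu_n^k$, whence $z^{kn}+w^{kn}=z^{kn}(1-\mu_n^k)=z^{(k-1)n}(z^n+w^n)(1+\mu_n+\dots+\mu_n^{k-1})$ and therefore
$$r^n(z^{kn}+w^{kn})=z^{(k-1)n}d_n\,(1+\mu_n+\dots+\mu_n^{k-1})=k\,z^{(k-1)n}d_n+O(\epsilon\,|d_n|),$$
the implied constant depending only on $k$ (because $\mu_n$ lies within $\epsilon$ of $1$).

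It remains to transfer density from $\{d_n\}$ to $\{z^{(k-1)n}d_n\}$. Let $F\colon\C\to\C$ be $F(\zeta)=\zeta^k/|\zeta|^{k-1}$ for $\zeta\neq0$ and $F(0)=0$; in polar coordinates $F(\rho e^{i\theta})=\rho e^{ik\theta}$, so $|F(\zeta)|=|\zeta|$, and $F$ is continuous and surjective, hence maps every dense subset of $\C$ to a dense subset of $\C$. A short computation (using $|1-\mu_n|^2=-(1-\mu_n)^2/\mu_n$ and, once more, that $(k-1)/2\in\Z$) gives the identity $z^{(k-1)n}d_n=(-1)^{(k-1)/2}\mu_n^{-(k-1)/2}F(d_n)$; since $\mu_n^{-(k-1)/2}=1+O(\epsilon)$ on $G_\epsilon$, this combines with the previous display to yield
$$r^n(z^{kn}+w^{kn})=(-1)^{(k-1)/2}\,k\,F(d_n)+O(\epsilon\,|d_n|)\qquad (n\in G_\epsilon).$$
Given $v\in\C$ and $\eta>0$, first choose $\epsilon$ small in terms of $v,\eta,k$; then, since $\{F(d_n):n\in G_\epsilon\}$ is dense, pick $n\in G_\epsilon$ with $|(-1)^{(k-1)/2}k\,F(d_n)-v|<\eta/2$. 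For this $n$ the quantity $|d_n|=|F(d_n)|$ is bounded in terms of $v,\eta,k$ only, so the error term above is $<\eta/2$ and hence $|r^n(z^{kn}+w^{kn})-v|<\eta$. Thus $\{r^n(z^{kn}+w^{kn}):n\in\Z_+\}$ is dense, i.e. $r^{1/k}\in M(z,w)$.

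The conceptual heart of the argument is the pair of observations in the last two paragraphs: on the resonant set $G_\epsilon$, where $w^n/z^n$ is close to $-1$, the parity of $k$ collapses $z^{kn}+w^{kn}$ to an almost-constant multiple of $z^{(k-1)n}(z^n+w^n)$; and the angle-multiplication map $F$ — which for odd $k$ also satisfies $F(t\zeta)=tF(\zeta)$ for real $t$, so that signs cause no trouble — is exactly the continuous surjection carrying $d_n$ to a constant multiple of $z^{(k-1)n}d_n$, up to an $O(\epsilon)$ error on $G_\epsilon$. Seeing these two facts (and that oddness of $k$ is what makes both work) is the main obstacle; once they are in place, the only thing needing care is keeping the error terms uniform so that one choice of $\epsilon$ handles a prescribed target and accuracy, which is routine.
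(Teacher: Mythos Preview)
Your proof is correct and follows essentially the same route as the paper's: restrict to indices $n$ where $|z^n+w^n|$ is small, use the odd-$k$ factorization $z^{kn}+w^{kn}=(z^n+w^n)\sum_{j=0}^{k-1}(-1)^jz^{jn}w^{(k-1-j)n}$ to get $r^n(z^{kn}+w^{kn})\approx k\,z^{(k-1)n}d_n$, and then invoke the map $F(\zeta)=\zeta^k/|\zeta|^{k-1}$ to transfer density. The only differences are bookkeeping: you parametrize the ``resonant'' set by a threshold $\epsilon$ and keep explicit $O(\epsilon|d_n|)$ errors, while the paper uses the set $A=\{n:n^{-1}\le|y_n|\le n\}$ and an $o_r$ notation; and you correctly describe $F$ as a continuous surjection (which is all that is needed to push density forward), whereas the paper calls $\Phi$ a homeomorphism, which is inaccurate for $k>1$ but harmless for the argument.
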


\begin{proof} In this proof, we shall use the symbol $o_r$ for any sequence $\{\alpha_n\}$ of complex numbers satisfying $\limsup |\alpha_n|^{1/n}\leq \frac1r$.
Since $r\in M(z,w)$, the set $\{y_n:n\in\Z_+\}$ is dense in $\C$, where $y_n=r^n(z^n+w^n)$. Let $A=\{n\in\N:n^{-1}\leq |y_n|\leq n\}$. Then  $\{y_n:n\in A\}$ is dense in $\C$. The inequality $|z^n+w^n|\leq nr^{-n}$ for $n\in A$ yields 
\begin{equation}\label{lili}
\text{$w^n=-z^{n}+o_r$ \ and \ $y_n^2=-|y_n|^2z^{2n}+o_r$ \ as $n\to\infty$, $n\in A$.}
\end{equation}
Since $k$ is odd and $r^n(z^{kn}+w^{kn})=r^n(z^n+w^n)\sum\limits_{j=0}^{k-1} (-1)^j z^{jn}w^{(k-1-j)n}$, 
the equality $y_n=r^n(z^n+w^n)$ and (\ref{lili}) imply $r^n(z^{kn}+w^{kn})=y_nz^{(k-1)n}+o_r=\frac{y_n^k}{|y_n|^{k-1}}+o_r$ as $n\to\infty$, $n\in A$. Hence,
$$\textstyle
r^n(z^{kn}+w^{kn})=\Phi(y_n)+o(1)\ \ \text{as $n\to\infty$, $n\in A$,\ \ where $\Phi:\C\setminus\{0\}\to\C$, $\Phi(x)=\frac{x^k}{|x|^{k-1}}$.}
$$
Since $\Phi$ is a homeomorphism of $\C\setminus\{0\}$ onto itself and $\{y_n:n\in A\}$ is dense in $\C$, $\{\Phi(y_n):n\in A\}$ is dense in $\C$. Then by the above display, 
$\{r^n(z^{kn}+w^{kn}):n\in A\}$ is dense in $\C$. Hence the bigger set $\{r^{n/k}(z^n+w^n):n\in\Z_+\}$ is dense in $\C$ and therefore $r^{1/k}\in M(z,w)$. 
\end{proof}

Finally, we shall verify that $M(z,w)$ can not coincide with $(1,\infty)$ by showing that each $M(z,w)$ has zero Lebesgue measure. We need some preparation. Consider the function $\delta:\R\to[0,1]$, $\delta(x)={\rm dist}\,(x,2\Z-1)$. That is, $\delta(x)$ is the distance from $x$ to the nearest odd integer.

\begin{lemma}\label{somth} Let $\theta\in\R\setminus\Q$, $0<a<b<1$ and $A=\{n\in\N:a^n\leq\delta(n\theta)\leq b^n\}$. Then $\sum\limits_{n\in A}\frac1n<\infty$.
\end{lemma}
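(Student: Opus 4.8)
The plan is to combine the theory of continued fractions with the two exponential constraints defining $A$. The condition $\delta(n\theta)\leq b^n$ says that $n\theta$ is fantastically close to an (automatically odd) integer, which by Legendre's theorem forces the approximating fraction to come from the continued fraction expansion of $\theta$; the companion condition $\delta(n\theta)\geq a^n$ then pins the ``multiplicity'' of that convergent to a window of ratio bounded in terms of $a$ and $b$ only. Throughout, $\|x\|=\dist(x,\Z)$; note $\delta(x)\geq\|x\|$, with equality when the nearest integer to $x$ is odd.

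First I would recall the standard facts about the convergents $p_k/q_k$ of $\theta$: $q_{k+1}\geq q_k+q_{k-1}$, so $q_k$ grows at least like the Fibonacci numbers and $\sum_k 1/q_k<\infty$; $\|q_k\theta\|=|q_k\theta-p_k|<1/q_{k+1}\to0$; and (Legendre) if $\gcd(p,q)=1$ and $|\theta-p/q|<\frac1{2q^2}$, then $p/q=p_k/q_k$ for some $k$. Pick $n_0$ with $b^n<\frac14$ and $b^n<\frac1{2n}$ for $n\geq n_0$. If $n\in A$ and $n\geq n_0$, then $\delta(n\theta)\leq b^n<\frac14$, so the nearest integer $N_n$ to $n\theta$ is odd and $\|n\theta\|=\delta(n\theta)\in[a^n,b^n]$. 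Put $d=\gcd(N_n,n)$ and $N_n=dp$, $n=dq$ with $\gcd(p,q)=1$; since $|\theta-p/q|=\|n\theta\|/n\leq b^n/n<\frac1{2q^2}$, Legendre gives $p/q=p_k/q_k$, hence $N_n=dp_k$, $n=dq_k$, and
$$a^{dq_k}\leq\|n\theta\|=|n\theta-N_n|=d\,|q_k\theta-p_k|=d\,\|q_k\theta\|\leq b^{dq_k}.$$
The map $n\mapsto(k,d)$ so obtained is injective, and (feeding $d\geq1$ into the last inequality) $\|q_k\theta\|\leq b^{q_k}$, so only convergent denominators in the thin set $K=\{k:\|q_k\theta\|\leq b^{q_k}\}$ occur.

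For fixed $k\in K$, let $\beta_k=\|q_k\theta\|$, $M_k=\log(1/\beta_k)$, $\alpha=\log(1/a)$, $\gamma=\log(1/b)$ (so $\alpha>\gamma>0$ as $a<b<1$). From $d\beta_k\leq b^{dq_k}$ we get $dq_k\gamma\leq M_k-\log d\leq M_k$, hence $d\leq M_k/(\gamma q_k)$; substituting this into $a^{dq_k}\leq d\beta_k$ gives $dq_k\alpha\geq M_k-\log d\geq M_k-\log M_k$. Since $k\in K$ forces $M_k\geq q_k\gamma$, which is large for $k$ large, we obtain $d\geq M_k/(2\alpha q_k)$. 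Thus for all but finitely many $k$ the set $D_k$ of admissible multipliers $d$ lies in an interval $[d_k^-,d_k^+]$ with $d_k^+/d_k^-\leq 2\alpha/\gamma$, whence $\sum_{d\in D_k}\frac1d\leq 1+\log(2\alpha/\gamma)=:C_1$. Hence $\sum_{n\in A,\,n\geq n_0}\frac1n\leq\sum_{k}\frac1{q_k}\sum_{d\in D_k}\frac1d\leq C_1\sum_k\frac1{q_k}+(\text{a finite correction for the finitely many small }k)<\infty$, and adding the finitely many $n\in A$ below $n_0$ completes the proof.

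The crux is the paragraph above: that the window of admissible multipliers for each convergent has ratio bounded uniformly in $k$. This uses both halves of the definition of $A$ in an essential way — the upper bound $b^n$ caps $d$, the lower bound $a^n$ bounds $d$ below by a comparable quantity — and without the lower bound a Liouville-type $\theta$, for which some $\|q_k\theta\|$ is astronomically small, could make a single $q_k$ contribute too much to $\sum 1/n$.
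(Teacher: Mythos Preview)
Your proof is correct and structurally parallel to the paper's: both write each $n\in A$ as $n=dq$ with $q$ ranging over a sparse ``base'' set of denominators satisfying $\sum 1/q<\infty$, and with the multiplier $d$ confined to an interval of uniformly bounded ratio, so that $\sum_{d}1/d$ is bounded by a constant independent of $q$.

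The genuine difference lies in how the base set is identified and shown to be sparse. You invoke continued fractions: Legendre's theorem forces $q$ to be a convergent denominator $q_k$, and the Fibonacci lower bound on $q_k$ gives $\sum_k 1/q_k<\infty$ for free. The paper avoids continued fractions entirely. It defines the base set $B$ as those $q$ with $\gcd(q,p(q))=1$ and $|\theta-p(q)/q|\leq b^q/q$ (where $p(q)$ is the nearest \emph{odd} integer to $q\theta$), and proves $\sum_{q\in B}1/q<\infty$ by a direct argument: if the sum diverged, one could find $q_j<r_j$ in $B$ with $q_j/r_j\to1$, and then the integer $p(q_j)r_j-p(r_j)q_j$, being bounded by $2r_jb^{q_j}\to0$, must vanish, forcing $r_j\mid q_j$, a contradiction. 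Your route is shorter if one is willing to quote Legendre and the growth of convergents; the paper's route is more self-contained and handles the ``odd integer'' aspect of $\delta$ more directly (your reduction to $\|n\theta\|$ via $\delta(n\theta)<1/2$ is fine, but note it silently forces $d$ and $p_k$ both odd --- harmless for the upper bound, just worth remarking).
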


\begin{proof} For every $q\in\N$, let $p(q)$ be the unique odd integer $p$ such that $\Bigl|\theta-\frac{p}{q}\Bigr|<\frac1{q}$. Denote $\alpha(q)=\Bigl|\theta-\frac{p(q)}{q}\Bigr|$. Then $0<\alpha(q)<\frac1{q}$ for each $q\in\N$. Consider the set
$$\textstyle
B=\Bigl\{q\in\N:\alpha(q)\leq \frac{b^q}{q}\ \ \text{and \ g.c.d.$(q,p(q))=1$}\Bigr\}.
$$
First, we shall show that
\begin{equation}\label{BB}\textstyle
\sum\limits_{q\in B}\frac1{q}<\infty.
\end{equation}
Assume that (\ref{BB}) fails. Then there are two strictly increasing sequences $\{q_j\}_{j\in\N}$ and $\{r_j\}_{j\in\N}$ in $B$ such that $q_j<r_j$ for each $j\in\N$ and $\frac{q_j}{r_j}\to 1$. Indeed, otherwise members of $B$ written in the increasing order grow exponentially and (\ref{BB}) follows. By definition of $B$, we have
$\theta=\frac{p(q_j)}{q_j}+\beta_j=\frac{p(r_j)}{r_j}+\gamma_j$, where $|\beta_j|\leq \frac{b^{q_j}}{q_j}$ and $|\gamma_j|\leq \frac{b^{r_j}}{r_j}$. In particular, $\frac{p(q_j)}{q_j}-\frac{p(r_j)}{r_j}=\gamma_j-\beta_j$. It follows that $\Bigl|\frac{p(q_j)r_j-p(r_j)q_j}{q_jr_j}\Bigr|\leq \frac{b^{q_j}}{q_j}+\frac{b^{r_j}}{r_j}$. Hence
$|p(q_j)r_j-p(r_j)q_j|\leq r_jb^{q_j}+q_jb^{r_j}<2r_jb^{q_j}$ since $q_j<r_j$. Since $\frac{q_j}{r_j}\to 1$ and $b<1$, we have $r_jb^{q_j}\to 0$ and therefore $|p(q_j)r_j-p(r_j)q_j|\to 0$. Since the latter is a sequence of integers, $p(q_j)r_j-p(r_j)q_j=0$ for all sufficiently large $j$. In particular, there is $j\in \N$  for which $p(q_j)r_j=p(r_j)q_j$. Since $p(r_j)$ and $r_j$ are relatively prime, $r_j$ must divide $q_j$, which is not possible since $r_j>q_j$. This contradiction proves (\ref{BB}).

Now $n\in A$ if and only if there is an odd integer $k$ such that $a^n\leq \delta(n\theta)=|n\theta-k|\leq b^n$. Let $r={\rm g.c.d.}\,(n,k)$. Then we can write $n=rq$ and $k=rp$, where $q\in\N$, $p$ is an odd integer and ${\rm g.c.d.}\,(q,p)=1$. Using this notation we can rewrite the last inequality as $\frac{a^{rq}}{rq}\leq \Bigl|\theta-\frac{p}{q}\Bigr|\leq\frac{b^{rq}}{rq}\leq \frac{b^q}{q}$, which happens if and only if $q\in B$, $p=p(q)$ and $\frac{a^{rq}}{rq}\leq \alpha(q)\leq\frac{b^{rq}}{rq}$. Thus, we have obtained the following description of the set $A$:
$$\textstyle
A=\Bigl\{qr:q\in B,\ r\in 2\N-1,\ \frac{a^{rq}}{rq}\leq \alpha(q)\leq\frac{b^{rq}}{rq}\Bigr\}.
$$
Fix $d>0$ small enough in such a way that $d^m\leq \frac{a^m}{m}$ for every $m\in\N$. Then
$$
A\subseteq A'=\{qr:q\in B,\ r\in\N,\ d^{rq}\leq \alpha(q)\leq b^{rq}\}=\{qr:q\in B,\ r\in\N,\ \beta(q)\leq r\leq c\beta(q)\},
$$
where $\beta(q)=\frac{\log \alpha(q)}{q\log d}$ and $c=\frac{\log d}{\log b}$. Obviously, $\beta(q)>0$ and $c>1$. Estimating a monotonic sum by the corresponding integral, one easily sees that
$$\textstyle
\sum\limits_{m\in\N\atop \beta\leq m\leq c\beta}\frac1m\leq 1+\log c\ \ \text{for every $\beta>0$ and $c>1$.}
$$
Using the above two displays and (\ref{BB}), we obtain
\[\textstyle
\sum\limits_{n\in A}\frac1n\leq \sum\limits_{n\in A'}\frac1n=\sum\limits_{q\in B}\frac1q\sum\limits_{r\in\N\atop \beta(q)\leq r\leq c\beta(q)}\frac1r\leq (1+\log c)\sum\limits_{q\in B}\frac1q<\infty.\qedhere
\]
\end{proof}

\begin{corollary}\label{poWW} Let $z\in\T$ be of infinite order and $A=\{n\in\N:ca^n\leq |1+z^n|\leq db^n\}$, where
$0<a<b<1$ and $c,d>0$. Then $\sum\limits_{n\in A}\frac1n<\infty$.
\end{corollary}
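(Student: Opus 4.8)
The plan is to reduce the statement to Lemma~\ref{somth} by translating the condition on $|1+z^n|$ into a condition on how closely $n\theta$ approximates an odd integer, where we write $z=e^{\pi i\theta}$ with $\theta\in\R$. First I would record that $\theta$ is irrational: if $\theta=p/q\in\Q$, then $z^{2q}=e^{2\pi ip}=1$, contradicting the assumption that $z$ has infinite order in $\T$. Thus $\theta\in\R\setminus\Q$, and Lemma~\ref{somth} is applicable to this $\theta$.

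The crux is an elementary trigonometric computation. From $1+e^{i\varphi}=2e^{i\varphi/2}\cos(\varphi/2)$ we get $|1+z^n|=2|\cos(\pi n\theta/2)|$. Let $m$ be the odd integer nearest to $n\theta$, so that $n\theta=m+t$ with $|t|=\delta(n\theta)\le 1$; since $m$ is odd, $\cos(\pi n\theta/2)=\pm\sin(\pi t/2)$, whence $|1+z^n|=2\sin\bigl(\tfrac{\pi}{2}\delta(n\theta)\bigr)$. Because $\tfrac{2}{\pi}x\le\sin x\le x$ on $[0,\pi/2]$ and $\tfrac{\pi}{2}\delta(n\theta)\in[0,\pi/2]$, this gives the two-sided bound
\begin{equation*}
2\,\delta(n\theta)\ \le\ |1+z^n|\ \le\ \pi\,\delta(n\theta)\qquad\text{for every }n\in\N .
\end{equation*}
Consequently, $n\in A$ forces $\tfrac{c}{\pi}a^n\le\delta(n\theta)\le\tfrac{d}{2}b^n$.

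It then remains to absorb the multiplicative constants, since Lemma~\ref{somth} is stated without them. Choose $a''\in(0,a)$ and $b''\in(b,1)$, so that $0<a''<b''<1$. As $(a''/a)^n\to 0$ and $(b/b'')^n\to 0$, there is $N_0$ with $(a'')^n\le\tfrac{c}{\pi}a^n$ and $\tfrac{d}{2}b^n\le(b'')^n$ for all $n\ge N_0$. Hence for $n\ge N_0$ the membership $n\in A$ implies $(a'')^n\le\delta(n\theta)\le(b'')^n$, that is, $A\subseteq\{1,\dots,N_0-1\}\cup A''$ where $A''=\{n\in\N:(a'')^n\le\delta(n\theta)\le(b'')^n\}$. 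Lemma~\ref{somth} yields $\sum_{n\in A''}\tfrac1n<\infty$, and therefore $\sum_{n\in A}\tfrac1n\le\sum_{n=1}^{N_0-1}\tfrac1n+\sum_{n\in A''}\tfrac1n<\infty$.

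Given Lemma~\ref{somth}, the argument is essentially routine; the only points that need a little care are the trigonometric reduction of $|1+z^n|$ to the odd-integer distance $\delta(n\theta)$ — in particular keeping track of the parity of the nearest integer so that $\delta$ (and not the ordinary distance to $\Z$) is the relevant quantity — and the clean absorption of the constants $c,d$ together with the passage from the bases $a,b$ to admissible bases $a'',b''$. I do not anticipate a genuine obstacle beyond these bookkeeping steps.
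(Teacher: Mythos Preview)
Your argument is correct and follows essentially the same approach as the paper: write $z=e^{i\pi\theta}$ with $\theta$ irrational, convert the bounds on $|1+z^n|$ into bounds on $\delta(n\theta)$ via the identity $|1+z^n|=2\sin\!\bigl(\tfrac{\pi}{2}\delta(n\theta)\bigr)$, enlarge the interval $[a,b]$ slightly to absorb the multiplicative constants, and invoke Lemma~\ref{somth}. The paper merely abbreviates the trigonometric reduction as ``an elementary exercise'', while you have written it out in full; otherwise the two proofs coincide.
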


\begin{proof} Since $z$ has infinite order in $\T$, $z=e^{i\pi\theta}$ with $\theta\in\R\setminus\Q$. Pick real numbers $s,t$ such that $0<s<a<b<t<1$ and let $A'=\{n\in\N:s^n\leq \delta(n\theta)\leq t^n\}$. It is an elementary exercise to see that $A\setminus A'$ is finite. By Lemma~\ref{somth}, $\sum\limits_{n\in A'}\frac1n<\infty$ and therefore $\sum\limits_{n\in A}\frac1n<\infty$.
\end{proof}

\begin{lemma}\label{scal} For every $z,w\in\T$, $M(z,w)$ has Lebesgue measure $0$.
\end{lemma}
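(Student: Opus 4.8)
The plan is to run a Borel--Cantelli argument after slicing $(1,\infty)$ into countably many intervals on each of which Corollary~\ref{poWW} can be invoked with \emph{fixed} exponents.

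First I would dispose of the trivial case: by Theorem~\ref{2dim} the diagonal operator with $rz,rw$ on the diagonal can be weakly numerically hypercyclic only when $z,w$ are independent in $\T$, so $M(z,w)=\varnothing$ unless $z,w$ are independent. Assume henceforth that they are, set $\zeta=w/z\in\T$ (which then has infinite order), and recall from~(\ref{calm}) that $M(z,w)=\{r>1:\{r^n(z^n+w^n):n\in\Z_+\}\ \text{is dense in}\ \C\}$; note also $|z^n+w^n|=|1+\zeta^n|$. The one consequence of density I would use is this: if $r\in M(z,w)$, then the open disc $B(1,\tfrac12)$ is hit by the sequence $\{r^n(z^n+w^n)\}$ for infinitely many $n$, and for each such $n$, $r^n|1+\zeta^n|\in(\tfrac12,\tfrac32)$, i.e. $\tfrac12 r^{-n}<|1+\zeta^n|<\tfrac32 r^{-n}$.

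Next, fixing $T>1$ (it suffices to show $M(z,w)\cap(1,T]$ is null, since $(1,\infty)=\bigcup_{T\in\N}(1,T]$), I would write $(1,T]=\bigcup_{j\ge1}I_j$ with $I_j=(T^{1/(j+1)},T^{1/j}]$ and put $a=T^{-1/j}$, $b=T^{-1/(j+1)}$, so $0<a<b<1$. For $r\in I_j$ one has $a^n\le r^{-n}<b^n$, so the necessary condition above forces $\tfrac12 a^n\le|1+\zeta^n|\le\tfrac32 b^n$ for infinitely many $n$; let $S_j$ denote the set of such $n$, which depends only on $j$. Corollary~\ref{poWW} (with $c=\tfrac12$, $d=\tfrac32$ and this $a,b$) then gives $\sum_{n\in S_j}\tfrac1n<\infty$. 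Now set $K_n=\{s>1:s^n|1+\zeta^n|\in(\tfrac12,\tfrac32)\}$, an interval of length $(2|1+\zeta^n|)^{-1/n}(3^{1/n}-1)$; by the construction $M(z,w)\cap I_j$ is contained in the set of $r$ lying in $K_n$ for infinitely many $n\in S_j$, and for $n\in S_j$ the bound $|1+\zeta^n|\ge\tfrac12 a^n$ yields $(2|1+\zeta^n|)^{-1/n}\le a^{-1}=T^{1/j}$ and hence $|K_n|\le T^{1/j}(3^{1/n}-1)\le 3T^{1/j}(\log 3)/n$. Thus $\sum_{n\in S_j}|K_n|<\infty$, so by Borel--Cantelli $M(z,w)\cap I_j$ is Lebesgue-null, and therefore so is $M(z,w)\cap(1,T]=\bigcup_{j\ge1}(M(z,w)\cap I_j)$.

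The point needing care is the slicing: a Borel--Cantelli estimate over all $n$ would fail, since for the positive-density set of $n$ with $|1+\zeta^n|$ of order $1$ the intervals $K_n$ have length of order $1/n$ and sum to $+\infty$. The resolution — and the reason the argument works — is that those indices are irrelevant (for a fixed $r>1$ the event $r^n|1+\zeta^n|\in(\tfrac12,\tfrac32)$ with $|1+\zeta^n|$ of order $1$ happens only finitely often), so one may restrict to $n\in S_j$, whose abundance is controlled by Corollary~\ref{poWW}; making the exponents $a,b$ depend on $j$ but not on $r$ is exactly what permits that corollary to be applied with fixed data.
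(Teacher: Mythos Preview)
Your argument is correct and follows essentially the same strategy as the paper's proof: reduce to the case where $\zeta=w/z$ has infinite order, exploit density to trap $r^n|1+\zeta^n|$ in a bounded window for infinitely many $n$, slice $(1,\infty)$ into intervals so that Corollary~\ref{poWW} applies with fixed $a,b$, and finish with a Borel--Cantelli estimate on the interval lengths. The only cosmetic differences are that the paper uses the target window $[1,2]$ rather than $(\tfrac12,\tfrac32)$ and covers $(1,\infty)$ by arbitrary intervals $[b^{-1},a^{-1}]$ rather than the specific dyadic-type slicing $I_j=(T^{1/(j+1)},T^{1/j}]$; neither choice affects the substance.
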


\begin{proof} If $w^{-1}z$ has finite order in $\T$, Theorem~\ref{2dim} implies that $M(z,w)=\varnothing$. Thus we can assume that $s=w^{-1}z$ is of infinite order. If $r\in M(z,w)$, then the number $r^n|z^n+w^n|=r^n|1+s^n|$ belongs to $[1,2]$ for infinitely many $n\in\N$. Let $0<a<b<1$. The last observation yields
$$
M(z,w)\cap [b^{-1},a^{-1}]\subseteq N_{a,b}=\bigcap_{m=1}^\infty \bigcup_{n=m}^\infty K_{n,a,b},\ \ \text{where}\ \ K_{n,a,b}=\{r\in [b^{-1},a^{-1}]:1\leq r^n|1+s^n|\leq 2\}.
$$
Let $A=\{n\in\N:a^n\leq |1+s^n|\leq 2\,b^n\}$. By Corollary~\ref{poWW}, $\sum\limits_{n\in A}\frac1n<\infty$. Note that $K_{n,a,b}=\varnothing$ if $n\notin A$. On the other hand, if $n\in A$, then $K_{n,a,b}$ is contained in the interval $[|1+s^n|^{-1/n},2^{1/n}|1+s^n|^{-1/n}]$, whose length is $(2^{1/n}-1)|1+s^n|^{-1/n}\leq \frac{2^{1/n}-1}{a}\leq \frac1{an}$. Thus, the Lebesgue measure $\mu$ of the sets $K_{n,a,b}$ is estimated as
$$
\text{$\mu(K_{n,a,b})=0$ if $n\notin A$ and $\mu(K_{n,a,b})\leq \frac1{an}$ if $n\notin A$}.
$$
According to the above two displays, for each $m\in\N$,
$$
\mu(M(z,w)\cap [b^{-1},a^{-1}])\leq \sum_{n=m}^\infty \mu(K_{n,a,b})\leq \frac1a \sum_{n\in A\atop n\geq m}\frac1n\to 0\ \ \text{as $m\to\infty$.}
$$
Hence $\mu(M(z,w)\cap [b^{-1},a^{-1}])=0$ whenever $0<a<b<1$. Thus $\mu(M(z,w))=0$.
\end{proof}

Proposition~\ref{scalar2} follows from Lemmas~\ref{muzz}, \ref{root} and~\ref{scal}.

\section{Further comments and open questions\label{s12}}

The next proposition provides a peculiar collection of diagonal numerically hypercyclic operators on $\C^3$. 

\begin{proposition}\label{znwn1} There is a diagonal $T\in SNH(\C^3)$ such that the restriction of $T$ to each invariant $2$-dimensional subspace is not weakly numerically hypercyclic.
\end{proposition}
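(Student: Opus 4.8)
The plan is to separate the two requirements and reduce each to a cited result. The operator will be a diagonal $T=\mathrm{diag}(\lambda_1,\lambda_2,\lambda_3)$ with three \emph{distinct} eigenvalues, so its only $2$-dimensional invariant subspaces are the three coordinate planes $\spann\{e_i,e_j\}$, and $T|_{\spann\{e_i,e_j\}}$ is unitarily equivalent to $\mathrm{diag}(\lambda_i,\lambda_j)\in L(\C^2)$. By Theorem~\ref{2dim} (equivalently, by Lemma~\ref{ddiag}), $\mathrm{diag}(\lambda_i,\lambda_j)\notin\WNH(\C^2)$ as soon as $\frac{\lambda_i}{|\lambda_i|}$ and $\frac{\lambda_j}{|\lambda_j|}$ are \emph{not} independent in $\T$; hence arranging all three pairs of normalized eigenvalues to be pairwise non-independent kills the weak numeric hypercyclicity of every $2$-dimensional restriction. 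On the other hand, by Lemma~\ref{snsnsn0}, $T\in\NH(\C^3)=\SNH(\C^3)$ as soon as there are $c_1,c_2,c_3\in\R_+$ with $\bigl\{c_1\lambda_1^k+c_2\lambda_2^k+c_3\lambda_3^k:k\in\Z_+\bigr\}$ dense in $\C$. So everything reduces to producing $\lambda_1,\lambda_2,\lambda_3$ of a common modulus $R>1$ whose normalized parts are pairwise non-independent, yet for which some non-negative combination of the power sequences is dense in $\C$.

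To do this I would reuse the funny numbers of Section~\ref{s2}. Fix an odd prime $p$ and a sequence $\mathbf w=\{w_k\}_{k\in\N}$ with $\frac1k\le|w_k|\le k$ and $\{w_k:k\in\N\}$ dense in $\C$, and let $\nu_k=\nu_k(p)$, $z=z(p,\mathbf w)$, $u=u(p,\mathbf w)$ be as defined in (\ref{nuk}), (\ref{tauz}), (\ref{theu}); by Lemmas~\ref{esti1} and~\ref{esti2} they satisfy $u^{\nu_k}p^{\nu_k}(1+z^{\nu_k})-w_k\to 0$ and $\liminf_{n\notin\{\nu_k:k\in\N\}}|1+z^n|^{1/n}\ge p^{-1/3}$. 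One then takes the three eigenvalues on the circle of radius $p$, with their normalized parts drawn from a single rank-one subgroup of $\T$ — roots of unity times integer powers of one infinite-order element manufactured from $u$ and $z$. This choice makes every pair $\frac{\lambda_i}{p},\frac{\lambda_j}{p}$ non-independent, so by the previous paragraph every coordinate-plane restriction is outside $\WNH$; at the same time the defining property of $\mathbf w$, together with the estimates above, should leave enough room to exhibit $c_1,c_2,c_3\in\R_+$ (equivalently a vector $x_0\in S(\C^3)$ with $c_j=|x_{0,j}|^2$) whose numerical orbit $\NO(T,x_0)$ runs through a dense subset of $\C$, giving $T\in\SNH(\C^3)$.

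The main obstacle is precisely this balancing act. The requirement that all three coordinate-plane restrictions leave $\WNH$ is a strong arithmetic constraint on the arguments of $\lambda_1,\lambda_2,\lambda_3$, whereas density of a numerical orbit pulls in the opposite direction — for a ``generic'' triple of pairwise-dependent unimodular multipliers the combined power sequence is merely nowhere dense. So the delicate part of the argument is verifying that, with the Liouville-type choices of $z$, $u$ and of the exponents of the common base, and with a suitable choice of the weights $c_1,c_2,c_3$, the sequence $\bigl\{c_1\lambda_1^k+c_2\lambda_2^k+c_3\lambda_3^k:k\in\Z_+\bigr\}$ is genuinely dense in $\C$; this should come down to a careful application of Lemmas~\ref{esti1}--\ref{esti2} along the subsequence $\{\nu_k\}$. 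Everything else — listing the $2$-dimensional invariant subspaces of a diagonal operator with distinct eigenvalues, and reading off non-$\WNH$ of each restriction from Theorem~\ref{2dim} — is routine, and $T\in\SNH(\C^3)$ then follows from Lemma~\ref{snsnsn0}.
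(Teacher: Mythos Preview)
Your approach has a genuine fatal flaw. You propose taking $\lambda_1,\lambda_2,\lambda_3$ on a \emph{common} circle $R\T$ with $R>1$, whose normalized parts $\frac{\lambda_j}{R}$ are pairwise non-independent. But this is exactly the hypothesis of Lemma~\ref{ddiag}, which then forces $T\notin\WNH(\C^3)$, and in particular $T\notin\SNH(\C^3)$. Your caveat that ``for a `generic' triple of pairwise-dependent unimodular multipliers the combined power sequence is merely nowhere dense'' understates the situation: by Lemma~\ref{ddiag} (via Lemma~\ref{tech}), this happens for \emph{every} such triple, not just generic ones. So no amount of Liouville-type fine-tuning along $\{\nu_k\}$ can rescue the construction --- the obstruction is structural, not a matter of parameters.

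The paper sidesteps this by using \emph{two} radii. It takes $1<r<R$ and distinct integers $k,m$, and sets $\lambda_1=Rz^k$, $\lambda_2=Rz^m$, $\lambda_3=rw$ for a suitable $(z,w)\in\T^2$. Then the three $2$-dimensional restrictions fail $\WNH$ for two different reasons: the pair $\{Rz^k,Rz^m\}$ has equal moduli but non-independent arguments (both are powers of $z$), while the pairs $\{Rz^k,rw\}$ and $\{Rz^m,rw\}$ have \emph{unequal} moduli, which already rules out $\WNH(\C^2)$ by Theorem~\ref{2dim}. The full operator escapes Lemma~\ref{ddiag} precisely because not all $|\lambda_j|$ coincide, and Lemma~\ref{123} supplies $(z,w)$ for which $\{R^n(z^{kn}+z^{mn})+r^nw^n:n\in\Z_+\}$ is dense in $\C$, so (\ref{suffsnh}.1) holds and $T\in\SNH(\C^3)$. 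You correctly identified the reduction to Lemma~\ref{snsnsn0}/Theorem~\ref{suffsnh} and the enumeration of invariant planes, but missed that Theorem~\ref{2dim} offers a second mechanism (unequal moduli) for a $2\times2$ diagonal to fall outside $\WNH$, and that exploiting it is essential.
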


\begin{proof} Let $1<r<R$ and $k,m$ be distinct integers. By Lemma~\ref{123}, the set
$$
W=\bigl\{(z,w)\in\T^2:\{R^n(z^{kn}+z^{mn})+r^nw^n:n\in\Z_+\}\ \text{is dense in $\C$}\bigr\}
$$
is a dense $G_\delta$ subset of $\T^2$. Pick $(z,w)$ in $W$ and consider the diagonal operator $T$ on $\C^3$ with the numbers $Rz^k$, $Rz^m$ and $rw$ on the diagonal. Clearly, (\ref{suffsnh}.1) is satisfied for $T$. By Theorem~\ref{suffsnh}, $T\in\SNH(\C^3)$. On the other hand, the possible spectra of the restrictions of $T$ to 2-dimensional invariant subspaces are $\{Rz^k,Rz^m\}$, $\{Rz^k,rw\}$ and $\{Rz^{m},rw\}$. In any case, such a restriction is not weakly numerically hypercyclic according to Theorem~\ref{2dim}.
\end{proof}

As we have already observed, a self-adjoint operator is never numerically hypercyclic. Funnily enough a scalar multiple of a self-adjoint operator can turn out to be numerically hypercyclic.

\begin{proposition}\label{scmusa} Let $T$ be a bounded self-adjoint operator on a Hilbert space $\hh$ such that there exists a sequence $\{\lambda_n\}_{n\in\N}$ in $\sigma(T)$ satisfying $1<\lambda_1<-\lambda_2<\lambda_3<-\lambda_4<{\dots}$ Then for any $w\in\T$ of infinite order, $wT\in\NH(\hh)$.
\end{proposition}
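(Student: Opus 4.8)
The plan is to realise $wT$ as a (normal) operator for which a single unit vector has a dense numerical orbit, the vector being produced by feeding a carefully built weight sequence into Theorem~U, in the spirit of Lemmas~\ref{ddiaa} and~\ref{ddIaa00}. Since $w\in\T$, the operator $S=wT$ is normal. Fix pairwise disjoint tiny open intervals $U_n\ni\lambda_n$, so small that, with $U_n\subseteq\{t:|\lambda_n|-\epsilon_n<|t|<|\lambda_n|+\epsilon_n\}$, the numbers $|\lambda_n|\pm\epsilon_n$ stay above $1$ and strictly increasing in $n$. Since $\lambda_n\in\sigma(T)$ and $T$ is self-adjoint, the spectral projection $E(U_n)$ is nonzero, so I can pick $x_n\in S(E(U_n)\hh)$; the $x_n$ are orthonormal because the $U_n$ are disjoint. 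For $c\in\ell^1_+(\N)$ with $\sum_n c_n=1$, set $x=\sum_n\sqrt{c_n}\,x_n\in S(\hh)$. Since $S$ commutes with the $E(U_n)$,
$$\langle S^kx,x\rangle=\sum_n c_n\langle(wT)^kx_n,x_n\rangle=w^k\sum_n c_n m_{n,k},\qquad m_{n,k}=\langle T^kx_n,x_n\rangle\in\R,$$
where $m_{n,k}$ has the sign of $\lambda_n^k$ (that is, $(-1)^{(n+1)k}$) and $(|\lambda_n|-\epsilon_n)^k\leq|m_{n,k}|\leq(|\lambda_n|+\epsilon_n)^k$. Writing $m_k(c)=\sum_n c_nm_{n,k}\in\R$, it thus suffices to produce such a $c$ with $\{w^km_k(c):k\in\Z_+\}$ dense in $\C$.

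Two structural facts guide the construction. First, $w$ has infinite order, hence so does $w^2$, so $\{w^k:k\text{ even}\}$ and $\{w^k:k\text{ odd}\}$ are each dense in $\T$. Second, for even $k$ every $m_{n,k}$ is positive, so $m_k(c)\geq c_1 m_{1,k}\to\infty$ and the even-exponent part of the orbit contributes nothing near the origin; for odd $k$ the sign of $m_{n,k}$ alternates with $n$, so by balancing an odd-indexed weight against an even-indexed one the quantity $m_k(c)$ can be made as small as we like, or of either sign and any size. So density must be harvested along odd $k$. This leads to the usual Theorem~U framework with the weight $c$ as the variable: let $Q$ be the Polish space $\{c\in\ell^1_+(\N):\sum_n\sqrt{c_n}<\infty\}$ with metric $\sum_n\sqrt{|c_n-c_n'|}$, and let $\Omega=\{c\in Q:\liminf_k|m_k(c)|=0\}$, a $G_\delta$ subset of $Q$ and hence Polish; the maps $c\mapsto w^km_k(c)$ are continuous on $\Omega$, so by Theorem~U it suffices to prove (i) $\Omega\neq\varnothing$ and (ii) $\Lambda=\{(c,w^km_k(c)):c\in\Omega,\ k\in\Z_+\}$ is dense in $\Omega\times\C$; then there is $c\in\Omega$ with $\{w^km_k(c)\}$ dense, and, after normalisation, $x$ is the required numerically hypercyclic vector.

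For (i), pick an odd index $p$ and an even index $q$: at a prescribed odd exponent $\kappa$, the relation $c_p|\lambda_p|^\kappa=c_q|\lambda_q|^\kappa$ kills the contribution of this pair to $m_\kappa$. Chaining infinitely many such pairs $(p_m,q_m)$ at odd exponents $\kappa_m\to\infty$ growing fast enough, with total masses $c_{p_m}+c_{q_m}\to0$ fast enough that the remaining pairs contribute $o(1)$ to $m_{\kappa_m}$, yields $c$ with $m_{\kappa_m}(c)\to0$, i.e.\ $c\in\Omega$; this is precisely the ``funny numbers'' block scheme of Section~\ref{s2}, the only change being that here the radii $|\lambda_n|$, not the arguments, do the oscillating. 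For (ii), given $c\in\Omega$, $y\in\C\setminus\{0\}$ and $\varepsilon>0$: since $c\in\Omega$ the set of exponents with $|m_k(c)|$ small is infinite, and a compactness argument in the group $\T^\N$ along the lines of Lemma~\ref{grou} shows it is syndetic, hence (using that $w$ is irrational, via a Lemma~\ref{toR}-type argument) contains a large odd $k$ with $w^k$ within $\varepsilon$ of $y/|y|$; adding a minute mass $\eta\approx|y|/|\lambda_p|^k$ to a single odd-indexed coordinate moves $m_k$ to exactly $|y|$, and to keep the perturbed weight inside $\Omega$ one simultaneously appends a fresh infinite tail of balanced pairs at exponents far beyond $k$, of total $Q$-mass $<\varepsilon$. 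This puts the perturbed weight back in $\Omega$ with $w^km_k$ within $O(\varepsilon)$ of $y$, proving $\Lambda$ dense in $\Omega\times\C$.

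The main obstacle is (ii). Unlike its circle analogue in Lemma~\ref{ddiaa}, the set $\Omega$ is not stable under finitely supported perturbations — a fixed added mass at $\lambda_p$ contributes $\eta\lambda_p^l\to\infty$ as $l\to\infty$, destroying the $\liminf=0$ property — which is exactly why the correction in (ii) must be an infinite balanced tail rather than a five-point patch. One must also ensure that the syndetic set of ``small-moment'' exponents is not confined to an arithmetic progression on which $\{w^k\}$ fails to be dense. Both points are handled by the standard device of letting all block parameters grow extremely rapidly, as in Sections~\ref{s2} and~\ref{s3}.
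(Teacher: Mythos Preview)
Your setup (the orthonormal $x_n$, the formula $\langle (wT)^kx,x\rangle=w^k m_k(c)$ with $m_k(c)\in\R$, and the observation that density must come from odd $k$) matches the paper, but the execution of step~(ii) has two genuine gaps, and the detour through the auxiliary space $\Omega$ is not needed.

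\textbf{Gap 1: the syndeticity claim.} You assert that for $c\in\Omega$ the set $A=\{k:|m_k(c)|<\varepsilon\}$ is syndetic ``by a compactness argument in $\T^{\N}$ along the lines of Lemma~\ref{grou}''. But Lemma~\ref{grou} concerns orbits in a \emph{compact} group, whereas $m_k(c)=\sum_n c_n m_{n,k}$ is an unbounded real sequence (indeed $m_k(c)\to+\infty$ along even $k$), not the $k$-th power of a fixed element of a compact group. There is no evident compact-group structure here, and in fact nothing prevents $A$ from being extremely sparse for a given $c\in\Omega$.

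\textbf{Gap 2: density of $\{w^k:k\in A\}$.} Even if $A$ were syndetic, it does not follow that $\{w^k:k\in A\}$ is dense in $\T$. For a counterexample take any arc $J\subsetneq\T$ and set $A=\{k:w^k\in J\}$; by Lemma~\ref{grou} this $A$ is syndetic, yet $\{w^k:k\in A\}\subseteq J$. Lemma~\ref{toR} does not help either: it varies the base $z_j$ over an infinite set $M$, whereas your base $w$ is fixed.

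\textbf{How the paper avoids both.} The paper applies Theorem~U directly on the closed linear span $\mathcal{K}$ of the $x_n$ (not on a space of weights), with the maps $y\mapsto\langle(wT)^ny,y\rangle$. Given $y=\sum_{j\leq k}a_jx_j$ in the dense finite span and $t\in\C\setminus\{0\}$, the key point is that for \emph{every} sufficiently large odd $n$ one has the sign relation $\mathrm{sign}\,\langle T^ny,y\rangle=-\mathrm{sign}\,\langle T^nx_{k+1},x_{k+1}\rangle$ (the dominant term in $\langle T^ny,y\rangle$ comes from index $k$, and $\lambda_k,\lambda_{k+1}$ have opposite signs). Hence
\[
\epsilon_n^2=\frac{|t|-\langle T^ny,y\rangle}{\langle T^nx_{k+1},x_{k+1}\rangle}>0,\qquad \epsilon_n\to 0,
\]
and $\langle T^n(y+\epsilon_nx_{k+1}),\,y+\epsilon_nx_{k+1}\rangle=|t|$ exactly. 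Because this works for \emph{all} large odd $n$, one is free to run $n$ along any subsequence of odd integers with $w^n\to t/|t|$ (which exists since $w^2$ also has infinite order). Thus $(y+\epsilon_n x_{k+1},\,w^n|t|)\to(y,t)$ and Theorem~U finishes. No syndeticity is needed, no infinite tails are appended, and the perturbation touches a single coordinate.

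Your scheme can likely be repaired (e.g.\ for arbitrary large odd $k$ with $w^k$ near the target, use a \emph{two}-coordinate perturbation at fresh odd/even indices to move $m_k(c)$ to $|y|$ regardless of its current value, then append the balanced tail), but as written the argument does not go through, and the paper's route is both shorter and cleaner.
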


\begin{proof} Let $s_k=\frac13\min\{|\lambda_k|-|\lambda_{k-1}|,|\lambda_{k+1}|-|\lambda_k|\}$ for $k\in\N$, where we assume $\lambda_0=1$. Using the spectral theorem, we can find an orthonormal sequence $\{x_n\}_{n\in\N}$ in $\hh$ such that $\langle T^kx_n,x_n\rangle$ is always between $(\lambda_n-s_n)^k$ and $(\lambda_n+s_n)^k$ and $\langle T^mx_k,T^nx_j\rangle =0$ provided $k\neq j$. Let $E$ be the linear span of $\{x_n:n\in\N\}$ and ${\cal K}$ be the closure of $E$ in $\hh$. Fix $y\in E$ and $t\in\C\setminus\{0\}$. Let $k=\max\{j\in\N:\langle y,x_j\rangle\neq 0\}$. Since $|\lambda_j|+s_j<|\lambda_{j+1}|-s_{j+1}$ for each $j$, $|\langle T^ny,y\rangle|\to\infty$ and $\langle T^ny,y\rangle=o(\langle T^nx_{k+1},x_{k+1}\rangle)$ as $n\to\infty$. Furthermore, since $\lambda_j$ alternate signs, $\langle T^ny,y\rangle$ and $\langle T^nx_{k+1},x_{k+1}\rangle$ have opposite signs for every sufficiently large odd positive integer $n$. Hence the numbers
$\frac{t-\langle T^ny,y\rangle}{\langle T^nx_{k+1},x_{k+1}\rangle}$ are positive for sufficiently large odd $n$ and form a convergent to $0$ sequence. That is, for each sufficiently large odd $n$, we can find $\epsilon_n>0$ such that $\epsilon_n^2=\frac{t-\langle T^ny,y\rangle}{\langle T^nx_{k+1},x_{k+1}\rangle}$. Furthermore, $\epsilon_n\to 0$. The last equality can be easily rewritten as $\langle T^n(y+\epsilon_nx_{k+1}),y+\epsilon_nx_{k+1}\rangle=|t|$. Since $w$ has infinite order, we can find a strictly increasing sequence $\{n_j\}_{j\in\N}$ of odd positive integers such that $w^{n_j}\to \frac{t}{|t|}$. Then $y_j\to y$ and $t_j=\langle (wT)^{n_j}y_j,y_j\rangle\to t$, where $y_j=y+\epsilon_{n_j}x_{k+1}$. Thus $(y,t)$ is in the closure of the set
$$
\Lambda=\{(y,\langle (wT)^ny,y\rangle):y\in {\cal K},\ n\in\N\}.
$$
Since $E$  is dense in $\cal K$ and $y\in E$ and $t\in\C\setminus\{0\}$ are arbitrary, $\Lambda$ is dense in ${\cal K}\times \C$. By Theorem~U, there is $y\in {\cal K}$ such that $\{\langle (wT)^ny,y\rangle:n\in\N\}$ is dense in $\C$. Hence $wT\in\NH(\hh)$.
\end{proof}

The normal operator $T$ in the following example is strongly numerically hypercyclic, while even its weak numeric hypercyclicity does not follow from Theorem~\ref{normaLLL}. The example illustrates the limitations of Theorem~\ref{normaLLL}.

\begin{example}\label{fior} Let $\{r_n\}_{n\in\N}$ be a strictly decreasing sequence in $(1,\infty)$ and $A=\{1,2,3,4,5\}$. Then there exists a sequence $\{a_{j,n}\}_{(j,n)\in A\times\N}$ of finite order elements of $\T$ such that the diagonal operator $T$ on $\ell^2(A\times \N)$ with the numbers $r_na_{j,n}$ on the diagonal is strongly numerically hypercyclic.
\end{example}

\begin{proof} Let $\epsilon,d>0$ be the numbers furnished by Lemma~\ref{penta}. For each $n\in\N$, let $\U_n$ be the only cyclic subgroup of $\T$  of order $n$: $\U_n=\{z\in\T:z^n=1\}$. Clearly, $U_n$ is a $\frac{\pi}{n}$-net in $\T$. Hence there is $n_0\in\N$ such that for each $n\geq n_0$, we can find $a_1,\dots,a_5\in\U_n$ for which $(a_1,\dots,a_5,a_1^2,\dots,a_5^2)\in P_\epsilon$, where $P_\epsilon$ is the set defined in (\ref{pe}). Fix a sequence $\{z_n\}_{n\in\N}$ such that $\{z_n:n\in\N\}$ is dense in $\C$ and $|z_n|<dn$ for every $n\in\N$. 
Using the induction with respect to $n$, we shall construct the sequences $\{a_{j,n}\}_{(j,n)\in A\times\N}$ in $\T$, $\{c_{j,n}\}_{(j,n)\in A\times\N}$ of positive numbers, $\{k_n\}_{n\in\N}$ of positive integers and $\{p_n\}_{n\in\N}$ of prime numbers such that
\begin{itemize}\itemsep=-2pt
\item[(A1)] $a_{j,n}\in\U_{p_n}$ for $1\leq j\leq 5$;
\item[(A2)] $\sum\limits_{j=1}^5 c_{j,n}a_{j,n}^{k_n}=\frac{z_n}{r_n^{k_n}}$, 
$\sum\limits_{j=1}^5 c_{j,q}a_{j,q}^{2k_n}=0$ and $\sum\limits_{j=1}^5 c_{j,n}=\frac{n}{r_n^{k_n}}$; 
\item[(A3)] $p_q$ divides $k_n-2k_q$ for $1\leq q<n$; 
\item[(A4)] $p_n>p_{n-1}$ and $r_n^{k_n-k_{n-1}}>n2^n$ if $n\geq 2$. 
\end{itemize} 

{\bf The basis of the induction.} \ Pick an arbitrary prime number $p_1$ such that $p_1\geq n_0$. Then we can choose $b_{1,1},\dots,b_{5,1}\in\U_{p_1}$ such that $(b_{1,1},\dots,b_{5,1},b_{1,1}^2,\dots,b_{5,1}^2)\in P_\epsilon$. Since $r_1>1$, there is $k_1\in\N$ such that $p_1$ does not divide $k_1$ and $r_1^{-k_1}<1$. Since $z_1\in d\,\D$, Lemma~\ref{penta} provides positive numbers $t_{1,1},\dots,t_{5,1}$ such that $\sum\limits_{j=1}^5 t_{j,1}b_{j,1}=z_1$, $\sum\limits_{j=1}^5 t_{j,1}b_{j,1}^{2}=0$ and $\sum\limits_{j=1}^5 t_{j,1}=1$. Since $b_{j,1}\in\U_{p_1}$, $p_1$ does not divide $k_1$ and $p_1$ is prime, there are $a_{1,1},\dots,a_{5,1}\in\U_{p_1}$ such that $a_{j,1}^{k_1}=b_{j,1}$ for $1\leq j\leq 5$. Set $c_{j,1}=\frac{t_{j,1}}{r_1^{k_1}}$ for $1\leq j\leq 5$. Now it is straightforward to see that (A1) and (A2) with $n=1$ are satisfied. Conditions (A3) and (A4) with $n=1$ are satisfied in a trivial manner. Thus we have our basis of induction. 

{\bf The induction step.} \ Let $n\geq 2$ and assume that $a_{j,q}$, $c_{j,q}$, $p_q$ and $k_q$ with $q<n$ satisfying (A1--A4) are already constructed. First, we choose an arbitrary prime number $p_n>p_{n-1}$. Since $p_n>p_1\geq n_0$, we can pick $b_{1,n},\dots,b_{5,n}\in\U_{p_n}$ such that $(b_{1,n},\dots,b_{5,n},b_{1,n}^2,\dots,b_{5,n}^2)\in P_\epsilon$. Since $r_n>1$ and $p_1,\dots,p_n$ are pairwise distinct primes, the Chinese Remainder Theorem allows us to choose $k_n\in\N$ such that $r_n^{k_n-k_{n-1}}>n2^n$, $p_n$ does not divide $k_n$ and $p_q$ divides $k_n-2k_q$ for $1\leq q\leq n-1$. Since $\frac{z_n}{n}\in d\,\D$, Lemma~\ref{penta} provides positive numbers $t_{1,n},\dots,t_{5,n}$ such that $\sum\limits_{j=1}^5 t_{j,n}b_{j,n}=\frac{z_n}{n}$, $\sum\limits_{j=1}^5 t_{j,n}b_{j,n}^{2}=0$ and $\sum\limits_{j=1}^5 t_{j,n}=1$. Since $b_{j,n}\in\U_{p_n}$, $p_n$ does not divide $k_n$ and $p_n$ is prime, there are $a_{1,n},\dots,a_{5,n}\in\U_{p_n}$ such that $a_{j,n}^{k_n}=b_{j,n}$ for $1\leq j\leq 5$. Set $c_{j,n}=\frac{nt_{j,n}}{r_n^{k_n}}$ for $1\leq j\leq 5$. Now it is straightforward to see that (A1--A4) are satisfied. This completes the construction of the sequences $\{a_{j,n}\}_{(j,n)\in A\times\N}$, $\{c_{j,n}\}_{(j,n)\in A\times\N}$, $\{k_n\}_{n\in\N}$ and $\{p_n\}_{n\in\N}$ satisfying (A1--A4). 

For $(j,n)\in A\times\N$, set $\lambda_{j,n}=r_na_{j,n}$. Since $\sum\limits_{j=1}^5 c_{j,n}=\frac{n}{r_n^{k_n}}$ and $r_n^{k_n}\geq n2^n$, we have $c\in\ell^1_+(A\times\N)$. For each $m\in\N$, we can write 
\begin{equation}\label{alnm}
\sum_{(j,n)\in A\times\N}c_{j,n}\lambda_{j,n}^{k_m}=\sum_{n\in\N} \alpha_{n,m},\ \ \text{where}\ \ 
\alpha_{n,m}=r_n^{k_m}\sum_{j=1}^5 c_{j,n}a_{j,n}^{k_m}. 
\end{equation}
First, note that by (A2), 
\begin{equation}\label{alnm1}
\alpha_{m,m}=r_m^{k_m}\sum_{j=1}^5 c_{j,m}a_{j,m}^{k_m}=r_m^{k_m}\frac{z_m}{r_m^{k_m}}=z_m.
\end{equation}
Since for $n<m$, $p_n$ divides $k_m-2k_n$ and $a_{j,n}^{p_n}=1$, (A2) yields 
\begin{equation}\label{alnm2}
\alpha_{n,m}=r_n^{k_m}\sum_{j=1}^5 c_{j,n}a_{j,n}^{k_m}=r_n^{k_m}\sum_{j=1}^5 c_{j,n}a_{j,n}^{2k_n}=0\ \ \text{if $n<m$}. 
\end{equation}
Finally, if $n>m$, using (A4) and (A2), we obtain 
\begin{equation}\label{alnm3}
|\alpha_{n,m}|=r_n^{k_m}\Bigl|\sum_{j=1}^5 c_{j,n}a_{j,n}^{k_m}\Bigr|\leq r_n^{k_m}\sum_{j=1}^5 c_{j,n}=\frac{nr_n^{k_m}}{r_n^{k_n}}\leq \frac{n}{r_n^{k_n-k_{n-1}}}\leq \frac{1}{2^n}\ \ \text{if $n>m$}. 
\end{equation}
Combining (\ref{alnm}--\ref{alnm3}), we get 
$$
\biggl|z_m-\sum_{(j,n)\in A\times\N}c_{j,n}\lambda_{j,n}^{k_m}\biggr|=\biggl|\sum_{n=m+1}^\infty \alpha_{n,m}\biggr|\leq \sum_{n=m+1}^\infty \frac1{2^n}\to 0\ \ \text{as $m\to\infty$}. 
$$
Since $\{z_m:m\in\N\}$ is dense in $\C$, the above display implies that $\Bigl\{
\sum\limits_{(j,n)\in A\times\N}c_{j,n}\lambda_{j,n}^{k}:k\in\N\Bigr\}$ is dense in $\C$. Thus the diagonal operator $T\in L(\ell^2(A\times\N))$ with the numbers $\lambda_{j,n}$ on the diagonal satisfies (\ref{suffsnhid}.1). By Theorem~\ref{suffsnhid}, $T$ is strongly numerically hypercyclic.
\end{proof} 

Let $V$ be the Volterra operator on $L^2[0,1]$. Exactly as in Example~\ref{vol}, we can show that $re^{-V}\in\WNH(L^2[0,1])$ for every $r>1$. Unexpectedly enough it turns out that these operators are not numerically hypercyclic. 

\begin{example}\label{vol00} Let $r>1$ and $T=re^{-V}$, where $V\in L(L^2[0,1])$ is the Volterra operator. Then $T\notin\NH(L^2[0,1])$. 
\end{example}

\begin{proof} The key element of the proof is the following claim: 
\begin{equation}\label{volest}
\text{$\|I-e^{-cV}\|\leq 1$ for every $c\in\R_+$.}
\end{equation}
We shall derive this estimate from the following observation: 
\begin{equation}\label{volest0}
\text{for every $f\in L^2[0,1]$, the function $F_f:\R_+\to \R_+$, $F_f(c)=\|(I-e^{-cV})f\|$ is 
increasing.} 
\end{equation}
First, we shall verify (\ref{volest0}). Obviously, it is enough to show that $G_f=F_f^2$ is increasing. Differentiating by $c$ the expression $G_f(c)=\langle (I-e^{-cV})f,(I-e^{-cV})f\rangle$, we get $G'_f(c)=\langle Vg_c,g_c\rangle+\langle g_c,Vg_c\rangle$, where $g_c=(I-e^{-cV})f$. Hence 
$G'_f(c)=\langle Pg_c,g_c\rangle$, where $P=V+V^\star$ and $V^\star$ is the Hilbert space adjoint of $V$. 
It is easy to see that $P$ is the orthogonal projection onto the one-dimensional space of constant functions. Then the self-adjoint operator $P$ is non-negative definite and $G'_f(c)=\langle Pg_c,g_c\rangle\geq 0$ for every $c\in\R_+$. Hence $G_f$ is increasing and (\ref{volest0}) follows.

Now we shall prove (\ref{volest}). Assume the contrary. Then there is $a>0$ such that $\|I-e^{-aV}\|>1$. Then there is $f\in S(L^2[0,1])$ such that $\|(I-e^{-aV})f\|=F_f(a)>1$. By (\ref{volest0}), $\lim\limits_{c\to\infty}F_f(c)>1$. On the other hand, in \cite{mo} it is shown that $\|(I-V)^ng\|\to 0$ for each $g\in L^2[0,1]$. According to the main result of \cite{fra} the operators $I-V$ and $e^{-V}$ are similar. Hence $\|e^{-nV}g\|\to 0$ for each $g\in L^2[0,1]$. In particular, $\|e^{-nV}f\|\to 0$. Hence $F_f(n)\to 1$, which contradicts the inequality $\lim\limits_{c\to\infty}F_f(c)>1$. This contradiction completes the proof of (\ref{volest}). 

Now let $r>1$ and $T=re^{-V}$. Then for each $f\in S(L^2[0,1])$, using (\ref{volest}), we have 
$$\textstyle
{\rm Re}\,\langle T^nf,f\rangle=\frac{r^n}{2}\langle (e^{-nV}+e^{-nV^\star})f,f\rangle =
\frac{r^n}{2}(2-\langle ((I-e^{-nV})+(I-e^{-nV^\star}))f,f\rangle)\geq 
r^n(1-\|I-e^{-nV}\|)\geq 0.
$$
Hence $\NO(T,f)$ is contained in the right half-plane and therefore can not be dense in $\C$. Thus $T\notin\NH(L^2[0,1])$. 
\end{proof}

At the expense of a number of technical details entering the proof, one can similarly show that $r(I-cV)$ is weakly numerically hypercyclic and not numerically hypercyclic whenever $r>1$ and $c>0$. We shall raise a number of natural questions.

\begin{question}\label{q1} Let $T\in \WNH(\C^n)$. Is it true that at least one of the conditions $(\ref{suffwnh}.1$--$\ref{suffwnh}.4)$ is satisfied$?$
\end{question}

\begin{question}\label{q2} Assume that $X$ is reflexive, $T\in L(X)$ is not power bounded and $\sigma_p(T)=\varnothing$. Is it true that $T$ is weakly numerically hypercyclic$?$
\end{question}

\begin{question}\label{q3} Let $T\in \WNH(X)$. Is it true that $T^n\in\WNH(X)$ for every $n\in\N?$
\end{question}

\begin{question}\label{q4} Let $T\in \WNH(X)$. Is it true that $rT\in\WNH(X)$ for every $r\geq 1?$
\end{question}

\begin{question}\label{q5} Let $T$ be a unitary operator such that $\sigma(T)$ is infinite and $\sigma_p(T)=\varnothing$. Is $rT$ strongly numerically hypercyclic for each $r>1?$
\end{question}

\begin{question}\label{q6} Let $T\in L(X)$ and $\lambda_1,\lambda_2\in\C$ are such that $\ker(T-\lambda_jI)^2\neq \ker(T-\lambda_jI)$ for $j\in\{1,2\}$, $|\lambda_1|=|\lambda_2|\geq1$ and $\frac{\lambda_1}{|\lambda_1|}$, $\frac{\lambda_2}{|\lambda_2|}$ are independent in $\T$. Is it true that $T$ must be strongly numerically hypercyclic$?$
\end{question}

\begin{question}\label{q7} Characterize normal numerically hypercyclic operators in terms of their spectrum.
\end{question}

\begin{question}\label{q8} Characterize numerically hypercyclic operators on $L^2[0,1]$ commuting with the Volterra operator $V$. In particular, is $I+V$ numerically hypercyclic$?$ 
\end{question} 

The last question is in the spirit of the Invariant Subspace Problem and probably is tough. 

\begin{question}\label{q9} Is there $T\in L(\ell^2(\N))$ such that $\NO(T,x)$ is dense in $\C$ for each $x\in S(\ell^2(\N))?$
\end{question}

\small\rm

\vskip1truecm

\scshape

\noindent Stanislav Shkarin

\noindent Queens's University Belfast

\noindent Department of Pure Mathematics

\noindent University road, Belfast, BT7 1NN, UK

\noindent E-mail address: \qquad {\tt s.shkarin@qub.ac.uk}

\end{document}